\theoremstyle{plain}\newtheorem{theo}{Theorem}[section]
\theoremstyle{plain}\newtheorem{cor}[theo]{Corollary}
\theoremstyle{defn}\newtheorem{rem}[theo]{Remark}
\theoremstyle{plain}
\theoremstyle{plain}\newtheorem{lem}[theo]{Lemma}
\theoremstyle{plain}
\theoremstyle{plain}\newtheorem{prop}[theo]{Proposition}
\theoremstyle{defn}
\theoremstyle{defn}\newtheorem{hypo}[theo]{Assumption}
\theoremstyle{defn}\newtheorem{nota}[theo]{Notation}
\def\cA{\mathcal{A}}
\def\cB{\mathcal{B}}
\def\cC{\mathcal{C}}
\def\cG{\mathcal{G}}
\def\bO{\boldsymbol{0}}
\def\uomega{\underline{\omega}}
\def\cK{\mathcal{K}}
\def\cL{\mathcal{L}}
\def\cP{\mathcal{P}}
\def\cQ{\mathcal{Q}}
\def\cS{\mathcal{S}}
\def\cV{\mathcal{V}}
\def\cW{\mathcal{W}}
\def\cZ{\mathcal{Z}}
\newcommand{\vf}{\varphi}
\newcommand{\ep}{\epsilon}
\newcommand{\ve}{\varepsilon}
\DeclareMathOperator{\cov}{Cov}
\begin{document}

\title[Local limit theorem for randomly deforming billiards]{Local limit theorem for randomly deforming billiards}

\author{Mark F.~Demers}
\address{Department of Mathematics, Fairfield University, 1073 North Benson Road, Fairfield CT 06824, USA}
\email{mdemers@fairfield.edu}

\author{Fran\c{c}oise P\`ene}
\address{Institut Universitaire de France and Universit\'e de Brest,
UMR CNRS 6205, Laboratoire de Math\'ematique de Bretagne Atlantique,
6 avenue Le Gorgeu, 29238 Brest cedex, France}
\email{francoise.pene@univ-brest.fr}

\author{Hong-Kun Zhang}
\address{Department of Math \& Stat. University of Massachusetts Amherst, MA, 01003}
\email{hongkun@math.umass.edu}

\maketitle
\keywords{}

\begin{abstract}
We study limit theorems in the context of random perturbations of dispersing billiards in finite and infinite measure.
In the context of a planar periodic Lorentz gas with finite horizon, we consider random perturbations in the form of
movements and deformations of scatterers.  We prove a Central Limit Theorem for the cell index of planar motion, as well
as a mixing Local Limit Theorem for the cell index   with piecewise H\"older continuous observables.  In the context of the infinite measure random
system, we prove limit theorems regarding visits to new obstacles and self-intersections, as well as decorrelation
estimates.  The main tool we use is the adaptation of anisotropic Banach spaces to the random setting.
\end{abstract}
\centerline{AMS classification numbers: 37D50, 37A25}

\tableofcontents
\printindex
\section*{Introduction}
The Lorentz process is a physically  interesting
mechanical system modeled  by mathematical
billiards with chaotic behavior.  Introduced by Sinai in \cite{Sin}, it has been 
studied extensively by many authors, see \cite{BSC90, BSC91, chernovbook} and other related references.
 It is the deterministic motion
of a point particle starting from a random phase point and undergoing specular
reflections on the boundaries of strictly convex scatterers. Throughout this paper
we will  consider a $\mathbb{Z}^2$-periodic random configuration of scatterers, with finite horizon.  The diffusion limit of the planar Lorentz process can be described by a
Wiener process \cite{BSC91}, and is thus closely related to the  Central Limit Theorem (CLT) and  Local Limit Theorem (LLT).

The history of the LLT goes back to the historic De Moivre Laplace theorem for independent identically distributed (iid) Bernoulli random variables.
It has then been generalized in many contexts. 
The CLT appears as a consequence of the LLT.
In the context of dynamical systems, the first LLT was established
by Guivarc'h and Hardy for subshifts of finite type \cite{GH}.
The method they used, also used by Nagaev in \cite{Nag}, was based on perturbations of 
an associated transfer operator and 
has since been used for many expanding and hyperbolic dynamical systems. 
This method is now often called the Nagaev-Guivarc'h method.
For the Sinai billiard (with fixed scatterers), the LLT was proved by Sz\'asz and Varj\'u in \cite{SV}
using Young towers and the Nagaev-Guivarc'h method.  Also using Young towers, P\`ene established and used in \cite{soazRange,soazPAPA,soazSelfInter} some precise versions of the LLT
to prove further limit theorems for the Sinai billiard (see also her works with Saussol \cite{FPBS09} and with Thomine \cite{DamienSoaz} for other applications of the LLT). 

The goal of this article is to prove the LLT, as well as several of its applications, in the context of randomly deforming scatterers in a dispersing Lorentz gas with finite horizon. 
In this context the use of Young towers does not appear very adequate, since a different tower is associated to every
different $\mathbb Z^2$-periodic configuration of scatterers. 
It is therefore much more natural to work directly with the billiard transformations since these transformations act on the same space $\bar M_0$ and preserve the same measure. To this end, we will work with the spaces
considered  in \cite{MarkHongKun2011,MarkHongKun2013,MarkHongKun2014}, which are spaces 
$\mathcal B,\mathcal B_w$ made of distributions instead of being spaces of functions
contained in $L^p$ for some $p>1$ as in \cite{GH,SV}. This will complicate our study. 
One advantage of the approach used by
Demers and Zhang is that the Banach spaces they construct in 
\cite{MarkHongKun2013} are the same
for natural families of billiard transformations.

Since we are interested in random iterations of billiard transformations,
we will consider the full random billiard system corresponding to the skew product transformation which takes in account both the billiard configuration (position and speed) and the randomness of the configuration of scatterers.
Let us mention that Aimino, Nicol and Vaienti established in \cite{ANV} 
an LLT (together with other limit theorems) for random iterations of expanding dynamical systems. 
Their approach was based on the Nagaev-Guivarc'h method applied to the restriction of the transfer operator of the full random system
to functions depending only on the phase space coordinate (and not on the random coordinate).
The advantage of their method is that they worked on a simple Banach space (in which the randomness of the transformations is not taken into account).
But the disadvantage is that they had to reprove for this restricted operator theorems that were already known for transfer operators.
In the present paper, we apply directly the Nagaev-Guivarc'h method to the transfer operator of the full random system
acting on suitable Banach spaces $\widetilde{\mathcal B},\widetilde{\mathcal B}_w$ which are easily defined using
${\mathcal B},{\mathcal B}_w$. As a consequence, our results apply to observables that may depend on
both the position and speed of the billiard, as well as the random coordinate. \\

This article is organized as follows.
In Section \ref{nota}, we specify our assumptions and notation.
In Section \ref{mainresults}, we state our main limit theorems: LLT, asymptotic estimate of the return time to the initial scatterer, 
asymptotic behavior of the number of self-intersections, annealed and
quenched limit theorem for a random billiard in random scenery,
limit theorems for some ergodic sums of the planar random billiard
(in infinite measure), mixing and decorrelation for the planar random billiard (in infinite measure).
In Section \ref{sec:ops}, we study the spectral properties of the 
transfer operator of the full random system. Section \ref{proofs}
is devoted to the proof of our main results under general spectral assumptions.

\section{Notation and assumptions}\label{nota}

\subsection{Deterministic billiard systems.}\label{sec:bar T}
Let $I\ge 1$ and let $O_1,...,O_I$ be $I$ convex open subsets of $\mathbb R^2$, having $\cC^3$
boundary with strictly positive curvature,
and such that the closure of the sets $(U_{i,\ell}:=\ell+O_i)_{i=1,...,I;\ \ell\in\mathbb Z^2}$ are pairwise disjoint. 
We consider the $\mathbb Z^2$-periodic
billiard table $Q:=\mathbb R^2\setminus \bigcup_{\ell\in\mathbb Z^2}\bigcup_{i=1}^I(U_{i,\ell})$. 
We assume moreover that every line meets $\partial Q$ (i.e. that the horizon is finite).
We are interested in the behavior of a point particle moving in $Q$
at unit speed, going straight inside $Q$, and reflecting elastically off $\partial Q$
(the reflected direction being the symmetric of the incident one with respect to the normal line to $Q$ at the reflection point).

We consider the {\bf planar billiard system} $(M_0,\mu_0,T_0)$ modeling the behavior of the point particle at reflection times. A configuration is given by a pair
$(q,\vec v)\in M_0$ representing position and velocity, and corresponding to a reflected vector off
$\partial Q$, with
$$M_0:=\{(q,\vec v)\in\mathbb R^2\times\mathbb R^2 :\
           q\in\partial Q,\
         \Vert\vec v\Vert=1,\ \langle \vec n(q),\vec v\rangle>0  \},$$
where $\vec n(q)$ is the unit vector, normal to $\partial Q$ at $q$ and directed into $Q$.
The transformation $T_0$ maps a reflected vector to
the reflected vector at the next reflection time.
This transformation preserves the measure $\mu_0$ given by $d\mu_0 = \tilde c \cos\varphi \, dr\, d\varphi$ (where $r$ is the
parametrized arclength coordinate on $\partial Q$ corresponding to $q$ and
$\varphi$ is the algebraic measure of the angle
$\widehat{(\vec n(q),\vec v)}$ and where $\tilde c=1/(2\sum_{i=1}^I|\partial O_i|)$, the reason for the choice of $\tilde c$ will be clear in a few lines).

For every $i\in\{1,...,I\}$ and every $\ell\in\mathbb Z^2$,
we define $M_{i,\ell}:=\{(q,\vec v)\in M_0\ : \ q\in \partial U_{i,\ell}\}$
for the set of reflected vectors based on the obstacle $U_{i,\ell}$.
For every $\ell\in\mathbb Z^2$, we will call an {\bf $\boldsymbol \ell$-cell} the set $\bar M_\ell:=\bigcup_{i=1}^IM_{i,\ell}$ .

Identifying the boundary of each scatterer $\partial O_i$ with a circle $\mathbb{S}_i$ of length $|\partial O_i|$, we define
$\bar M_0 := \cup_{i=1}^I \mathbb{S}_i \times [-\pi/2, \pi/2]$.  Thus $\bar M_0$ is a parametrization of $\bar M_{(0,0)}$ in the coordinates
$(r,\vf)$ introduced above. 
Note that may configurations of obstacles $O_i$ result in the same parametrized space $\bar M_0$.
We shall exploit this fact when defining the classes of random perturbations that we shall consider.  

Because of its $\mathbb Z^2$ periodicity, the planar billiard system can
be identified with a $\mathbb Z^2$-cylindrical extension over a dynamical system $(\bar M_0,\bar\mu_0,\bar T_0)$.
Indeed, using the notation $x+\ell=(q+\ell,\vec v)$ for every $x=(q,\vec v)\in M_0$ and every $\ell\in\mathbb Z^2$, we observe that
there exists a transformation $\bar T_0:\bar M_0\rightarrow\bar M_0$
(corresponding to the billiard map modulo $\mathbb Z^2$)
and a function $\Phi_0:\bar M_0\rightarrow \mathbb Z^2$ called a {\bf cell-change}) such that
$$T_0(x+\ell)=\bar T_0 (x)+\ell+\Phi_0(x)\, .$$
This transformation $\bar T_0$
preserves the probability measure $\bar\mu_0(\cdot):=\mu_0(\cdot\cap\bar M_0)$ (the fact that $\bar\mu_0$ is a probability comes from our choice for the normalizing constant $\tilde c$).

In the following, identifying a couple $(x,\ell)\in\bar M_0\times\mathbb Z^2$ with $x+\ell\in M_0$, we identify $(M_0,\mu_0,T_0)$ with the $\mathbb Z^2$-cylindrical extension of  $(\bar M_0,\bar\mu_0,\bar T_0)$ by $\Phi_0$, i.e. we identify $M_0$ with $\bar M_0\times\mathbb Z^2$, $\mu_0$ with
$\bar\mu_0\otimes \mathfrak m$, where $\mathfrak m:=\sum_{k\in\mathbb Z^2}\delta_k$ is the counting measure on $\mathbb Z^2$.

\subsection{Random perturbations of the initial billiard system}
\label{sec:rand pert}

Before describing the random perturbations we shall consider, we describe a class of maps
$\bar {\mathcal{F}}$ on $\bar M_0$
with uniform properties from which we will draw random sequences of maps.
The class $\bar {\mathcal{F}}$ we will use is a slightly simplified version of the one introduced in
\cite{MarkHongKun2013}.  The perturbations in \cite{MarkHongKun2013} allowed billiards
with infinite horizon, while for the present work we will assume a finite horizon condition and that the invariant measure 
is absolutely continuous with respect to the Lebesgue measure,
which
simplifies several of our assumptions.

We consider a probability space $(E,\mathfrak T,\eta)$ containing 0 and
 a family $(T_\omega)_{\omega\in E}$ of $\mathbb Z^2$-periodic planar Sinai billiard systems (with finite horizon) defined on $M$, the quotient billiard maps (modulo $\mathbb Z^2$ for the position) $\bar T_\omega$ of which are in $ \bar{\mathcal{F}}$, and below 
 we will choose $\bar{\mathcal{F}}_{\vartheta_0}(\bar T_0)$ as a small
$\vartheta_0$-neighbourhood of our original map $\bar T_0$, see (\ref{defnFtheta}).

For any $\uomega\in E^{\mathbb{N}}$, we will consider random iterations $T^k_{\uomega}:=T_{\omega_{k-1}}\circ...\circ T_{\omega_0}$, where $\omega_k$
are chosen independently with respect to $\eta$. Here $\uomega=(\omega_k)_{k\geq 0}$, and 
$T_{\omega_k}\in {\mathcal{F}}$, for any $k\geq 0$, where ${\mathcal{F}}$ is a collection of $\mathbb{Z}^2$ extensions of $\bar {\mathcal{F}}$. 
This will be formalized below.
In our model, the modification of environment is applied during the reflection time of the particle; the particle stays on the obstacle and moves with it during the modification of the billiard system.
At its $k$-th reflection time, the particle arrives on an obstacle in an environment parametrized by $\omega_{k-1}$, but when it leaves it sees the environment $\omega_{k}$.

We identify $(M_0,\mu_0,T_\omega)$ with the $\mathbb Z^2$-extension of $(\bar M_0,\bar\mu_0,\bar T_\omega)$ by some function
$\Phi_\omega : \bar M_0\rightarrow\mathbb Z^2$ which is constant on each
connected component of continuity of $\bar T_\omega$. We define {\bf the random billiard system} $(\bar M,
\bar\mu,\bar T)$, corresponding to random iterations of maps in $\bar {\mathcal{F}}$, 
by setting:
$$\bar M:=\bar M_0\times E^{\mathbb N},\,\,\,\,\,\,\,\,\bar\mu:=\bar\mu_0\otimes \eta^{\otimes\mathbb N},\,\,\,\,\,\,\,\,\bar T(x,(\omega_k)_{k\ge 0}):=(\bar T_{\omega_0}x,(\omega_{k+1})_{k\ge 0})\, .$$
We also define {\bf the planar random billiard system} $(M,\mu, T)$ with:
$$ M:= M_0\times E^{\mathbb N},\,\,\,\,\,\,\,\mu:=\mu_0\otimes \eta^{\otimes\mathbb N},\,\,\,\,\,\,\,
 T\left((x,\ell,(\omega_k)_k)_{k\ge 0}\right):=( T_{\omega_0}(x,\ell),(\omega_{k+1})_{k\ge 0})\, .$$
This dynamical system is a $\mathbb Z^2$-extension of
$\bar M$ by $\Phi:\bar M\rightarrow\mathbb Z^2$ given by:
$$\Phi(x,(\omega_k)_{k\ge 0})=\Phi_{{\omega_0}}(x))\, .$$
Observe that
\begin{eqnarray*}
T^n\left((x,\ell,(\omega_k)_k)_{k\ge 0}\right)&=&( T_{\omega_{n-1}}\circ ...\circ  T_{\omega_0}(x),(\omega_{n+k})_k)\\
&=&(\bar T_{\omega_{n-1}}\circ ...\circ \bar T_{\omega_0}(x),\ell+S_n(x,(\omega_k)_k),(\omega_{n+k})_k)\, ,
\end{eqnarray*}
with
$$S_n(x,(\omega_k)_k)
 :=\sum_{k=0}^{n-1}\Phi\circ \bar T^k(x,(\omega_k)_k)
     =\sum_{k=0}^{n-1}\Phi_{\omega_{k}}\circ
     \bar T_{\omega_{k-1}}\circ\cdots\circ \bar T_{\omega_0}(x),$$
corresponding to the cell change, starting from $x$, after $n$ iterations of maps labeled successively by $\omega_0,\dots,\omega_{n-1}$.

\begin{nota}
As exemplified by the definitions above, we will use overlines such as 
$\bar \mu$, $\bar M$, $\bar T$ to denote objects associated with the quotient random system,
defined in finite measure.
When we introduce a subscript such as $\bar \mu_0$, $\bar M_0$, $\bar T_\omega$, these
denote objects which are not functions of the random coordinate, but are still defined on the quotient space.
\end{nota}
\subsection{A uniform family of maps}
\label{sec:family}
We fix the phase space $\bar{M}_0 = \cup_{i=1}^I \mathbb{S}_i \times [-\pi/2, \pi/2]$ as described above,
and denote by $m$ the normalized Lebesgue measure on $\bar M_0$.  Define
$\cS_0 = \{ \vf = \pm \frac{\pi}{2} \}$ and for a fixed $k_0 \in \mathbb{N}$ with value to be chosen in (\ref{eq:one step}),
for $k \ge k_0$ we define the homogeneity strips,
\begin{equation}
\label{eq:strips}
\mathbb{H}_k = \big\{ (r, \vf) \in \bar M_0 : \tfrac{\pi}{2} - \tfrac{1}{k^2} < \vf < \tfrac{\pi}{2} - \tfrac{1}{(k+1)^2} \big\},
\end{equation}
and the strips $\mathbb{H}_{-k}$ are defined similarly in a neighborhood of $\vf = -\pi/2$.
For the class of maps defined below, we will work with the
extended singularity set $\cS_{0,H} = \cS_0 \cup (\cup_{k \ge k_0} \partial \mathbb{H}_{\pm k})$.
Thus for any $F \in \bar {\mathcal{F}}$, the set $\cS_{\pm n}^{F} := \cup_{i=0}^n F^{\mp i} \cS_{0,H}$
represents the singularity set for $F^{\pm n}$.

We suppose there exists a class $\bar {\mathcal{F}}$ of maps $F : \bar M_0 \circlearrowleft$ such that each
$F \in \bar{\mathcal{F}}$ is a $C^2$ diffeomorphism of
$\bar M_0 \setminus \cS_1^{F}$ onto $\bar M_0 \setminus \cS_{-1}^{F}$ and satisfies the following properties.\\

\smallskip
\noindent
{\bf (H1)  Hyperbolicity and Singularities.}
There exist continuous families of stable and unstable cones, $C^s(x)$ and $C^u(x)$,
which are strictly invariant in the following sense: $DF(x) C^u(x) \subset C^u(Fx)$
and $DF^{-1}(x) C^s(x) \subset C^s(F^{-1}x)$ for all $F \in \bar {\mathcal{F}}$ wherever $DF$ and 
$DF^{-1}$
are defined.

The sets $\cS_{\pm n}^{F}$ comprise finitely many smooth curves for each $n \in \mathbb{N}$.
$\cS_n^{F}$ is uniformly transverse\footnote{The uniformity is assumed to be a lower bound
on the angle between these curves and the relevant cone, which is indepedent of
$x \in \bar M_0$, $n \in \mathbb{N}$ and $F \in \bar {\mathcal{F}}$.} to $C^u(x)$ and $\cS_{-n}^{F}$ is uniformly transverse to
$C^s(x)$ for each $n \ge 0$.  Moreover, $C^s(x)$ and $C^u(x)$ are uniformly transverse on $\bar M_0$
and $C^s(x)$ is uniformly transverse to the horizontal and vertical directions on all of $\bar M_0$.\footnote{This is not
a restrictive assumption for perturbations of the Lorentz gas since the standard cones for
the associated billiard map satisfy this property \cite[Section~4.5]{chernovbook}.}

We assume there exist constants $C_e>0$ and $\Lambda>1$ such that for all $F \in \bar {\mathcal{F}}$
and $n \ge 0$,
\begin{equation}
\label{eq:hyp}
\| DF^n(x) v \| \ge C_e^{-1} \Lambda^n \| v \|, \forall v \in C^u(x), \mbox{ and }
\| DF^{-n}(x) v \| \ge C_e^{-1} \Lambda^n \| v \|, \forall v \in C^s(x),
\end{equation}
where $\| \cdot \|$ is the Euclidean norm on the tangent space to $\bar M_0$.

Finally, near singularities, we assume the maps in $\bar {\mathcal{F}}$ behave like billiards:
there exists $C_a > 0$ such that
\[
C_a \| v \| \le \| DF^{-1}(x) v \|  \cos \vf(F^{-1}x) \le C_a^{-1} \| v\|, \quad \forall v \in C^s(x),
\]
where $\vf(z)$ denotes the angle $\vf$ at the point $z = (r, \vf) \in \bar M_0$.
We also require that the second derivative is bounded by,
\[
C_a \le \| D^2F^{-1}(x) \| \cos^3 \vf(F^{-1}x) \le C_a^{-1} .
\]

\smallskip
\noindent
{\bf (H2) Families of stable and unstable curves.}
We call a $\cC^2$ curve $W \subset \bar M_0$ a {\em stable curve} with respect to the class $\bar {\mathcal{F}}$ if the
unit tangent to $W$ lies in $C^s(x)$ for all $x \in W$.  We say $W$ is {\em homogeneous}
if it lies in a single homogeneity strip $\mathbb{H}_k$.  We define homogeneous unstable
curves analogously.

Let $\widehat \cW^s$ denote the set of $\cC^2$ homogeneous stable curves in $\bar M_0$ whose curvature
is bounded above by a constant $B>0$.  We assume there exists $B$ large enough that
$F^{-1}W$ is a union of elements of $\widehat \cW^s$ for all $W \in \widehat \cW^s$ and $F \in \bar {\mathcal{F}}$.
A family $\widehat \cW^u$ of unstable curves is defined analogously.

\smallskip
\noindent
{\bf (H3) One-step Expansion.}
Assume there exists an adapted norm $\| \cdot \|_*$ on the tangent space to $\bar M_0$,
equivalent to $\| \cdot \|$, in which the constant $C_e$ in \eqref{eq:hyp} can be taken to be 1.
This yields a uniform expansion and contraction in one step for maps in the class $\bar {\mathcal{F}}$.

Let $W \in \widehat \cW^s$.  For $F \in \bar {\mathcal{F}}$, we subdivide $F^{-1}W$ into maximal homogeneous
curves $V_i = V_i(F) \in \widehat W^s$.  We denote by $|J_{V_i}F|_*$ the minimum contraction
on $V_i$ under $F$ in the metric induced by the adapted norm $\| \cdot \|_*$.  We assume
that $k_0$ in \eqref{eq:strips} can be chosen sufficiently large that,
\begin{equation}
\label{eq:one step}
\limsup_{\delta \to 0} \sup_{F \in \bar {\mathcal{F}}} \sup_{\substack{W \in \widehat \cW^s \\ |W| < \delta}} \sum_i |J_{V_i}F|_* < 1 ,
\end{equation}
where $|W|$ denotes the arclength of $W$.

In addition, if we weaken the power of the Jacobian slightly, we assume that the sum above
still converges (although it need not be a contraction).  There exists $\zeta_0 \in (0, 1)$
and $C_1>0$ such that
for each $\delta > 0$ and $\zeta \in [\zeta_0, 1]$,
\[
\sup_{F \in \bar {\mathcal{F}}} \sup_{\substack{W \in \widehat \cW^s \\ |W| < \delta}} \sum_i |J_{V_i}F|_{\cC^0(V_i)}^\zeta \le C_1.
\]

\smallskip
\noindent
{\bf (H4) Bounded distortion.}
There exists a constant $C_d > 0$ with the following properties.
Let $W' \in \widehat \cW^s$ and for $F \in \bar {\mathcal{F}}$, $n \in \mathbb{N}$, let $x,y \in W \subset F^{-n}W'$
such that $F^iW$ is a homogeneous stable curve for each $0 \le i \le n$.  Then,
\begin{equation}
\label{eq:dist}
\left| \frac{J_WF^n(x)}{J_WF^n(y)} - 1 \right| \le C_d d_W(x,y)^{1/3}\, ,
\end{equation}
where $J_WF^n$ denotes the (stable) Jacobian of $F^n$ along $W$ with respect to arclength.

\smallskip
\noindent
{\bf (H5) Invariant measure.}
All the maps $F \in \bar {\mathcal{F}}$ have the same  invariant measure $\bar\mu_0$.

\medskip
\begin{rem}
Assumption (H5) can be replaced more generally with the requirement that all $F \in \bar {\mathcal{F}}$ preserve the same
measure $\bar\mu$ which is absolutely continuous with respect to $m$ and mixing.  In addition,
$\bar\mu$ should satisfy the following technical assumptions:
For $k \ge k_0$, $\bar\mu
(\mathbb H_k)=O(k^{-5q})$ for some $q\in(\frac 45,1)$; also, $\bar\mu$ can be 
disintegrated into measures $\mu_{\alpha}$ along any measurable foliation of $\bar M_0$ into stable 
manifolds $\{W_{\alpha}, \alpha\in \cA\}$, with a factor measure $\lambda$, such that 
$$\bar\mu_0(A)=\int_{\alpha\in \cA}\int_{x\in W_{\alpha}} 1_A(x)\, d\mu_{\alpha} d\lambda(\alpha),$$ 
where $d\mu_{\alpha}=\rho_{\alpha} dm_\alpha$ satisfies a regularity condition: 
$|\ln \rho_{\alpha}(x)-\ln \rho_{\alpha}(y)|\leq C_F d_{W_{\alpha}}(x,y)^{1/3}$, for some constant 
$C_F \ge C_d$,  $d_{W}(x,y)$ is the distance of $x$ and $y$ measured along the curve $W$, and $m_\alpha$ is arclength
measure on $W_\alpha$.

This generalization to other smooth invariant measures is of interest, for example, when considering perturbations in the form 
of certain soft potentials rather than hard scatterers, or the case of external forces due to gradient fields.  See for instance \cite{balint toth,Chernov2001} and their inclusion in a similar perturbative
framework \cite{MarkHongKun2013}.
\end{rem}

A crucial lemma, which will allow us to draw random sequences from the class $\bar {\mathcal{F}}$, is the following.

\begin{lem}
\label{lem:sequence}
Fix a class $\bar {\mathcal{F}}$ satisfying {\bf (H1)}-{\bf (H5)} with uniform constants.
Let $\uomega \in E^{\mathbb{N}}$, and suppose $\bar T_{\omega_k} \in \bar {\mathcal{F}}$ for all $k \ge 0$.

Then for all $n \in \mathbb{N}$, the composition 
$\bar T^n_{\uomega} := \bar T_{\omega_{n-1}} \circ \cdots \circ \bar T_{\omega_0}$ satisfies assumptions {\bf (H1)}-{\bf (H5)},
with possibly larger constants (that are nonetheless independent of $n$ and $\uomega$), and with respect to the singularity sets
$\cS_n^{\bar T_{\uomega}} = \cup_{k=0}^{n-1} \bar T_{\omega_0}^{-1} \circ \cdots \circ \bar T_{\omega_k}^{-1} \cS_0$.
\end{lem}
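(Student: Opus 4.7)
The plan is to verify the five hypotheses (H1)--(H5) for the composition $\bar T^n_\uomega$ one at a time, exploiting the fact that all maps in $\bar{\mathcal{F}}$ share the same cone families, the same adapted norm, the same invariant measure, and uniform constants; so the main task at each step is to control how bounds propagate under $n$-fold composition and check that the propagated constants remain uniform in $\uomega$ and in $n$.

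Hypotheses (H1), (H2) and (H5) are largely mechanical. For (H1), strict invariance of the cones $C^u(x)$ and $C^s(x)$ under each $\bar T_{\omega_k}$ passes to the composition; the single-step expansion in the adapted norm $\|\cdot\|_*$ iterates multiplicatively to $\|D\bar T^n_\uomega(x) v\|_* \geq \Lambda^n \|v\|_*$ on $C^u(x)$, yielding the Euclidean version with constant $C_e^{-1}\Lambda^n$. The stated set $\cS_n^{\bar T_\uomega}$ is precisely where $\bar T^n_\uomega$ fails to be smooth, and uniform transversality of $\cS_n^{\bar T_\uomega}$ to $C^u$ follows by pulling the uniform transversality of $\cS_0$ back through the cone-preserving intermediate maps $\bar T_{\omega_k}^{-1}$. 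The $\cos\vf$ bounds on $D\bar T_{\omega_k}^{-1}$ and $D^2\bar T_{\omega_k}^{-1}$ multiply telescopically along the orbit, with the $\cos\vf$ factors at intermediate points cancelling correctly. For (H2), induction on $n$ suffices: by the single-step property, $\bar T_{\omega_0}^{-1}$ of any curve in $\widehat \cW^s$ is a union of elements of $\widehat \cW^s$ with curvature bound $B$, and induction on the remaining $n-1$ maps in the composition completes the argument. For (H5), $\bar\mu_0$ is a common invariant measure for every $\bar T_{\omega_k}$.

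The real work is (H3) and (H4). For the main one-step expansion for $\bar T^n_\uomega$, I would partition $(\bar T^n_\uomega)^{-1}W$ according to which homogeneous piece $V^{(k)}_j$ it falls into after each of the $n$ successive pullbacks, write the $n$-fold stable Jacobian along a final piece as a product of $n$ single-step Jacobians, and iterate. An intermediate pulled-back piece need not be short, but it lies in a single homogeneity strip and can therefore be further subdivided into sub-arcs of arclength less than $\delta$; applying the single-step bound to each sub-arc and summing yields a net bound of the form $\theta^n$ for some $\theta<1$ depending only on $\bar{\mathcal{F}}$, uniform in $\uomega$. The $\zeta$-power version is handled analogously, the iterated sum controlled by a convergent geometric-type series. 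For (H4), the log-distortion of $\bar T^n_\uomega$ along a stable curve decomposes as a sum over $k=0,\ldots,n-1$ of single-step log-distortion contributions evaluated on $\bar T^k_\uomega W$; the arclengths of these iterates decay geometrically by the uniform contraction on stable curves, so the $d_W(x,y)^{1/3}$-H\"older bound sums to a constant independent of $n$. The principal obstacle is precisely the one-step expansion of (H3): the sharp $<1$ sum estimate for individual maps applies only to curves of arclength below a small threshold, which after a single pullback may be violated, so one must carefully justify that the subdivision within each homogeneity strip produces a controlled number of sub-arcs whose contributions can still be summed within the same single-step $\theta<1$ bound, uniformly across all sequences $\uomega$.
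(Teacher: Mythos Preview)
The paper does not give its own proof of this lemma: it simply states ``Lemma~\ref{lem:sequence} is proved in \cite[Section~5.3]{MarkHongKun2013}.'' Your sketch is therefore more detailed than anything in the present paper, and its overall strategy---verify each of {\bf (H1)}--{\bf (H5)} for the composition using the shared cones, adapted norm, and invariant measure, then iterate the single-step estimates---is exactly the approach carried out in that reference.

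One point deserves correction. Your claim that in {\bf (H1)} ``the $\cos\vf$ factors at intermediate points cancel telescopically'' is not accurate: for a composition $(\bar T^n_\uomega)^{-1}$, the product $\prod_k \|D\bar T_{\omega_k}^{-1}\|$ accumulates a factor $\prod_k (\cos\vf_k)^{-1}$ over intermediate collision angles $\vf_k$, and these do \emph{not} cancel. What actually controls this in \cite{MarkHongKun2013} is the homogeneity-strip decomposition: on each connected component of $\bar M_0\setminus \cS_n^{\bar T_\uomega}$ (with $\cS_{0,H}$ in place of $\cS_0$), the intermediate angles $\vf_k$ lie in fixed homogeneity strips, so only the angle at the \emph{final} pre-image governs the blowup, and the required $\cos\vf((\bar T^n_\uomega)^{-1}x)$ bound holds with a uniform constant. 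Similarly, your {\bf (H3)} paragraph correctly isolates the main difficulty (intermediate pulled-back curves may be long), but the resolution in \cite{MarkHongKun2013} is not quite to ``subdivide within each homogeneity strip and re-apply the single-step $\theta<1$ bound''; rather, one proves a growth-lemma type estimate (their Lemma~5.5) that tracks both short and long pieces and shows the weighted Jacobian sum contracts after accounting for the artificial cuts. Your sketch is morally right but would need these refinements to close.
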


Lemma~\ref{lem:sequence} is proved in \cite[Section~5.3]{MarkHongKun2013}.

\subsection{Distance in the class $\bar {\mathcal{F}}$}
\label{sec:distance}

To define a notion of distance $d_{\bar {\mathcal{F}}}(\cdot, \cdot)$ in the class of maps $\bar {\mathcal{F}}$, let 
$F_1, F_2 \in \bar {\mathcal{F}}$ and for $\ep >0$,
let $N_\ep(\cS_{-1}^{F_i})$ denote the $\ep$-neighborhood of the singularity set $\cS_{-1}^{F_i}$.
We say $d_{\bar {\mathcal{F}}}(F_1, F_2) \le \ep$ if for all $x \notin N_\ep(\cS_{-1}^{F_1} \cup \cS_{-1}^{F_2})$:
\begin{itemize}
  \item[{\bf (C1)}]  $d((F_1)^{-1}(x), (F_2)^{-1}(x)) \le \ep$;
  \item[{\bf (C2)}]  $\displaystyle \left| \frac{J_{\bar\mu_0}F_i(x)}{J_{\bar\mu_0}F_j(x)} - 1 \right| \le \ep$, $i,j = 1,2$;
  \item[{\bf (C3)}]  $\displaystyle \left| \frac{J_WF_i(x)}{J_WF_j(x)} - 1 \right| \le \ep$,
  for all $W \in \widehat \cW^s$ and $x \in W$, $i,j = 1,2$;
  \item[{\bf (C4)}]  $\| D(F_1)^{-1}(x) v - D(F_2)^{-1}(x) v \| \le \sqrt{\ep}$, for any unit vector $v$ tangent
  to $W \in \widehat \cW^s$ at $x$.
\end{itemize}

For $F_0 \in \bar {\mathcal{F}}$ and $\vartheta_0 > 0$, define
\begin{equation}\label{defnFtheta}
\bar{\mathcal{F}}_{\vartheta_0}(F_0)=\{F \in \bar {\mathcal{F}} : d_{\bar {\mathcal{F}}}(F, F_0)<\vartheta_0\},
\end{equation}
to be the $\theta_0$ neighborhood of $F_0$ in $\bar {\mathcal{F}}$.\\

We remark that is definition of distance does not require the sets $\cS_{-1}^1$ and $\cS_{-1}^2$ to
be close in any sense, only that the maps are $\cC^1$-close outside an $\ep$-neighborhood
of the union of the two singularity sets. 
Next, we describe a perturbation family of billiards that satisfying assumptions 
(\textbf{H1})-(\textbf{H5}), to illustrate that these  assumptions are reasonable.


\subsection{Applications -- Deterministic perturbations}
Given $I$ intervals $J_1, \ldots J_I$, we fix the phase space 
$\bar M_0 = \bigcup_{i=1}^I J_i\times [-\pi/2,\pi/2]$ on which the maps in class $\bar {\mathcal{F}}$ are defined.
We use the notation $\bar Q = \bar Q(\{ O_i \}_{i=1}^I ; \{ J_i \}_{i=1}^I)$ to denote the configuration of
scatterers $O_1, \ldots, O_l$ placed on the billiard table
such that $|\partial O_i| = |J_i|$, $i = 1, \ldots, I$.
We identify the endpoints of $J_i$ so that each $J_i$ 
can be identified with a circle and each component
of $\bar M_0$ is a cylinder.
Since we have fixed $J_1, \ldots, J_I$, $\bar M_0$ remains the same
for all configurations $\bar Q$ that we consider.
For each such configuration, we define
\[
\tau_{\min}(\bar Q) =
\inf \{ \tau(x) : \tau(x) \mbox{ is defined for the configuration } \bar Q \} .
\]
Similarly, we define $\tau_{\max}$, as well as $\cK_{\min}(\bar Q)$ and $\cK_{\max}(\bar Q)$, which denote the minimum and maximum curvatures
respectively of the $\partial O_i$ in the configuration $\bar Q$.  The constant
$E_{\max}(\bar Q)$ denotes the maximum $C^3$ norm of the $\partial O_i$ in $\bar Q$.

For each fixed $\tau_*, \cK_*, E_* >0$, define $\cQ_1(\tau_*, \cK_*, E_*)$ to be the collection
of all configurations $\bar Q$ such that:
$$\tau_* \le \tau_{\min}(\bar Q) \le \tau_{\max}(\bar Q) \le \tau_*^{-1},\,\,\,\,\,\,
\cK_* \le \cK_{\min}(\bar Q) \le K_{\max}(\bar Q) \le \cK_*^{-1},\,\,\,\,\,\,\,\,E_{\max}(\bar Q) \le E_*.$$
Let $\bar {\mathcal{F}}_1(\tau_*, \cK_*, E_*)$ be the corresponding set of billiard maps induced by the
configurations in $\cQ_1$.  
The following lemma is proved in \cite{MarkHongKun2013}.

\begin{lem} {\em (\cite[Theorem~2.7]{MarkHongKun2013})}
\label{thm:F1}
Fix intervals $J_1, \ldots, J_I$ and let $\tau_*, \cK_*, E_* >0$.  The family
$\bar {\mathcal{F}}_1(\tau_*, \cK_*, E_*)$ satisfies {\bf (H1)}-{\bf (H5)} with uniform constants
depending only on $\tau_*$, $\cK_*$ and $E_*$.  
\end{lem}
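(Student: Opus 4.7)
The plan is to verify each of \textbf{(H1)}--\textbf{(H5)} for an arbitrary $F \in \bar{\mathcal{F}}_1(\tau_*,\cK_*,E_*)$ and to check that every constant appearing can be expressed solely in terms of $\tau_*$, $\cK_*$, $E_*$ and the fixed data $\{J_i\}$. The overall strategy is to invoke the classical quantitative estimates for planar dispersing billiards (cones, expansion, curvature evolution, distortion), tracking how each constant depends on the geometric parameters of the scatterer configuration. Since $\cQ_1$ is carved out by a finite list of two-sided inequalities on $\tau$, $\cK$, and the $\cC^3$-norm, the standard estimates automatically become uniform over $\bar{\mathcal{F}}_1$.

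I would start with the easy items. For \textbf{(H5)}, $\bar M_0 = \cup_i J_i \times [-\pi/2,\pi/2]$ is fixed and every planar dispersing billiard preserves $d\bar\mu_0 = \tilde c\cos\vf\, dr\, d\vf$; the normalizing constant $\tilde c = (2\sum_i |J_i|)^{-1}$ depends only on the fixed intervals $J_i$, so all maps in $\bar{\mathcal{F}}_1$ share the same invariant measure. For \textbf{(H1)}, the standard unstable cone at $(r,\vf)$ consists of tangent vectors whose slopes $d\vf/dr$ lie between $\cK(r)$ and $\cK(r)+1/\tau_{\min}$; the bounds $\cK_* \le \cK \le \cK_*^{-1}$ and $\tau \ge \tau_*$ give uniformly strictly invariant cones and, via the adapted norm, a uniform expansion factor $\Lambda>1$. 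Uniform transversality of $\cC^s$ and $\cC^u$ with each other and with the vertical direction follows from the same bounds. The near-singularity estimates on $\|DF^{-1}v\|\cos\vf(F^{-1}x)$ and $\|D^2F^{-1}\|\cos^3\vf(F^{-1}x)$ come from the explicit geometric-optics formulae for $DF^{-1}$ and $D^2F^{-1}$, which involve only $\tau$, $\cK$ and the first two derivatives of $\partial O_i$, all controlled by $\tau_*,\cK_*,E_*$.

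Next I would address \textbf{(H2)} and \textbf{(H4)}. The curvature of a stable (or unstable) manifold obeys a Riccati-type invariant equation whose coefficients are bounded functions of $\tau$, $\cK$ and the curvatures of $\partial O_i$, so one can choose $B = B(\tau_*,\cK_*,E_*)$ large enough that $F^{-1}W \in \widehat\cW^s$ for every $W \in \widehat\cW^s$ and every $F \in \bar{\mathcal{F}}_1$. For \textbf{(H4)}, the H\"older bound \eqref{eq:dist} reduces to integrating $\|D^2F\|$ along the stable curve weighted by the expansion factor; homogeneity forces $\cos\vf$ to vary by only a bounded ratio along $W$, and combined with $\tau\ge\tau_*$ and $E_{\max}\le E_*$ this yields a distortion constant $C_d=C_d(\tau_*,\cK_*,E_*)$ that depends only on the family parameters.

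The main obstacle, as is typical for billiards, is the one-step expansion \textbf{(H3)}. Near $\vf=\pm\pi/2$ the contraction factor $|J_{V_i}F|_*$ becomes arbitrarily small and $F^{-1}W$ may break into infinitely many components accumulating on $\cS_0$. The remedy is the classical one: restricting the preimages to the homogeneity strips $\mathbb H_k$ of \eqref{eq:strips} forces $\cos\vf \asymp k^{-2}$ on $\mathbb H_k$, the number of pieces of $F^{-1}W\cap \mathbb H_k$ stays uniformly bounded, and the contraction on each piece is at most $C k^{-2}$ in the adapted norm. The tail sum $\sum_{k\ge k_0} k^{-2}$ can be made less than any prescribed value strictly below $1$ by choosing $k_0$ large, depending only on $\tau_*,\cK_*,E_*$; together with the contribution from the finitely many ``non-homogeneous'' pieces, whose total number is bounded by a complexity count depending on $\tau_*$, this yields \eqref{eq:one step} uniformly in $F\in\bar{\mathcal{F}}_1$. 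The weakened-Jacobian bound with exponent $\zeta\in[\zeta_0,1]$ is obtained in the same way provided $\zeta_0>1/2$, so that $\sum k^{-2\zeta}$ still converges; the resulting $C_1$ again depends only on $\tau_*,\cK_*,E_*$. Throughout, the main care needed is to verify that every cone, expansion, and distortion constant produced above depends only on the two-sided bounds defining $\cQ_1$ and not on the particular map chosen.
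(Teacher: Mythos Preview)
The paper does not prove this lemma; it merely cites \cite[Theorem~2.7]{MarkHongKun2013}. Your sketch is correct and follows exactly the approach taken in that reference: verify each of \textbf{(H1)}--\textbf{(H5)} from the explicit formulas for $DF$, $D^2F$, the curvature evolution equation, and the homogeneity-strip growth lemma, checking at each step that the constants depend only on the two-sided bounds $\tau_*,\cK_*,E_*$.

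One small point to tighten: in \textbf{(H1)} you describe the unstable cone using the curvature $\cK(r)$ at the base point, but $\cK(r)$ depends on the configuration $\bar Q$, whereas \textbf{(H1)} requires a \emph{single} family of cones $C^u(x)$, $C^s(x)$ on $\bar M_0$ that is strictly invariant under \emph{every} $F\in\bar{\mathcal{F}}_1$. The fix is the obvious one you already gesture at: take the cone of slopes $d\vf/dr \in [\cK_*, \cK_*^{-1}+\tau_*^{-1}]$ (and its negative for the stable cone), which is independent of the configuration and strictly invariant for every map in the family. With that adjustment your argument matches the cited proof.
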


We fix an initial configuration of scatterers $\bar Q_0 \in \cQ_1(\tau_*, \cK_*, E_*)$ and consider
configurations $\bar Q$ which alter each $\partial O_i$ in $\bar Q_0$ to a curve
$\partial \tilde{O}_i$ having the same arclength as $\partial O_i$.
We consider each $\partial O_i$ as a parametrized curve
$u_i:J_i \to \mathbb{R}^2$ and each $\partial \tilde{O}_i$ as parametrized by $\tilde{u}_i$.
Define $$\Delta(\bar Q, \bar Q_0) = \sum_{i=1}^l |u_i - \tilde{u}_i|_{C^2(J_i,\mathbb{R}^2)}.$$The following is proved in \cite{MarkHongKun2013} .

\begin{lem} {\em (\cite[Theorem~2.8]{MarkHongKun2013})}
\label{thm:deform}
Choose $\vartheta_0 \le  \min \{ \tau_*/2, \cK_*/2 \}$ and let $\bar {\mathcal{F}}_A(\bar Q_0, E_*; \vartheta_0)$
be the set of all billiard maps corresponding to
configurations $\bar Q$ such that $\Delta(\bar Q, \bar Q_0) \le \vartheta_0$ and 
$E_{\max}(\bar Q) \le E_*$.
Then $\bar {\mathcal{F}}_A(\bar Q_0, E_*; \vartheta_0) \subset \bar {\mathcal{F}}_1(\tau_*/2, \cK_*/2, E_*)$
and $d_{\bar {\mathcal{F}}}(\bar F_1, \bar F_2) \le C|\vartheta_0|^{1/3}$ for any $\bar F_1, \bar F_2 \in \bar {\mathcal{F}}_A(Q_0, E_*; \vartheta_0)$.
\end{lem}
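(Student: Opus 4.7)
I would split the lemma into its two assertions and handle each in turn.

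\textbf{Inclusion.} For any configuration $\bar Q$ with $\Delta(\bar Q, \bar Q_0) \le \vartheta_0$, the $\cC^2$-closeness of each $u_i$ to $\tilde u_i$ within $\vartheta_0$ forces the scatterer curvatures to shift by at most $\vartheta_0 \le \cK_*/2$, giving $\cK_{\min}(\bar Q) \ge \cK_*/2$ and $\cK_{\max}(\bar Q) \le 2\cK_*^{-1}$. A direct geometric estimate on the free-flight function shows that $\tau(\bar Q)$ differs from $\tau(\bar Q_0)$ pointwise by at most a constant times $\vartheta_0 \le \tau_*/2$, yielding $\tau_{\min}(\bar Q) \ge \tau_*/2$ and $\tau_{\max}(\bar Q) \le 2\tau_*^{-1}$. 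The $\cC^3$-bound $E_{\max}(\bar Q) \le E_*$ is hypothesis. Hence $\bar Q \in \cQ_1(\tau_*/2, \cK_*/2, E_*)$, and Lemma~\ref{thm:F1} supplies (H1)-(H5) with uniform constants depending only on $\tau_*,\cK_*,E_*$.

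\textbf{Distance bound.} Let $\bar F_1, \bar F_2 \in \bar{\mathcal{F}}_A(\bar Q_0, E_*; \vartheta_0)$ come from parametrizations with $\sum_i \| u_i^{(1)} - u_i^{(2)} \|_{\cC^2(J_i)} \le 2\vartheta_0$. I set $\ep := C_0 \vartheta_0^{1/3}$, with $C_0$ to be chosen, and verify (C1)-(C4) for any $x \notin N_\ep(\cS_{-1}^{F_1} \cup \cS_{-1}^{F_2})$; this choice forces $\cos \vf(F_i^{-1}x) \ge c \ep$ uniformly. Condition (C2) is trivial, since both maps preserve $\bar\mu_0$ and hence $J_{\bar\mu_0} F_i \equiv 1$. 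For (C1), tracing the backward trajectory from $x$ until it meets $\partial O_i^{(j)}$ combines the $\cC^0$ closeness $\vartheta_0$ of the parametrized scatterers with the angular lower bound to yield $d(F_1^{-1}x, F_2^{-1}x) \le C \vartheta_0 / \cos\vf(F^{-1}x) \le C \vartheta_0 / \ep$. For (C3) and (C4), I would invoke the standard billiard formulas expressing $J_WF_i$ and $D F_i^{-1}$ on stable vectors as rational functions of $\cos\vf$ at the collision points, the free path $\tau$, and the collision curvatures; $\cC^2$-closeness of the parametrizations and the chain rule then bound the discrepancies by $C \vartheta_0 / \cos^2\vf \le C \vartheta_0 / \ep^2$. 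The choice $\ep = C_0 \vartheta_0^{1/3}$ balances $\vartheta_0/\ep^2$ against $\ep$ in (C3) and against $\sqrt\ep$ in (C4), while (C1) is satisfied with margin since $\vartheta_0/\ep \le \vartheta_0^{2/3}\ll\ep$. Taking $C_0$ large enough to absorb the constants then gives $d_{\bar{\mathcal{F}}}(\bar F_1, \bar F_2) \le C \vartheta_0^{1/3}$.

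\textbf{Main obstacle.} The delicate step is the perturbative billiard calculation underlying (C3)-(C4): one must track exactly which negative powers of $\cos\vf$ arise from differentiating the collision map in the modified configuration, and confirm that $\vartheta_0/\ep^2 \le \ep$ is the binding balance (rather than, say, $\vartheta_0/\ep^3 \le \sqrt\ep$, which would force a worse exponent). The cubic-root scaling $\vartheta_0^{1/3}$ then records the precise competition between the linear shift of scatterers and the quadratic blow-up of stable-direction derivatives near tangential collisions.
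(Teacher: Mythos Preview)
The paper does not prove this lemma at all; it is quoted as \cite[Theorem~2.8]{MarkHongKun2013} and the proof resides entirely in that reference. There is therefore no ``paper's proof'' to compare your sketch against beyond the original source.

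That said, your outline is a faithful sketch of the argument actually carried out in \cite{MarkHongKun2013}. The inclusion step is routine, exactly as you describe. For the distance bound, your identification of the mechanism is correct: one excises an $\ep$-neighbourhood of the singularity sets so that $\cos\vf(F_i^{-1}x)\gtrsim \ep$, the discrepancies in the various derivative quantities pick up inverse powers of $\cos\vf$ from the standard billiard coordinate formulas, and the cubic-root exponent records the balance $\vartheta_0/\ep^2 \le \ep$ coming from (C3). Your remark that (C2) is vacuous under {\bf (H5)} is also right.

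The portion you flag as the ``main obstacle'' is exactly where the real content lies, and your sketch is explicitly incomplete there: verifying that the worst blow-up in (C1)--(C4) is $\vartheta_0/\ep^2$ rather than $\vartheta_0/\ep^3$ requires the explicit expressions for $DF^{-1}$ and $J_WF$ in terms of $\tau$, $\cK$, and $\cos\vf$ at both endpoints, together with careful bookkeeping of how a $\cC^2$-perturbation of the boundary propagates through those expressions. That computation occupies several pages in \cite{MarkHongKun2013} and cannot be reconstructed from the heuristic ``rational functions of $\cos\vf$'' you invoke; without it the exponent $1/3$ is a guess rather than a conclusion.
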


The importance of these results is that together, they will imply that the transfer operators associated to maps in the
neighborhood $\bar {\mathcal{F}}_{\vartheta_0}(\bar T_0)$ have a uniform spectral gap if the transfer operator associated with
$\bar T_0$ has a spectral gap.  Moreover, small changes in the configuration of scatterers are seen to generate small
differences in the distance $d_{\bar {\mathcal{F}}}( \cdot , \cdot)$.

%
%
%

%
%
%
\section{Main results}\label{mainresults}
In this section, we consider all $\bar T_{\omega}\in \bar{\mathcal{F}}_{\vartheta_0}(\bar T_0)$, for some $\vartheta_0>0$ small enough and a fixed map $\bar T_0 : \bar M_0 \circlearrowleft$.
\subsection{Local Limit Theorem}
Adapting the proof of \cite[Corollary 2.4]{MarkHongKun2013} (with the slight difference that, here, the observable $\Phi(x,\underline{\omega})$ we are interested in depends also on $\underline{\omega}$), we will prove the following central limit theorem.
\begin{theo}[Central Limit Theorem for the cell index]\label{CLT}
With respect to $\bar\mu$, the
covariance matrix of $(S_n/\sqrt{n})_n$ converges to a
non-negative symmetric function
\begin{equation}\label{Sigma2}
\Sigma^2:=\left(\mathbb E_{\bar\mu}\left[\Phi^{(i)}.\Phi^{(j)}\right]+\sum_{k\ge 1}\mathbb E_{\bar\mu}\left[\Phi^{(i)}.\Phi^{(j)}\circ \bar T^k+\Phi^{(j)}.\Phi^{(i)}\circ \bar T^k\right]\right)_{i,j=1,2}\, ,
\end{equation}
where, for every $j=1,2$,  $\Phi^{(j)}$ is the $j$-th coordinate of $\Phi$, and using $.$ to denote multiplication.\\
Moreover $(S_n/\sqrt{n})_n$ converges in distribution to
a centered Gaussian distribution with covariance matrix $\Sigma^2$.
\end{theo}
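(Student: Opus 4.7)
The plan is to apply the Nagaev-Guivarc'h perturbation method to the transfer operator $\widetilde{\cL}$ of the full random system $(\bar M,\bar\mu,\bar T)$ acting on the anisotropic Banach space $\widetilde{\cB}$ constructed in Section~\ref{sec:ops}. One may assume (this being precisely the output of that section) that $\widetilde{\cL}$ has a spectral gap on $\widetilde{\cB}$: $1$ is a simple isolated eigenvalue with eigenprojector $\Pi_0 f = \int f\, d\bar\mu$, and the rest of the spectrum lies in a disk of radius strictly less than $1$. For $t\in\mathbb R^2$ small I introduce the twisted operators
\[
\widetilde{\cL}_t f := \widetilde{\cL}\bigl(e^{i\langle t,\Phi\rangle}\, f\bigr),
\]
and a short induction based on the duality identity $g\cdot\widetilde{\cL}h = \widetilde{\cL}((g\circ\bar T)h)$ gives
\[
\int \widetilde{\cL}_t^n f\, d\bar\mu = \mathbb E_{\bar\mu}\bigl[e^{i\langle t,S_n\rangle} f\bigr], \qquad f\in\widetilde{\cB}.
\]

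The first step is to verify that $t\mapsto \widetilde{\cL}_t$ is continuous (in fact real-analytic) near $0$ as a map $\mathbb R^2\to L(\widetilde{\cB})$. Since $\Phi(x,\uomega)=\Phi_{\omega_0}(x)$ is $\mathbb Z^2$-valued and constant on each component of continuity of $\bar T_{\omega_0}$, the multiplier $e^{i\langle t,\Phi\rangle}$ is piecewise constant with jumps on the singularity curves of $\bar T_{\omega_0}$, which are uniformly transverse to the stable cone by \textbf{(H1)}. Multiplication by such functions is bounded on $\widetilde{\cB}$ by the standard anisotropic-space estimates, and the elementary bound $|e^{i\langle t,\Phi\rangle}-1|\le |t|\,|\Phi|_\infty$ together with H\"older interpolation between $\widetilde{\cB}$ and the weaker space $\widetilde{\cB}_w$ yields the required operator-norm continuity. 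This is the adaptation of \cite[Corollary~2.4]{MarkHongKun2013} to the random setting; the new ingredient is merely that the relevant constants be uniform in $\omega_0\in E$, which holds by construction of $\bar{\cF}_{\vartheta_0}(\bar T_0)$ and Lemma~\ref{lem:sequence}.

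The second step is then classical. Continuity plus the spectral gap, via Kato analytic perturbation theory (or a Keller-Liverani variant in the two-norm setting of $\widetilde{\cB}\hookrightarrow\widetilde{\cB}_w$), produces for $|t|$ small a simple leading eigenvalue $\lambda(t)$ with eigenprojector $\Pi_t$, smooth in $t$, and a decomposition
\[
\widetilde{\cL}_t^n = \lambda(t)^n \Pi_t + R_t^n, \qquad \|R_t^n\|_{\widetilde{\cB}}\le C\kappa^n,
\]
for some $\kappa<1$ uniform in $t$. Using $\int\Phi\,d\bar\mu=0$ (inherited from the time-reversal symmetry of the unperturbed dispersing billiard and preserved throughout $\bar{\cF}_{\vartheta_0}(\bar T_0)$), the standard Nagaev computation gives $\nabla\lambda(0)=0$ and
\[
\lambda(t) = 1 - \tfrac{1}{2}\langle \Sigma^2 t,t\rangle + o(|t|^2),
\]
where identifying the Hessian of $\lambda$ at $0$ via differentiating $\int\widetilde{\cL}_t^n 1\,d\bar\mu$ twice and invoking the exponential decay of correlations (spectral gap of $\widetilde{\cL}$) recovers exactly the Green-Kubo series \eqref{Sigma2}, with absolute summability. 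The same computation also yields $n^{-1}\cov_{\bar\mu}(S_n)\to \Sigma^2$. Plugging $t/\sqrt n$ into the spectral decomposition then gives
\[
\mathbb E_{\bar\mu}\bigl[e^{i\langle t,S_n/\sqrt n\rangle}\bigr] = \lambda(t/\sqrt n)^n \int\Pi_{t/\sqrt n}1\,d\bar\mu + O(\kappa^n) \xrightarrow[n\to\infty]{} e^{-\frac12\langle \Sigma^2 t,t\rangle},
\]
and L\'evy's continuity theorem concludes the CLT.

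The main obstacle is the first step: multiplication by piecewise constant functions is generically unbounded on anisotropic spaces of distributions, so one really needs the geometric fact that the jump set of $e^{i\langle t,\Phi\rangle}$ is uniformly transverse to the stable cone to show that this multiplier acts boundedly on $\widetilde{\cB}$ and depends continuously on $t$. Once this is established, the remainder of the proof is routine Nagaev-Guivarc'h bookkeeping, with the random coordinate $\uomega$ playing a passive parameter role throughout.
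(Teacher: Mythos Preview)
Your overall strategy is exactly the paper's: Nagaev--Guivarc'h applied to the twisted operators $P_u f := P(e^{iu\cdot\Phi}f)$ acting on $\widetilde{\cB}$, with the spectral gap for $P$ coming from Section~\ref{sec:ops} and the CLT following from the expansion of the leading eigenvalue. The paper states this in one line at the start of Section~\ref{proofs}, pointing to Theorem~\ref{prop:perturb} for the spectral input.

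There is, however, a genuine geometric error in your ``main obstacle'' paragraph. You assert that the jump set of $e^{i\langle t,\Phi\rangle}$, namely $\cS_1^{\bar T_{\omega_0}}$, is uniformly transverse to the stable cone. This is backwards: by {\bf (H1)}, $\cS_n$ is transverse to the \emph{unstable} cone, so the singularity curves of $\bar T_{\omega_0}$ lie essentially in the stable direction. A stable curve $W\in\cW^s$ can therefore run nearly tangent to $\cS_1$, and the hypothesis $m_W(N_\ve(\partial Z)\cap W)\le C_0\ve$ of Lemma~\ref{lem:piecewise}(b) fails. Multiplication by $e^{i\langle t,\Phi\rangle}$ is \emph{not} a bounded operator on $\cB$, and your argument for boundedness and continuity of $t\mapsto\widetilde{\cL}_t$ does not go through as written.

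The paper's fix (Lemma~\ref{lem:PuOp}) is to never multiply before applying the transfer operator. One writes
\[
\cL_\omega\bigl(e^{iu\cdot\Phi_\omega}f\bigr)=\bigl(e^{iu\cdot\Phi_\omega}\circ\bar T_\omega^{-1}\bigr)\cdot\cL_\omega f,
\]
and observes that $\Phi_\omega\circ\bar T_\omega^{-1}$ is piecewise constant on $\bar M_0\setminus\cS_{-1}^{\bar T_\omega}$. Now $\cS_{-1}$ \emph{is} transverse to the stable cone by {\bf (H1)}, Lemma~\ref{lem:piecewise}(b) applies, and the composed operator is bounded on $\cB$ with norm independent of $u$. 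Iterating this and differentiating in $u$ gives analyticity (Proposition~\ref{analytic}) rather than mere continuity, which in turn yields the clean expansion $\lambda_u=1-\tfrac12(\Sigma^2u\cdot u)+O(|u|^3)$ in Theorem~\ref{prop:perturb}(d). Once you make this correction your argument lines up with the paper's.
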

The fact that $\Sigma^2$ is positive if $\vartheta_0$ is small enough
will be proved in Lemma \ref{Sigma>0} (using a continuity argument).
In Section~\ref{sec:tilde}, we will define a Banach space $\widetilde \cB$, containing a class
of distributions on $\bar M$, and its dual $\widetilde \cB'$.
For a function $g: \bar M \to \mathbb{R}$, define the functional $H_g$, by
\begin{equation}
\label{eq:H_g}
H_g(\cdot) := \mathbb{E}_{\bar \mu}[g . \, \cdot \, ] \, .
\end{equation}
Remark~\ref{Rke0} and Lemma~\ref{lem:piecewise} will give conditions on $g$ that guarantee
that $H_g \in \widetilde \cB'$.

\begin{theo}[Local limit theorem]\label{theo:LLT}
For every $f,g:\bar M\rightarrow \mathbb R$ such that $H_g\in\widetilde\cB'$ and such that $f\in\widetilde\cB$,
\begin{equation}
\mathbb E_{\bar\mu}\left[f.\mathbf 1_{\{{S}_n=\ell\}}.g\circ \bar T^n\right]=\frac {\exp\left(-\frac{\Sigma^{-2}\ell\cdot \ell}{2n}\right)}{2\pi n\sqrt{\det \Sigma^2}}\mathbb E_{\bar\mu}[f]\mathbb E_{\bar\mu}[g]+ O\left(n^{-\frac 32}\Vert f\Vert_{\widetilde \cB}\, \Vert H_g\Vert_{\widetilde \cB'}\right)\, .
\end{equation}
\end{theo}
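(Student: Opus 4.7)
The plan is to apply the Nagaev--Guivarc'h method to the full random transfer operator $\widetilde{\mathcal L}$ acting on $\widetilde\cB$. For $t\in\mathbb T^2:=[-\pi,\pi]^2$, define the Fourier-perturbed operators
$$\widehat{\mathcal L}_t h \ :=\ \widetilde{\mathcal L}(e^{it\cdot\Phi}h).$$
A short induction using $\widetilde{\mathcal L}(u\cdot v\circ\bar T)=v\cdot\widetilde{\mathcal L}(u)$ gives $\widehat{\mathcal L}_t^n h=\widetilde{\mathcal L}^n(e^{it\cdot S_n}h)$, so by duality between $\widetilde\cB$ and $\widetilde\cB'$,
$$\mathbb E_{\bar\mu}\bigl[f\cdot e^{it\cdot S_n}\cdot g\circ\bar T^n\bigr]\ =\ H_g\bigl(\widehat{\mathcal L}_t^n f\bigr).$$
Since $S_n$ is $\mathbb Z^2$-valued, Fourier inversion then yields
$$\mathbb E_{\bar\mu}\bigl[f\cdot\mathbf 1_{\{S_n=\ell\}}\cdot g\circ\bar T^n\bigr]\ =\ \frac{1}{(2\pi)^2}\int_{\mathbb T^2}e^{-it\cdot\ell}\, H_g\bigl(\widehat{\mathcal L}_t^n f\bigr)\,dt.$$

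\textbf{Spectral input from Section~\ref{sec:ops}.} I would rely on the following properties of the family $\{\widehat{\mathcal L}_t\}$ on $\widetilde\cB$: (i) $\widetilde{\mathcal L}=\widehat{\mathcal L}_0$ has a simple leading eigenvalue $1$ with the rest of the spectrum of radius $\le\kappa_0<1$; (ii) $t\mapsto\widehat{\mathcal L}_t$ is continuous in an operator topology compatible with analytic perturbation theory, so that on some ball $|t|\le\delta$ one has a decomposition
$$\widehat{\mathcal L}_t^n\ =\ \lambda_t^n\Pi_t+N_t^n,\qquad \|N_t^n\|_{\widetilde\cB}\le C\kappa_0^n,$$
with $\lambda_t,\Pi_t$ smooth in $t$, $\lambda_0=1$, $\Pi_0 f=\mathbb E_{\bar\mu}[f]\cdot\mathbf 1$; (iii) \emph{aperiodicity}: for every $t\in\mathbb T^2\setminus\{0\}$, the spectral radius of $\widehat{\mathcal L}_t$ on $\widetilde\cB$ is strictly less than $1$, so by compactness there exists $\kappa_1<1$ with $\|\widehat{\mathcal L}_t^n\|_{\widetilde\cB}\le C\kappa_1^n$ uniformly for $|t|>\delta$.

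\textbf{Extracting the asymptotic.} I would split the integral according to $|t|\le\delta$ and $|t|>\delta$. The second piece is bounded by $O(\kappa_1^n\|f\|_{\widetilde\cB}\|H_g\|_{\widetilde\cB'})$, which is absorbed in the error. In the first piece, the $N_t^n$ contribution is $O(\kappa_0^n)$. For the main piece, Theorem~\ref{CLT} together with the smoothness of $t\mapsto\lambda_t$ gives
$$\lambda_t=\exp\bigl(-\tfrac12 \Sigma^2 t\cdot t+O(|t|^3)\bigr),\qquad H_g(\Pi_t f)=\mathbb E_{\bar\mu}[f]\mathbb E_{\bar\mu}[g]+O(|t|)\,\|f\|_{\widetilde\cB}\|H_g\|_{\widetilde\cB'}.$$
After the change of variables $t=s/\sqrt n$, the leading Gaussian integral reproduces the announced main term (using $\Sigma^2$ positive definite by Lemma~\ref{Sigma>0}). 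The $O(|t|^3)$ correction in $\lambda_t^n$ produces a factor $O(n|t|^3)=O(|s|^3/\sqrt n)$ inside a Gaussian integral; combined with the $1/n$ Jacobian, this contributes $O(n^{-3/2})$. The $O(|t|)$ correction from $\Pi_t-\Pi_0$ contributes similarly, giving the final $O(n^{-3/2}\|f\|_{\widetilde\cB}\|H_g\|_{\widetilde\cB'})$ remainder.

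\textbf{Main obstacle.} The hardest ingredient is item (iii), \emph{aperiodicity of the random cell-change $\Phi$ on the distributional space $\widetilde\cB$}: one must rule out that $\alpha\cdot e^{it\cdot\Phi}$ is a multiplicative coboundary for the full random skew product for any $t\in\mathbb T^2\setminus\{0\}$ and $|\alpha|=1$, and then convert this into a quasi-compactness and spectral-radius bound on $\widetilde\cB$ rather than on an $L^p$-space. The randomness of $(\omega_k)$ is expected to make this strictly easier than in the deterministic Sinai billiard setting of \cite{SV}, but still requires a careful argument. A secondary difficulty is verifying (ii): the multiplier $e^{it\cdot\Phi}$ is only piecewise H\"older (with discontinuities along $\cS_0^{\bar T_\omega}$), so the boundedness and continuity of $t\mapsto\widehat{\mathcal L}_t$ on the anisotropic space $\widetilde\cB$ is exactly what dictates the piecewise H\"older structure built into $\widetilde\cB$ and into the hypotheses on $g$ via $H_g\in\widetilde\cB'$.
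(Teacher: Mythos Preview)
Your proposal is correct and takes essentially the same approach as the paper: the spectral inputs (i)--(iii) you list are exactly the content of Theorem~\ref{prop:perturb}, and the Fourier-inversion/Gaussian-integral computation you sketch is carried out verbatim in Theorem~\ref{LLT0} and Corollary~\ref{TLL00}. Your identification of aperiodicity as the main obstacle is also accurate; the paper dispatches it by first proving it for the unperturbed operator $\mathcal L_{u,0}$ (Lemma~\ref{lem:nonarithm}, via ergodicity of the infinite-measure planar billiard) and then transferring to $P_u$ through the closeness estimate of Proposition~\ref{lem:close2} and the Keller--Liverani perturbation theorem.
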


\begin{rem}
Due to Lemma~\ref{lem:piecewise} and Remark~\ref{rem:dual}, it suffices for the conclusion of Theorem~\ref{theo:LLT} that $f( \cdot, \underline\omega)$ and $g(\cdot, \underline\omega)$
be piecewise H\"older continuous on
$\bar M_0$ (with H\"older bounds that are uniform in $\underline\omega$).  For instance, the coordinates $\Phi^{(i)}$ of the
displacement function $\Phi$ satisfy these conditions, as well as the free flight function for the billiard map $\bar T_\omega$, 
$\tau(\cdot, \omega)$.
\end{rem}


\subsection{Return time, visit to new obstacles and self intersections}
\label{self}
We define ${\mathcal I}_0(x,\underline{\omega}):=i$ if $x\in \bigcup_{\ell\in\mathbb Z^2}M_{i,\ell}$ as the index in $\{1,...,I\}$ of the obstacle on which
the particle is at time $0$ and ${\mathcal I}_k:={\mathcal I}_0\circ  T^k$.
Since the quantity ${\mathcal I}_0(x,\underline{\omega})$ does not depend on $\underline\omega$, we will also write $\mathcal I_0(x)$ for this quantity.
Note that $\mathcal I_k(x,\underline{\omega})$ does not depend on the index $\ell$ of the cell containing $x$, this allows us to define also $\mathcal I_k$ on $\bar M$ (by projection).

Observe that the fact that the point particle is on the obstacle $(i,\ell)$ at the $k$-th reflection time (i.e.
 $T^k(x,\underline{\omega})\in M_{i,\ell}$) can be rewritten:
$$(\ell_0+S_k(\bar x,\underline{\omega}),\mathcal I_k(\bar x,\underline{\omega}))=(\ell,i),$$
if $x=(\bar x,\ell_0)\in\bar M_0\times\mathbb Z^2$.
We are interested here in the study of the probability that a point particle starting\footnote{Throughout the
paper, we shall use the notation $\bO = (0,0)$ as an element of $\mathbb{Z}^2$.} 
from $\bar M\times\{ \bO \}$ does not come back to
its original obstacle until time $n$, that is in 
$\bar\mu(B_n)$
with
$$B_n:=\{\forall k=1,...,n\, :\, 
       ({\mathcal I}_k,S_k)\ne (\mathcal I_0,(0,0))\} \subset \bar M \,  .$$
We also study the probability that the obstacle visited at time $n$ has not been visited before, that is
$\bar\mu(B'_n)$
with
$$B'_n:=\{\forall k=0,...,n-1\, :\, ({\mathcal I}_k,S_k)\ne (\mathcal I_{n},S_n)\} \subset \bar M \, .$$
Observe that, because of the reversibility of our model, $\mu(B_n)=\mu(B'_n)$.
\begin{theo}\label{theo:range}
We have the following asymptotics
$$
\bar\mu(B_n)= \bar\mu(B'_n) 
= \frac{2I\pi\sqrt{\det\Sigma^2}}{\log n}+O\left((\log n)^{-\frac 43}\right)\, ,\quad\mbox{as}\quad
n\rightarrow + \infty\, .
$$
\end{theo}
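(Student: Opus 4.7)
The plan is to reduce $\bar\mu(B_n)$ to the tail of a first return time on the infinite-measure extension $(M,\mu,T)$, and then to invert Theorem~\ref{theo:LLT} via a Tauberian/renewal-type argument.

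First, the statement itself records that reversibility gives $\bar\mu(B_n)=\bar\mu(B'_n)$, so it suffices to handle $B_n$. I would decompose by initial obstacle: for each $i\in\{1,\dots,I\}$, set
\begin{equation*}
A_i:=\bigl\{(x,\ell,\underline\omega)\in M : {\mathcal I}_0(x)=i,\ \ell=\bO\bigr\},\qquad \pi_i:=\mu(A_i)=\bar\mu({\mathcal I}_0=i),
\end{equation*}
and note that for $y\in A_i$ one has $T^k y\in A_i$ exactly when ${\mathcal I}_k=i$ and $S_k=\bO$. Writing $\varphi_{A_i}(y):=\inf\{k\ge 1 : T^k y\in A_i\}$ for the first return of $T$ to $A_i$, a direct unfolding of the definitions gives $\bar\mu(B_n\cap\{{\mathcal I}_0=i\})=\mu(A_i\cap\{\varphi_{A_i}>n\})$, hence
\begin{equation*}
\bar\mu(B_n)=\sum_{i=1}^I \pi_i\,\mu_{A_i}(\varphi_{A_i}>n),
\end{equation*}
where $\mu_{A_i}:=\mu(\cdot\cap A_i)/\pi_i$ is the induced probability on $A_i$.

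Next, I apply Theorem~\ref{theo:LLT} with $f=g=\mathbf 1_{\{{\mathcal I}_0=i\}}$ and $\ell=\bO$: these observables are constant on each cylinder $\mathbb S_j\times[-\pi/2,\pi/2]$, hence trivially piecewise H\"older on $\bar M_0$ uniformly in $\underline\omega$, so $f\in\widetilde\cB$ and $H_g\in\widetilde\cB'$ by Lemma~\ref{lem:piecewise} and Remark~\ref{rem:dual}. This yields
\begin{equation*}
u^{(i)}_n:=\mu_{A_i}(T^{-n}A_i)=\frac{\pi_i}{2\pi n\sqrt{\det\Sigma^2}}+O(n^{-3/2}),
\end{equation*}
and consequently $G^{(i)}_n:=\sum_{k=1}^n u^{(i)}_k=\dfrac{\pi_i}{2\pi\sqrt{\det\Sigma^2}}\log n+O(1)$.

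The last step, and the one I expect to be the main obstacle, is the inversion of $G^{(i)}_n$ into a tail estimate for $\varphi_{A_i}$. Formally one expects $\mu_{A_i}(\varphi_{A_i}>n)\sim 1/G_n^{(i)}$, in the spirit of the classical Dvoretzky--Erd\H{o}s estimate for the planar random walk, but since successive returns to $A_i$ are not i.i.d.\ under the induced map $T_{A_i}$ the classical renewal identity $U(s)=1/(1-F(s))$ is not literally available. Instead I would follow the generating-function strategy of P\`ene~\cite{soazRange}: the perturbed transfer-operator machinery of Section~\ref{sec:ops} that drives Theorem~\ref{theo:LLT} also yields a spectral expansion of $\sum_n u^{(i)}_n s^n$ near $s=1$, which can be related to $\sum_n \mu_{A_i}(\varphi_{A_i}>n)\,s^n$ via a first-return identity on the induced map, after which a quantitative Karamata-type Tauberian inversion converts the LLT remainder $O(n^{-3/2})$ into
\begin{equation*}
\mu_{A_i}(\varphi_{A_i}>n)=\frac{2\pi\sqrt{\det\Sigma^2}}{\pi_i\,\log n}+O\bigl((\log n)^{-4/3}\bigr),
\end{equation*}
the exponent $4/3$ arising from the optimal balancing of the $O(n^{-1/2})$ cumulated tail of the LLT remainder against the $\log n$ divergence of $G^{(i)}_n$, exactly as in \cite{soazRange}. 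Multiplying by $\pi_i$ and summing over $i=1,\dots,I$ then gives
\begin{equation*}
\bar\mu(B_n)=\sum_{i=1}^I \pi_i\cdot\frac{2\pi\sqrt{\det\Sigma^2}}{\pi_i\,\log n}+O\bigl((\log n)^{-4/3}\bigr)=\frac{2I\pi\sqrt{\det\Sigma^2}}{\log n}+O\bigl((\log n)^{-4/3}\bigr),
\end{equation*}
and the identical formula for $\bar\mu(B'_n)$ follows from reversibility. The LLT input is already in hand, so all the difficulty is concentrated in the quantitative Tauberian step of the third paragraph.
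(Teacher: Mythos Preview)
Your reduction by obstacle and the application of Theorem~\ref{theo:LLT} to obtain $u_n^{(i)}=\dfrac{\pi_i}{2\pi n\sqrt{\det\Sigma^2}}+O(n^{-3/2})$ are correct and match the paper exactly. The gap is in the inversion step.

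You propose a generating-function/Tauberian argument and attribute it to \cite{soazRange}, but that is not what \cite{soazRange} does, nor what the present paper does. In a genuinely non-Markovian setting there is no identity $U(s)=1/(1-F(s))$, and you do not explain what replaces it; ``a first-return identity on the induced map'' is not enough, because after the first return the law of subsequent returns is not the same measure you started with, so the convolution structure that Tauberian theorems exploit is absent. As written, step~3 is a hope rather than an argument.

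The paper's route is the direct Dvoretzky--Erd\H{o}s last-visit decomposition: for each $a$,
\[
\bar\mu(\bar O_a)=\sum_{k=0}^n \bar\mu\bigl(\bar O_a\cap\{S_k=\bO\}\cap \bar T^{-k}B_{n-k}(a)\bigr),
\]
and then Theorem~\ref{theo:LLT} (in the form of Corollary~\ref{TLL00}) is applied \emph{with $g=\mathbf 1_{B_{n-k}(a)}$} to replace each summand by $\dfrac{\bar\mu(\bar O_a)\,\bar\mu(B_{n-k}(a))}{2\pi k\sqrt{\det\Sigma^2}}+O(k^{-3/2})$. One then uses monotonicity of $k\mapsto\bar\mu(B_k(a))$ to squeeze $\bar\mu(B_n(a))$ between upper and lower bounds, optimising the cut-offs to get the $O((\log n)^{-4/3})$ error; summing over $a$ yields the factor $I$. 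No generating functions, no Tauberian theorem.

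The technical point you are missing --- and which your Tauberian detour would not bypass --- is that applying the LLT with $g=\mathbf 1_{B_{m}(a)}$ requires a bound on $\|H_{\mathbf 1_{B_m(a)}}\|_{\widetilde\cB'}$ that is \emph{uniform in $m$}. This is not obvious: the set $B_m(a)$ is cut out by the singularity curves of $\bar T^1,\dots,\bar T^m$, whose number grows with $m$. The paper handles this via Lemma~\ref{lem:bounded}, which shows that for functions that are piecewise H\"older on the partition induced by $\cup_{k\le m}(\bar T_{\underline\omega}^k)^{-1}\cS_{0,H}$, the dual norm is bounded independently of $m$. That lemma is the real workhorse; once you have it, the Dvoretzky--Erd\H{o}s argument is elementary.
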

In Section \ref{proofreturn}, we give a proof of the above asymptotic estimates of $\mu(B_n)$ and $\mu(B'_n)$ for in a more general context. This result will appear as an easy and direct consequence
of the local limit theorem, Theorem~\ref{theo:LLT}.
We now consider the number of couples of times
at which the point particle hits the same obstacle:
\[
\mathcal V_n:=\sum_{i,j=1}^{n}\mathbf 1_{\{S_j=S_i,\ \mathcal I_j=\mathcal I_i\}}.
\]
\begin{theo}\label{theo:selfintersection} $\bar\mu$-almost surely, we have:
$$\lim_{n\to\infty} \frac{\mathcal V_n}{n\log n}= \frac 1{\pi\sqrt{\det \Sigma^2}} \frac{\sum_{a=1}^I |\partial O_a|^2}{\left(\sum_{b=1}^I|\partial O_a|\right)^2}.$$ 
\end{theo}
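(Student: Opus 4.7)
The plan is to reduce $\cV_n$ to a sum of indicators, compute its expectation via the Local Limit Theorem (Theorem~\ref{theo:LLT}), estimate its variance by applying the LLT to four-point correlations, and then upgrade $L^2$ convergence to almost sure convergence by a subsequence/monotonicity argument. First I would write
\begin{equation*}
\cV_n \;=\; n \;+\; 2\!\!\sum_{1\le i<j\le n}\!\! \mathbf 1_{\{S_j=S_i,\ \cI_j=\cI_i\}}
\;=\; n \;+\; 2\sum_{a=1}^I\sum_{k=1}^{n-1}(n-k)\;\mathbf 1_{\{\cI_0=a\}}\cdot\bigl(\mathbf 1_{\{\cI_0=a\}}\circ \bar T^k\bigr)\cdot\mathbf 1_{\{S_k\circ \bar T^i=0\}}
\end{equation*}
after invoking the cocycle identity $S_j-S_i=S_{j-i}\circ\bar T^i$, and using the $\bar T$-invariance of $\bar \mu$ to bring the expectation into the form $\bar\mu(\mathbf 1_{\{\cI_0=a\}}\cdot\mathbf 1_{\{S_k=0\}}\cdot\mathbf 1_{\{\cI_0=a\}}\circ\bar T^k)$.

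Then, applying Theorem~\ref{theo:LLT} with $f=g=\mathbf 1_{\{\cI_0=a\}}$ (which is piecewise H\"older on $\bar M_0$, independent of $\underline\omega$, and hence admissible by the remark following Theorem~\ref{theo:LLT}), I would obtain, uniformly in $k$,
\[
\bar\mu\bigl(\cI_0=a,\ S_k=0,\ \cI_k=a\bigr) \;=\; \frac{\bar\mu(\cI_0=a)^2}{2\pi k\sqrt{\det\Sigma^2}}\;+\;O(k^{-3/2}).
\]
A direct computation using $d\bar\mu_0=\tilde c\cos\vf\, dr\, d\vf$ gives $\bar\mu(\cI_0=a)=|\partial O_a|/\sum_b|\partial O_b|$, and summing $\sum_{k=1}^n (n-k)/k \sim n\log n$ together with summation over $a$ yields
\[
\mathbb E_{\bar\mu}[\cV_n] \;=\; \frac{n\log n}{\pi\sqrt{\det\Sigma^2}}\cdot\frac{\sum_a|\partial O_a|^2}{\bigl(\sum_b|\partial O_b|\bigr)^2}\;+\;O(n),
\]
which is the target constant.

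For the almost sure statement, I would estimate the variance of $\cV_n$. Expanding $\cV_n^2$ produces a quadruple sum indexed by times $i_1<i_2,\ i_3<i_4$ with constraints $\cI_{i_1}=\cI_{i_2}$, $\cI_{i_3}=\cI_{i_4}$, $S_{i_2}-S_{i_1}=0$, $S_{i_4}-S_{i_3}=0$. Grouping according to the combinatorics of $\{i_1,\ldots,i_4\}$, each contribution is controlled by iterating Theorem~\ref{theo:LLT} on a product of at most four indicators $\mathbf 1_{\{\cI_0=a\}}$. The main term cancels against $(\mathbb E\cV_n)^2$ and the remainder is $O(n^2(\log n)^{2-\delta})$ for some $\delta>0$, so $\var(\cV_n)=o((n\log n)^2)$. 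Chebyshev then yields $\cV_n/(n\log n)\to L$ in probability, where $L$ is the announced constant. To pass to $\bar\mu$-a.s.\ convergence, I would choose a subsequence $n_p$ with $n_{p+1}/n_p\to 1$ slowly enough that $\sum_p \var(\cV_{n_p})/(n_p\log n_p)^2<\infty$, apply Borel--Cantelli along $n_p$, and interpolate using the monotonicity of $p\mapsto\cV_p$ in $p$ together with the regular variation of $n\mapsto n\log n$.

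The principal obstacle will be the variance estimate: the four-point correlation requires applying the LLT not just once but in a chained fashion (on intervals $[i_1,i_2]$, $[i_2,i_3]$, $[i_3,i_4]$), and the error term $O(n^{-3/2})$ in Theorem~\ref{theo:LLT} needs to be propagated through the triple sum without destroying the required $o((n\log n)^2)$ bound. This is where care is needed, particularly for the near-diagonal regimes where two of the times are close and the LLT asymptotics must be replaced by trivial probability bounds. The rest of the argument is a bookkeeping exercise following closely the scheme of \cite{soazSelfInter}.
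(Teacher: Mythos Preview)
Your overall architecture matches the paper exactly: compute $\mathbb E_{\bar\mu}[\cV_n]$ via the LLT, bound $\var_{\bar\mu}(\cV_n)$, then use Borel--Cantelli along a subsequence and interpolate by monotonicity of $\cV_n$. The expectation step is fine and coincides with Lemma~\ref{expVn}.

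The gap is in the variance step. First, the bound you announce, $\var(\cV_n)=O(n^2(\log n)^{2-\delta})$ ``for some $\delta>0$'', is not strong enough for your concluding argument. The interpolation requires $n_{p+1}/n_p\to 1$, which forces $\log n_p$ to grow at most polynomially in $p$; then $\sum_p(\log n_p)^{-\delta}<\infty$ only if $\delta>1$. The paper in fact proves the sharp asymptotic $\var(\cV_n)\sim c\,n^2$ (Lemma~\ref{lem:varVn}), i.e.\ $\delta=2$, and uses the subsequence $n_k=\exp(\sqrt{k}\log k)$.

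Second, and more seriously, the claim that ``the main term cancels and the remainder is $O(n^2(\log n)^{2-\delta})$'' hides the genuine difficulty. Decompose the covariance sum according to the order type of $(k_1,\ell_1,k_2,\ell_2)$. The separated case $k_1<\ell_1\le k_2<\ell_2$ is handled by exponential decorrelation (Lemma~\ref{PRO1}). The interleaved case $k_1<k_2<\ell_1<\ell_2$ and, above all, the nested case $k_1<k_2<\ell_2<\ell_1$ are where the work lies: in the nested case both $\sum\bar\mu(E_{k_1,\ell_1}\cap E_{k_2,\ell_2})$ and $\sum\bar\mu(E_{k_1,\ell_1})\bar\mu(E_{k_2,\ell_2})$ are individually of order $n^2\log n$, and one must exhibit a cancellation bringing their difference down to $O(n^2)$. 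Merely ``chaining'' Theorem~\ref{theo:LLT} three times yields the $n^2\log n$ bound on each piece but does not produce the cancellation. The paper carries this out (Appendix~B) not by iterating the LLT statement but by working directly with the Fourier representation $\mathcal H_{\ell,n}=(2\pi)^{-2}\int e^{-i\ell\cdot u}P_u^n\,du$ and the expansion $P_u^n=e^{-\frac n2\Sigma^2u\cdot u}\Pi_0+O(e^{-na|u|^2}|u|)$, which allows one to compare the joint and product terms inside a single $du$-integral and see the cancellation explicitly.

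Finally, your remark that ``the rest follows closely \cite{soazSelfInter}'' is optimistic in this setting: the argument in \cite{soazSelfInter} used that the Banach space embeds in some $L^p$ and that $\Phi$ is bounded, neither of which holds for the distributional spaces $\widetilde\cB,\widetilde\cB_w$ here. The paper specifically replaces those estimates by the operator-level computation just described.
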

The proof of the previous result is delicate as it uses a precise estimate of the variance of $\mathcal V_n$. 
As can be seen from the works by Bolthausen \cite{Bolthausen} and by Deligiannidis and Utev \cite{DU}, going from
a rough to a precise estimate of the variance of the number of self intersections requires important additional work.
In section \ref{proofselfinter}, we give a proof of this result
under general spectral assumptions. Our argument provides, in the case of random walks, an alternative argument to the one given by Deligiannidis and Utev in \cite{DU}. Let us indicate that even if we use 
the general sheme of the previous unpublished paper \cite{soazSelfInter} (in which an analogous result is proved for a single billiard map), this general scheme being just the natural decomposition already used by Bolthausen in \cite{Bolthausen} to get a non-optimal estimate of the variance, the method we use in the present paper to establish our crucial estimates
is different from \cite{soazSelfInter}. In particular our method enables us to get rid of some
assumptions (bounded cell change function, Banach spaces continuously injected in some $L^p$) that were satisfied and used in \cite{soazSelfInter}.

The two previous results (probability to visit a new site, precise asymptotics for the number self-intersections), in addition to being
 interesting in their own right, will greatly help us to prove the result of the next section.

\subsection{Billiard in random scenery}
To each obstacle $(i,\ell)$, we associate a random variable
$\xi_{(i,\ell)}$. We assume that these random variables
are i.i.d., centered and square integrable and independent of
the dynamic of the billiard.
We assume that, each time the point particle hits the obstacle $(i,\ell)$, it wins the value $\xi_{(i,\ell)}$. Let $\mathcal Z_n$
be the total amount won by the particle up to the $n$-th reflection.
For every $n$, we consider the linearized process $(\widetilde {\mathcal Z}_n(t))_{t\ge 0}$
defined by
$$\widetilde {\mathcal Z}_n(t)=\mathcal Z_{\lfloor nt\rfloor}+(nt-\lfloor nt\rfloor)(\mathcal Z_{\lfloor nt\rfloor+1}-\mathcal Z_{\lfloor nt\rfloor})\,  .$$
\begin{theo}\label{theo:randomscenery}
The sequence of processes $((\widetilde{\mathcal Z}_n(t)/\sqrt{n\log n})_{t\ge 0})_n$
converges in distribution,
with respect to the uniform norm on $[0,T]$ for every $T>0$, to a Brownian motion $B = (B_t)_{t \ge 0}$ such that 
$$\mathbb E[B_1^2]=\frac {\sigma^2_\xi}{\pi\sqrt{\det \Sigma^2}} \frac{\sum_{a=1}^I |\partial O_a|^2}{\left(\sum_{b=1}^I|\partial O_b|\right)^2}.$$

If, moreover, there exists $\chi>0$ such that $\mathbb E[|\xi_{(1,0)}|^2(\log^+|\xi_{(1,0)}|)^\chi|)]<\infty$, then, for 
almost every
realization of $(\xi_{i,\ell})_{i,\ell}$, 
$(\widetilde {\mathcal Z}_n)_n$ converges in distribution to the same Brownian motion $B$.
\end{theo}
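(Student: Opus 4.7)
The plan is to exploit the identification $\mathcal Z_n = \sum_{(i,\ell)\in\{1,\ldots,I\}\times\mathbb Z^2} \xi_{(i,\ell)} N_n(i,\ell)$, where $N_n(i,\ell) := \#\{0\le k<n : (\mathcal I_k,S_k)=(i,\ell)\}$ is the number of visits to obstacle $(i,\ell)$. Two facts tie this to the preceding theorems: first, $\sum_{(i,\ell)} N_n(i,\ell)^2 = \mathcal V_n$, the self-intersection count of Theorem~\ref{theo:selfintersection}; second, an analogous expression holds for any increment $\mathcal Z_n - \mathcal Z_m$. By independence of the scenery from the billiard and random environment, the conditional characteristic function, given the entire billiard trajectory (including $\underline\omega$), factorizes over obstacles:
\[
\mathbb E\!\left[e^{it\mathcal Z_n/\sqrt{n\log n}}\,\bigm|\,\bar x,\underline\omega\right] = \prod_{(i,\ell)} \varphi_\xi\!\left(\tfrac{t\,N_n(i,\ell)}{\sqrt{n\log n}}\right),
\]
with $\varphi_\xi$ the common characteristic function of the $\xi_{(i,\ell)}$.

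For the annealed finite-dimensional convergence, I would expand $\log\varphi_\xi(u) = -\tfrac{\sigma_\xi^2}{2}u^2 + o(u^2)$ as $u\to 0$ and combine it with the a priori bound $\max_{(i,\ell)} N_n(i,\ell) = o(\sqrt{n\log n})$, which follows from the LLT in Theorem~\ref{theo:LLT} applied to individual cell occupation times. This yields
\[
\log \mathbb E\!\left[e^{it\mathcal Z_n/\sqrt{n\log n}}\,\bigm|\,\bar x,\underline\omega\right] = -\tfrac{\sigma_\xi^2 t^2}{2}\cdot\tfrac{\mathcal V_n}{n\log n} + o(1),\qquad\bar\mu\text{-a.s.}
\]
Theorem~\ref{theo:selfintersection} drives the right-hand side to the claimed Gaussian exponent; dominated convergence then gives the one-dimensional annealed CLT. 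The same argument applied to the increments $\mathcal Z_{\lfloor nt\rfloor}-\mathcal Z_{\lfloor ns\rfloor}$ delivers joint convergence at multiple times, once one checks that the self-intersection count restricted to the window $[\lfloor ns\rfloor,\lfloor nt\rfloor]$ still satisfies the asymptotic of Theorem~\ref{theo:selfintersection} with the constant scaled by $(t-s)$, which follows by shift-invariance and subtraction. Tightness of the piecewise-linear interpolation $\widetilde{\mathcal Z}_n(\cdot)/\sqrt{n\log n}$ in $C([0,T])$ is then obtained via a fourth-moment bound on increments of the form $\mathbb E[(\mathcal Z_{n+m}-\mathcal Z_n)^4] \le C(m\log m)^2$, which reduces to a fourfold coincidence count again controlled by Theorem~\ref{theo:LLT}.

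For the quenched statement, I fix a.e.\ scenery $(\xi_{(i,\ell)})$ and view $\mathcal Z_n/\sqrt{n\log n}$ as a random variable in the billiard alone, i.e.\ under $\bar\mu$. I split $\xi_{(i,\ell)}= \xi^{(1)}_{(i,\ell)}+\xi^{(2)}_{(i,\ell)}$ with $\xi^{(1)}$ a truncation at level $\sqrt{n}/(\log n)^{\chi}$. The hypothesis $\mathbb E[|\xi|^2(\log^+|\xi|)^\chi]<\infty$, combined with a Borel-Cantelli argument applied to the scenery on boxes $\{|\ell|\le n\}$, ensures that almost surely the untruncated piece $\mathcal Z^{(2)}_n/\sqrt{n\log n}$ is negligible in $L^2(\bar\mu)$. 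For the truncated piece, the conditional Lindeberg condition given the billiard is verified $\bar\mu$-a.s.\ by combining the bound on $\max N_n$ with the truncation; the a.s.\ convergence of the conditional variance to the correct limit uses Theorem~\ref{theo:selfintersection} together with a Chebyshev bound on the deviation of $\sum_{(i,\ell)}(\xi_{(i,\ell)}^2 - \sigma_\xi^2) N_n(i,\ell)^2$ from zero, relying on truncated fourth moments of $\xi$.

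The main obstacle is the quenched functional statement. The annealed result reduces smoothly to Theorem~\ref{theo:selfintersection} via the factorization above, but the quenched result requires exchanging the roles of the two sources of randomness: for almost every scenery, the billiard-driven sum must obey the CLT. The delicate point is to control the effect of atypically large $\xi_{(i,\ell)}$ on a typical trajectory using only a logarithmic moment; this is precisely where $\mathbb E[|\xi|^2(\log^+|\xi|)^\chi]<\infty$ is consumed, and it demands a quantitative coupling between the in-probability self-intersection asymptotics of Theorem~\ref{theo:selfintersection} and almost-sure tail control on the scenery via Borel-Cantelli at the right scale.
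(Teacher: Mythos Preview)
Your annealed strategy is close to the paper's: both condition on the billiard trajectory, use the Lindeberg/characteristic-function argument together with the almost-sure self-intersection asymptotics of Theorem~\ref{theo:selfintersection}, and then establish tightness separately. However, your tightness argument has a genuine gap. You propose to use a fourth-moment bound $\mathbb E[(\mathcal Z_{n+m}-\mathcal Z_n)^4]\le C(m\log m)^2$, but expanding $\mathbb E_\xi\bigl[(\sum\xi_{(i,\ell)}N_m(i,\ell))^4\bigr]$ produces a term $\mathbb E[\xi^4]\sum N_m(i,\ell)^4$, and the theorem only assumes the $\xi$'s are square integrable. So your bound is not available. The paper avoids this by using Bolthausen's maximal inequality: conditional on the walk, the increments $(\mathcal Z_{j+1}-\mathcal Z_j)_j$ are positively associated (they are increasing functions of the i.i.d.\ scenery), which yields
\[
(\bar\mu\otimes\mathbb P)\Bigl(\max_{j\le n}|\mathcal Z_j|\ge\lambda\sigma_\xi\sqrt{\mathcal V_n}\Bigr)\le 2\,(\bar\mu\otimes\mathbb P)\Bigl(|\mathcal Z_n|\ge(\lambda-\sqrt 2)\sigma_\xi\sqrt{\mathcal V_n}\Bigr),
\]
and then the already-proved one-dimensional convergence of $\mathcal Z_n/\sqrt{\sigma_\xi^2\mathcal V_n}$ to a Gaussian gives the required decay. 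This needs only second moments.

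A secondary point: your treatment of the multi-time increments is too brief. The increments $\mathcal Z_{\lfloor nt_j\rfloor}-\mathcal Z_{\lfloor nt_{j-1}\rfloor}$ are not conditionally independent (they share scenery on obstacles visited in several windows), so ``shift-invariance and subtraction'' does not suffice; one must show the cross-window self-intersection counts are $o(n\log n)$, which the paper does explicitly by writing the full conditional variance of $\sum_j a_j(\mathcal Z_{\lfloor nt_j\rfloor}-\mathcal Z_{\lfloor nt_{j-1}\rfloor})$ in terms of $\mathcal V$-type quantities and invoking Theorem~\ref{theo:selfintersection} for each.

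For the quenched statement, your truncation-plus-Lindeberg sketch is a different route from the paper's, which instead verifies the general hypotheses of Guillotin-Plantard, Dos Santos and Poisat \cite{NJR}: bounds on $\sup_y\mathbb E[\widetilde N_n(y)]$, on $\sum_y(\mathbb E[\widetilde N_n(y)])^2$, and the estimate $\bar\mu(B'_n)=O((\log n)^{-1})$ from Proposition~\ref{prop:muB}. Your approach could in principle be made to work, but as written it addresses only finite-dimensional convergence and does not explain how tightness survives after fixing the scenery, nor how the logarithmic-moment condition controls the interaction between atypical $\xi$-values and the occupation-time profile; the \cite{NJR} machinery handles both issues cleanly.
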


Let us say a few words about the historical background of this result.
Limit distributional theorems of analogous processes when $S_n$ 
is replaced by a random  walk on $\mathbb Z^d$ were first established at the end of the 70's by Borodin in \cite{Bor1,Bor2} and by Kesten and Spitzer in \cite{KS}, by Boltausen \cite{Bolthausen} in dimension 2 ten years later, and more recently by
Deligiannidis and Utev in \cite{DU} and by Castell, Guillotin-Plantard
and the second author in \cite{FFN}. Let us also remark that when the random walk is the one dimensional simple symmetric random walk on $\mathbb Z$, the random walk in random scenery corresponds to an ergodic sum of a dynamical system, the so-called $T,T^{-1}$-transformation. This dynamical system has been introduced in a list of open problems by Weiss \cite[problem 2, p. 682]{Weiss} in the early 1970's. This dynamical system is a famous natural example of a $K$-transformation which is not Bernoulli and even not loosely Bernoulli as has been shown by Kalikow in \cite{Kalikow}.\\

We prove Theorem \ref{theo:randomscenery} in a more general context in Section \ref{proofRS}.
As noticed by Deligiannidis and Utev in \cite{DU} in the context of random walks, the estimate provided by Theorem \ref{theo:selfintersection} simplifies greatly the proof of Theorem~\ref{theo:randomscenery} compared to \cite{Bolthausen,soazPAPA}
(\cite{soazPAPA} contained a proof of this result for a single billiard map, with the use of the properties of Young towers). Furthermore, we
simplify also the tightness argument used by Bolthausen in \cite{Bolthausen}.
\subsection{Limit theorems in infinite measure}
The following results are consequences of our perturbation result (Proposition \ref{prop:perturb}), combined with
the general results of \cite{DamienSoaz} and of \cite{SoazDecorr}.

Our next result deals with the asymptotic behavior of additive functionals of $S_n$, that is of quantities of the form
$\sum_{k=0}^{n-1}g(S_k)$, for summable functions $g:\mathbb Z^2\rightarrow \mathbb R$. This can be seen as the ergodic sum
$\sum_{k=0}^{n-1}G\circ T^k$ with $G(x,\ell,\uomega):=g(\ell)$.

\begin{theo}[Additive functionals of $S_n$]\label{TCL}
If $g$ is a summable (i.e. $\sum_{\ell\in\mathbb Z^2}|g(\ell)|<\infty$), then
$$\lim_{n\to\infty} \frac{\sum_{k=0}^{n-1}g(S_k)}{\log n}  = \frac 1{2\pi\sqrt{\det\Sigma^2}}\, \sum_{\ell\in\mathbb Z^2} g(\ell)\ \mathcal E\, ,$$ convergence in distribution,
where $\mathcal E$ is an exponential random variable with expectation 1 and where $\Longrightarrow$ means the convergence in distribution with respect to 
any probability measure absolutely continuous with respect to $\bar\mu$.

If moreover $\sum_{\ell\in\mathbb Z^2}g(\ell)=0$ and $\sum_{\ell\in\mathbb Z^2}|\ell|^{\varepsilon}|g(\ell)|<\infty$, for some $\varepsilon>0$, then
$$\lim_{n\to\infty} \frac{\sum_{k=0}^{n-1}g(S_k)}{\sqrt{\log n}}  = \frac 1{\sqrt{2\pi}(\det\Sigma^2)^{\frac 14}}\, \sigma_g\ \mathcal E\, \mathcal N,$$ convergence in distribution, 
where $\mathcal E$ is as above and where $\mathcal N$ is a standard
Gaussian random variable independent of $\mathcal E$ and where
$$\sigma_g^2:=\sum_{\ell\in\mathbb Z^2}(g(\ell))^2+2\sum_{k\ge 1}
  \left(\sum_{\ell,\ell'\in\mathbb Z^2}g(\ell)g(\ell')\bar\mu_0(S_k=\ell-\ell')\right)\, .$$ \\
\end{theo}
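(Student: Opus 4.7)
The overall plan is to treat Theorem \ref{TCL} as a direct application of general limit theorems for $\mathbb{Z}^d$-extensions of probability-preserving hyperbolic systems, proved by P\`ene--Thomine in \cite{DamienSoaz} (first assertion) and by P\`ene in \cite{SoazDecorr} (second assertion). Those general results are formulated in terms of spectral/perturbation properties of the transfer operator of the underlying quotient dynamics on a suitable Banach space, together with a Nagaev--Guivarc'h perturbation by the extension cocycle. In our setting, the quotient dynamics is $(\bar M, \bar\mu, \bar T)$, the cocycle is the cell-change function $\Phi$, and the relevant Banach space is $\widetilde{\cB}$. The required perturbation statement---the existence of a dominant eigenvalue $\lambda(t) = 1 - \tfrac{1}{2}\Sigma^2 t \cdot t + o(|t|^2)$ for the twisted operator $\widetilde{\cL}_t$, with spectral gap uniform in $t$ in a neighbourhood of $0$, and $\cC^2$ dependence of the associated spectral data---is exactly the content of our Proposition \ref{prop:perturb}, the same ingredient that drives the proofs of Theorems \ref{CLT} and \ref{theo:LLT}.

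For the first assertion, I would set $G(x,\ell,\uomega) := g(\ell)$, an $L^1(\mu)$ observable for the infinite-measure system $(M,\mu,T)$, so that $\sum_{k=0}^{n-1} g(S_k) = \sum_{k=0}^{n-1} G \circ T^k$ when started from cell $\bO$. This places us squarely in the Darling--Kac framework for $\mathbb{Z}^2$-extensions developed in \cite{DamienSoaz}. The return sequence is read off the local limit theorem: for a small mixing set $A \subset \bar M \times \{\bO\}$, Theorem \ref{theo:LLT} yields $\mu(A \cap T^{-k}A) \sim \frac{\bar\mu(A)^2}{2\pi k \sqrt{\det \Sigma^2}}$, giving $a_n = \frac{\log n}{2\pi \sqrt{\det \Sigma^2}}$. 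Since the Mittag--Leffler distribution of index $\alpha = 1$ is the exponential law of mean $1$, the general theorem delivers the limit $\frac{\sum_\ell g(\ell)}{2\pi\sqrt{\det \Sigma^2}} \mathcal{E}$ with $\mathcal{E} \sim \mathrm{Exp}(1)$.

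For the second assertion, the centering condition $\sum_\ell g(\ell) = 0$ cancels the leading term of the LLT expansion, pushing the fluctuations into the $\sqrt{\log n}$ mixed-normal regime studied in \cite{SoazDecorr}. The variance series $\sigma_g^2$ expresses the Green-function contribution of the recurrent walk $(S_k)$ against the observable $g$; the moment hypothesis $\sum |\ell|^{\varepsilon} |g(\ell)| < \infty$, combined with the Gaussian factor and the uniform error term in Theorem \ref{theo:LLT}, ensures absolute convergence of this series and justifies the limit interchange. The product structure $\mathcal{E} \cdot \mathcal{N}$ with independent $\mathcal{E}$ and $\mathcal{N}$ reflects the standard decomposition in which $\mathcal{E}$ encodes the random ``local time'' of the two-dimensional walk while $\mathcal{N}$ encodes the asymptotically independent centered fluctuations collected across its visits.

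The main obstacle is bookkeeping rather than substance: the general theorems of \cite{DamienSoaz, SoazDecorr} are often phrased for Banach spaces continuously injected into some $L^p$, whereas $\widetilde{\cB}$ consists of distributions and $H_g \in \widetilde{\cB}'$ is not literally a function. Every step in the general arguments that uses the $L^p$ embedding must be rechecked against the dual pairing $(\widetilde{\cB}, \widetilde{\cB}')$, using the decorrelation bounds furnished by Proposition \ref{prop:perturb} to control mixed moments $\mathbb E_{\bar\mu}[f \cdot g \circ \bar T^k]$. This verification is carried out in parallel situations in the proofs of Theorems \ref{theo:range} and \ref{theo:selfintersection}, and no new dynamical input is needed beyond what is already established in Sections \ref{sec:ops}--\ref{proofs}.
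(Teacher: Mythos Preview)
Your approach is essentially the same as the paper's: verify the abstract spectral hypotheses via Theorem~\ref{prop:perturb} and then invoke the general limit theorems of \cite{DamienSoaz} and \cite{SoazDecorr}. The paper's proof is literally two lines, citing \cite[Theorem~1.4]{DamienSoaz} for Theorem~\ref{TCL} (both parts) and \cite[Theorem~3.2, Remark~3.3]{SoazDecorr} for Theorem~\ref{mixing}; your attribution of the second assertion to \cite{SoazDecorr} rather than \cite{DamienSoaz} is a minor bibliographic slip but not a mathematical difference.

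One comment: your worry about the $L^p$-embedding bookkeeping is overcautious. The general results in \cite{DamienSoaz} and \cite{SoazDecorr} are already stated under abstract spectral hypotheses on a pair of Banach spaces $\widetilde\cB_1\hookrightarrow\widetilde\cB_2$ with a continuous linear form $\mathbb E_{\bar\mu}[\cdot]$---precisely the package delivered by Theorem~\ref{prop:perturb}---so no rechecking against a dual pairing is required. The extra intuition you supply (Darling--Kac, return sequence $a_n=\log n/(2\pi\sqrt{\det\Sigma^2})$, local-time/fluctuation decomposition for $\mathcal E\cdot\mathcal N$) is correct and helpful context, but it is already absorbed into the cited references.
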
 

For $g: M_0 \to \mathbb{R}$, define $H_{g, \ell} : \cB \to \mathbb{R}$ by 
$H_{g, \ell}(h) = \mathbb{E}_{\bar \mu_0} [ g(\cdot , \ell) h ]$.  We also obtain the decay rates of correlations for the process generated  by our random systems in infinite measure:\\

\begin{theo}[Mixing and decorrelation in infinite measure]\label{mixing}
Let $K\ge 1$.
Let $f,g:M_0\rightarrow\mathbb R$ be two functions such that
$$\sum_{\ell\in\mathbb Z^2}|\ell|^{2K}\left(\Vert f(\cdot,\ell)\Vert_{\mathcal B}+\Vert H_{g , \ell} \Vert_{\mathcal B'}\right) <\infty\, .$$
Then, there exist real numbers $C_0(f,g),...,C_K(f,g)$ such that
$$\int_{M_0\times E^{\mathbb N}}f.g\circ T_{\omega_n}\circ...\circ T_{\omega_1}\, d\mu_0\,  d\eta^{\otimes\mathbb N}((\omega_n)_n)=
\sum_{m=0}^K\frac{C_m(f,g)}{n^{m+1}}+o(n^{-K+1})\, ,$$
with $C_0(f,g)=\frac 1{2\pi\sqrt{\det \Sigma^2}}\int_{M_0}f\, d\mu_0\, \int_{M_0}g\, d\mu_0$.
\end{theo}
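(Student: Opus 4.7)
The plan is to apply a higher-order version of the local limit theorem (Theorem~\ref{theo:LLT}) cell-by-cell and then collect terms of matching order in $1/n$. Using $M_0=\bar M_0\times\mathbb Z^2$, the cocycle identity $T_{\omega_n}\circ\cdots\circ T_{\omega_1}(\bar x,\ell)=(\bar T^n_{\uomega}\bar x,\,\ell+S_n(\bar x,\uomega))$, and $\mu_0=\bar\mu_0\otimes\mathfrak m$, the integral in the statement rewrites as
$$\sum_{\ell,\ell'\in\mathbb Z^2}\mathbb E_{\bar\mu}\bigl[f(\cdot,\ell)\cdot\mathbf 1_{\{S_n=\ell'-\ell\}}\cdot g(\cdot,\ell')\circ\bar T^n\bigr].$$
Each summand has exactly the shape governed by Theorem~\ref{theo:LLT}, with $\widetilde f=f(\cdot,\ell)$ and $\widetilde g=g(\cdot,\ell')$ lifted to $\bar M$ as $\uomega$-independent observables; these belong to $\widetilde\cB$ and induce $H_{\widetilde g}\in\widetilde\cB'$ because the ambient $\cB$-norms do.

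To recover all $K+1$ terms of the expansion, I would sharpen Theorem~\ref{theo:LLT} to an asymptotic series of order $K$. The spectral machinery of Section~\ref{sec:ops} provides a family of Fourier-perturbed transfer operators $\widehat T_t$ on $\widetilde\cB$, with a simple dominant eigenvalue $\lambda_t$ admitting a Taylor expansion $\lambda_t=1-\tfrac12\Sigma^2 t\cdot t+\cdots$ to any prescribed order together with a uniform spectral gap near $t=0$. Injecting this into the torus-inversion formula $\mathbf 1_{\{S_n=k\}}=(2\pi)^{-2}\int_{\mathbb T^2}e^{-it\cdot k}\,e^{it\cdot S_n}\,dt$ and expanding $\lambda_t^n$ around the Gaussian $e^{-n\Sigma^2 t\cdot t/2}$ yields, with $k=\ell'-\ell$,
$$\mathbb E_{\bar\mu}\bigl[\widetilde f\,\mathbf 1_{\{S_n=k\}}\,\widetilde g\circ\bar T^n\bigr]=\mathbb E_{\bar\mu_0}[f(\cdot,\ell)]\,\mathbb E_{\bar\mu_0}[g(\cdot,\ell')]\sum_{m=0}^K\frac{P_m(k)}{n^{m+1}}+R_n(\ell,\ell'),$$
where $P_m$ is a polynomial of total degree $\le 2m$ (in particular $P_0\equiv(2\pi\sqrt{\det\Sigma^2})^{-1}$) and $R_n(\ell,\ell')=O\bigl(n^{-K-1-\delta}\Vert f(\cdot,\ell)\Vert_{\cB}\Vert H_{g,\ell}\Vert_{\cB'}\bigr)$ for some $\delta>0$. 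This refined LLT is the standard output of the Nagaev--Guivarc'h method at higher order and is the abstract content of \cite{SoazDecorr}; it applies here thanks to Proposition~\ref{prop:perturb}. Substituting back and exchanging the finite $m$-sum with the $(\ell,\ell')$-sum identifies
$$C_m(f,g)=\sum_{\ell,\ell'\in\mathbb Z^2}P_m(\ell'-\ell)\,\mathbb E_{\bar\mu_0}[f(\cdot,\ell)]\,\mathbb E_{\bar\mu_0}[g(\cdot,\ell')],$$
and for $m=0$ the sum factorizes to give the stated value of $C_0(f,g)$.

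The remaining analytic task is to bound the tail. For $1\le m\le K$, $P_m(\ell'-\ell)$ expands in monomials of total degree $\le 2m\le 2K$ in the coordinates of $\ell$ and $\ell'$, so $C_m(f,g)$ converges thanks to the weighted hypothesis $\sum_\ell|\ell|^{2K}(\Vert f(\cdot,\ell)\Vert_{\cB}+\Vert H_{g,\ell}\Vert_{\cB'})<\infty$, using Cauchy--Schwarz to split cross products of $\ell$-factors and $\ell'$-factors. The summed LLT remainder $\sum_{\ell,\ell'}|R_n(\ell,\ell')|$ is then $O(n^{-K-1-\delta})$, hence $o(n^{-K-1})$. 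The main obstacle lies in the refined spectral step: the Fourier multiplier $e^{it\cdot\Phi}$ is only piecewise constant (since $\Phi$ is piecewise constant), and the ambient space $\widetilde\cB$ consists of distributions rather than $L^p$ functions, so the standard smoothness-of-perturbation arguments must be carried out in a nonclassical setting. This is exactly what Proposition~\ref{prop:perturb} provides, after which the proof reduces to the bookkeeping sketched above, following the $\mathbb Z^d$-extension decorrelation framework of \cite{SoazDecorr}.
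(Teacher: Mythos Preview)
Your approach matches the paper's: the authors' entire proof of Theorem~\ref{mixing} is to observe that Theorem~\ref{prop:perturb} verifies the spectral hypotheses of \cite[Theorem~3.2, Remark~3.3]{SoazDecorr}, and then cite that reference. You unpack the same reduction, writing the infinite-measure correlation as $\sum_{\ell,\ell'}\mathbb E_{\bar\mu}[f(\cdot,\ell)\mathbf 1_{\{S_n=\ell'-\ell\}}g(\cdot,\ell')\circ\bar T^n]$ and then invoking a higher-order LLT driven by the Nagaev--Guivarc'h expansion of $P_u^n$, which is precisely what \cite{SoazDecorr} does.

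One imprecision worth flagging: your displayed higher-order LLT
\[
\mathbb E_{\bar\mu}\bigl[\widetilde f\,\mathbf 1_{\{S_n=k\}}\,\widetilde g\circ\bar T^n\bigr]=\mathbb E_{\bar\mu_0}[f(\cdot,\ell)]\,\mathbb E_{\bar\mu_0}[g(\cdot,\ell')]\sum_{m=0}^K\frac{P_m(k)}{n^{m+1}}+R_n(\ell,\ell')
\]
is correct only at $m=0$. For $m\ge 1$, the expansion of $P_u^n=\lambda_u^n\Pi_u+N_u^n$ brings in derivatives of $\Pi_u$ at $u=0$, so the coefficient of $n^{-m-1}$ is a bilinear form in $(f(\cdot,\ell),g(\cdot,\ell'))$ that is \emph{not} a scalar multiple of the product of means; your formula for $C_m(f,g)$ as $\sum_{\ell,\ell'}P_m(\ell'-\ell)\,\mathbb E[f(\cdot,\ell)]\,\mathbb E[g(\cdot,\ell')]$ is therefore incomplete for $m\ge 1$. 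This does not break the argument---the correct bilinear forms are still controlled by $\Vert f(\cdot,\ell)\Vert_{\cB}$ and $\Vert H_{g,\ell'}\Vert_{\cB'}$, and the polynomial growth in $k=\ell'-\ell$ is still of degree $\le 2m$---but the stated identification of $C_m$ should be replaced by the general bilinear expression produced in \cite{SoazDecorr}. (Also, your remainder should carry $\Vert H_{g,\ell'}\Vert_{\cB'}$, not $\Vert H_{g,\ell}\Vert_{\cB'}$.)
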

\section{Transfer Operators}
\label{sec:ops}
In order to prove our main limit theorems, we will study the transfer operators associated
with the random maps $T$ and $\bar T$ as perturbations of the
transfer operator associated with a fixed quotient billiard map $\bar T_0$.

In this section, we fix a class of maps $\bar {\mathcal{F}}$ satisfying {\bf (H1)}-{\bf (H5)} with uniform constants.
$\bar T$ denotes the quotient of the full random map $T$, while $\bar T_\omega$, $\omega \in E$ 
denotes a quotient billiard map
belonging to $\bar {\mathcal{F}}$, following the notation defined in Section~\ref{sec:rand pert}.

Using {\bf (H3)}, choose $\delta_0 > 0$ for which there exists $\theta<1$ so that \eqref{eq:one step}
satisfies,
\begin{equation}
\label{eq:one step contract}
\sup_{\bar T_\omega \in \bar{\mathcal{F}}_{\vartheta_0}} \sup_{\substack{W \in \widehat\cW^s \\ |W| < \delta_0}}
\sum_i |J_{V_i}\bar T_\omega|_* \le \theta .
\end{equation}
We then define $\cW^s \subset \widehat\cW^s$ to be those stable curves in $\widehat\cW^s$ whose
length is at most $\delta_0$.

Following \cite{MarkHongKun2011}, for any $\bar T_\omega \in \bar{\mathcal{F}}$ and $n \ge 0$,
define $\bar T_\omega^{-n}\cW^s \subset \cW^s$ to be the set of homogeneous stable curves
$W \in \cW^s$ whose images $\bar T_\omega^iW \in \cW^s$ for $0 \le i \le n$.
For $p \in [0,1]$ and letting $\cC^p(\bar T_\omega^{-n}\cW^s)$ denote those functions $\psi$ which
are H\"older continuous on elements of $\bar T_\omega^{-n}\cW^s$, it follows from {\bf (H1)}
that $\psi \circ \bar T_\omega \in \cC^p(\bar T^{-n-1}\cW^s)$.  Thus if
$f \in (\cC^p(\bar T_\omega^{-n-1}\cW^s))'$ is an element of the dual of
$\cC^p(\bar T_\omega^{-n-1}\cW^s)$, then
$\cL_{\bar T_\omega} : (\cC^p(\bar T_\omega^{-n-1}\cW^s))' \to (\cC^p(\bar T_\omega^{-n}\cW^s))'$
is defined by
\[
\cL_{\bar T_\omega} f (\psi) = f(\psi \circ \bar T_\omega), \qquad \forall \; \psi \in \cC^p(\bar T_\omega^{-n} \cW^s) .
\]
If in addition, $f$ is a measure absolutely continuous with respect to $\bar \mu_0$, then we identify
$f$ with its density in $L^1(\bar \mu_0)$, which we shall also denote $f$, i.e. $f(\psi) = \int_{\bar M_0} \psi \, f \, d\bar \mu_0$.  With this identification, we write $L^1(\bar\mu_0) \subset (\cC^p(\bar T_\omega^{-n}\cW^s))'$ for each $n \in \mathbb{N}$.  Then acting on $L^1(\bar\mu_0)$, $\cL_{\bar T_\omega}$ has the following
familiar expression,
\[
\cL_{\bar T_\omega}^n f = f \circ \bar T_\omega^{-n} , \qquad \mbox{for any $n \ge 0$.}
\]
For brevity, sometimes we will denote $\cL_{\bar T_\omega}$ by $\cL_\omega$.

Let $P$ be the transfer operator
of $\bar  T$ with respect to $\bar\mu:=\bar\mu_0\otimes\eta^{\otimes \mathbb N}$. This operator is given by
$$P f(y,(\omega_k)_{k\ge 0})
    =\int_{E}\mathcal L_{\omega_{-1}} f(\cdot)(y,(\omega_{k-1})_{k\ge 0})\,  d\eta(\omega_{-1}).$$
Let us write $\cdot$ for the usual scalar product on $\mathbb R^2$.
We consider the family of operators
$(P_u)_{u\in\mathbb R^2}$ given by
$$ P_uf(y,(\omega_k)_k) :=P\left(e^{iu \cdot \Phi}f\right)(y,(\omega_k)_k)=\int_{E}\mathcal L_{u,\omega_{-1}}
        f(\cdot,(\omega_{k-1})_{k\ge 0})(y)\, d\eta(\omega_{-1}),$$
    where
    \[
    \cL_{u,\omega_{-1}} f = \cL_{\omega_{-1}} (e^{iu \cdot \Phi_{\omega_{-1}}} f) \, .
    \]
Note that
$$P_u^nf=P^n(e^{iu\cdot S_n}f)\, .$$

Using results of \cite{MarkHongKun2013}, we will see that
if we restrict $\bar T_\omega$ to a neighborhood $\bar {\mathcal{F}}_{\vartheta_0}(\bar T_0)$ according to 
\eqref{defnFtheta}, then 
$P$ is a small (depending on $\vartheta_0$) perturbation of
the transfer operator $\mathcal P_{0}$ of the product system $(\bar M, \bar\mu:=\bar\mu_0\times\eta^{\otimes \mathbb N},\bar T_0\times \sigma)$, with $\sigma$
is the shift over $E^{\mathbb N}$ (i.e. $\sigma((\omega_k)_{k\ge 0})=(\omega_{k+1})_{k\ge 0}$) and where $(\bar T_0\times\sigma)(x,\uomega):=(\bar T_0(x),\sigma(\uomega))$.
\subsection{Banach spaces $\mathcal B$ and $\mathcal B_w$}
\label{sec:norm def}

We start by defining Banach spaces $\mathcal B\subset \mathcal B_w$ of distributions on $\bar M_0$, on which the
transfer operators $\mathcal L_\omega$ associated to $\bar T_\omega \in \bar {\mathcal{F}}$ are well-behaved.

In order to define our norms, we first require
a notion of distance $d_{\cW^s}(\cdot, \cdot)$ between stable curves as well as
a distance $d(\cdot, \cdot)$ defined among functions supported on these curves.

Due to the transversality condition on the stable cones
$C^s(x)$ given by {\bf (H1)}, each $W \in \cW^s$ can be viewed
as the graph of a function $\vf_W(r)$ of the arc length parameter $r$.
For each $W \in \cW^s$,
let $J_W$ denote the interval on which
$\vf_W$ is defined and set $G_W(r) = (r, \vf_W(r))$ to be its graph so that
$W = \{ G_W(r) : r \in J_W \}$.
We let $m_W$ denote the unnormalized arclength measure on $W$,
defined using the Euclidean metric.

Let $W_1, W_2 \in \cW^s$ and let $\vf_{W_i}$, $G_{W_i}$ denote the corresponding
functions defined above, for $i = 1, 2$.
Denote by $\ell(J_{W_1} \triangle J_{W_2})$ the length of the symmetric difference
between $J_{W_1}$ and $J_{W_2}$.
If $W_1$ and $W_2$ belong to the same homogeneity strip, we define the distance between
them to be,
\[
d_{\cW^s} (W_1,W_2) =
\ell( J_{W_1} \triangle J_{W_2}) + |\vf_{W_1} -\vf_{W_2}|_{\cC^1(J_{W_1} \cap J_{W_2})};
\]
otherwise, we set $d_{\cW^s}(W_1, W_2) = \infty$.

For $0 \leq p \leq 1$, let
$\tilde{\cC}^p(W)$ denote the set of continuous complex-valued functions on
$W$ with H\"{o}lder exponent $p$, measured in the Euclidean
metric.
Denote by $\cC^p(W)$ the closure of $\cC^\infty(W)$
in the $\tilde{\cC}^p$-norm\footnote{While $\cC^p(W)$ is smaller than
$\tilde{\cC}^p(W)$, it does contain $\cC^{p'}\!(W)$ for all $p'>p$.}:
$$| \psi |_{\cC^p(W)} = |\psi|_{\cC^0(W)} + C^{(p)}_W(\psi),$$ where
$C^{(p)}_W(\psi)$ is the H\"older constant of $\psi$ along $W$.
It is remarkable to note that that with this definition,
$$|\psi_1 \psi_2 |_{\cC^p(W)} \le |\psi_1|_{\cC^p(W)} |\psi_2|_{\cC^p(W)}.$$
 $\tilde{\cC}^p(\bar M_0)$ and $\cC^p(\bar M_0)$ can be defined similarly.

Given two curves $W_1, W_2 \in \cW^s$ with $d_{\cW^s}(W_1, W_2) < \infty$, and two
test functions
$\psi_i\in\cC^p(W_i,\mathbb{C})$, {\it{ the distance between}}
$\psi_1$, $\psi_2$ is defined as:
\[
d(\psi_1,\psi_2) =|\psi_1\circ G_{W_1}-\psi_2\circ G_{W_2}|_{\cC^0(I_{W_1} \cap I_{W_2})}.
\]
We will define the relevant Banach spaces by closing $\cC^1(\bar M_0)$ with respect to
the following set of norms.
Fix $0 < p \le \frac 13 $.
Given a function $f \in \cC^1(\bar M_0)$, define the \emph{weak norm}
of $f$ by
\begin{equation}
\label{eq:weak}
|f|_w:=\sup_{W\in\cW^s}\sup_{\substack{\psi \in\cC^p(W)\\
|\psi|_{\cC^p(W)} \leq 1}}\int_W f \psi \; dm_W .
\end{equation}
Choose\footnote{The restrictions on the constants are placed according to the
dynamical properties summarized in {\bf (H1)}-{\bf (H5)}.  For example, $p \le 1/3$ due to the distortion bounds
in {\bf (H4)}, while $\varsigma \le 1 - \zeta_0$ due to {\bf (H3)}, which is relevant for the
uniform Lasota-Yorke inequalities (Lemma~\ref{lem:uniform ly}).}
$q$, $\gamma$, $\varsigma >0$ such that $\varsigma \le 1 - \zeta_0$, $q < p$ and
$\gamma \leq \min \{ \varsigma, p-q \}$.
We define the \emph{strong stable norm} of $f$ as
\begin{equation}
\label{eq:s-stable}
\|f\|_s:=\sup_{W\in\cW^s}\sup_{\substack{\psi\in\cC^q(W)\\
|\psi|_{\cC^q(W)} \leq |W|^{-\varsigma}}} \int_W f \psi \; dm_W
\end{equation}
and the \emph{strong unstable norm} as
\begin{equation}
\label{eq:s-unstable}
\|f\|_u:=\sup_{\ve \leq \ve_0} \; \sup_{\substack{W_1,
W_2 \in \cW^s \\
d_{\cW^s} (W_1,W_2)\leq \varepsilon}}\;
\sup_{\substack{\psi_i \in \cC^p(W_i) \\ |\psi_i|_{\cC^p(W)} \leq 1\\ d(\psi_1,\psi_2)
=0}} \;
\frac{1}{\varepsilon^\gamma} \left| \int_{W_1} f
\psi_1 \; dm_W - \int_{W_2} f \psi_2 \; dm_W \right|
\end{equation}
where $\omega_0 > 0$ is chosen less than $\delta_0$, the maximum length of $W \in \cW^s$ which
is determined by \eqref{eq:one step contract}.
The \emph{strong norm} of $f$ is defined by
\[
\|f\|_\cB = \|f\|_s + c_0 \|f\|_u,
\]
where $c_0$ is a small constant chosen so that the uniform Lasota-Yorke inequalities in
\cite[Theorem~2.2]{MarkHongKun2013} hold.

We define $\cB$ to be the completion of $\cC^1(\bar M_0)$ in the strong norm\footnote{As a measure,
$f \in \cC^1(\bar M_0)$ is identified with $fd\bar\mu_0$ according to our earlier convention.
As a consequence, Lebesgue measure $dm = (\cos \vf)^{-1} d\bar\mu_0$ is not automatically
included in $\cB$ since $(\cos \vf)^{-1} \notin \cC^1(\bar M_0)$.  It follows from
\cite[Lemma 3.5]{MarkHongKun2013} that in fact, $m \in \cB$ (and $\cB_w$).}
and $\cB_w$ to be the completion of $\cC^1(\bar M_0)$ in the weak norm.
\begin{rem}\label{Rke0}
Due to \cite[Lemma 3.4]{MarkHongKun2013}, we have for $f \in \cB_w$,
\[
| f(\psi) | \le |f|_w \Big( |\psi|_\infty + \sup_{W \in \cW^s} C^{(p)}_W(\psi) \Big), \qquad \mbox{for all $\psi \in \cC^p(\cW^s)$.}
\]
This permits us to
extend $\mathbb E_{\bar\mu_0}[\cdot]$ to a linear continuous form on $\mathcal B_w$
(and so on $\mathcal B$) since
$$\forall f\in C^1(\bar M_0),\quad
      \mathbb E_{\bar\mu_0}[f]=\int_{\bar M_0}f\, d\bar\mu_0=f(\mathbf 1_{\bar M_0})\, . $$ 
\end{rem}
We begin by recalling some properties of $\cB$ and $\cB_w$ proved in \cite{MarkHongKun2011,
MarkHongKun2013, MarkHongKun2014}.

\begin{lem}
\label{lem:recall}
\begin{itemize}
  \item[a)] \cite[Lemma 3.7]{MarkHongKun2011} $\cB$ contains piecewise H\"older continuous
  functions $f$ with exponent $\zeta > \gamma/(1-\gamma)$ as described in Lemma~\ref{lem:piecewise} below.
  \item[b)] \cite[Lemma 3.5]{MarkHongKun2013} $(\cos \vf)^{-1} \in \cB$.  Thus, Lebesgue
  measure $m = (\cos \vf)^{-1} \bar\mu_0 \in \cB$ and so is $fm$ for any $f$ as in item (a) above.
  \item[c)] \cite[Lemma 2.1]{MarkHongKun2011} $\cL_\omega$ is well-defined as a continuous
  linear operator on both $\cB$ and $\cB_w$ for any $\bar T_\omega \in \bar {\mathcal{F}}$.
  Moreover, there exists a sequence of continuous\footnote{The first three of these are
  also injective.  The fourth can be made injective by introducing a weight $|W|^{-\eta}$
  for test functions $\psi$ in the weak norm (as appears in the definition of $\| \cdot \|_s$)
  and requiring $\eta > p$ (see, for example, \cite[Lemma~3.8]{MarkHongKun2014}).} inclusions
  $\cC^\zeta(\bar M_0) \hookrightarrow \cB \hookrightarrow \cB_w \hookrightarrow (\cC^p(\bar M_0))'$,
  for all $\zeta > \gamma/(1-\gamma)$.
  \item[d)] \cite[Lemma 3.10]{MarkHongKun2011} The unit ball of $(\cB, \| \cdot \|_{\cB})$
  is compactly embedded in $(\cB_w, | \cdot |_w)$.
\end{itemize}
\end{lem}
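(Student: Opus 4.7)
The plan is to treat each item (a)--(d) separately, since each is a recollection from \cite{MarkHongKun2011,MarkHongKun2013,MarkHongKun2014}, but for completeness I would sketch the arguments adapted to the current setting, keeping careful track of the exponents $p,q,\gamma,\varsigma$ fixed in Section~\ref{sec:norm def}.

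For item (a), I would show that if $f$ is piecewise H\"older with exponent $\zeta > \gamma/(1-\gamma)$ on a partition of $\bar M_0$ whose boundaries are uniformly transverse to $C^s(x)$, then one can approximate $f$ in $\|\cdot\|_\cB$ by a sequence of $\cC^1$ functions obtained by smoothing $f$ in an $\epsilon$-neighborhood of the partition boundaries. The weak norm bound is immediate from the bound on $|f|_\infty$. The strong unstable norm is the delicate one: given two nearby curves $W_1,W_2$ with $d_{\cW^s}(W_1,W_2)\le\epsilon$, the difference $\int_{W_1} f\psi_1 - \int_{W_2}f\psi_2$ splits into (i) a H\"older term on curves not crossing partition boundaries, bounded by $C\epsilon^\zeta$, and (ii) contributions from portions of $W_i$ lying within $\epsilon$ of the partition, whose total length is $O(\epsilon)$. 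Balancing the two gives $\epsilon^\gamma$ provided $\zeta > \gamma/(1-\gamma)$.

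For item (b), I would bound $(\cos\varphi)^{-1}$ by its restriction to the homogeneity strips $\mathbb H_k$, on which $(\cos\varphi)^{-1}\sim k^2$. Any stable curve $W$ meets at most one $\mathbb H_k$, and there the arclength weight and the $k^{-5q}$ mass estimate on $\mathbb H_k$ (from the remark after \textbf{(H5)}) produce geometric series that sum to finite constants; the truncations $(\cos\varphi)^{-1}\mathbf 1_{|k|\le N}$ are then a Cauchy sequence in $\|\cdot\|_\cB$.

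For item (c), continuity of $\cL_\omega$ on $\cB$ and $\cB_w$ follows from the change-of-variables identity
\[
\int_W \cL_\omega f\,\psi\,dm_W=\sum_i\int_{V_i}f\,(\psi\circ\bar T_\omega)\,|J_{V_i}\bar T_\omega|\,dm_{V_i},
\]
where $V_i$ are the components of $\bar T_\omega^{-1}W$, combined with the one-step expansion bound \eqref{eq:one step contract} from \textbf{(H3)} and the distortion bound \textbf{(H4)}. The chain of inclusions $\cC^\zeta \hookrightarrow\cB\hookrightarrow\cB_w\hookrightarrow(\cC^p(\bar M_0))'$ follows respectively from (a), from the fact that the weak test functions form a subclass (up to rescaling by $|W|^\varsigma$) of the strong stable test functions, and from Remark~\ref{Rke0}.

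Item (d) is the step that I expect to be the most subtle, and where I would lean hardest on the cited proof. The argument is an Arz\`ela--Ascoli scheme: a bounded sequence $(f_n)\subset\cB$ induces, for each $W\in\cW^s$ and each $\psi\in\cC^p(W)$ with $|\psi|_{\cC^p(W)}\le 1$, a bounded sequence of numbers $\int_W f_n\psi\,dm_W$. The strong unstable norm gives equicontinuity in the $W$ variable (with exponent $\gamma$), and the gap $p-q>0$ and $\varsigma<1-\zeta_0$ give equicontinuity in the $\psi$ variable (using a finite covering in $\cC^q$ of the unit ball in $\cC^p$). A diagonal extraction over a countable dense family of curves and test functions produces a subsequence that is Cauchy in $|\cdot|_w$. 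The only thing to verify is that the distances $d_{\cW^s}$ and the test-function metrics admit such countable dense subsets compatible with the uniform transversality from \textbf{(H1)}, which is routine but needs care near $\cS_0$.
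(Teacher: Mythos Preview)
The paper offers no proof of this lemma at all: each item is stated with a citation to \cite{MarkHongKun2011}, \cite{MarkHongKun2013}, or \cite{MarkHongKun2014} and nothing further. Your proposal therefore does considerably more than the paper, supplying outlines of the arguments from those references. The sketches are broadly faithful to the cited proofs --- the mollification/transversality scheme for (a), the homogeneity-strip summation for (b), the change-of-variables plus one-step expansion for (c), and the Arz\`ela--Ascoli/diagonal extraction for (d) are all the correct mechanisms.

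One point to tighten: in (a) your ``balancing'' sentence asserts the threshold $\zeta>\gamma/(1-\gamma)$ without deriving it. The naive split you describe --- H\"older contribution $O(\epsilon^\zeta)$ on matched pieces and $O(\epsilon)$ near crossings --- only yields $\zeta\ge\gamma$, which is weaker. The sharper exponent arises because $\cB$ is the \emph{closure} of $\cC^1$, so you must control $\|f-f_\delta\|_u$ for a mollification $f_\delta$ on scale $\delta$ and then optimize over the relation between $\delta$ and $\epsilon$; the interplay between the $O(\delta)$ support estimate and the $\epsilon^{-\gamma}$ weight is what produces $\gamma/(1-\gamma)$. If you intend this as a genuine proof rather than a pointer to the literature, that optimization should be made explicit.
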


The following lemma is crucial for describing the types of discontinuities allowed in elements of
$\cB$ and for proving that the operator $\cL_{u, \omega}$ is analytic in $u$.

\begin{lem}
\label{lem:piecewise}
Let $\mathfrak P$ be a (mod 0) countable partition of $\bar M_0$ into open, simply connected sets such that:
(1) for each $k \in \mathbb{N}$, there is an $N_k < \infty$ such that at most $N_k$ elements
$Z \in \mathfrak P$ intersect $\mathbb{H}_k$; (2) there are constants $K, C_0>0$ such that for each
$Z \in \mathfrak P$ and $W \in \cW^s$, $Z \cap W$ comprises at most $K$ connected components
and for any\footnote{In fact, Lemma 3.5 of \cite{MarkHongKun2014} allows a nondegenerate
tangency between $\partial \mathfrak P$ and the stable cone: $m_W(N_\ve(\partial Z) \cap W) \le C_0 \ve^{t_0}$, for some $t_0 >0$.  But we will not need this weaker condition here so we assume
$t_0=1$ in order to simplify the proofs and also the statement of
the norms (which otherwise would depend on $t_0$).} $\ve>0$, $m_W(N_{\ve}(\partial Z) \cap W) \le C_0 \ve$.

\begin{itemize}
  \item[a)] \cite[Lemma~3.5]{MarkHongKun2014}
 Let $\zeta >\gamma/(1-\gamma)$.  If  $f \in \cC^\zeta(Z)$ for each $Z \in \cZ$ and $\sup_{Z \in \mathfrak P} |f|_{\cC^\zeta(Z)} < \infty$, then $f \in \cB$ and $\| f \|_{\cB} \le C \sup_{Z \in \mathfrak P} |f|_{\cC^\zeta(Z)}$, for some $C>0$ independent of
 $f$.
 In particular, $\cC^\zeta(\bar M_0) \subset \cB$ for each $\zeta>\gamma/(1-\gamma)$.
  \item[b)] \cite[Lemma~5.3]{MarkHongKun2014}
  Suppose in addition that $\zeta > \max \{ p, \gamma/(1-\gamma) \}$ and there is a uniform bound
  on the $N_k$ above.  If $g$ satisfies
  $\sup_{Z \in \mathfrak P} |g|_{\cC^\zeta(Z)} < \infty$ and $f \in \cB$, then $fg \in \cB$ and
  $\| f g \|_{\cB} \le C \| f \|_{\cB} \sup_{Z \in \mathfrak P} |g|_{\cC^\zeta(Z)}$ for some $C>0$ independent
  of $f$ and $g$.
\end{itemize}
\end{lem}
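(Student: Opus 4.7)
The plan is to establish both parts by bounding each of the three defining semi-norms $|\cdot|_w$, $\|\cdot\|_s$, $\|\cdot\|_u$ separately. For part (a), the weak and strong stable norms are handled by a direct decomposition: for $W \in \cW^s$ and an admissible test function $\psi$, split $\int_W f \psi \, dm_W$ over the connected components of $W \setminus \partial \mathfrak{P}$ (at most $K N_k$ of them for $W \subset \mathbb{H}_k$). On each such piece use the pointwise bound $|f| \le \sup_Z |f|_{\cC^\zeta(Z)}$ and bound the arclength contribution by $|W| \le \delta_0$. For the strong stable norm, the weight $|W|^{-\varsigma}$ on $\psi$ is compensated by a factor of $|W|^{1-\varsigma}$ coming from the integral, which is finite since $\varsigma < 1$.

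The strong unstable norm is the principal obstacle for part (a). Given $W_1, W_2 \in \cW^s$ with $d_{\cW^s}(W_1, W_2) \le \ve$ and test functions $\psi_i$ with $d(\psi_1, \psi_2) = 0$, I would split each $W_i$ into a matched part $M_i$ (parametrized over the common $r$-interval $J_{W_1} \cap J_{W_2}$) and an unmatched part of $m_W$-measure at most $\ve$. The unmatched part contributes $O(\ve \sup_Z |f|_{\cC^\zeta(Z)}) \le O(\ve^\gamma)$. On the matched part, subtract and add $\int_I (f \circ G_{W_2})(\psi_1 \circ G_{W_1})$: the term involving $\psi_1 \circ G_{W_1} - \psi_2 \circ G_{W_2}$ vanishes identically. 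The remaining term is split a second time: in the bulk region where $G_{W_1}(r)$ and $G_{W_2}(r)$ lie in the same element $Z$, Hölder continuity gives $|f \circ G_{W_1} - f \circ G_{W_2}| \le C \ve^\zeta$; in the complementary boundary region, $r$ lies in an $O(\ve)$-neighborhood of $\partial \mathfrak{P}$ on $W_1$ or $W_2$, which has arclength at most $C \ve$ by condition (2), and there one uses the pointwise bound on $f$. The exponent condition $\zeta > \gamma/(1-\gamma)$ (in particular $\zeta > \gamma$) ensures both contributions are bounded by $\ve^\gamma$.

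Part (b) follows essentially the same scheme, now applied to $fg$ with $\psi$ still playing the role of test function. For the weak and strong stable norms, observe that on each connected component of $W \setminus \partial \mathfrak{P}$ the product $g\psi$ is $\cC^{\min(p,\zeta)}$ with norm controlled by $|g|_{\cC^\zeta(Z)} |\psi|_{\cC^p(W)}$; using a piecewise extension to a finite number of $\cC^p$ test functions on all of $W$ and applying the definition of $\|f\|_{\cB}$ to each piece, one obtains the desired bound. The new hypothesis $\sup_k N_k < \infty$ is used precisely here to uniformly bound the number of pieces. For the strong unstable norm, the matched/unmatched/boundary decomposition of part (a) is repeated, now with $g\psi_i$ as the effective test function; the difference $g \psi_1 \circ G_{W_1} - g \psi_2 \circ G_{W_2}$ is $O(\ve^\zeta)$ on matched bulk (by Hölder continuity of $g$ together with the vanishing of $\psi_1 \circ G_{W_1} - \psi_2 \circ G_{W_2}$), and controlled on the boundary piece using the measure bound from condition (2). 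This pairs with the $\cB$-bound on $f$ applied piecewise to give the $\ve^\gamma$ estimate; the additional assumption $\zeta > p$ is what allows $g \psi$ to be treated as a legitimate $\cC^p$-type test function in the definition of $\|f\|_u$.

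The main obstacle in both parts is the strong unstable norm, because partition boundaries need not be aligned between $W_1$ and $W_2$. Condition (2) is designed precisely to quantify this mismatch, while Hölder continuity inside each $Z$ handles the bulk; the condition $\zeta > \gamma/(1-\gamma)$ arises exactly when balancing the boundary contribution (linear in $\ve$) and the bulk contribution (of order $\ve^\zeta$) against the required $\ve^\gamma$.
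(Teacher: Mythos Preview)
The paper does not reprove this lemma; both parts are quoted directly from \cite{MarkHongKun2014} (Lemmas~3.5 and~5.3 there), so there is no in-paper argument to compare against. Your outline is essentially the proof given in that reference: the weak and strong stable norms reduce to pointwise bounds, while the strong unstable norm is handled by exactly the matched/unmatched/boundary decomposition you describe, with the exponent condition $\zeta>\gamma/(1-\gamma)$ arising for the reason you identify.

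One point you omit: $\cB$ is defined as the \emph{closure} of $\cC^1(\bar M_0)$ in $\|\cdot\|_{\cB}$, so bounding $\|f\|_{\cB}$ is not by itself enough --- you must also show $f$ is a $\|\cdot\|_{\cB}$-limit of $\cC^1$ functions. In \cite{MarkHongKun2014} this is done by mollifying $f$ to obtain $f_\delta\in\cC^1$ and applying the same three-norm estimates to $f-f_\delta$; condition~(2) on $m_W(N_\ve(\partial Z)\cap W)$ is what makes the error near $\partial\mathfrak P$ vanish as $\delta\to 0$. This is a routine addendum to your argument, not a structural gap.
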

\subsection{Banach spaces $\widetilde {\mathcal B}$ and $\widetilde {\mathcal B}_w$}
\label{sec:tilde}

In this section, we introduce the associated Banach spaces $\widetilde {\mathcal B}_w$ and  $\widetilde{\mathcal B}$
on $\bar M$ on which $P$ acts suitably.
$\widetilde{\mathcal B}$ will correspond to a set of Lipschitz functions from $E^{\mathbb N}$ to $\mathcal B$ and 
$\widetilde{\mathcal B}_w$ will correspond to the set of uniformly bounded functions from $E^{\mathbb N}$ to $\mathcal B_w$.
For convenience, we will identify elements of 
$\cB ^{E^{\mathbb N}}$
with 
distributions $f$ on $\bar M_0 \times E^{\mathbb N}$ such that $f( \cdot, \uomega) \in \cB$ for all $\uomega \in E^{\mathbb N}$.

Let $\varkappa>\sup_{\omega\in E}\Vert \mathcal L_\omega\Vert_{L(\mathcal B,\mathcal B)}\ge 1$. Let us define
$$\widetilde {\mathcal B}:=\{ f \in \cB ^{E^{\mathbb N}} \ : \ 
   \Vert f\Vert_{\widetilde{\mathcal B}}<\infty\}\, ,$$
with 
$$\Vert f\Vert_{\widetilde{\mathcal B}}:=\sup_{\uomega
     \in E^\mathbb N}
   \Vert f(\cdot,\uomega)\Vert_{\mathcal B}+\sup_{\uomega\ne \uomega'}
    \frac{\Vert f(\cdot,\uomega)- f(\cdot,\uomega')\Vert_{\mathcal B}}
      {d(\uomega,\uomega')}\, ,$$
and with $$d((\omega_k)_k,(\omega'_k)_k)
      =\varkappa^{-\min\{k\ge 0:\omega_k\ne\omega_{k'}\}}.$$
It is immediate from this definition and the definition of $\cB$, that $\widetilde \cB$ is the completion 
in the $\| \cdot \|_{\widetilde \cB}$ norm of the set
of functions 
$$
{(C^1(\bar M_0))^{E^{\mathbb N}}} = \{ f: \bar M_0 \times E^{\mathbb N} \to \mathbb{C} \ : \
f(\cdot, \underline \omega) \in C^1(\bar M_0) \ \forall \uomega \in E^\mathbb{N} \} \, .
$$  
In particular, $\widetilde \cB$ is a Banach space.

\begin{rem}\label{PhiinB}
It will be worthwhile to notice that, due Lemma~\ref{lem:piecewise}(a), for every $\omega\in E$, the coordinates of $\Phi_\omega$ belong to $\mathcal B$, so that the coordinates of  $\Phi$ are in $\widetilde{\mathcal B}$.
\end{rem}

We also define
$$\widetilde {\mathcal B}_w:=\{ f  \in (\cB_w )^{E^{\mathbb N}}
\ :\ \vert f\vert_{\widetilde{\mathcal B}_w} <\infty\}\, ,$$
with
$\vert f\vert_{\widetilde{\mathcal B}_w} :=\sup_{{\underline{\omega}}
     \in E^\mathbb N}
   \vert f(\cdot,{\underline{\omega}})\vert_w\, .$  As with $\widetilde \cB$, the space $\widetilde \cB_w$ can also be realized as the
   completion of ${(C^1(\bar M_0))^{E^{\mathbb N}}}$ in the $| \cdot |_{\widetilde \cB_w}$ norm.

\begin{rem}\label{Rke1}
Using Remark \ref{Rke0}, we extend $\mathbb E_{\bar\mu}[\cdot]$
to a continuous linear form on $\widetilde{\mathcal B}_w$ (and so on 
 $\widetilde{\mathcal B}$) by setting
$$\forall f\in \widetilde{\mathcal B}_w,\quad
      \mathbb E_{\bar\mu}[f]=\int_{E^{\mathbb N}}\mathbb E_{\bar\mu_0}[f(\cdot,\underline{\omega})] \, d\eta^{\otimes\mathbb N}(\underline{\omega})\, . $$ 
      
It follows from Lemma~\ref{lem:piecewise}(a) that for any obstacle $O_a$, $1_{O_a} \in \cB$, and from Lemma~\ref{lem:piecewise}(b)
that $f(\cdot, \uomega) \mapsto 1_{O_a} f(\cdot, \uomega)$ is a bounded linear operator on $\cB$ for each 
$\uomega \in E^\mathbb{N}$ and $f \in \widetilde \cB$.
  Thus
$f \mapsto 1_{O_a} f$ is a bounded linear operator on $\widetilde \cB$ as well.
\end{rem}
We introduce the following notation for convenience.
\begin{nota}
For any positive integer $m$, any $\tilde{\uomega}_m \in E^m$ and any $\uomega\in E^{\mathbb N}$, we will write $(\tilde{\uomega}_m,\uomega)$ as the element of $E^{\mathbb N}$ obtained by concatenation; i.e. such that the first $m$ terms correspond to those of $\tilde{\uomega}_m$ and that the term of order $m+k$ corresponds to the term of order $k$ of $\uomega$.
\end{nota}

\begin{lem}
\label{tildeBnorm}
\begin{itemize}
\item[(a)]
Let $n$ be a positive integer. Denote the norm $\Vert\cdot\Vert_{\sigma}$, for $\sigma\in \{ w, s, u\}$.
If $(f(\cdot,\tilde\uomega_n))_{\tilde\uomega_n\in E^n}$ is a measurable (in $\tilde\uomega_n$) family of elements of $\mathcal B_w$ such that $$\sup_{\tilde\uomega_n\in E^n}\Vert f(\cdot,\tilde\uomega_n)\Vert<\infty,$$ then
$$\left\|\int_{E^n}  f(\cdot ,\tilde\uomega_n)\, d\eta^{\otimes n}(\tilde\uomega_n)\right\|_{\sigma}\le \int_{E^n}  \|f(\cdot ,\tilde\uomega_n)\|_{\sigma}\, d\eta^{\otimes n}(\tilde\uomega_n)\le\sup_{\tilde\uomega_n\in E^n}\|f(\cdot ,\tilde\uomega_n)\|_{\sigma}\, .$$
\item[(b)]If $(H_\omega)_{\omega\in E}$ is a measurable (in $\omega$) family of uniformly bounded operators on $\mathcal B$ (resp. $\mathcal B_w$), then
$H:f(x,\uomega) \mapsto \int_{E}H_{\tilde\omega} (f(x,(\tilde\omega,\uomega)))\, d\eta(\tilde\omega) $ 
defines a continuous linear operator on $\widetilde\cB$ (resp. $\widetilde\cB_w$) with operator norm dominated by
$\sup_{\omega\in E}\Vert H_\omega\Vert_{L(\cB,\cB)}$.
\end{itemize}
\end{lem}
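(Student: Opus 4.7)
\smallskip
\noindent\textbf{Proof plan.}
The approach is to observe that each of the three norms $|\cdot|_w$, $\|\cdot\|_s$, $\|\cdot\|_u$ on $\cB_w$ (resp.\ $\cB$) is, by its very definition, a supremum of absolute values of linear functionals evaluated at $f$. For $|\cdot|_w$ and $\|\cdot\|_s$ these functionals have the form $f\mapsto \int_W f\psi\, dm_W$, parametrized by admissible pairs $(W,\psi)$, while for $\|\cdot\|_u$ they have the form $f\mapsto \varepsilon^{-\gamma}\bigl(\int_{W_1} f\psi_1\, dm_W-\int_{W_2} f\psi_2\, dm_W\bigr)$, parametrized by admissible $(W_1,W_2,\psi_1,\psi_2,\varepsilon)$. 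Fix any such functional $\ell$. By Fubini's theorem (valid given the hypothesized measurability and the uniform bound $\sup_{\tilde\uomega_n}\|f(\cdot,\tilde\uomega_n)\|_\sigma<\infty$),
$$\ell\Bigl(\int_{E^n} f(\cdot,\tilde\uomega_n)\, d\eta^{\otimes n}(\tilde\uomega_n)\Bigr)=\int_{E^n}\ell\bigl(f(\cdot,\tilde\uomega_n)\bigr)\, d\eta^{\otimes n}(\tilde\uomega_n).$$
Taking absolute values, bounding $|\ell(f(\cdot,\tilde\uomega_n))|\le\|f(\cdot,\tilde\uomega_n)\|_\sigma$ under the integral, and finally taking the supremum over $\ell$ on the left yields part~(a); the second inequality there is immediate.

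For part~(b), fix $f\in\widetilde\cB$. Applying part~(a) with $n=1$ to the family $\tilde\omega\mapsto H_{\tilde\omega}f(\cdot,(\tilde\omega,\uomega))$ gives, for each $\uomega$,
$$\|(Hf)(\cdot,\uomega)\|_\cB\le \int_E \|H_{\tilde\omega}f(\cdot,(\tilde\omega,\uomega))\|_\cB\, d\eta(\tilde\omega)\le \bigl(\sup_{\omega\in E}\|H_\omega\|_{L(\cB,\cB)}\bigr)\,\sup_{\uomega'}\|f(\cdot,\uomega')\|_\cB.$$
For the Lipschitz part of the $\widetilde\cB$ norm, the key observation is the identity
$$d\bigl((\tilde\omega,\uomega),(\tilde\omega,\uomega')\bigr)=\varkappa^{-1}\,d(\uomega,\uomega'),$$
which follows directly from the definition of $d$, since the zeroth coordinates of $(\tilde\omega,\uomega)$ and $(\tilde\omega,\uomega')$ agree and the first index of disagreement is therefore shifted by one. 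Using linearity and part~(a) once more,
$$\|(Hf)(\cdot,\uomega)-(Hf)(\cdot,\uomega')\|_\cB\le \sup_{\omega\in E}\|H_\omega\|\,\cdot\,\sup_{\tilde\omega}\bigl\|f(\cdot,(\tilde\omega,\uomega))-f(\cdot,(\tilde\omega,\uomega'))\bigr\|_\cB,$$
and the inner supremum is bounded by $L\cdot\varkappa^{-1}d(\uomega,\uomega')$, where $L$ is the Lipschitz constant of $\uomega\mapsto f(\cdot,\uomega)$ appearing in $\|f\|_{\widetilde\cB}$. Since $\varkappa\ge 1$, summing the two estimates gives $\|Hf\|_{\widetilde\cB}\le \sup_{\omega\in E}\|H_\omega\|_{L(\cB,\cB)}\cdot \|f\|_{\widetilde\cB}$, as desired.

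The $\widetilde\cB_w$ version is strictly easier: only the first estimate is needed, since the norm $|\cdot|_{\widetilde\cB_w}$ has no Lipschitz component. I expect no genuine obstacle in the argument; the only technical point to verify is that the Bochner integral defining $Hf$ actually takes values in $\cB$ (resp.\ $\cB_w$), which is guaranteed by strong measurability of the integrand together with completeness of these spaces.
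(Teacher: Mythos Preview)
Your proposal is correct and follows essentially the same route as the paper. For part~(a) the paper simply states ``this is just the triangle inequality,'' while you spell out why it holds by writing each norm as a supremum of linear functionals and invoking Fubini; for part~(b) both arguments are identical, hinging on the same metric identity $d((\tilde\omega,\uomega),(\tilde\omega,\uomega'))=\varkappa^{-1}d(\uomega,\uomega')$ and the bound $\varkappa^{-1}\le 1$.
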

\begin{proof}
(a) is just the triangle inequality.
Let us prove Item (b). Let $f\in \widetilde \cB$ or in $\widetilde\cB_w$ and writing $\Vert\cdot\Vert_{\sigma}$ for the associated norm, due to (a), for every $\uomega,\uomega'\in E^{\mathbb N}$, we have
\begin{eqnarray*}
\Vert Hf(\cdot,\uomega)\Vert_{\sigma}&\le& \sup_{\tilde\omega\in E} \left\Vert H_{\tilde\omega} f(\cdot,(\tilde\omega,\uomega))\right\Vert_{\sigma}\\
   &\le&
     \sup_{\tilde\omega}\Vert H_{\tilde\omega}\Vert_{\sigma}\, \Vert f(\cdot,(\tilde\omega,\uomega))\Vert_{\sigma}\le      \sup_{\tilde\omega}\Vert H_{\tilde\omega}\Vert_{\sigma}\,   \sup_{\tilde\uomega'}\Vert f(\cdot,\uomega')\Vert _{\sigma}\, ,
\end{eqnarray*}
which proves (b) if $f \in \widetilde{\cB}_w$.  If, in addition, $f \in \widetilde{\cB}$, then
\begin{eqnarray*}
{\Vert Hf(\cdot,\uomega)-Hf(\cdot,\uomega')}\Vert_{\mathcal B}&\le&
     \sup_{\tilde\omega}\Vert H_{\tilde\omega}\Vert_{\sigma}\, \sup_{\omega\in E}\Vert f(\cdot,(\omega,\uomega))-f(\cdot,(\omega,\uomega'))\Vert_{\sigma}\, \\
&\le&    \sup_{\tilde\omega}\Vert H_{\tilde\omega}\Vert_{\sigma}\, \sup_{\uomega,\uomega'\in E^{\mathbb N}}\frac{\Vert f(\cdot,(\omega,\uomega))-f(\cdot,(\omega,\uomega'))\Vert_{\sigma}}{d(\tilde\uomega,\tilde\uomega')}\varkappa^{-1}d(\uomega,\uomega')\, ,
\end{eqnarray*}
where $\tilde\uomega = (\omega, \uomega)$ and $\tilde\uomega' = (\omega, \uomega')$.
\end{proof}

\begin{rem}
The previous lemma ensures in particular that $P$ acts continuously
$\widetilde{\mathcal B}$ and on $\widetilde\cB_w$ since $\mathcal L_\omega$ acts uniformly continuously on $\cB$ and on $\cB_w$.
\end{rem}
A key step in our proof is the study the spectral properties on $\widetilde{\mathcal B}$
of $P$ and of the family of operators $P_u$ defined by
 $$P_u:=P(e^{iu\cdot\Phi}\cdot)\, .$$
The next lemma ensures, in particular, that $P_u$ is a linear operator
on $\widetilde{\mathcal B}$. Denote by $\Phi^{(1)}$ and $\Phi^{(2)}$ the components of the vector $\Phi$.
\begin{lem}\label{lem:PuOp}
For every $u\in\mathbb R^2$, any positive integer $m$ and any
$i_1,...,i_m\in\{1,2\}$, $P(\Phi^{(i_1)}...\Phi^{(i_m)} e^{iu\cdot\Phi}\cdot)$ is a linear operator on $\widetilde\cB$ and on $\widetilde\cB_w$, with operator norms uniformly in
$O(\sup_{\omega\in E}\Vert \Phi \Vert^m_\infty)$.
\end{lem}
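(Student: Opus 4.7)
The plan is to recast $P\bigl(\Phi^{(i_1)}\cdots\Phi^{(i_m)}e^{iu\cdot\Phi}\,\cdot\bigr)$ into the integral form covered by Lemma~\ref{tildeBnorm}(b), and then to verify its hypotheses via a multiplier estimate on $\cB$ supplied by Lemma~\ref{lem:piecewise}(b) and an analogous estimate on $\cB_w$ proved directly by splitting stable curves.

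First, I would unravel the definitions. Since $\Phi(x,\uomega)=\Phi_{\omega_0}(x)$ depends only on $(\omega_0,x)$, the definition of $P$ yields
\[
P\bigl(\Phi^{(i_1)}\cdots\Phi^{(i_m)}e^{iu\cdot\Phi}f\bigr)(y,\uomega)
=\int_E\bigl(\cL_\omega\bigl(g_\omega\,f(\cdot,(\omega,\uomega))\bigr)\bigr)(y)\,d\eta(\omega),
\]
with $g_\omega:=\Phi^{(i_1)}_\omega\cdots\Phi^{(i_m)}_\omega\,e^{iu\cdot\Phi_\omega}$. This matches the form of Lemma~\ref{tildeBnorm}(b) with $H_\omega(h):=\cL_\omega(g_\omega h)$, so it suffices to show $(H_\omega)_{\omega\in E}$ is a uniformly bounded family on both $\cB$ and $\cB_w$, with operator norms $O(\sup_\omega\|\Phi\|_\infty^m)$.

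For the $\cB$-bound I would invoke Lemma~\ref{lem:piecewise}(b). By finite horizon $\Phi_\omega$ takes only finitely many values, uniformly in $\omega\in E$, and is constant on the connected components of continuity of $\bar T_\omega$; grouping these into the coarser partition $\mathfrak{P}_\omega$ on which $g_\omega$ is constant gives finitely many open simply connected pieces bounded by smooth curves contained in $\cS_1^{\bar T_\omega}$. By (H1), $\mathfrak{P}_\omega$ satisfies the hypotheses of Lemma~\ref{lem:piecewise} with uniform constants. Since $g_\omega$ is constant on each $Z\in\mathfrak{P}_\omega$ and $|e^{iu\cdot\Phi_\omega}|\equiv 1$, one has $\sup_{Z}|g_\omega|_{\cC^\zeta(Z)}\le\|\Phi\|_\infty^m$. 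Lemma~\ref{lem:piecewise}(b) then gives $\|g_\omega f\|_\cB\le C\|\Phi\|_\infty^m\|f\|_\cB$, and composing with $\cL_\omega$ (bound $\le\varkappa$) yields the required uniform bound on $\|H_\omega\|_{L(\cB,\cB)}$.

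For the $\cB_w$-bound I would argue directly, since Lemma~\ref{lem:piecewise}(b) is stated only for $\cB$. Given $W\in\cW^s$ and $\psi\in\cC^p(W)$ with $|\psi|_{\cC^p(W)}\le 1$, I would split $W$ into subcurves $W_1,\ldots,W_L$ on which $g_\omega$ is constant; the uniform transversality of the boundaries of $\mathfrak{P}_\omega$ to $C^s$ in (H1), together with the uniform finiteness of $\mathfrak{P}_\omega$, bounds $L$ uniformly in $\omega$ and $W$. Each $W_j\in\cW^s$ and $\psi|_{W_j}$ is a valid test function of norm $\le 1$, so for $f\in\cC^1(\bar M_0)$,
\[
\Bigl|\int_W g_\omega f\,\psi\,dm_W\Bigr|
\le \sum_{j=1}^L |g_\omega|_{\cC^0(W_j)}\Bigl|\int_{W_j}f\,\psi\,dm_{W_j}\Bigr|
\le L\,\|\Phi\|_\infty^m\,|f|_w.
\]
Extending by density to all $f\in\cB_w$ and composing with the uniformly bounded action of $\cL_\omega$ on $\cB_w$ (Lemma~\ref{lem:recall}(c)) gives the uniform bound on $\|H_\omega\|_{L(\cB_w,\cB_w)}$. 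Then Lemma~\ref{tildeBnorm}(b) produces the claimed boundedness of $P\bigl(\Phi^{(i_1)}\cdots\Phi^{(i_m)}e^{iu\cdot\Phi}\,\cdot\bigr)$ on $\widetilde\cB$ and $\widetilde\cB_w$. The main technical step is the $\cB_w$-multiplier estimate, since it is not directly provided by any prior lemma; the piecewise-constant structure of $g_\omega$, however, reduces it to the short splitting argument above.
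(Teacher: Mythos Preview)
There is a genuine gap in your argument, and it lies precisely at the step you identify as routine: the claim that the partition $\mathfrak{P}_\omega$ on which $g_\omega$ is constant satisfies the hypotheses of Lemma~\ref{lem:piecewise}. The boundaries of this partition are contained in $\cS_1^{\bar T_\omega}$, and by {\bf (H1)} these curves are uniformly transverse to the \emph{unstable} cone $C^u$, not to $C^s$. In dispersing billiards the curves in $\cS_1$ are stable-like, so a stable curve $W\in\cW^s$ need not be transverse to $\partial Z$ for $Z\in\mathfrak{P}_\omega$; condition~(2) of Lemma~\ref{lem:piecewise} (the bound $m_W(N_\ve(\partial Z)\cap W)\le C_0\ve$ and the bound on the number of components of $Z\cap W$) therefore fails in general. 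The paper makes this explicit: ``The singularity set for $\Phi_\omega$ is $\cS_1^{\bar T_\omega}$, which does not satisfy the hypotheses of Lemma~\ref{lem:piecewise}.'' Your $\cB_w$ argument has the same problem: the uniform bound on $L$ relies on the very transversality that does not hold.

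The fix, and this is what the paper does, is to commute the multiplier through $\cL_\omega$ before estimating. Since $\bar T_\omega$ preserves $\bar\mu_0$, one has $\cL_\omega(g_\omega h)=(g_\omega\circ\bar T_\omega^{-1})\,\cL_\omega h$. The discontinuity set of $g_\omega\circ\bar T_\omega^{-1}$ is $\cS_{-1}^{\bar T_\omega}$, which by {\bf (H1)} \emph{is} uniformly transverse to $C^s$ and consists of finitely many smooth curves; hence the partition $\cZ$ into components of $\bar M_0\setminus\cS_{-1}^{\bar T_\omega}$ satisfies Lemma~\ref{lem:piecewise}(b), yielding $\|\cL_\omega(g_\omega h)\|_\cB\le C\sup_{Z\in\cZ}|g_\omega\circ\bar T_\omega^{-1}|_{\cC^1(Z)}\|\cL_\omega h\|_\cB\le C'\|\Phi_\omega\|_\infty^m\|h\|_\cB$. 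The same commutation gives the $\cB_w$ estimate, since now each $W\in\cW^s$ meets only boundedly many $Z\in\cZ$ and $g_\omega\circ\bar T_\omega^{-1}$ is constant on each. With these uniform bounds on $H_\omega$, your appeal to Lemma~\ref{tildeBnorm}(b) completes the proof exactly as you outlined.
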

\begin{proof}
This proof is a variation of the argument used in
\cite[Section 5.2]{MarkHongKun2013}.
Recall that $Pg(\cdot,\uomega)=\int_{E}f(\bar T_\omega^{-1}(\cdot),(\omega,\uomega))\, d\eta(\omega)$, so that
$$ P(\Phi^{(i_1)}...\Phi^{(i_m)} e^{iu\cdot\Phi}f)(\cdot,\uomega)
       =\int_E \left(\Phi_{\omega}^{(i_1)}...\Phi_{\omega}^{(i_m)} e^{iu\cdot \Phi_\omega}\right)\circ \bar T_\omega^{-1}
      \mathcal L_\omega f(\cdot,(\omega,\uomega)\, d\eta(\omega)\, .$$

Let $\omega\in E$.
The singularity set for $\Phi_\omega$ is $\cS_1^{\bar T_\omega}$, which does not satisfy the hypotheses of
Lemma~\ref{lem:piecewise}; however, the
singularity set for $\Phi_\omega \circ \bar T_\omega^{-1}$ is $\cS_{-1}^{\bar T_\omega}$, which satisfies the
hypotheses of Lemma~\ref{lem:piecewise}(b) since there are only finitely many such curves and
they lie in the unstable cone.
Let $\cZ$ denote the (finite) partition of $\bar M_0 \setminus \cS_{-1}^{\bar T_\omega}$ into its maximal
connected components.  Note that $\Phi_\omega \circ \bar T_\omega^{-1}$ is constant on each element of $\cZ$.  
We use Lemma~\ref{lem:piecewise}(b) to estimate, for every $f\in \widetilde \cB$,
\begin{equation}
\label{eq:analytic}
\begin{split}
\| \mathcal L_\omega(\Phi_\omega^{(i_1)}...\Phi_\omega^{(i_m)} e^{iu\cdot\Phi}f) (\cdot, \uomega) \|_{\cB} & = \| (i \cdot \Phi_\omega)^n \circ \bar T_\omega^{-1} \, (\cL_\omega f) (\cdot, \uomega) \|_{\cB}\\
&\le C \sup_{Z \in \cZ} | (\Phi_\omega^{(i_1)}...\Phi_\omega^{(i_m)}e^{iu\cdot\Phi_\omega}) \circ \bar T_\omega^{-1}|_{\cC^1(Z)} \| \cL_\omega f(\cdot,(\omega,\uomega)) \|_{\cB} \\
&\le C'  \Vert \Phi_\omega\Vert_\infty^m \|  f (\cdot,(\omega,\uomega))\|_{\cB} \, .
\end{split}
\end{equation}
Analogously, for every  $ f\in \widetilde \cB_w$,
\[
| \mathcal L_\omega(\Phi_\omega^{(i_1)}...\Phi_\omega^{(i_m)} e^{iu\cdot\Phi}f )   (\cdot, \uomega)  |_{w}
\le C' \Vert \Phi\Vert_\infty^m |  f (\cdot,(\omega,\uomega)) |_w \, ,
\]
and we conclude by Item (b) of Lemma \ref{tildeBnorm}.
\end{proof}

\subsection{View $P_u$ as a perturbation of a quasicompact operator}
\label{sec:props}

For the remainder of Section~\ref{sec:ops}, we fix a billiard map $\bar T_0$, and for $\vartheta_0>0$, define
$\bar {\mathcal{F}}_{\vartheta_0}(\bar T_0)$ as in \eqref{defnFtheta}.  Our main results in this setting will be that for $\vartheta_0$
sufficiently small, both $P$ and $P_u$ are quasi-compact and have a spectral gap in $\widetilde \cB$.  These statements
are contained in Proposition~\ref{Quasicompact} and Theorem~\ref{prop:perturb}.

Recall $P_u:=P(e^{iu\cdot\Phi}\cdot)$.
Our next result states that $P_u$ is a small perturbation (as $\vartheta_0\rightarrow 0$) of  $\mathcal P_u:=\mathcal P(e^{iu\cdot\Phi_0}\cdot)$, where $\mathcal P$ is the transfer operator $\mathcal {P}_0$ of the direct product $(\bar M,\bar\mu,\bar T'_0:=\bar T_0\times \sigma)$, i.e.
$$\mathcal{ P}(f)(y,(\omega_k)_{\ge 0})=\int_{E}\mathcal L_{0} f(\cdot,(\omega_{k-1})_{k})(y)\,  d\eta(\omega_{-1})\,  ,$$
and
$$\mathcal{ P}_u(f)(y,(\omega_k)_{\ge 0})=\int_{E}\mathcal L_{u,0} f(\cdot,(\omega_{k-1})_{k})(y)\,  d\eta(\omega_{-1})\,  .$$
Here, $\cL_0 = \cL_{\bar T_0}$ and $\cL_{u,0} = \cL_{\bar T_0}( e^{iu \cdot \Phi_0} \cdot)$.

\begin{prop}\label{lem:close2}
There exists $C>0$ such that for every $u\in\mathbb R^2$ and every $f\in\widetilde{\mathcal B}$,
$$\left\vert  P_u f-\mathcal P_u f \right\vert_{\widetilde{\mathcal B}_w}\le C\Vert f\Vert_{\widetilde{\mathcal B}}\, \vartheta_0^{\frac\gamma 2}\, .$$
\end{prop}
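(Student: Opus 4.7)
The plan is to reduce the claim to a fiber-wise perturbation estimate on single-map twisted transfer operators acting on $\cB$, and then to invoke the deterministic perturbation theory of \cite{MarkHongKun2013}. Starting from
\[
(P_u-\mathcal P_u)f(y,\uomega)=\int_E\bigl(\mathcal L_{u,\omega}-\mathcal L_{u,0}\bigr)f(\cdot,(\omega,\uomega))(y)\,d\eta(\omega),
\]
Lemma \ref{tildeBnorm}(a) applied with the weak norm reduces the proposition to showing that, uniformly in $u\in\mathbb R^2$ and in $\omega\in E$ with $\bar T_\omega\in\bar{\mathcal{F}}_{\vartheta_0}(\bar T_0)$,
\begin{equation}
\label{eq:fiber-plan-est}
|\mathcal L_{u,\omega}g-\mathcal L_{u,0}g|_w\le C\vartheta_0^{\gamma/2}\|g\|_{\cB}, \qquad g\in\cB.
\end{equation}

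To prove \eqref{eq:fiber-plan-est} I would decompose
\[
\mathcal L_{u,\omega}g-\mathcal L_{u,0}g \;=\; (\mathcal L_\omega-\mathcal L_0)\bigl(e^{iu\cdot\Phi_\omega}g\bigr) \;+\; \mathcal L_0\bigl((e^{iu\cdot\Phi_\omega}-e^{iu\cdot\Phi_0})g\bigr).
\]
For the first summand, the multiplier $e^{iu\cdot\Phi_\omega}$ is piecewise constant on the finite partition of $\bar M_0$ into connected components of continuity of $\bar T_\omega$; the boundary of this partition, contained in $\cS_1^{\bar T_\omega}$, consists of finitely many smooth curves in the unstable cone and so is uniformly transverse to the stable cone (by (H1)), with a uniform bound on the number of elements meeting any homogeneity strip. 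The hypotheses of Lemma \ref{lem:piecewise}(b) therefore hold with $\sup_Z|e^{iu\cdot\Phi_\omega}|_{\cC^\zeta(Z)}=1$, giving $\|e^{iu\cdot\Phi_\omega}g\|_{\cB}\le C\|g\|_{\cB}$ uniformly in $u$ and $\omega$. The bound $|(\mathcal L_\omega-\mathcal L_0)h|_w\le C\vartheta_0^{\gamma/2}\|h\|_{\cB}$---the deterministic single-map perturbation estimate of \cite{MarkHongKun2013}---applied to $h=e^{iu\cdot\Phi_\omega}g$ then yields the desired control.

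For the second summand, the key point is that $\Phi_\omega$ takes values in the discrete set $\mathbb Z^2$, while (C1)--(C4) make $\bar T_\omega,\bar T_0$ $\cC^1$-close outside an $\vartheta_0$-neighborhood of $\cS_{-1}^{\bar T_\omega}\cup\cS_{-1}^{\bar T_0}$; this forces $\Phi_\omega=\Phi_0$ off such a neighborhood. Hence $(e^{iu\cdot\Phi_\omega}-e^{iu\cdot\Phi_0})g$ vanishes outside a set whose intersection with any stable curve has arclength $O(\vartheta_0)$, and I would estimate the integrals defining $|\cdot|_w$ by pairing the strong stable norm of $g$ with the length of the support to bound this contribution by $C\vartheta_0^{1-\varsigma}\|g\|_{\cB}$. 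By the constraints $\gamma\le\min\{\varsigma,p-q\}$ and $\varsigma\le 1-\zeta_0$ listed just before \eqref{eq:s-unstable}, this is at most $C\vartheta_0^{\gamma/2}\|g\|_{\cB}$, completing \eqref{eq:fiber-plan-est}.

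The main obstacle is the scalar perturbation estimate invoked in the first summand: its proof in \cite{MarkHongKun2013} requires a delicate pairing of the preimages $\bar T_\omega^{-1}W$ and $\bar T_0^{-1}W$ of a test stable curve $W$ into \emph{matched} pieces (at $d_{\cW^s}$-distance $\le C\vartheta_0$, handled via the strong unstable norm together with the $\cC^1$-closeness of the inverse Jacobians from (C4)---the source of the ``$1/2$'' in the exponent $\gamma/2$) and \emph{unmatched} pieces of total arclength $O(\vartheta_0)$ near the differing singularity sets (absorbed by the strong stable norm). I would simply import this fact rather than reprove it here.
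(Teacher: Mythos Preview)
Your reduction via Lemma~\ref{tildeBnorm}(a) to a fiber-wise estimate of the form \eqref{eq:fiber-plan-est} is exactly right and matches the paper. The problem is in your decomposition
\[
\mathcal L_{u,\omega}g-\mathcal L_{u,0}g=(\mathcal L_\omega-\mathcal L_0)(e^{iu\cdot\Phi_\omega}g)+\mathcal L_0\bigl((e^{iu\cdot\Phi_\omega}-e^{iu\cdot\Phi_0})g\bigr),
\]
specifically in the claim that $\|e^{iu\cdot\Phi_\omega}g\|_\cB\le C\|g\|_\cB$ via Lemma~\ref{lem:piecewise}(b). You assert that the discontinuity set of $\Phi_\omega$, namely $\cS_1^{\bar T_\omega}$, ``consists of finitely many smooth curves in the unstable cone.'' This is false: (H1) says that $\cS_n^F$ is uniformly transverse to $C^u(x)$, not contained in it. For dispersing billiards the curves in $\cS_1$ are stable-like (decreasing in $(r,\vf)$), so they are \emph{not} uniformly transverse to stable curves, and condition~(2) of Lemma~\ref{lem:piecewise} fails. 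The paper makes this point explicitly in the proof of Lemma~\ref{lem:PuOp}: ``The singularity set for $\Phi_\omega$ is $\cS_1^{\bar T_\omega}$, which does not satisfy the hypotheses of Lemma~\ref{lem:piecewise}.'' Consequently $e^{iu\cdot\Phi_\omega}g$ need not lie in $\cB$ at all (the strong unstable norm can blow up when two nearby stable curves straddle a stable-aligned discontinuity), and the untwisted perturbation bound from \cite{MarkHongKun2013} cannot be applied to this product. The same geometric issue undermines your second term: since $\cS_1^{\bar T_\omega}\cup\cS_1^{\bar T_0}$ is not transverse to $\cW^s$, the support of $e^{iu\cdot\Phi_\omega}-e^{iu\cdot\Phi_0}$ can intersect a single stable curve in a set of length far exceeding $O(\vartheta_0)$.

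The paper avoids this by \emph{not} splitting off the multiplier before applying the transfer operator. Instead it proves Lemma~\ref{lem:close} by revisiting the matched/unmatched decomposition of $\bar T_\omega^{-1}W$ and $\bar T_0^{-1}W$ directly for the twisted operators. On each preimage piece $W_i\in\cW^s$ the function $\Phi_\omega$ (respectively $\Phi_0$) is already constant, so the twist contributes only a unimodular scalar; on matched pieces the two scalars coincide because matched pieces lie in the same component of $\bar M_0\setminus(\cS_1^{\bar T_\omega}\cup\cS_1^{\bar T_0})$, and the common factor drops out of the difference. Thus the estimates of \cite[Theorem~2.3]{MarkHongKun2013} go through verbatim with constants independent of $u$. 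The moral is that the twist must be pushed through $\cL_\omega$ first, turning its discontinuity set into $\cS_{-1}$ (unstable-aligned, hence admissible), rather than treated as a pre-multiplier on $\cB$.
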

Before proving this proposition, we state the following lemma.

\begin{lem}
\label{lem:close}
There exists $C > 0$ such that for all $\omega\in E$ and $u \in \mathbb{R}^2$,
\[
| \cL_{u, \omega} f - \cL_{u, 0} f |_w \le C \| f \|_{\cB} d_{\bar {\mathcal{F}}}(\bar T_\omega, \bar T_0)^{\gamma/2}, \quad \forall \, f \in \cB .
\]
\end{lem}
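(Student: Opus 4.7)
The plan is to bound $|\cL_{u,\omega} f - \cL_{u,0} f|_w$ by testing against an arbitrary pair $(W, \psi)$ with $W \in \cW^s$, $\psi \in \cC^p(W)$, $|\psi|_{\cC^p(W)} \le 1$, uniformly in $u$. Set $\vartheta := d_{\bar{\mathcal{F}}}(\bar T_\omega, \bar T_0)$. A useful preliminary observation: for $\vartheta_0$ small, the integer-valued cell-change functions $\Phi_\omega$ and $\Phi_0$ (each locally constant on $\bar M_0 \setminus \cS_{0,H}$, a common set of continuity) must agree, so $e^{iu\cdot\Phi_\omega} \equiv e^{iu\cdot\Phi_0}$ almost everywhere. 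This reduces the problem to bounding $|\cL_\omega g - \cL_0 g|_w$ with $g := e^{iu\cdot\Phi_0} f$, where $\|g\|_\cB \le C \|f\|_\cB$ uniformly in $u$ by Lemma~\ref{lem:piecewise}(b) applied to the partition generated by $\cS_{0,H}$ (whose strata are transverse to $C^s$ by \textbf{(H1)} and have uniformly bounded complexity per homogeneity strip).

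Next, split $W = W^b \sqcup W^g$, where $W^b$ is the intersection of $W$ with a $\vartheta$-neighborhood of $\cS_{-1}^{\bar T_\omega} \cup \cS_{-1}^{\bar T_0}$. By \textbf{(H1)}, each $\cS_{-1}$ is a finite union of smooth curves uniformly transverse to $C^s$, so $W^b$ consists of boundedly many pieces of total arclength $O(\vartheta)$. On $W^b$, rather than pulling back, I would use that $\cL_\omega g, \cL_0 g \in \cB$ with uniform $\cB$-norms (uniform Lasota--Yorke, \cite[Thm.~2.2]{MarkHongKun2013}), and apply the strong stable norm directly on each short piece $W^b_k$: since $|\psi|_{\cC^q(W^b_k)} \le 1 \le |W^b_k|^{-\varsigma}$, the rescaling trick gives $\int_{W^b_k} \cL_\omega g \cdot \psi \, dm_W \le \|\cL_\omega g\|_s \, |W^b_k|^\varsigma \le C \|g\|_\cB \vartheta^\varsigma$, and analogously for $\cL_0 g$. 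Since $\gamma \le \varsigma$, the bad-part contribution is $O(\|f\|_\cB \vartheta^{\gamma/2})$.

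On the good part, change variables to the preimages $\bar T_\omega^{-1}(W^g) = \bigsqcup_i U_i$ and $\bar T_0^{-1}(W^g) = \bigsqcup_i U'_i$, where the bijection $U_i \leftrightarrow U'_i$ is obtained from \textbf{(C1)} (for $\vartheta_0$ small, $\bar T_\omega^{-1}(x)$ and $\bar T_0^{-1}(x)$ lie in the same source continuity component, so each preimage branch of $\bar T_\omega$ matches one of $\bar T_0$). Combining \textbf{(C1)} and \textbf{(C4)} via the graph representation of stable curves yields $d_{\cW^s}(U_i, U'_i) \le C \sqrt\vartheta$, the square root---hence the final exponent $\gamma/2$---being inherited from \textbf{(C4)}. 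For each matched pair, I would write
\[
\int_{U_i} g \widetilde\psi_i^\omega \, dm - \int_{U'_i} g \widetilde\psi_i^0 \, dm
= \bigg( \int_{U_i} g \widetilde\psi_i^\omega \, dm - \int_{U'_i} g \widehat\psi_i \, dm \bigg) + \int_{U'_i} g\, (\widehat\psi_i - \widetilde\psi_i^0) \, dm,
\]
with $\widetilde\psi_i^\omega := (\psi \circ \bar T_\omega)\, J_{U_i}\bar T_\omega / J_{\bar\mu_0}\bar T_\omega$ (analogously $\widetilde\psi_i^0$), and $\widehat\psi_i$ defined on $U'_i$ via the graph correspondence with $U_i$ so that $d(\widetilde\psi_i^\omega, \widehat\psi_i) = 0$. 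The first bracketed term is estimated by the strong unstable norm: $\le C \|g\|_u (\sqrt\vartheta)^\gamma = C \|g\|_u \vartheta^{\gamma/2}$. The second term uses \textbf{(C2)}--\textbf{(C3)} to obtain $|\widehat\psi_i - \widetilde\psi_i^0|_{\cC^0(U'_i)} \le C \vartheta\, |J_{U'_i}\bar T_0|_\infty$, and summing over $i$ via the weakened \textbf{(H3)} with $\zeta = 1 \in [\zeta_0, 1]$ gives total $C |g|_w \vartheta \le C \|f\|_\cB \vartheta^{\gamma/2}$.

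The main obstacle is the geometric bookkeeping on $W^g$: establishing and quantifying the bijection $U_i \leftrightarrow U'_i$, tracking the $d_{\cW^s}$-closeness through the square root in \textbf{(C4)}, and constructing $\widehat\psi_i$ precisely to satisfy $d(\widetilde\psi_i^\omega, \widehat\psi_i) = 0$ as demanded by the unstable-norm estimate. A secondary check is that the routine uniform bounds---the multiplier bound $\|e^{iu\cdot\Phi_0} f\|_\cB \le C \|f\|_\cB$, the $\cC^p$-regularity of pulled-back test functions $\widetilde\psi_i^\omega$ via \textbf{(H4)}, and the summability of Jacobian-close contributions via \textbf{(H3)}---all go through uniformly in $u$, which they do because $|e^{iu\cdot\Phi}| \equiv 1$ and the phases match on corresponding preimage branches.
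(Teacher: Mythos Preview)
Your preliminary reduction is incorrect on two counts, and this invalidates the argument as written.

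First, the cell-change function $\Phi_\omega$ is locally constant on $\bar M_0 \setminus \cS_1^{\bar T_\omega}$, \emph{not} on $\bar M_0 \setminus \cS_{0,H}$. The sets $\cS_1^{\bar T_\omega}$ and $\cS_1^{\bar T_0}$ are genuinely different (the distance $d_{\bar{\mathcal F}}$ imposes no closeness requirement on singularity sets), so $\Phi_\omega$ and $\Phi_0$ do \emph{not} agree almost everywhere; they differ on a set of small but positive $\bar\mu_0$-measure (cf.\ the proof of Lemma~\ref{Sigma>0}, where $\bar\mu_0(\Phi_\omega \ne \Phi_0) \to 0$ is used, not equality). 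Hence $\cL_{u,\omega} f \ne \cL_\omega(e^{iu\cdot\Phi_0} f)$ in general, and the reduction to $\cL_\omega g - \cL_0 g$ with a common $g$ fails.

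Second, even if the phases did agree, the multiplier bound $\|e^{iu\cdot\Phi_0} f\|_\cB \le C\|f\|_\cB$ is not available: the discontinuity set of $\Phi_0$ is $\cS_1^{\bar T_0}$, whose curves lie \emph{in} the stable cone rather than transverse to it, so the hypotheses of Lemma~\ref{lem:piecewise}(b) are not met. (This is exactly why the proof of Lemma~\ref{lem:PuOp} works with $\Phi_\omega \circ \bar T_\omega^{-1}$, whose discontinuity set $\cS_{-1}^{\bar T_\omega}$ \emph{is} transverse to $C^s$.)

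The repair is to abandon the reduction and carry the phases through the matched/unmatched decomposition directly, which is what the paper does. On each preimage curve $U_i \subset \bar T_\omega^{-1}W$ (resp.\ $U'_i \subset \bar T_0^{-1}W$) the factor $e^{iu\cdot\Phi_\omega}$ (resp.\ $e^{iu\cdot\Phi_0}$) is a constant of modulus $1$, so the unmatched estimates are unchanged from the $u=0$ case. On matched pairs $U_i, U'_i$, both curves lie in the same connected component of $\bar M_0 \setminus (\cS_1^{\bar T_\omega} \cup \cS_1^{\bar T_0})$, hence $\Phi_\omega = \Phi_0$ there, the common constant $e^{iu\cdot\Phi_0}$ factors out of the difference, and the strong-unstable-norm estimate is again identical to the $u=0$ case. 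The rest of your argument (the structure of the matched/unmatched split, the $\sqrt\vartheta$ from \textbf{(C4)}, the use of $\|\cdot\|_u$ and $\|\cdot\|_s$) is essentially the content of \cite[Theorem~2.3]{MarkHongKun2013} and is correct once applied to $\cL_{u,\omega} f$ and $\cL_{u,0} f$ directly rather than to $\cL_\omega g$ and $\cL_0 g$.
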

\begin{proof}
This lemma for $u=0$ is proved in \cite[Theorem~2.3]{MarkHongKun2013}.  As in the proof
of Lemma~\ref{lem:uniform ly}, we must show that the relevant estimates are independent of $u$.
The relevant estimate is eq. (5.1) of \cite{MarkHongKun2013}:  Let
$W \in \cW^s$, $f \in \cC^1(\bar M_0)$ and $\psi \in \cC^p(W)$ with $|\psi|_{\cC^p(W)} \le 1$.  Then,
\[
\begin{split}
\int_W (\cL_{u, \omega} f & - \cL_{u, 0} f) \psi \, dm_W
= \int_W \cL_{u, \omega} f \, \psi \, dm_W - \int_W \cL_{u,0} f \, \psi \, dm_W \\
& = \int_{\bar T_\omega^{-1} W} f \, e^{i u \cdot \Phi_\omega} \, \psi \circ \bar T_\omega
 J_{\bar T_\omega^{-1} W} \bar T_\omega \, dm_W 
 -  \int_{\bar T_0^{-1} W} f \, e^{i u \cdot \Phi_0} \, \psi \circ \bar T_0
 J_{\bar T_0^{-1} W} \bar T_0 \, dm_W .
\end{split}
\]
Following \cite{MarkHongKun2013}, we split $\bar T_\omega^{-1}W$ and $\bar T_0^{-1}W$ into
matched pieces (which can be connected by a transverse foliation of unstable curves) and
unmatched pieces which cannot.  On the unmatched pieces, we use \eqref{eq:same}
to note that the estimate \cite[eq.~(5.3)]{MarkHongKun2013} remains identical.  Similarly,
since matched pieces lie in the same connected component of
$\bar M_0 \setminus (\cS_1^{\bar T_\omega} \cup \cS_1^{\bar T_0})$, we have
$\Phi_\omega = \Phi_0$ on such components.  Thus factoring out the common constant
$e^{i u \cdot \Phi_0}$ from the difference on matched pieces, we see that
the estimate \cite[eq.~(5.4)]{MarkHongKun2013} holds without any changes.  Thus the final
estimate \cite[eq.~(5.9)]{MarkHongKun2013} holds with the same constants, independently of
$u \in \mathbb{R}^2$.
\end{proof}
\begin{proof}[Proof of Proposition \ref{lem:close2}]
This comes directly from Lemmas \ref{lem:close} and \ref{tildeBnorm}. Indeed, for every $f\in\widetilde{\mathcal B}$, we have
\begin{eqnarray*}
\sup_{\underline{\omega}\in E^{\mathbb N}}\left|
    (P_u-\mathcal P_u)f (\cdot, \uomega) \right|_w &=&
    \sup_{\underline{\omega}\in E^{\mathbb N}}\left|
        \int_E\left(\mathcal L_{u,\omega_{-1}}-\mathcal L_{u,0}\right)f(\cdot,(\omega_{-1},\underline{\omega}))\, d\eta(\omega_{-1})\right|_w\\
&\le&     \sup_{\underline{\omega}\in E^{\mathbb N}}\int _E\left|
        \left(\mathcal L_{u,\omega_{-1}}-\mathcal L_{u,0}\right)f(\cdot,(\omega_{-1},\underline{\omega}))\right|\, d\eta(\omega_{-1})\\
&\le&C\sup_{\underline{\omega'}\in E^{\mathbb N}}
\Vert f(\cdot, \underline{\omega'})\Vert_{\mathcal B}\, \vartheta_0^{\frac\gamma 2}= C
\Vert f\Vert_{\widetilde{\mathcal B}}  \,   \vartheta_0^{\frac\gamma 2} \, ,
\end{eqnarray*}
since $\bar T_\omega \in \bar {\mathcal{F}}_{\theta_0}(\bar T_0)$.
\end{proof}

\begin{lem}\label{quasicompact1}
${\mathcal P}$ is quasicompact, 1 its is
only dominating eigenvalue and it is a simple eigenvalue
(with eigenspace $\mathbb C.\bar \mu$). In particular, there exists $\tilde C>0$ and $\tilde\alpha\in(0,1)$ such that
$$\forall f\in\widetilde{\mathcal B},\quad
          \Vert { \mathcal P}^n f-\mathbb E_{\bar\mu}[f]\mathbf 1_{\bar M}\Vert_{\widetilde {\mathcal B}}\le
      \tilde C\tilde\alpha^n\Vert f\Vert_{\widetilde{\mathcal B}}.$$
\end{lem}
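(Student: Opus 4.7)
The strategy is to view $\mathcal P$ as a skew-product transfer operator that combines $\cL_0$ acting in the spatial direction $\bar M_0$ with the Bernoulli-shift transfer operator acting in the random direction $E^{\mathbb N}$, each of which has its own spectral gap. From \cite{MarkHongKun2011,MarkHongKun2013,MarkHongKun2014} I know that $\cL_0$ is quasicompact on $\cB$ with $1$ as its unique simple dominant eigenvalue (via a Lasota--Yorke inequality plus the compact injection $\cB\hookrightarrow\cB_w$ of Lemma~\ref{lem:recall}(d), together with mixing of $\bar T_0$). Writing the corresponding spectral decomposition $\cL_0 = \Pi_0 + R_0$ with $\Pi_0 f = \mathbb E_{\bar\mu_0}[f]\mathbf 1_{\bar M_0}$, $\Pi_0 R_0 = R_0 \Pi_0 = 0$, and $\|R_0^n\|_{L(\cB,\cB)} \le C\alpha_0^n$ for some $\alpha_0 \in (0,1)$, iteration of the definition of $\mathcal P$ yields
\begin{equation*}
\mathcal P^n f(y,\uomega) = \int_{E^n}\bigl(\cL_0^n f(\cdot,(\tilde\uomega_n,\uomega))\bigr)(y)\, d\eta^{\otimes n}(\tilde\uomega_n) = A_n(\uomega)\mathbf 1_{\bar M_0}(y) + B_n(y,\uomega),
\end{equation*}
where $A_n(\uomega) := \int_{E^n} g(\tilde\uomega_n,\uomega)\, d\eta^{\otimes n}(\tilde\uomega_n)$ with $g(\uomega):=\mathbb E_{\bar\mu_0}[f(\cdot,\uomega)]$, and $B_n(y,\uomega) := \int_{E^n}\bigl(R_0^n f(\cdot,(\tilde\uomega_n,\uomega))\bigr)(y)\, d\eta^{\otimes n}(\tilde\uomega_n)$.

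To bound $B_n$, Lemma~\ref{tildeBnorm}(a) applied fiberwise gives $\|B_n(\cdot,\uomega)\|_{\cB} \le C\alpha_0^n\|f\|_{\widetilde\cB}$ uniformly in $\uomega$; for the Lipschitz part, the key observation $d((\tilde\uomega_n,\uomega),(\tilde\uomega_n,\uomega')) = \varkappa^{-n} d(\uomega,\uomega')$ yields $\|B_n(\cdot,\uomega)-B_n(\cdot,\uomega')\|_{\cB} \le C\alpha_0^n\varkappa^{-n}\|f\|_{\widetilde\cB}\,d(\uomega,\uomega')$, hence $\|B_n\|_{\widetilde\cB} \le C\alpha_0^n\|f\|_{\widetilde\cB}$. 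To bound $A_n$, Remark~\ref{Rke0} shows $g$ is Lipschitz on $(E^{\mathbb N},d)$ with $\mathrm{Lip}(g) \le C\|f\|_{\widetilde\cB}$, so $A_n$ inherits $\mathrm{Lip}(A_n) \le C\|f\|_{\widetilde\cB}\varkappa^{-n}$. Shift-invariance of $\eta^{\otimes\mathbb N}$ together with Remark~\ref{Rke1} give $\int A_n\, d\eta^{\otimes\mathbb N} = \int g\, d\eta^{\otimes\mathbb N} = \mathbb E_{\bar\mu}[f]$; averaging $A_n(\uomega)-A_n(\uomega')$ over $\uomega'$ yields $\|A_n-\mathbb E_{\bar\mu}[f]\|_\infty \le C\|f\|_{\widetilde\cB}\varkappa^{-n}$. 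Since $\mathbf 1_{\bar M_0}\in\cB$ by Lemma~\ref{lem:recall}(a), this delivers $\|(A_n-\mathbb E_{\bar\mu}[f])\mathbf 1_{\bar M_0}\|_{\widetilde\cB} \le C\|f\|_{\widetilde\cB}\varkappa^{-n}$.

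Combining both estimates with $\tilde\alpha := \max(\alpha_0,\varkappa^{-1})\in(0,1)$ gives the stated inequality. From it, the simplicity of $1$ as an eigenvalue (eigenspace $\mathbb C\cdot\mathbf 1_{\bar M}$, identified with $\mathbb C\cdot\bar\mu$), the absence of any other eigenvalue of unit modulus, and quasicompactness all follow immediately, since $\mathcal P^n - \Pi$ then decays exponentially in operator norm for the rank-one projector $\Pi f := \mathbb E_{\bar\mu}[f]\mathbf 1_{\bar M}$. The principal subtlety is to coordinate the two decay mechanisms inside the single norm $\|\cdot\|_{\widetilde\cB}$: the definition $d(\uomega,\uomega') = \varkappa^{-\min\{k:\omega_k\ne\omega'_k\}}$ is precisely engineered so that integration over $\tilde\uomega_n$ simultaneously iterates $\cL_0$ $n$ times in space and pushes the random coordinate $n$ positions into the tail, allowing both contractions to take effect at once.
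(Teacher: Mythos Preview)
Your proof is correct and follows essentially the same route as the paper: both arguments write $\mathcal P^n f$ as an integral over $E^n$ of $\cL_0^n f(\cdot,(\tilde\uomega_n,\uomega))$, split via the spectral decomposition $\cL_0^n=\Pi_0+R_0^n$ into a projection part (your $A_n$) and a remainder (your $B_n$), and then exploit the relation $d((\tilde\uomega_n,\uomega),(\tilde\uomega_n,\uomega'))=\varkappa^{-n}d(\uomega,\uomega')$ to control the Lipschitz component, arriving at the same rate $\tilde\alpha=\max(\alpha_0,\varkappa^{-1})$. The only organizational difference is that the paper treats the sup and Lipschitz parts of the $\widetilde\cB$ norm separately and applies the $\Pi_0/R_0^n$ split inside each, whereas you split first into $A_n+B_n$ and then estimate both norm components of each piece; the underlying estimates are identical.
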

\begin{proof}
Due to \cite[Theorem 2.2 and Corollary 2.4]{MarkHongKun2013}) $\cL_0$ is quasicompact, 1
is its only dominating eigenvalue and it is a simple eigenvalue
(with eigenspace $\mathbb C.\mathbf 1_{\bar M_0}$).
In particular, there exists $\tilde C>0$, $\tilde \alpha_0 \in(0,1)$ such that
$$\forall h\in\mathcal B,\quad
     \Vert\mathcal L_0^n h-\mathbb E_{\bar\mu_0}[h]\mathbf 1_{\bar M_0}\Vert_{\mathcal B}\le \tilde C\tilde\alpha_0^n  \Vert h\Vert_{\mathcal B}\, .$$
Let $f \in\widetilde{\mathcal B}$.
Observe that
$${ \mathcal P}^n(f)(y,(\omega_k)_{\ge 0})=\int_{E^n}\mathcal L^n_{0} f(\cdot,(\omega_{k-n})_{k})(y)\,  d\eta^{\otimes n}(\omega_{-n},...,\omega_{-1}) \, $$
and that
$$ \mathbb E_{\bar\mu}[f]=\int_{\mathbb E^{\mathbb N}}\mathbb E_{\bar\mu_0}[f(\cdot,\underline{\omega'})]\, d\eta^{\otimes\mathbb N}(\underline{\omega'})\, .$$
First, setting $\tilde \omega_n = (\omega_{-n}, \ldots, \omega_{-1})$, we have, using Lemma~\ref{lem:piecewise}(a), 
 \begin{eqnarray*}
&\ &\sup_{\underline{\omega}}\Vert\mathcal P^n(f)(\cdot,{\underline{\omega}})-\mathbb E_{\bar\mu} [f]\mathbf 1_{\bar M}\Vert_{\mathcal B}= \sup_{\underline\omega}\left\Vert
\int_{E^n}(\mathcal L_0^n f(\cdot,(\omega_{k-n})_{k})- \mathbb{E}_{\bar \mu}[f])
\,  d\eta(\omega_{-1})...d\eta(\omega_{-n})\right\Vert_{\mathcal B}\\
&\le &\sup_{\underline{\omega}}\left\Vert
\int_{E^n}\mathcal L_0^n f(\cdot,(\omega_{k-n})_{k})-\mathbb E_{\bar\mu_0}\left[f(\cdot,(\omega_{k-n})_{k})\right]\,  d\eta(\omega_{-1})...d\eta(\omega_{-n})\right\Vert_{{\mathcal B}}\\
&\ &+ \, \Vert \mathbf 1_{\bar M_0}\Vert_{\mathcal B}\sup_{\underline{\omega}}\left\vert\int_{E^n}\left(\mathbb E_{\bar\mu_0}\left[f(\cdot,(\tilde\omega_n,\underline{\omega}))\right]-\int_{E^{\mathbb{N}}} \mathbb E_{\bar\mu_0}(f(\cdot,(\tilde\omega_n,\underline{\omega}')))d\eta^{\otimes\mathbb N}(\underline{\omega}')\right)\,  d\eta^{\otimes n}(\tilde\omega_n)\right\vert\\
&\le &\sup_{\underline{\omega}}
\int_{E^n}\left\Vert\mathcal L_0^n f(\cdot,(\omega_{k-n})_{k})-\mathbb E_{\bar\mu_0}[f(\cdot,(\omega_{k-n})_{k})]\right\Vert_{\mathcal B}\,  d\eta(\omega_{-1})...d\eta(\omega_{-n})\\
&\ &+\Vert \mathbf 1_{\bar M_0}\Vert_{\mathcal B}\sup_{\underline{\omega},\underline{\omega}'\, :\, d(\underline{\omega},\underline{\omega}')<\varkappa^{-n}}\left\vert\mathbb E_{\bar\mu_0}\left[f(\cdot,\underline{\omega})-f(\cdot,\underline{\omega}')\right]\right\vert\\
&\le&\tilde C\tilde \alpha_0^n\int_{E^n}\left\Vert f(\cdot,(\omega_{k-n})_{k})\right\Vert_{\mathcal B}\,  d\eta(\omega_{-1})...d\eta(\omega_{-n})\\
&\ &+\Vert \mathbf 1_{\bar M_0}\Vert_{\mathcal B}\Vert\mathbb E_{\bar\mu_0}[\cdot]\Vert_{\mathcal B'}\sup_{\underline{\omega},\underline{\omega}'\, :\, d(\underline{\omega},\underline{\omega}')<\varkappa^{-n}}\left\Vert f(\cdot,\underline{\omega})-f(\cdot,\underline{\omega}')\right\Vert_{\mathcal B}\\
&\le&(\tilde C\tilde \alpha_0^n+C_1\varkappa^{-n})\Vert f\Vert_{\widetilde{\mathcal B}},
\end{eqnarray*}
since $\mathbf 1_{\bar M_0}$ is in $\mathcal B$ and $\mathbb E_{\bar\mu_0}[\cdot]$ is in the dual of $\mathcal B$
by Remark~\ref{Rke0}.

Second, for every $\underline{\omega}$ and
$\underline{\omega}'$ in $E^{\mathbb N}$, we have
 \begin{eqnarray*}
&\ &\Vert{\mathcal P}^n(f)(\cdot,{\underline{\omega}})-{\mathcal P}^n(f)(\cdot,{\underline{\omega}'})\Vert_{\mathcal B}= \left\Vert
\int_{E^n}\left(\mathcal L_0^n f(\cdot,(\tilde\omega_n,\underline{\omega}))-\mathcal L_0^n f(\cdot,(\tilde\omega_n,\underline{\omega}))\right)
\,  d\eta^{\otimes n}(\tilde\omega_n)\right\Vert_{\mathcal B}\\
&\le&
\int_{E^n}\left\Vert \mathcal L_0^n\left( f(\cdot,(\tilde\omega_n,\underline{\omega}))- f(\cdot,(\tilde\omega_n,\underline{\omega}))\right)\right\Vert_{\mathcal B}
\,  d\eta^{\otimes n}(\tilde\omega_n)\\
&\le&\sup_{\underline{\omega}^{(1)},\underline{\omega}^{(2)}\, :\, d(\underline{\omega}^{(1)},\underline{\omega}^{(2)})<d(\underline{\omega},\underline{\omega}')\varkappa^{-n}}\left\vert \mathbb E_{\bar\mu_0}\left[f(\cdot,\underline{\omega}^{(1)})-f(\cdot,\underline{\omega}^{(2)})\right]\right\vert
+\tilde C\tilde\alpha_0^n \Vert f\Vert_{\widetilde{\mathcal B}}d(\underline{\omega},\underline{\omega}')\varkappa^{-n}\\
&\le&\left\Vert \mathbb E_{\bar\mu_0}\left[\cdot\right]\right\Vert_{\mathcal B'}\sup_{\underline{\omega}^{(1)},\underline{\omega}^{(2)}\, :\, d(\underline{\omega}^{(1)},\underline{\omega}^{(2)})<d(\underline{\omega},\underline{\omega}')\varkappa^{-n}}\left\Vert f(\cdot,\underline{\omega}^{(1)})-f(\cdot,\underline{\omega}^{(2)})\right\Vert_{\mathcal B}
+\tilde C\tilde \alpha_0^n \Vert f\Vert_{\widetilde{\mathcal B}}d(\underline{\omega},\underline{\omega}')\varkappa^{-n}\\
&\le&\left\Vert \mathbb E_{\bar\mu_0}\left[\cdot\right]\right\Vert_{\mathcal B'} \Vert f\Vert_{\widetilde  B}
d(\underline{\omega},\underline{\omega}')\varkappa^{-n}
+\tilde C\tilde \alpha_0^n \Vert f\Vert_{\widetilde{\mathcal B}}d(\underline{\omega},\underline{\omega}')\varkappa^{-n}\, .
\end{eqnarray*}
This proves the lemma with $\tilde\alpha = \max \{ \tilde \alpha_0, \varkappa^{-1} \}$.
\end{proof}
\subsection{Doeblin-Fortet-Lasota-Yorke type inequality for $P_u$}

We next establish the spectral properties of $P$ and $P_u$ on $\widetilde \cB$.

\begin{prop}\label{DFLY}
If $\vartheta_0$ is small enough, there exist $\tilde C >0$ and $\tilde \tau \in(0,1)$, such that for every $n\ge 1$, 
$f \in \widetilde \cB$, 
$u \in \mathbb{R}^2$ and $n \ge 0$,
\begin{eqnarray}
| P_u^n f |_{\widetilde{\mathcal B}_w} & \le & \tilde C  |f|_{\widetilde{\mathcal B}_w} , \nonumber \\
\|P_u^nf \|_{\widetilde{\mathcal B}} & \le & \tilde C \left(\tilde \tau^n\Vert f\Vert_{\widetilde{\mathcal B}}+|f|_{\widetilde{\mathcal B}_w}\right)\, .
\end{eqnarray}
\end{prop}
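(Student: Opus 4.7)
The plan is to reduce the bound on $P_u^n$ to a uniform Doeblin-Fortet-Lasota-Yorke inequality for the twisted deterministic compositions
$$\cL^n_{u,\uomega_n} := \cL_{u,\omega_{n-1}} \circ \cdots \circ \cL_{u,\omega_0},$$
acting on $\cB$ and $\cB_w$, with constants uniform in $u\in\mathbb R^2$, $n\ge 0$, and sequences $\uomega_n = (\omega_0,\dots,\omega_{n-1}) \in E^n$ with each $\bar T_{\omega_k} \in \bar{\mathcal F}_{\vartheta_0}(\bar T_0)$. Iterating the defining integral representation of $P$ and $\Phi(x,\uomega)=\Phi_{\omega_0}(x)$ gives
$$P_u^n f(\cdot,\uomega) = \int_{E^n} \cL^n_{u,\uomega_n}\, f(\cdot, (\uomega_n,\uomega)) \, d\eta^{\otimes n}(\uomega_n),$$
so once uniform bounds for $\cL^n_{u,\uomega_n}$ are known, the inequalities for $P_u^n$ on $\widetilde\cB$ and $\widetilde\cB_w$ will follow from Lemma~\ref{tildeBnorm}.

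For the deterministic step, I use the identity
$$\cL^n_{u,\uomega_n} f = \cL_{\omega_{n-1}}\circ\cdots\circ\cL_{\omega_0}\bigl(e^{iu\cdot \Psi_n^{\uomega_n}}\, f\bigr),$$
where $\Psi_n^{\uomega_n} := \sum_{k=0}^{n-1} \Phi_{\omega_k}\circ \bar T_{\omega_{k-1}}\circ\cdots\circ\bar T_{\omega_0}$ is $\mathbb Z^2$-valued and constant on each connected component of $\bar M_0 \setminus \cS_n^{\bar T_{\uomega_n}}$, a set uniformly transverse to the stable cone by {\bf (H1)} and Lemma~\ref{lem:sequence}. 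Since $|e^{iu\cdot\Psi_n^{\uomega_n}}|=1$ and the multiplier is piecewise constant with discontinuities on the singularity set, the uniform Lasota-Yorke arguments for the untwisted composition $\cL_{\omega_{n-1}}\circ\cdots\circ\cL_{\omega_0}$ proved in \cite[Section~5]{MarkHongKun2013} adapt to $\cL^n_{u,\uomega_n}$ with constants independent of $u$. Concretely, on any $W \in \cW^s$ the test function $e^{iu\cdot\Psi_n^{\uomega_n}}\psi$ has the same $\cC^p$ bound as $\psi$ on every component of $W \setminus \cS_n^{\bar T_{\uomega_n}}$, which handles the weak and strong stable norms; for the strong unstable norm, matched pieces used in the comparison of two nearby stable curves lie inside a single component of $\bar M_0 \setminus \cS_n^{\bar T_{\uomega_n}}$, so the common factor $e^{iu\cdot\Psi_n^{\uomega_n}}$ reduces to the same constant on both curves and cancels in the difference, while unmatched pieces are absorbed using unimodularity. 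This yields constants $C>0$ and $\tau\in(0,1)$, independent of $u$, $n$ and $\uomega_n$, such that
$$|\cL^n_{u,\uomega_n} f|_w \le C|f|_w, \qquad \|\cL^n_{u,\uomega_n} f\|_\cB \le C\bigl(\tau^n\|f\|_\cB + |f|_w\bigr).$$

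To lift to $\widetilde\cB$ and $\widetilde\cB_w$, I apply Lemma~\ref{tildeBnorm}(a) to the integral representation of $P_u^n$. The weak inequality is immediate from the uniform bound on $|\cL^n_{u,\uomega_n}f|_w$. For the strong norm, the sup part is bounded by
$$\sup_{\uomega}\|P_u^n f(\cdot,\uomega)\|_\cB \le \sup_{\uomega,\uomega_n}\|\cL^n_{u,\uomega_n} f(\cdot,(\uomega_n,\uomega))\|_\cB \le C\tau^n\|f\|_{\widetilde\cB} + C|f|_{\widetilde\cB_w}.$$
For the Lipschitz part, writing
$$P_u^n f(\cdot,\uomega) - P_u^n f(\cdot,\uomega') = \int_{E^n} \cL^n_{u,\uomega_n}\bigl(f(\cdot,(\uomega_n,\uomega)) - f(\cdot,(\uomega_n,\uomega'))\bigr)\, d\eta^{\otimes n},$$
and using that $d((\uomega_n,\uomega),(\uomega_n,\uomega'))=\varkappa^{-n}d(\uomega,\uomega')$ since the first $n$ coordinates coincide, combined with the Lipschitz clause in the definition of $\|f\|_{\widetilde\cB}$ and the uniform boundedness of $\cL^n_{u,\uomega_n}$ on $\cB$ from the previous step, gives a contribution of order $\varkappa^{-n}\|f\|_{\widetilde\cB}\,d(\uomega,\uomega')$. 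Setting $\tilde\tau := \max\{\tau,\varkappa^{-1}\}$ then completes the proof. The main obstacle is the uniform-in-$u$ Lasota-Yorke bound in the strong unstable norm: one must ensure that, in the comparison of two nearby stable curves, each matched pair of pieces lies inside a common component of $\bar M_0 \setminus \cS_n^{\bar T_{\uomega_n}}$ so that $e^{iu\cdot\Psi_n^{\uomega_n}}$ genuinely reduces to the same constant on both curves and factors out of the difference; this is precisely where the uniform transversality of the singularity set provided by Lemma~\ref{lem:sequence} is used.
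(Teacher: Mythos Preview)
Your proposal is correct and follows essentially the same route as the paper: first establish uniform-in-$u$ Lasota--Yorke inequalities for the deterministic twisted compositions $\cL^n_{u,\uomega_n}$ on $\cB$ and $\cB_w$ (this is the paper's Lemma~\ref{lem:uniform ly}, proved exactly via the piecewise-constancy of $e^{iu\cdot\Psi_n^{\uomega_n}}$ on components of $\bar M_0\setminus\cS_n^{\bar T_{\uomega_n}}$ and the matched/unmatched analysis you describe), and then lift to $\widetilde\cB$, $\widetilde\cB_w$ through the integral representation of $P_u^n$, Lemma~\ref{tildeBnorm}, and the $\varkappa^{-n}$ contraction of the metric on $E^{\mathbb N}$ for the Lipschitz part. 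The only cosmetic difference is that for the Lipschitz term the paper bounds $\|\cL^n_{u,\uomega_n}\|_{L(\cB,\cB)}$ by $(\sup_\omega\|\cL_\omega\|)^n$ and uses $\varkappa>\sup_\omega\|\cL_\omega\|$, whereas you use the uniform boundedness coming directly from the Lasota--Yorke estimate; both yield the same conclusion with $\tilde\tau=\max\{\tau,\varkappa^{-1}\}$.
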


This result will follow directly from the next lemma.
\begin{lem}
\label{lem:uniform ly}
If $\vartheta_0$ is small enough, there exist $C >0$ and $\tau \in(0,1)$, such for every $n\ge 1$, $ \varepsilon_1,...,\varepsilon_{n}\in E$, $f \in \cB$, $u \in \mathbb{R}^2$ and $n \ge 0$,
\begin{eqnarray}
| \cL_{u, \omega_1}\cdots \cL_{u, \omega_n} f |_w & \le & C |f|_w , \nonumber \\
\|\cL_{u, \omega_1}\cdots \cL_{u, \omega_n}f \|_{\mathcal B} & \le & C \left(\tau^n\Vert f\Vert_{\mathcal B}+|f|_w\right)\, .
\end{eqnarray}
\end{lem}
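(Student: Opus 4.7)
The plan is to revisit the proof of \cite[Theorem~2.2]{MarkHongKun2013} for the untwisted Lasota--Yorke inequality and verify that the insertion of the unimodular factors $e^{iu\cdot \Phi_{\omega_k}}$ does not affect any of the constants. The key algebraic identity is the cocycle relation
\[
\cL_{u,\omega_1}\cdots \cL_{u,\omega_n} f \;=\; \cL_{\omega_1}\cdots \cL_{\omega_n}\bigl(e^{iu\cdot \widetilde S_n}\, f\bigr),
\]
obtained by iterating $\cL_\omega(h\cdot \cL_{\omega'}g)=\cL_\omega \cL_{\omega'}((h\circ \bar T_{\omega'})\, g)$, where $\widetilde S_n:=\sum_{k=1}^n \Phi_{\omega_k}\circ \bar T_{\omega_{k+1}}\circ\cdots\circ \bar T_{\omega_n}$ takes values in $\mathbb Z^2$. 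The decisive point is that each $\Phi_{\omega_k}$ is constant on the connected components of continuity of $\bar T_{\omega_k}$, so $\widetilde S_n$ is constant on each cell of continuity of the composition $F:=\bar T_{\omega_1}\circ\cdots\circ \bar T_{\omega_n}$; hence $e^{iu\cdot \widetilde S_n}$ is locally constant, of modulus one, with discontinuity set contained in $\cS_n^F$.

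By Lemma~\ref{lem:sequence}, $F$ satisfies \textbf{(H1)}--\textbf{(H5)} with constants depending only on the class $\bar{\mathcal{F}}$, so the Demers--Zhang Lasota--Yorke machinery applies to $F$. I would then revisit each of the three norm bounds (weak, strong stable, strong unstable) with the extra factor $e^{iu\cdot \widetilde S_n}$ inside the argument. For the weak and strong stable norms, one expands a test integral along a stable curve $W$ as
\[
\int_W \cL_F(e^{iu\cdot \widetilde S_n}f)\,\psi\, dm_W \;=\; \sum_i e^{iu\cdot s_i}\int_{V_i} f\,(\psi\circ F)\, J_{V_i}F\, dm_{V_i},
\]
where $F^{-1}W=\bigsqcup_i V_i$ is the decomposition into homogeneous stable components and $s_i\in\mathbb Z^2$ is the constant value of $\widetilde S_n$ on $V_i$. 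The unimodular constants $e^{iu\cdot s_i}$ pull out of each integral, and the triangle inequality then recovers exactly the untwisted estimate, uniformly in $u\in\mathbb R^2$.

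The main obstacle is the strong unstable norm, which compares integrals on two nearby curves $W_1,W_2$ with $d_{\cW^s}(W_1,W_2)\le\ve$. Following the matching construction from the proof of \cite[Theorem~2.2]{MarkHongKun2013}, $F^{-1}W_1$ and $F^{-1}W_2$ split into matched pairs $(U_\alpha,U_\alpha')$ joined by a transverse unstable foliation, together with unmatched remainders of total arclength $O(\ve)$. For a matched pair, both $U_\alpha$ and $U_\alpha'$ lie in the same connected component of $\bar M_0\setminus \cS_n^F$, so $\widetilde S_n$ takes a single common constant value $s_\alpha\in\mathbb Z^2$ on them; the shared unimodular factor $e^{iu\cdot s_\alpha}$ therefore factors out of the matched-piece difference and cancels exactly, reducing that contribution to its untwisted analogue. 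The unmatched remainders are controlled using $|e^{iu\cdot \widetilde S_n}|=1$ together with the standard arclength bound, producing the usual $\ve^\gamma$-term with constant independent of $u$. This is precisely the cancellation mechanism already exploited in the proof of Lemma~\ref{lem:close}.

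Combining the three norm bounds with the uniform Lasota--Yorke iteration argument of \cite[Proposition~5.4]{MarkHongKun2013}---whose geometric decay on the strong norm and boundedness on the weak norm depend only on the uniform constants in \textbf{(H1)}--\textbf{(H5)}---yields the announced inequalities with $C$ and $\tau\in(0,1)$ independent of $u\in\mathbb R^2$, of $n\in\mathbb N$, and of the sequence $\omega_1,\ldots,\omega_n\in E$.
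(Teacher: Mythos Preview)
Your proposal is correct and follows essentially the same approach as the paper's proof: both exploit that the Birkhoff-type sum $\widetilde S_n$ (the paper's $S_n\Phi_{\underline\omega}$) is constant on each homogeneous component $W_i$ of the preimage of a stable curve, so the unimodular factor $e^{iu\cdot\widetilde S_n}$ leaves the $\cC^p$ norm of the test function on each piece unchanged; and for the unstable norm, matched pieces lie in the same continuity component of the composed map, so the common phase factors out of the difference, while unmatched pieces are handled via $|e^{iu\cdot\widetilde S_n}|=1$. The only cosmetic differences are that you package the argument via the explicit cocycle identity $\cL_{u,\omega_1}\cdots\cL_{u,\omega_n}f=\cL_F(e^{iu\cdot\widetilde S_n}f)$ and cite \cite[Theorem~2.2, Proposition~5.4]{MarkHongKun2013}, whereas the paper works directly with the expanded integral and cites \cite[Proposition~5.6]{MarkHongKun2013} for the random-sequence Lasota--Yorke inequalities.
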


\begin{proof}
Here we denote  $\cL_{u, \underline{\omega}}^n:=\cL_{u, \omega_1}\cdots \cL_{u, \omega_n} $, and $\bar T^n_{\underline{\omega}} =\bar T_{\omega_n}\circ \cdots \circ\bar T_{\omega_1}$.
The above Lasota-Yorke inequalities are proved\footnote{The estimates
in \cite[Proposition~5.6]{MarkHongKun2013} include a factor $\eta \ge 1$, which comes from the
Jacobian of $\bar T_\omega$ with respect to $\bar \mu_0$.  Since we have assumed that 
$J_{\bar \mu_0}\bar T_\omega = 1$ in our simplified version of {\bf (H5)}, we have $\eta =1$ in the present
setting.  Also note that the density function $g$ for the random perturbation in \cite{MarkHongKun2013}
is identically 1 in our setting as well.} for $\cL^n_{\underline{\omega}}$ as long as each $\bar T_{\omega_k} \in \bar {\mathcal{F}}$ by \cite[Proposition~5.6]{MarkHongKun2013}, with $\underline{\omega}=(\omega_k)_{k\geq 1}$.   We must show that the
constants appearing in the inequalities are independent of $u \in \mathbb{R}^2$, and all $\underline{\omega}\in E^{\mathbb{N}}$.
We will use the fact that $S_n \Phi_{\underline{\omega}}$ is constant on elements of $\bar M_0 \setminus \cS_n^{\bar T_{\underline{\omega}}}$.

For $f \in \cB$, $W \in \cW^s$ and appropriate test functions $\psi$, we must estimate expressions of the form,
\[
\int_W \cL_{u, \underline{\omega}}^n f \, \psi \, dm_W = \sum_{W_i \in \cG_n(W)} \int_{W_i} f e^{i u \cdot S_n \Phi_{\underline{\omega}}}  J_{W_i}\bar T_{\underline{\omega}}^n \, \psi \circ \bar T_{\underline{\omega}}^n \, dm_{W_i},
\]
where $\cG_n(W)$ are the components of $\bar T_{\underline{\omega}}^{-n}W$, subdivided so that they each
belong to $\cW^s$.

For example, for the weak norm estimate,
\[
\int_W \cL_{u, {\underline{\omega}}}^n f \, \psi \, dm_W
\le \sum_{W_i \in \cG_n(W)}
| f |_w |e^{i u \cdot S_n \Phi_{\underline{\omega}}} \psi \circ \bar T_{\underline{\omega}}^n|_{\cC^p(W_i)} |J_{W_i}\bar T_{\underline{\omega}}^n|_{\cC^p(W_i)} ,
\]
which is the same as \cite[eq. (4.5)]{MarkHongKun2013}, except that the test function
is $e^{i u \cdot S_n \Phi_{\underline{\omega}}} \psi \circ \bar T_{\underline{\omega}}^n$ rather than simply $\psi \circ \bar T_{\underline{\omega}}^n$.  (See also the generalization of
this inequality 
for the random sequence of maps in \cite[eq. (5.26)]{MarkHongKun2013}.)
But since $e^{i u \cdot S_n \Phi_{\underline{\omega}}}$ is constant on each $W_i$, we have
\begin{equation}
\label{eq:same}
|e^{i u \cdot S_n \Phi_{\underline{\omega}}} \psi \circ \bar T_{\underline{\omega}}^n|_{\cC^p(W_i)}
= |e^{i u \cdot S_n \Phi_{\underline{\omega}}} |_\infty | \psi \circ \bar T_{\underline{\omega}}^n|_{\cC^p(W_i)}
= | \psi \circ \bar T_{\underline{\omega}}^n|_{\cC^p(W_i)},
\end{equation}
so that the estimate is precisely the same and independent of $u$, as (\textbf{H1}) and (\textbf{H5}) are used for  $|J_{W_i}\bar T_{\underline{\omega}}^n|_{\cC^p(W_i)}$ to get the uniform bound independent of
${\underline{\omega}}$.

The same observation is true for the strong norm estimates for precisely the same reason.
For the unstable norm estimate, we must compare values of test functions on two stable
curves $W^1, W^2$ that lie close together.
But the `matched' pieces\footnote{These are curves $U^1_j \in \cG_n(W^1)$ and
$U^2_j \in \cG_n(W^2)$ that can be connected
by a transverse foliation of unstable curves.  See \cite[Section 4.3]{MarkHongKun2013} for
a precise definition.} of $\bar T_{\underline{\omega}}^{-n} W^1$ and $\bar T_{\underline{\omega}}^{-n} W^2$ lie in the same
component of $\bar M_0 \setminus \cS_n^{\bar T_{\underline{\omega}}}$ so that $S_n \Phi_{\underline{\omega}}$ is the same constant
on both curves and does not affect any of the constants appearing in the Lasota-Yorke inequalities.
For the `unmatched' pieces of  $\bar T_{\underline{\omega}}^{-n} W^1$ and $\bar T_{\underline{\omega}}^{-n} W^2$,
the estimate is precisely the same as in \cite[eq. (4.14)]{MarkHongKun2013}
due to \eqref{eq:same}.
\end{proof}
\begin{proof}[Proof of Proposition \ref{DFLY}]
Observe that
$$ (P_u^n f)(\cdot,\uomega)
        = \int_{E^n}    \mathcal L_{u,\omega_{-n}}\cdots\mathcal L_{u,\omega_{-1}} f(\cdot,(\omega_{k-n})_{k\ge 0})(y)\, d\eta^{\otimes n}(\omega_{-n},...,\omega_{-1})\, .$$
Due to Lemma \ref{tildeBnorm} and to the first inequality of Lemma \ref{lem:uniform ly}, for any $f\in\widetilde{\mathcal B}_w$ and $n\ge 1$,
\begin{eqnarray*}
\left\vert P_u^nf \right\vert_{\widetilde{\mathcal B}_w}&=&\sup_{\uomega\in E^{\mathbb N}}
       \left\vert (P_u^n f)(\cdot,\uomega)\right\vert_{w}\\
&\le&
\sup_{\uomega\in E^{\mathbb N}}
       \int_{E^n}   \left\vert \mathcal L_{u,\omega_{-n}}\cdots\mathcal L_{u,\omega_{-1}} f(\cdot,(\omega_{k-n})_{k\ge 0})\right\vert_{w}\, d\eta^{\otimes n}(\omega_{-n},...,\omega_{-1})\\
&\le& C
\sup_{\uomega'\in E^{\mathbb N}} \left\vert f(\cdot,\uomega')\right\vert_{w}=C\vert f\vert_{\widetilde{\mathcal B}_w}\, .
\end{eqnarray*}
Analogously, using again Lemma \ref{tildeBnorm} and, this time, the second inequality of 
Lemma~\ref{lem:uniform ly}, we obtain, for any $f\in\widetilde{\mathcal B}$ and $n\ge 1$,
\begin{eqnarray*}
\sup_{\uomega\in E^{\mathbb N}}
       \left\Vert (P_u^n f)(\cdot,\uomega)\right\Vert_{\mathcal B}
\le  C\left(\tau^n
\sup_{\uomega\in E^{\mathbb N}} \left\Vert f(\cdot,\uomega)\right\Vert_{\mathcal B}+
\sup_{\uomega\in E^{\mathbb N}} \left\vert f(\cdot,\uomega)\right\vert_{w}\right) .
\end{eqnarray*}
Finally, using Lemma \ref{tildeBnorm},
\begin{eqnarray*}
&\ &      \sup_{\uomega\ne\uomega'}\frac{\Vert P_u^nf(\cdot,\omega)- P_u^nf(\cdot,\uomega')\Vert_{\mathcal B}}
     {d(\uomega,\uomega')}\\
    &=& \sup_{\uomega\ne\uomega'}\frac{\Vert    \int_{E^n} \mathcal L_{u,\omega_{-n}}\cdots\mathcal L_{u,\omega_{-1}}\left(f(\cdot,(\tilde\omega,\uomega))-f(\cdot,(\tilde\omega,\uomega'))\right)\, d\eta^{\otimes n}(\tilde\omega)\Vert_{\mathcal B}}
     {d(\uomega,\uomega')}  \\
&\le & \sup_{\uomega\ne\uomega'}\frac{    \int_{E^n} \Vert\mathcal L_{u,\omega_{-n}}\cdots\mathcal L_{u,\omega_{-1}}\left(f(\cdot,(\tilde\omega,\uomega))-f(\cdot,(\tilde\omega,\uomega'))\right)\Vert_{\mathcal B}\, d\eta^{\otimes n}(\tilde\omega)}
     {d(\uomega,\uomega')}  \\
&\le &     \int_{E^n} \sup_{\uomega\ne\uomega'}\frac{\Vert\mathcal L_{u,\omega_{-n}}\cdots\mathcal L_{u,\omega_{-1}}\left(f(\cdot,(\tilde\omega,\uomega))-f(\cdot,(\tilde\omega,\uomega'))\right)\Vert_{\mathcal B}}{d(\uomega,\uomega')}\, d\eta^{\otimes n}(\tilde\omega)
       \\
&\le &     \int_{E^n} \sup_{\uomega_0\ne\uomega'_0}\frac{\Vert\mathcal L_{u,\omega_{-n}}\cdots\mathcal L_{u,\omega_{-1}}\left(f(\cdot,\uomega_0)-f(\cdot,\uomega'_0)\right)\Vert_{\mathcal B}
}
{\varkappa^{n}d(\uomega_0,\uomega'_0)}\, d\eta^{\otimes n}(\tilde\omega)
       \\
&\le& \varkappa^{-n}
     C\sup_{\uomega\ne\uomega'}
       \frac{\Vert \mathcal L_{\omega_n}\cdots \mathcal L_{\omega_1}(h(\cdot,\uomega)-h(\cdot,\uomega'))\Vert_{\mathcal B}}
         {d(\uomega,\uomega')}\\
&\le&\varkappa^{-n}\sup_{\omega\in E}\Vert \mathcal L_\omega\Vert_{L(\mathcal B,\mathcal B)}^n
   \sup_{\uomega\ne\uomega'}
       \frac{\Vert h(\cdot,\uomega)-h(\cdot,\uomega')\Vert_{\mathcal B}}
         {d(\uomega,\uomega')}.
\end{eqnarray*}
since $\varkappa>\sup_{\omega\in E}\Vert \mathcal L_\omega\Vert_{L(\mathcal B,\mathcal B)}$, we obtain that
$P_u$ satisfies Doeblin-Fortet-Lasota-Yorke conditions
for $(\widetilde{\mathcal B}$ and $\widetilde{\mathcal B}_w)$.
\end{proof}
\subsection{Quasicompactness of $P$ and  of $P_u$}
\begin{prop}\label{Quasicompact}
If $\vartheta_0$ and $\varkappa^{-1}$ are small enough,
${ P}$ is quasicompact on $\widetilde{\mathcal B}$, 1 its is
only dominating eigenvalue and it is a simple eigenvalue
(with eigenspace $\mathbb C.\mathbf 1_{\bar M}$). In particular, there exists $\tilde C>0$ and $\tilde \alpha\in(0,1)$,
 such that
$$\forall f\in\widetilde{\mathcal B},\quad
          \Vert{P}^n f-\mathbb E_{\bar\mu}[f]\mathbf 1_{\bar M}\Vert_{\widetilde{\mathcal B}}\le
      \tilde C\tilde\alpha^n\Vert f\Vert_{\widetilde{\mathcal B}}\, .$$
\end{prop}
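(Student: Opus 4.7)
The plan is to use a Keller--Liverani type perturbation argument, viewing $P$ as a perturbation of the quasicompact operator $\mathcal{P}$ (whose spectral picture is given in Lemma \ref{quasicompact1}) and transferring the spectral gap via the uniform Doeblin--Fortet--Lasota--Yorke bounds from Proposition \ref{DFLY} together with the closeness of $P$ to $\mathcal{P}$ from Proposition \ref{lem:close2} (applied at $u=0$). First I would set $\Pi f := \mathbb{E}_{\bar\mu}[f] \mathbf{1}_{\bar M}$, the spectral projector of $\mathcal{P}$ associated with the eigenvalue $1$. A short calculation using the $\bar T$-invariance of $\bar\mu$ shows that $P \mathbf{1}_{\bar M} = \mathbf{1}_{\bar M}$ (since $P$ is the transfer operator with respect to an invariant measure) and $\mathbb{E}_{\bar\mu}[Pf] = \mathbb{E}_{\bar\mu}[f]$ (by duality), so that $\Pi P = P \Pi = \Pi$ on $\widetilde{\mathcal B}$. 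Setting $g_k := P^{km} f - \Pi f$ for a fixed $m$ to be chosen later, these identities give $g_k = P^m g_{k-1}$ and $\Pi g_{k-1} = 0$ for all $k\ge 1$.

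The crucial step is then to prove that, for every $\delta > 0$, one can first choose $m \in \mathbb{N}$ and then $\vartheta_0>0$ small enough so that
\[
\bigl|P^m f - \Pi f\bigr|_{\widetilde{\mathcal B}_w} \le \delta \, \|f\|_{\widetilde{\mathcal B}}, \qquad \forall f \in \widetilde{\mathcal B} .
\]
This is obtained from the telescoping identity
\[
P^m - \mathcal{P}^m = \sum_{j=0}^{m-1} P^j (P - \mathcal{P}) \mathcal{P}^{m-1-j}\, .
\]
Each term is estimated by combining: the uniform $\widetilde{\mathcal B}$-boundedness of $\mathcal{P}^{m-1-j}$ (from Lemma \ref{quasicompact1}), the uniform $\widetilde{\mathcal B}_w$-boundedness of $P^j$ (from the first inequality of Proposition \ref{DFLY}), and the smallness bound $|(P-\mathcal{P})g|_{\widetilde{\mathcal B}_w} \le C \vartheta_0^{\gamma/2} \|g\|_{\widetilde{\mathcal B}}$ from Proposition \ref{lem:close2}. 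Together with the estimate $\|\mathcal{P}^m f - \Pi f\|_{\widetilde{\mathcal B}} \le \tilde C \tilde\alpha_0^m \|f\|_{\widetilde{\mathcal B}}$ from Lemma \ref{quasicompact1}, one obtains
\[
\bigl|P^m f - \Pi f\bigr|_{\widetilde{\mathcal B}_w} \le C\bigl(m\,\vartheta_0^{\gamma/2} + \tilde\alpha_0^m\bigr)\|f\|_{\widetilde{\mathcal B}} ,
\]
and fixing $m$ large so that $C\tilde\alpha_0^m \le \delta/2$, then $\vartheta_0$ small so that $Cm \vartheta_0^{\gamma/2} \le \delta/2$, gives the claim.

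Applied with this $m$, the $g_k$-recursion and the previous estimate yield $|g_k|_{\widetilde{\mathcal B}_w} \le \delta \|g_{k-1}\|_{\widetilde{\mathcal B}}$ for all $k\ge 1$, while the second inequality of Proposition \ref{DFLY} gives
\[
\|g_k\|_{\widetilde{\mathcal B}} \le \tilde C \tilde\tau^m \|g_{k-1}\|_{\widetilde{\mathcal B}} + \tilde C \delta \|g_{k-2}\|_{\widetilde{\mathcal B}} .
\]
Taking $m$ large and $\delta$ small so that $\tilde C \tilde\tau^m$ and $\tilde C\delta$ are both below a suitable threshold makes the dominant root $\lambda$ of the associated characteristic polynomial satisfy $\lambda<1$, and one concludes $\|P^{km} f - \Pi f\|_{\widetilde{\mathcal B}} \le C' \lambda^k \|f\|_{\widetilde{\mathcal B}}$. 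Interpolating between consecutive multiples of $m$ via the uniform bound $\sup_{0\le j\le m}\|P^j\|_{L(\widetilde{\mathcal B},\widetilde{\mathcal B})}<\infty$ (from Proposition \ref{DFLY}), and setting $\tilde\alpha:=\lambda^{1/m}$, yields the desired estimate $\|P^n f - \Pi f\|_{\widetilde{\mathcal B}} \le \tilde C \tilde\alpha^n \|f\|_{\widetilde{\mathcal B}}$, which in turn implies that $1$ is a simple isolated eigenvalue of $P$ with eigenspace $\mathbb{C}\cdot\mathbf{1}_{\bar M}$ and the rest of the spectrum has modulus $\le \tilde\alpha$.

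The main obstacle I expect is the closeness estimate in the second step: since $P-\mathcal{P}$ is only small from the strong norm to the weak norm, the telescoping sum picks up a factor of $m$, and one has to carefully balance $m$ against $\vartheta_0^{\gamma/2}$ to get arbitrarily small $\delta$. Verifying that $\mathcal{P}^{m-1-j}$ is uniformly $\widetilde{\mathcal B}$-bounded (rather than merely $\widetilde{\mathcal B}_w$-bounded) is what makes the telescoping legitimate, and is why Lemma \ref{quasicompact1} is stated on the strong space $\widetilde{\mathcal B}$ rather than only on $\widetilde{\mathcal B}_w$.
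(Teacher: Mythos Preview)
Your argument is correct and amounts to an explicit, self-contained proof of the perturbation step that the paper dispatches in one line by invoking the Keller--Liverani theorem \cite[Corollary~1]{kellerliverani99}. The paper simply feeds Lemma~\ref{quasicompact1}, Proposition~\ref{lem:close2} (at $u=0$) and Proposition~\ref{DFLY} into that black box to conclude that $P$ has a simple dominant eigenvalue close to $1$, then identifies this eigenvalue as exactly $1$ by noting that $P$ is the transfer operator of a probability-preserving system (so $1$ is in the spectrum and the spectral radius is $1$).

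The genuine difference in your route is that you exploit from the outset the identity $P\Pi=\Pi P=\Pi$, i.e.\ the fact that the eigenprojector $\Pi=\mathbb E_{\bar\mu}[\cdot]\mathbf 1_{\bar M}$ of $\mathcal P$ is \emph{exactly} the eigenprojector of $P$ as well (a consequence of all $\bar T_\omega$ preserving the same measure $\bar\mu_0$). This lets you bypass the abstract perturbation machinery: instead of first locating a perturbed eigenvalue and projector and then identifying them, you work directly with $g_k=P^{km}f-\Pi f$ and close a two-step linear recursion via DFLY. What you gain is an elementary proof with no external citation; what the paper's approach gains is brevity and the ability to handle situations (not needed here) where the projector genuinely moves under perturbation. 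Both arguments rest on the same three inputs, and your careful balancing of $m$ against $\vartheta_0^{\gamma/2}$ in the telescoping estimate is exactly the mechanism hidden inside the Keller--Liverani proof.
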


\begin{proof}
This comes from the Keller-Liverani perturbation theorem \cite[Corollary 1]{kellerliverani99} thanks to 
Lemma~\ref{quasicompact1}  and Propositions \ref{lem:close2} and \ref{DFLY}.
Observe moreover that, since $P$
is the dual operator of $f\mapsto f\circ \bar T$, the spectral radius of 
$P$ is 1 and 1 is an eigenvalue of $P$.
We conclude that 1 is the dominating eigenvalue and that it is simple.
\end{proof}

\begin{prop}\label{analytic}
$P_u$, as an operator acting on $\widetilde\cB$, is an analytic perturbation of $P$.
\end{prop}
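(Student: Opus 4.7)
The plan is to exhibit $P_u$ as the sum in operator norm of a convergent power series in $u$ whose coefficients are bounded operators on $\widetilde\cB$, with infinite radius of convergence. The guiding identity is the Taylor expansion
\begin{equation*}
P_u f = P\bigl(e^{iu\cdot\Phi}f\bigr) = \sum_{m=0}^\infty \frac{i^m}{m!}\, P\bigl((u\cdot\Phi)^m f\bigr)
= \sum_{m=0}^\infty \frac{i^m}{m!} \sum_{i_1,\dots,i_m\in\{1,2\}} u_{i_1}\cdots u_{i_m}\, P\bigl(\Phi^{(i_1)}\cdots\Phi^{(i_m)} f\bigr),
\end{equation*}
so my candidates for the coefficients of the power series are the operators $A_{i_1,\dots,i_m}:=P(\Phi^{(i_1)}\cdots\Phi^{(i_m)}\cdot)$.

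First I would invoke Lemma~\ref{lem:PuOp} (applied with $u=0$) to record that each $A_{i_1,\dots,i_m}$ is bounded on $\widetilde\cB$ with $\Vert A_{i_1,\dots,i_m}\Vert_{L(\widetilde\cB,\widetilde\cB)}\le C\,\Vert\Phi\Vert_\infty^m$, where $\Vert\Phi\Vert_\infty:=\sup_{\omega\in E}\Vert\Phi_\omega\Vert_\infty<\infty$ by the finite horizon assumption, and $C$ is independent of $m$. A quick inspection of the proof of Lemma~\ref{lem:PuOp} confirms this $m$-independence: the constant there comes from Lemma~\ref{lem:piecewise}(b), and $\Phi_\omega\circ\bar T_\omega^{-1}$ is piecewise constant, so its $\cC^1$-norm over each maximal connected component of $\bar M_0\setminus\cS_{-1}^{\bar T_\omega}$ is simply its sup norm, bounded by $\Vert\Phi\Vert_\infty^m$. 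Summing over the $2^m$ multi-indices with $|u_{i_1}\cdots u_{i_m}|\le\Vert u\Vert_\infty^m$, the series above is dominated in operator norm by $\sum_m C\, (2\Vert u\Vert_\infty\Vert\Phi\Vert_\infty)^m/m!<\infty$ for every $u\in\mathbb C^2$. So the series defines an entire function $u\mapsto A_u\in L(\widetilde\cB,\widetilde\cB)$.

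It remains to identify $A_u$ with $P_u$. For a fixed $f\in\widetilde\cB$, I would show that the partial sums $\Psi_N(u):=\sum_{m=0}^N (iu\cdot\Phi)^m/m!$ satisfy $\Psi_N(u)\cdot f\to e^{iu\cdot\Phi}f$ in $\widetilde\cB$. Here, because $\Phi$ is piecewise constant on a partition satisfying the hypotheses of Lemma~\ref{lem:piecewise}(b), multiplication by $(u\cdot\Phi)^m$ is a bounded operator on $\widetilde\cB$ of norm $\le C(\Vert u\Vert_\infty\Vert\Phi\Vert_\infty)^m$, with the same $m$-independent $C$; hence $\Vert(\Psi_N(u)-e^{iu\cdot\Phi})f\Vert_{\widetilde\cB}\le\sum_{m>N}C(\Vert u\Vert_\infty\Vert\Phi\Vert_\infty)^m\Vert f\Vert_{\widetilde\cB}/m!\to 0$. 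Applying the bounded operator $P$ yields $P\Psi_N(u)f\to P_u f$ in $\widetilde\cB$, while by construction $P\Psi_N(u)f$ coincides with the $N$-th partial sum of $A_uf$. Therefore $A_uf=P_u f$ for every $f\in\widetilde\cB$, which proves $A_u=P_u$ in $L(\widetilde\cB,\widetilde\cB)$ and hence that $u\mapsto P_u$ is analytic (in fact entire).

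There is no real obstacle; the only point that needs care is the uniformity in $m$ of the constant in Lemma~\ref{lem:PuOp}, which I would spell out explicitly as above so that the estimates $\Vert A_{i_1,\dots,i_m}\Vert=O(\Vert\Phi\Vert_\infty^m)$ and $\Vert(u\cdot\Phi)^m\Vert_{\mathrm{mult}}=O((\Vert u\Vert\Vert\Phi\Vert_\infty)^m)$ hold with a constant prefactor that does not grow with $m$. Once this is verified, the remainder of the argument is a clean combination of operator-norm summability and continuity of $P$.
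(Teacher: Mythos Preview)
Your overall approach---expand $e^{iu\cdot\Phi}$ as a power series, bound the coefficient operators $P(\Phi^{(i_1)}\cdots\Phi^{(i_m)}\,\cdot)$ via Lemma~\ref{lem:PuOp}, and sum in operator norm---is the same as the paper's, and the first half (construction of the convergent series $A_u$) is correct, including your observation about the $m$-independence of the constant.

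There is, however, a genuine gap in the identification step. You claim that multiplication by $(u\cdot\Phi)^m$ is a bounded operator on $\widetilde\cB$ because ``$\Phi$ is piecewise constant on a partition satisfying the hypotheses of Lemma~\ref{lem:piecewise}(b).'' This is precisely what fails: the discontinuity set of $\Phi_\omega$ is $\cS_1^{\bar T_\omega}$, whose curves lie in the stable cone and are therefore \emph{not} uniformly transverse to stable curves $W\in\cW^s$; the hypothesis $m_W(N_\varepsilon(\partial Z)\cap W)\le C_0\varepsilon$ can fail when $W$ runs close to a curve of $\cS_1$. The paper makes exactly this point in the proof of Lemma~\ref{lem:PuOp}: ``The singularity set for $\Phi_\omega$ is $\cS_1^{\bar T_\omega}$, which does not satisfy the hypotheses of Lemma~\ref{lem:piecewise}.'' So you cannot conclude $\Psi_N(u)f\to e^{iu\cdot\Phi}f$ in $\widetilde\cB$ this way.

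The fix is to bypass multiplication by $\Phi$ in $\widetilde\cB$ and estimate $P\bigl((\Psi_N(u)-e^{iu\cdot\Phi})f\bigr)$ directly, exactly as in the proof of Lemma~\ref{lem:PuOp}: after applying $\cL_\omega$, the factor $(\Psi_N(u)-e^{iu\cdot\Phi_\omega})\circ\bar T_\omega^{-1}$ has discontinuity set $\cS_{-1}^{\bar T_\omega}$, which \emph{does} satisfy the hypotheses of Lemma~\ref{lem:piecewise}(b). Since this function is piecewise constant with sup norm at most $\sum_{m>N}(|u|\,\|\Phi\|_\infty)^m/m!$, one obtains $\|P(\Psi_N(u)f)-P_uf\|_{\widetilde\cB}\to 0$ and hence $A_u=P_u$. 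The paper's own proof sidesteps the issue by appealing directly to Lemma~\ref{lem:PuOp} for the formal derivatives and citing classical analyticity criteria, which amounts to the same device of always keeping $P$ attached to the $\Phi$-factor.
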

\begin{proof}
Observe that the $n$-th derivative of $u\mapsto P_u$ is the operator defined by
$$ f\mapsto i^nP\left(\Phi^{(i_1)}...\Phi^{(i_n)}e^{iu\cdot\Phi}f\right)$$
Due to  Lemma \ref{lem:PuOp} and to classical results on analytic functions, we conclude that,  in $L(\widetilde\cB,\widetilde \cB)$,
$u\mapsto P_u$ is analytic on $\mathbb R^2$ and that
\[
P_u=\sum_{n=0}^\infty\frac{1}{n!} \cA_{n,\omega}, \quad\mbox{with}\
\cA_{n} f(u)= P ((i u \cdot \Phi)^n f) \, ,
\]
where $\cA_{n} f(u)$ is $n$-linear in $u$.
\end{proof}
Our main results will follow from the following technical result.
\begin{theo}\label{prop:perturb}
The function $\mathbf 1_{\bar M}$ is in $\widetilde\cB$ and $\mathbb E_{\bar\mu}[\cdot]$ is a continuous linear form on $\widetilde\cB$ and $\widetilde\cB_w$.\\
If $\vartheta_0$ and $\varkappa^{-1}$ are small enough, there exist $\beta\in(0,\pi)$, $C>0$ and $\alpha\in(0,1)$, three analytic maps $u\mapsto \lambda_u$ from $[-\beta,\beta]^2$ to $\mathbb C$,
$u\mapsto N_u$ and $u\mapsto \Pi_u$ from $[-\beta,\beta]^2$ to $L(\widetilde\cB, \widetilde\cB)$ such that
\begin{itemize}
\item[a)] $\lambda_0=1$, $\Pi_0:=\mathbb E_{\bar\mu}[\cdot]\mathbf 1_{\bar M}$,
\item[b)] for every $u\in[-\beta,\beta]^2$ and every integer $n\ge 1$, $P_u^n=\lambda_u^n\Pi_u+N_u^n$, $\Pi_uN_u=N_u\Pi_u=0$,
$\Pi_u^2=\Pi_u$, and $\Vert N_u^n\Vert_{L(\widetilde \cB,\widetilde\cB)}\le C\alpha^n$.\\
Moreover, for every integer $k\ge 0$, $\Vert (N_u^n)^{(k)}\Vert_{L(\widetilde \cB,\widetilde\cB)}=O(\alpha^n)$, where $(N_u^n)^{(k)}$ means the $k$-th derivative of $N_u^n$.
\item[c)] for every $u\in[-\pi,\pi]^2\setminus[-\beta,\beta]^2$ and every integer
$n\ge 1$, we have $\Vert P_u^n\Vert_{L(\widetilde \cB,\widetilde\cB)}\le C\alpha^n$.
\item[d)] There exists a positive symmetric matrix $\Sigma^2$ such that
$\lambda_u=1-\frac 12 (\Sigma^2 u\cdot u)+
O(|u|^3)$.
\end{itemize}
\end{theo}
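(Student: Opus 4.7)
\emph{Proof plan.} The plan is to treat (a) directly from the Banach-space construction, handle (b) and (d) by applying standard Kato/Dunford analytic perturbation theory to the family $u \mapsto P_u$ around $u=0$, and handle (c) by an aperiodicity argument combined with Keller--Liverani away from $u=0$. For item (a), note that $\mathbf 1_{\bar M_0}\in\cB$ by Lemma \ref{lem:piecewise}(a); since $\mathbf 1_{\bar M}$ is independent of $\uomega$, its Lipschitz component in $\widetilde\cB$ vanishes and $\|\mathbf 1_{\bar M}\|_{\widetilde\cB}=\|\mathbf 1_{\bar M_0}\|_{\cB}<\infty$. Continuity of $\mathbb E_{\bar\mu}[\cdot]$ on $\widetilde\cB_w$ (hence on $\widetilde\cB$) follows from Remark \ref{Rke0} fiberwise together with Fubini, as already noted in Remark \ref{Rke1}.

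For items (b) and (d), I would invoke Dunford spectral calculus around $u=0$ on $\widetilde\cB$. Proposition \ref{Quasicompact} gives the spectral gap at $1$ for $P$, with projector $\Pi_0 = \mathbb E_{\bar\mu}[\cdot]\mathbf 1_{\bar M}$ and the rest of the spectrum inside a disk of radius $\tilde\alpha<1$. Since $\Phi$ is bounded under the finite-horizon assumption, Proposition \ref{analytic} in fact extends $u\mapsto P_u$ to an entire $L(\widetilde\cB,\widetilde\cB)$-valued map of $u\in\mathbb C^2$, uniformly bounded on compact sets. Pick $\alpha\in(\tilde\alpha,1)$ and a circle $\gamma$ around $1$ whose distance to the rest of $\sigma(P)$ exceeds $\alpha$; for $u$ in a small complex neighborhood $U$ of $0$, $\gamma$ still isolates a unique simple eigenvalue $\lambda_u$ of $P_u$, and the Dunford formulas
\[
\Pi_u=\frac{1}{2\pi i}\oint_\gamma(z-P_u)^{-1}\,dz,\qquad N_u^n=\frac{1}{2\pi i}\oint_{|z|=\alpha}z^n(z-P_u)^{-1}\,dz,
\]
together with $\lambda_u\Pi_u=P_u\Pi_u$, yield analyticity of $\lambda_u$, $\Pi_u$ in $u$ and the bound $\|N_u^n\|\le C\alpha^n$ uniformly on $U$. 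Derivative bounds $\|(N_u^n)^{(k)}\|=O(\alpha^n)$ come from differentiating under the contour integral (or equivalently Cauchy's formula in the complex parameter $u$), using that $\|P_v\|$ is bounded on $U$. For (d), the Taylor expansion $P_uf=Pf+iP((u\cdot\Phi)f)-\tfrac12 P((u\cdot\Phi)^2 f)+O(|u|^3)$ from Lemma \ref{lem:PuOp}, inserted into the Dunford formula, produces the classical Nagaev--Guivarc'h expansion $\lambda_u=1+i\mathbb E_{\bar\mu}[\Phi]\cdot u-\tfrac12 \Sigma^2 u\cdot u+O(|u|^3)$; the linear term vanishes by centering of $\Phi$ (symmetric $\bar T_0$, preserved for $\vartheta_0$ small), and the quadratic coefficient reproduces the Green--Kubo sum \eqref{Sigma2} matching Theorem \ref{CLT}.

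Item (c) is the main obstacle. The plan is a standard two-step aperiodicity argument. First, for each fixed $u_0\in[-\pi,\pi]^2\setminus\{0\}$ I would show $\mathrm{spr}(P_{u_0})<1$ on $\widetilde\cB$: any peripheral eigenvector $P_{u_0}h=\zeta h$ with $|\zeta|=1$ would, after iteration and pairing with suitable test functions (using $\widetilde\cB\subset\widetilde\cB_w\subset(\cC^p)^{E^\mathbb N\, \prime}$), produce a cohomological equation of the form $e^{iu_0\cdot S_n\Phi}\equiv \zeta^n h/h\circ\bar T^n$ on the support of $h$; combined with the mixing of the unperturbed system $(\bar M,\bar\mu,\bar T)$, this forces $u_0\cdot\Phi$ to be a coboundary modulo $2\pi\mathbb Z$, which is incompatible with the aperiodicity of the cell-change cocycle for the deterministic Lorentz gas (as established in \cite{SV}) and inherited here because on matched pieces $\Phi_\omega\equiv\Phi_0$ (the mechanism used in the proof of Lemma \ref{lem:close}). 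Second, given this pointwise spectral bound, I would combine the uniform Lasota--Yorke inequality of Proposition \ref{DFLY} with the closeness estimate of Proposition \ref{lem:close2} and the norm-analyticity of $u\mapsto P_u$ (Lemma \ref{lem:PuOp}) to apply the Keller--Liverani theorem \cite{kellerliverani99} in a neighborhood of each such $u_0$, producing local uniform exponential decay $\|P_u^n\|\le C_{u_0}\alpha_{u_0}^n$. A compactness argument on $[-\pi,\pi]^2\setminus[-\beta,\beta]^2$ then consolidates these into the single pair $(C,\alpha)$ required in (c). The delicate point is the cohomological reduction on the distributional space $\widetilde\cB$; this is where most of the work will go, and where density of $\cC^1(\bar M_0)^{E^\mathbb N}$ and the dual inclusions recalled in Lemma \ref{lem:recall}(c) are essential to bridge the distributional eigenequation and the pointwise cohomology statement.
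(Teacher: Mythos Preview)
Your treatment of (a), (b) and the Taylor part of (d) is correct and essentially what the paper does: $\mathbf 1_{\bar M}\in\widetilde\cB$ and $\mathbb E_{\bar\mu}[\cdot]\in\widetilde\cB_w'$ are immediate from the fiberwise facts (Remarks \ref{Rke0}, \ref{Rke1}), and (b) follows from Propositions \ref{DFLY}, \ref{Quasicompact}, \ref{analytic} together with standard Kato/Dunford analytic perturbation, exactly as you outline. For (d) the paper takes a slightly different but equivalent route, reading off the expansion of $\lambda_u$ from the CLT for $S_n/\sqrt n$ rather than by direct Dunford computation; your direct Nagaev--Guivarc'h computation is fine. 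Positivity of $\Sigma^2$ is handled separately (Lemma \ref{Sigma>0}) by a continuity argument as $\vartheta_0\to 0$.

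The real divergence is in (c), and here the paper's route is both different and substantially simpler than yours. You propose to establish aperiodicity of $P_{u_0}$ \emph{directly} on $\widetilde\cB$ via a cohomological argument for the random skew product. The difficulty you flag is real: the eigenequation $\zeta h(\cdot,\uomega)=\int_E \cL_{u,\omega_{-1}} h(\cdot,(\omega_{-1},\uomega))\,d\eta(\omega_{-1})$ involves an average over $\omega_{-1}$ and does not immediately give a pointwise relation $\zeta\,h\circ\bar T=e^{iu_0\cdot\Phi}h$; nor is it clear how to show that a distributional eigenvector in $\widetilde\cB$ is an $L^\infty$ density when the $\uomega$-dependence is present. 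Your sketch does not address either point.

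The paper avoids this entirely. It argues by contradiction on $\vartheta_0$: if no small $\vartheta_0$ worked, one could find $u_k\to u_\infty$ with $\beta\le|u_k|\le\pi$ and random operators $P_{u_k}^{(k)}$ (built from shrinking neighborhoods of $\bar T_0$) each having spectral radius $1$. Proposition \ref{lem:close2} plus continuity of $u\mapsto\cL_{u,0}$ gives $\|P_{u_k}^{(k)}-\mathcal P_{u_\infty}\|_{L(\widetilde\cB,\widetilde\cB_w)}\to 0$, and Keller--Liverani (with the uniform Lasota--Yorke of Proposition \ref{DFLY}) then forces $\rho(\mathcal P_{u_\infty})=1$, contradicting Proposition \ref{lem:nonarithm2}. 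The point is that all the aperiodicity work is done for the \emph{deterministic} operator $\cL_{u,0}$ (Lemma \ref{lem:nonarithm}), where one can show the eigenvector is an $L^\infty$ density and invoke ergodicity of the $\mathbb Z^2$-extension of $\bar T_0$; this is then lifted trivially to the product operator $\mathcal P_u$. So Keller--Liverani is applied in the $\vartheta_0$-direction (comparing $P_u$ to $\mathcal P_u$), not in the $u$-direction as you propose.
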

\begin{proof}[Proof of Theorem \ref{prop:perturb}]
The fact that $\mathbf 1_{\bar M}$ is in $\widetilde\cB$ comes from
the fact that $\mathbf 1_{\bar M_0}$ is in $\cB$.\\
As seen in Remark \ref{Rke1},  $\mathbb E_{\bar\mu}[\cdot]$ is a continuous form on $\widetilde\cB$.
The proof of the remaining part of the theorem relies on Propositions  \ref{lem:close2}, \ref{DFLY} and \ref{Quasicompact} and \ref{analytic}.

Propositions \ref{DFLY}, \ref{Quasicompact} and \ref{analytic} immediately imply the existence of a spectral
gap for $P_u$ for $|u|$ sufficiently small, using standard perturbation theory \cite{kato}.  This yields the analyticity and items (a) and (b) of the
proposition with $\beta$ depending on $\vartheta_0$ and the uniform constants depending on the
family $\bar {\mathcal{F}}_{\vartheta_0}$, but not on the probability measure $\eta$.

For item (c), due to \cite[Lemma 4.3]{AaronsonDenker}, it is enough to prove that, if $\vartheta_0$ is small enough, then
for every $u\in [-\pi,\pi]^2\setminus[-\beta,\beta]^2$, $P_u$
admits no eigenvalue of modulus 1.
Assume the contrary. There would exist a sequence
of operators $(P_{u_k}^{(k)})_k$ corresponding to a sequence of vanishing neighbourhoods $(E_k)_k$ of $\bar T_0$ in $\mathcal F$ and with $\beta\le |u_k|\le\pi$ and $\rho(P_{u_k}^{(k)})=1$,
where $\rho(\cdot)$ denotes the spectral radius. Up to extracting a subsequence, we also have $\lim_{k\rightarrow +\infty}u_k=u_\infty$. But, due to Proposition \ref{lem:close2} and since
$u\mapsto\mathcal L_{u,0}$ is continuous from $\mathbb R^2$ to $ L(\mathcal B,\mathcal B)$, we would deduce that $$\lim_{k\rightarrow +\infty}\Vert P_{u_k}^{(k)}-\mathcal{P}_{u_\infty}\Vert_{L(\mathcal B,\mathcal B_w)}=0.$$ Combining this with Proposition
\ref{DFLY} and with the perturbation theorem of \cite{kellerliverani99}, this would imply that $\rho(\mathcal P_{u_\infty})=1$, which would contradict Proposition \ref{lem:nonarithm2}.
We conclude that, as soon as $\vartheta_0$ is sufficiently small,
$\sup_{\beta\le |u|\le\pi}\rho( P_u)<1$ as claimed.

It remains to prove item (d).
Due to \cite[Corollary 2.4]{MarkHongKun2013}, for any initial probability measure
$\nu\in\mathcal B$, $(S_n/\sqrt {n})_n$
converges in distribution to a (possibly generalized) centered Gaussian random variable
with variance $\Sigma^2$. Moreover, due to item (b) of the
present theorem,
$$\sup_{t\in[-\beta,\beta]^2}|\mathbb E_{\bar \mu}[e^{it S_n}]
-\lambda_t^n\mathbb E_{\bar \mu}[\Pi_t(\mathbf 1)]|=O(\alpha^n)$$ and so $$\lim_{n\rightarrow +\infty}\lambda_{t/\sqrt{n}}^n= e^{-\frac 12(\Sigma^2 t\cdot t)}$$
with uniform convergence on any compact set of $\mathbb R^2$.
This implies that 
$$\lim_{n\rightarrow +\infty}n\log \big( \lambda_{t/\sqrt{n}} \big) = -\frac 12(\Sigma^2 t\cdot t) \, . $$
On the other hand, $\log (\lambda_{t/\sqrt{n}}) \sim
    (\lambda_{t/\sqrt{n}}-1)$ as $n\rightarrow +\infty$.
Hence  $$\lim_{n\rightarrow +\infty}n(\lambda_{t/\sqrt{n}}-1)= -\frac 12(\Sigma^2 t\cdot t).$$ Setting $u=t/\sqrt{n}$, we can then deduce the stated Taylor expansion since $u \mapsto \lambda_u$ is
analytic.
The positivity of $\Sigma^2$ follows from the next lemma.
\end{proof}

\begin{lem}\label{Sigma>0}
If $\vartheta_0$ is small enough, $\Sigma^2$ is positive.
\end{lem}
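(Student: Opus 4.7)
The plan is to prove positivity by continuity, reducing to the classical fact that the deterministic diffusion matrix $\Sigma_0^2$ of the fixed finite-horizon periodic Lorentz gas $\bar T_0$ is positive definite (Bunimovich--Sinai--Chernov; see \cite{BSC91,chernovbook}). Since positivity is an open condition on symmetric matrices, it suffices to establish $\Sigma^2=\Sigma^2(\vartheta_0)\to \Sigma_0^2$ as $\vartheta_0\to 0^+$, and then pick $\vartheta_0$ sufficiently small.

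First, I would express the Green--Kubo formula \eqref{Sigma2} via the transfer operator using $\mathbb E_{\bar\mu}[\Phi^{(j)}\cdot \Phi^{(i)}\circ \bar T^k]=\mathbb E_{\bar\mu}[\Phi^{(j)}\cdot P^k\Phi^{(i)}]$, and similarly for $\Sigma_0^2$ with $P,\Phi$ replaced by $\mathcal P,\Phi_0$. By Remark~\ref{PhiinB} the coordinates of $\Phi$ lie in $\widetilde{\mathcal B}$ with norm uniformly bounded in $\vartheta_0$, and by Remark~\ref{Rke1} the functional $\mathbb E_{\bar\mu}[\,\cdot\,]$ is continuous on $\widetilde{\mathcal B}_w$. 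For each fixed $k$, the telescoping identity
$$P^k-\mathcal P^k=\sum_{j=0}^{k-1}P^{j}(P-\mathcal P)\mathcal P^{k-1-j},$$
together with the closeness estimate $\|P-\mathcal P\|_{\widetilde{\mathcal B}\to\widetilde{\mathcal B}_w}=O(\vartheta_0^{\gamma/2})$ from Proposition~\ref{lem:close2} and the uniform weak-norm bound from Proposition~\ref{DFLY}, yields $\|(P^k-\mathcal P^k)\Phi^{(i)}\|_{\widetilde{\mathcal B}_w}=O(k\vartheta_0^{\gamma/2})$. Combined with the convergence $\Phi_\omega\to\Phi_0$ in $\mathcal B$, which follows from Lemma~\ref{lem:piecewise}(a) together with the fact that $\Phi_\omega-\Phi_0$ is piecewise constant and supported on a set of small $\bar\mu_0$-measure when $d_{\bar{\mathcal F}}(\bar T_\omega,\bar T_0)$ is small, this gives term-by-term convergence of each Green--Kubo summand.

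To exchange the limit with the infinite sum I would invoke the uniform spectral gap of Proposition~\ref{Quasicompact}: for $\vartheta_0$ small enough, $\|P^k\Phi^{(i)}\|_{\widetilde{\mathcal B}}\le \tilde C\tilde\alpha^k\|\Phi^{(i)}\|_{\widetilde{\mathcal B}}$ with $\tilde\alpha<1$ independent of $\vartheta_0$, which furnishes a summable dominator. The same bound applies to $\mathcal P^k\Phi_0^{(i)}$ uniformly. Dominated convergence then yields $\Sigma^2(\vartheta_0)\to \Sigma_0^2$; positivity of $\Sigma_0^2$ propagates to $\Sigma^2(\vartheta_0)$ for all sufficiently small $\vartheta_0$.

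The main technical obstacle is the simultaneous perturbation of both the dynamics and the observable: the transfer operator $P$ converges to $\mathcal P$ only in the weaker $\widetilde{\mathcal B}\to \widetilde{\mathcal B}_w$ topology, while the cell-change function $\Phi_\omega$ itself depends on $\vartheta_0$, with $\Phi_\omega-\Phi_0$ being a piecewise integer-valued function whose discontinuity structure involves the symmetric difference of the singularity curves of $\bar T_\omega$ and $\bar T_0$. Estimating $\|\Phi_\omega-\Phi_0\|_{\mathcal B}$ carefully, using that $d_{\bar{\mathcal F}}(\bar T_\omega,\bar T_0)<\vartheta_0$ forces these curves to be $C^1$-close outside a $\vartheta_0$-neighborhood of their union, is the delicate ingredient.
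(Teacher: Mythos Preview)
Your overall strategy---prove $\Sigma^2(\vartheta_0)\to\Sigma_0^2$ via the Green--Kubo series, control the tail uniformly by the spectral gap of Proposition~\ref{Quasicompact}, and pass to the limit termwise---is exactly the paper's approach. The telescoping estimate for $(P^k-\mathcal P^k)$ in $L(\widetilde{\mathcal B},\widetilde{\mathcal B}_w)$ is also what the paper uses for the part of each summand coming from the change of dynamics.

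The gap is your treatment of the change of observable. You assert that $\Phi_\omega\to\Phi_0$ in $\mathcal B$ ``follows from Lemma~\ref{lem:piecewise}(a)'' together with the smallness of the support of $\Phi_\omega-\Phi_0$. But Lemma~\ref{lem:piecewise}(a) only yields the bound $\|\Phi_\omega-\Phi_0\|_{\mathcal B}\le C\sup_Z|\Phi_\omega-\Phi_0|_{\cC^\zeta(Z)}$, and on each partition element where the two cell-change functions differ this sup is a nonzero integer---the bound does not shrink with $\vartheta_0$. Worse, the discontinuity set of $\Phi_\omega-\Phi_0$ lies along $\cS_1^{\bar T_\omega}\cup\cS_1^{\bar T_0}$, and as noted explicitly in the proof of Lemma~\ref{lem:PuOp}, these forward singularity curves do \emph{not} satisfy the transversality hypothesis of Lemma~\ref{lem:piecewise}: they are stable-like. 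Consequently, for any $\vartheta_0>0$ one can place a stable curve $W\in\cW^s$ of fixed length inside the thin strip between $\cS_1^{\bar T_\omega}$ and $\cS_1^{\bar T_0}$, on which $\Phi_\omega-\Phi_0$ is a nonzero constant; testing against $\psi\equiv|W|^{-\varsigma}$ shows $\|\Phi_\omega-\Phi_0\|_s$ is bounded below independently of $\vartheta_0$. So convergence in $\mathcal B$ fails.

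The paper circumvents this entirely by not using the anisotropic norm for the $\Phi-\Phi_0$ terms. It splits each summand as
\[
A_k=\mathbb E_{\bar\mu}\big[(\Phi-\Phi_0)\cdot\Phi\circ\bar T^k\big]
+\mathbb E_{\bar\mu}\big[\Phi_0\cdot(\Phi-\Phi_0)\circ\bar T^k\big]
+\mathbb E_{\bar\mu}\big[\Phi_0\cdot(P^k-\mathcal P^k)\Phi_0\big],
\]
and bounds the first two terms crudely in $L^\infty$ by $4\|\Phi\|_\infty^2\sup_{\omega\in E}\bar\mu_0(\Phi_\omega\ne\Phi_0)$, which does tend to $0$ since the singularity sets of $\bar T_\omega$ and $\bar T_0$ are $\vartheta_0$-close in the sense of (C1)--(C4). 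Only the third term, where the observable $\Phi_0$ is fixed and independent of $\vartheta_0$, is handled via the Banach-space machinery you describe. If you replace your $\mathcal B$-convergence claim by this elementary $L^\infty$/measure estimate, your argument goes through and coincides with the paper's.
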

\begin{proof}
Recall that $\Sigma^2$ has been defined in \eqref{Sigma2}. We consider $\Sigma_0^2$ being defined by
\begin{equation}\label{Sigma02}
\Sigma_0^2:=\left(\mathbb E_{\bar\mu}\left[\Phi_0^{(i)}.\Phi_0^{(j)}\right]+\sum_{k\ge 1}\mathbb E_{\bar\mu_0}\left[\Phi_0^{(i)}.\Phi_0^{(j)}\circ \bar T_0^k+\Phi_0^{(j)}.\Phi_0^{(i)}\circ \bar T_0^k\right]\right)_{i,j=1,2}\, .
\end{equation}
It is enough to prove that $\Sigma^2$ converges to $\Sigma_0^2$ as
$\vartheta_0$ goes to $0$.
We use \eqref{Sigma2} together with the fact that $\Sigma^2_0$
satisfies an analogous formula (with $\Phi(x,\underline\omega)$ replaced by $\Phi_0(x)$ and with $\bar T(x,\underline\omega)$ replaced by $\bar T_0(x)$.
Therefore
$$\Sigma^2-\Sigma_0^2= A_0+2\sum_{k\ge 1}A_k\, ,$$
with
$A_k:=\mathbb E_{\bar\mu}\left[
 \Phi.\Phi\circ \bar T^k]-\mathbb E_{\bar\mu_0}[\Phi_0.\Phi_0\circ\bar T_0^k\right].$
Extending the definition of $\Phi_0$ on $\bar M$ by setting $\Phi_0(x,\uomega):=\Phi_0(x)$, we obtain
\begin{eqnarray*}
A_k&:=&\mathbb E_{\bar\mu}\left[
 \Phi.\Phi\circ \bar T^k-\Phi_0.\Phi_0\circ(\bar T'_0)^k\right] \\
&=&\mathbb E_{\bar\mu}\left[
 P^k\Phi.\Phi-\mathcal P^k\Phi_0.\Phi_0\right] \\
&=&\mathbb E_{\bar\mu}\left[
 (\Phi-\Phi_0). P^k\Phi\right] +\mathbb E_{\bar\mu}\left[
 \Phi_0.P^k(\Phi-\Phi_0)\right]+\mathbb E_{\bar\mu}\left[
 \Phi_0.(P^k\Phi_{0}-\mathcal P^k\Phi_0)\right]\, .
\end{eqnarray*}
The two first terms of the right hand side of this formula are less than
$$ 4 \Vert\Phi\Vert_\infty^2\sup_{\omega\in E}\bar\mu_0(\Phi_\omega - \Phi_0 \ne \mathbf{0} ) \, , $$
which goes to 0 as $\vartheta_0\rightarrow 0$.
The third term is dominated by
$$k\, \max\left(\Vert P\Vert_{L(\widetilde\cB,\widetilde\cB)},\Vert \mathcal P\Vert_{L(\widetilde\cB_w,\widetilde\cB_w)}\right)^{k-1} \Vert P-\mathcal P\Vert_{\mathcal L(\widetilde\cB,\widetilde\cB_w)}\Vert\Phi_0\Vert_{\widetilde\cB} \Vert\mathbb E_{\bar\mu}[\Phi_0\cdot]\Vert_{\widetilde\cB_w'}\, .$$
We deduce that this quantity goes to 0
using Remark~\ref{PhiinB}, Lemma~\ref{lem:PuOp}, and Proposition~\ref{lem:close2},  and since $\mathbb E_{\bar\mu}[\Phi_0\cdot]=\mathbb E_{\bar\mu}[\mathcal P(\Phi_0\cdot)]$ (applying Lemma \ref{lem:PuOp} with $E=\{0\}$).

We conclude with the use of the dominated convergence theorem, since
\begin{eqnarray*}
\left|\mathbb E_{\bar\mu}\left[
 \Phi.\Phi\circ \bar T^k\right]\right|
&=& \left|\mathbb E_{\bar\mu}\left[
 P^k\Phi.\Phi\right]\right| \\
&\le&  \Vert \Phi\Vert_{\widetilde\cB}
\widetilde C\widetilde\alpha^{k}\Vert\mathbb E_{\bar\mu}[ \Phi\cdot]\Vert_{\widetilde\cB'}=\Vert \Phi\Vert_{\widetilde\cB}
\widetilde C\widetilde\alpha^{k}\Vert\mathbb E_{\bar\mu}[ P(\Phi\cdot)]\Vert_{\widetilde\cB'}\, ,
\end{eqnarray*}
where we used Proposition \ref{Quasicompact} 
since $\mathbb{E}_{\bar \mu}[\Phi] =\mathbf{0}$, and a similar bound holds for
$\mathbb{E}_{\bar \mu_0}[\Phi_0 . \Phi_0 \circ \bar T_0^k]$.
\end{proof}

\section{Limit theorems under general assumptions}\label{proofs}

We start with the proof of our results which are direct consequences
of Proposition \ref{prop:perturb} and of general results existing in the literature.

\begin{proof}[Proofs of Theorems \ref{CLT}, \ref{TCL} and \ref{mixing}]
Theorem \ref{CLT} is a direct corollary of Theorem~\ref{prop:perturb} by standard arguments 
(see \cite{Nag,GH,HH}) since
$\mathbb E_{\bar\mu}\left[e^{iu\cdot S_n}\right]=\mathbb E_{\bar\mu}\left[P_u^n \mathbf 1_{\bar M}\right]$.

Due to Theorem~\ref{prop:perturb}, our dynamical system
$(\bar M,\bar \mu,\bar T)$ satisfies the general assumptions of
\cite[Theorem 1.4]{DamienSoaz} and \cite[Theorem 3.2, Remark 3.3]{SoazDecorr} and so Theorems \ref{TCL} and \ref{mixing} follow.
\end{proof}
We will prove the other results in a general context.
About these results, let us mention that Theorem \ref{theo:range} and the first part of Theorem \ref{theo:randomscenery} have been proved in \cite{DSV1,soazRange} and in \cite{soazPAPA} for a single billiard map.
We give here the proof in a more general context with a significant 
simplification in the proof of Theorem \ref{theo:randomscenery} due to the better 
estimate of the variance of the auto-intersection and to some simplification in the 
Bolthausen tightness argument. The second part of Theorem~\ref{theo:randomscenery} uses 
a general argument from \cite{NJR}. Theorem \ref{theo:selfintersection}
exists for a single billiard map, but only in an unpublished paper
by the second author \cite{soazSelfInter}.
Let us indicate that the generality of the proof we give in the present paper
is possible due to important modifications of the proof (we do not assume that $\Phi$ is bounded, nor 
that the Banach space we consider is continuously injected in $L^p$ for a suitable $p>1$; both of these conditions were used in \cite{soazSelfInter}).\\

We will prove the limit theorems we are interested in under the following general hypothesis.
\begin{hypo}\label{hypo:perturb}
Let $(M,\mu, T)$ be a $\mathbb Z^2$-extension of a probability preserving dynamical system $(\bar M,\bar\mu,\bar T)$ by a function $\Phi:\bar M\rightarrow\mathbb C$. Let $P$ be the transfer operator associated with
$\bar T$ with respect to $\bar\mu$ and let $(P_u:=P(e^{iu\cdot\Phi}\cdot))_{u\in\mathbb R^2}$.
We assume that these operators act on two Banach spaces  $\widetilde\cB_1$ and $\widetilde\cB_2$ such that $\mathbf 1_{\bar M}\in\widetilde\cB_1\hookrightarrow \widetilde\cB_2$ (continuous inclusion) and that 
$\mathbb E_{\bar\mu}[\cdot]$ is a continuous linear form\footnote{up to extending by continuity the definition of $\mathbb E_{\bar\mu}[\cdot]$} on $\widetilde\cB_2$.\\
Assume that there exist $\beta\in(0,\pi)$, $C>0$ and $\alpha\in(0,1)$, three continuous maps $u\mapsto \lambda_u$ from $[-\beta,\beta]^2$ to $\mathbb C$,
$u\mapsto N_u$ and $u\mapsto \Pi_u$ from $[-\beta,\beta]^2$ to $L(\widetilde\cB_1, \widetilde\cB_2)$ such that
\begin{itemize}
\item[(A1)] for every $u\in[-\beta,\beta]^2$ and every integer $n\ge 1$, $$P_u^n=\lambda_u^n\Pi_u+N_u^n,\,\,\,\,\Pi_uN_u=N_u\Pi_u=0,\,\,\,\,\,\,\Pi_u^2=\Pi_u$$
 and $\Vert N_u^n\Vert_{L(\widetilde \cB_1,\widetilde\cB_1)}\le C\alpha^n$.
\item[(A2)] for every $u\in[-\pi,\pi]^2\setminus[-\beta,\beta]^2$ and every integer
$n\ge 1$, we have $\Vert P_u^n\Vert_{L(\widetilde \cB_1,\widetilde\cB_1)}\le C\alpha^n$.
\item[(A3)] $u\mapsto\Pi_u$, seen as a $L(\widetilde\cB_1,\widetilde \cB_2)$-valued function, is differentiable at 0 
and that $\Pi_0:=\mathbb E_{\bar\mu}[\cdot]\mathbf 1_{\bar M}$,
\item[(A4)] There exists a positive symmetric matrix $\Sigma^2$ such that
$\lambda_u=1-\frac 12 (\Sigma^2 u\cdot u)+
O(|u|^3)$.
\end{itemize}
\end{hypo}
We will also use the following notation and considerations.
We write $S_n$ for the ergodic sum $S_n:=\sum_{k=0}^{n-1}\Phi\circ \bar T^k$. It will be crucial to notice that $P_u^n=P^n(e^{iu\cdot S_n}\cdot)$.\\
We consider a partition of $\bar M$ in $I$ subsets $\bar O_1,...,\bar O_I$ of $\bar\mu$ positive measure (corresponding to $(\partial O_i\times S^1)\times E^{\mathbb N}$ in our example). We consider the function $\mathcal I_0$
which, at every $x\in\bar M$, associates the index $\mathcal I_0(x)$
of the atom $\bar O_{\mathcal I_0(x)}$ of the partition containing $x$.
We also define $\mathcal I_k:=\mathcal I_0\circ\bar T^k$.

We remark that our random map $\bar T$ with $\bar T_\omega \in \bar {\mathcal{F}}_{\vartheta_0}(\bar T_0)$ for all $\omega \in E$
satisfies all the items of Assumption~\ref{hypo:perturb} due to Theorem~\ref{prop:perturb}.

\subsection{Proof of the Local Limit Theorem -- Theorem \ref{theo:LLT}}
For every $n\in\mathbb N^*$, $\ell\in\mathbb Z^2$ and  $h\in\widetilde{\cB}_1$, we set:
\begin{equation}\label{EQOP1}
\mathcal H_{\ell,n}h:=P^n\left(\mathbf 1_{\{S_n=\ell\}}h\right)\, .
\end{equation}
Recall that
\begin{equation}\label{EQOP2}
\mathbf 1_{\{k=\ell\}}= \frac{1}{(2\pi)^2} \int_{[-\pi,\pi]^2}e^{i (k-\ell)\cdot u}\, du \,
\end{equation}
where $du$ is understood as $du_1du_2$ for $u = (u_1,u_2) \in \mathbb{R}^2$ (integral with respect to the Lebesgue measure),
which leads us to the following formula
\begin{equation}\label{EQOP3}
\mathcal H_{\ell,n}h=\frac 1{(2\pi)^2}\int_{[-\pi,\pi]^2}e^{-i \ell\cdot u}P_u^nh\, du \, .
\end{equation}

\begin{theo}\label{LLT0}
Assume Assumptions \ref{hypo:perturb}. Then
\[
\sup_{\ell\in\mathbb Z^2}\left\Vert  \mathcal H_{\ell,n}-\frac {e^{-\frac 1{2n}\Sigma^{-2}\ell\cdot \ell}}{2\pi n\sqrt{\det\Sigma^2}}\Pi_0
\right\Vert_{L(\widetilde\cB_1, \widetilde\cB_2)}=O(n^{-\frac 32}) \, .
\]
Moreover, there exists $K_0\ge 1$
such that for every integer $n\ge 0$ and every $\ell\in\mathbb Z^2$,
\begin{equation}\label{intPun}
 \Vert\mathcal H_{\ell, n} \Vert_{L(\widetilde\cB_1,\widetilde\cB_1)}\le \frac 1{(2\pi)^2}\int_{[-\pi,\pi]^2}
    \Vert P_u^n\Vert_{L(\widetilde\cB_1,\widetilde\cB_1)}\, du\le\frac{K_0}{n+1}\, ,
\end{equation}
\end{theo}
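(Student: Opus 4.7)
The plan is to use the Fourier representation (\ref{EQOP3}), namely $\mathcal H_{\ell,n} = (2\pi)^{-2}\int_{[-\pi,\pi]^2} e^{-i\ell\cdot u}P_u^n\, du$, and to split $[-\pi,\pi]^2$ into the spectral region $[-\beta,\beta]^2$ and its complement. On the complement, (A2) gives $\|P_u^n\|_{L(\widetilde\cB_1,\widetilde\cB_1)} \le C\alpha^n$, so this contribution to $\mathcal H_{\ell,n}$ is exponentially small. On the spectral region, (A1) yields the decomposition $P_u^n = \lambda_u^n \Pi_u + N_u^n$, and the $N_u^n$ piece again contributes $O(\alpha^n)$ after integration. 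The main term therefore reduces to
\[
\frac{1}{(2\pi)^2}\int_{[-\beta,\beta]^2} e^{-i\ell\cdot u}\lambda_u^n \Pi_u\, du.
\]

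After the change of variables $u = t/\sqrt{n}$ this becomes
\[
\frac{1}{(2\pi)^2 n}\int_{\sqrt{n}[-\beta,\beta]^2} e^{-i\ell\cdot t/\sqrt n}\lambda_{t/\sqrt n}^n \Pi_{t/\sqrt n}\, dt.
\]
The classical Gaussian Fourier identity $\int_{\mathbb R^2} e^{-i\xi\cdot t}e^{-\frac12(\Sigma^2 t\cdot t)}\, dt = \frac{2\pi}{\sqrt{\det\Sigma^2}}e^{-\frac12(\Sigma^{-2}\xi\cdot\xi)}$ applied with $\xi = \ell/\sqrt n$ produces precisely the announced leading term $\frac{e^{-\Sigma^{-2}\ell\cdot\ell/(2n)}}{2\pi n\sqrt{\det\Sigma^2}}\Pi_0$, once $\lambda_{t/\sqrt n}^n$ is replaced by $e^{-\frac12(\Sigma^2 t\cdot t)}$, $\Pi_{t/\sqrt n}$ is replaced by $\Pi_0$, and the domain of integration is extended to all of $\mathbb R^2$.

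The main obstacle will be showing that each of these three replacements costs only $O(n^{-3/2})$. From (A4), a Taylor expansion of $\log\lambda_u$ gives $|\lambda_u^n - e^{-\frac n2(\Sigma^2 u\cdot u)}|\le Cn|u|^3 e^{-cn|u|^2}$ on a small enough neighborhood of $0$; the substitution $u=t/\sqrt n$ then shows $\int n|u|^3 e^{-cn|u|^2}\, du = O(n^{-3/2})$. From (A3), $\|\Pi_u - \Pi_0\|_{L(\widetilde\cB_1,\widetilde\cB_2)} = O(|u|)$, and similarly $\int|u|e^{-cn|u|^2}\, du = O(n^{-3/2})$. Extending the Gaussian from $\sqrt n[-\beta,\beta]^2$ to $\mathbb R^2$ incurs only a superpolynomially small error. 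Note that the differentiability of $\Pi_u$ at $0$ is assumed only in $L(\widetilde\cB_1,\widetilde\cB_2)$ (not in the stronger $L(\widetilde\cB_1,\widetilde\cB_1)$), which is precisely what forces the final estimate to be stated in that weaker operator norm.

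For the second assertion, the first inequality is immediate from (\ref{EQOP3}) and the triangle inequality. For the second, on $[-\beta,\beta]^2$ the decomposition in (A1) yields
\[
\|P_u^n\|_{L(\widetilde\cB_1,\widetilde\cB_1)} \le |\lambda_u|^n\|\Pi_u\|_{L(\widetilde\cB_1,\widetilde\cB_1)} + C\alpha^n,
\]
with $|\lambda_u|^n \le e^{-cn|u|^2}$ from (A4) and $\|\Pi_u\|_{L(\widetilde\cB_1,\widetilde\cB_1)}$ uniformly bounded on the neighborhood (which follows from the analytic perturbation setup underlying (A1)). Integrating gives $\int_{[-\beta,\beta]^2} e^{-cn|u|^2}\, du = O(n^{-1})$, while the $N_u^n$ contribution and the complementary region (A2) are both $O(\alpha^n)$; combining these produces the bound $K_0/(n+1)$.
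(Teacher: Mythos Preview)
Your proof is correct and follows essentially the same route as the paper: Fourier representation via \eqref{EQOP3}, split $[-\pi,\pi]^2$ into $[-\beta,\beta]^2$ and its complement, dispose of the complement and the $N_u^n$ piece with (A1)--(A2), then on the main term use (A4) to replace $\lambda_u^n$ by a Gaussian, (A3) to replace $\Pi_u$ by $\Pi_0$, and the change of variables $u=t/\sqrt{n}$ to read off the $O(n^{-3/2})$ error and the leading constant via the Gaussian Fourier identity. One small remark: for the second assertion you invoke a uniform bound on $\|\Pi_u\|_{L(\widetilde\cB_1,\widetilde\cB_1)}$, appealing to the ``analytic perturbation setup underlying (A1)''; the paper does the same thing tacitly, but strictly speaking Assumption~\ref{hypo:perturb} only states continuity of $u\mapsto\Pi_u$ in $L(\widetilde\cB_1,\widetilde\cB_2)$, so in the purely abstract setting this uniform bound is being taken as implicit in the spectral decomposition (it holds in the concrete billiard application by Theorem~\ref{prop:perturb}).
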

\begin{proof}
Up to a change of $\beta$, there exists $a>0$ such that, for every $u\in[-\beta,\beta]^2 $,
$|\lambda_u|\le \exp(-a|u|^2)$.
 Hence, using Assumption \ref{hypo:perturb}, we have the following equalities in $L(\widetilde\cB_1,\widetilde \cB_2)$:
\begin{eqnarray*}
&\ &\frac 1{(2\pi)^2}\int_{[-\pi,\pi]^2}e^{-i \ell\cdot u}P_u^n\, du=\frac 1{(2\pi)^2}\int_{[-\beta,\beta]^2}e^{-i \ell\cdot u}
P_u^n\, du+O(\alpha^n)\nonumber\\
   &=& \frac 1{(2\pi)^2}\int_{[-\beta,\beta]^2}e^{-i \ell\cdot u}
\lambda_u^n\Pi_u\, du+O(\alpha^n)\label{interm}\\
   &=& \frac 1{(2\pi)^2}\int_{[-\beta,\beta]^2}e^{-i \ell\cdot u}
\lambda_u^n(\Pi_0+O(u))\, du+O(\alpha^n)\label{interm}\\
   &=& \frac 1{(2\pi)^2}\int_{[-\beta,\beta]^2}e^{-i \ell\cdot u}
\left(e^{-\frac 12 (\Sigma^2 u.u)}+O(|u|^3)\right)^n\Pi_0+O(e^{-an|u|^2}|u|)\, du+O(\alpha^n)\nonumber\\
&=& \frac 1{(2\pi)^2n}\int_{[-\beta\sqrt{n},\beta\sqrt{n}]^2}
                       e^{-i \ell\cdot \frac v{\sqrt{n}}}
  e^{-\frac 1{2} (\Sigma^2 v.v)}\Pi_0+O\left(ne^{-a(n-1)\frac{|v|^2}n}\frac{|v|^3}{n^{\frac 32}}+e^{-a{|v|^2}}\frac{v}{\sqrt{n}}\right)\, dv+O(\alpha^n)\nonumber\\
&=& \frac 1{(2\pi)^2n}\int_{[-\beta\sqrt{n},\beta\sqrt{n}]^2}e^{-i \ell\cdot \frac v{\sqrt{n}}}
  e^{-\frac 1{2} (\Sigma^2 v.v)}\Pi_0+O\left(e^{-\frac a 2{|v|^2}} 
\frac{|v|^3}{\sqrt{n}}+e^{-a{|v|^2}}\frac{v}{\sqrt{n}}\right)\, dv+O(\alpha^n)\\
&=& \frac 1{(2\pi)^2n}\int_{[-\beta\sqrt{n},\beta\sqrt{n}]^2}e^{-i \ell\cdot 
        \frac v{\sqrt{n}}}
  e^{-\frac 1{2} (\Sigma^2 v.v)}\Pi_0\, dv+O(n^{-\frac 32})\nonumber\\
&=& \frac 1{(2\pi)^2n}\int_{\mathbb R^2}e^{-i \frac{\ell}{\sqrt {n}}\cdot  v}
  e^{-\frac 1{2} (\Sigma^2 v.v)}\Pi_0\, dv+O(n^{-\frac 32})\nonumber\\
&=&\frac {e^{-\frac 1{2n}\Sigma^{-2}\ell\cdot \ell}}{2\pi n\sqrt{\det\Sigma^2}}\Pi_0
+O(n^{-\frac 32}) \, ,
\end{eqnarray*}
where we have changed variables, $v = u \sqrt{n}$, and 
the $O$ are in $L(\widetilde\cB_1,\widetilde\cB_2)$
with uniform bound. This bound is in $ L(\widetilde\cB_1,\widetilde\cB_2)$ and 
not in $ L(\widetilde\cB_1,\widetilde\cB_1)$ because according to Assumption (A3), the map $u\mapsto \Pi_u$ is differentiable from $[-\beta,\beta]$ to $ L(\widetilde\cB_1,\widetilde\cB_2)$ and a priori not from $[-\beta,\beta]$ to $ L(\widetilde\cB_1,\widetilde\cB_1)$.

For the second estimate, we write
\begin{eqnarray*}
\frac 1{(2\pi)^2}\int_{[-\pi,\pi]^2}\Vert P_u^n\Vert_{\mathcal L(\widetilde\cB_1,\widetilde\cB_1)}\, du
   &=& \frac 1{(2\pi)^2}\int_{[-\beta,\beta]^2}
|\lambda_u|^n\Vert\Pi_u\Vert_{\mathcal L(\widetilde\cB_1,\widetilde\cB_1)}\, du+O(\alpha^n)\\
&\le &\frac 1{(2\pi)^2}\int_{[-\beta,\beta]^2} e^{-a|u|^2 n}
\sup_{u\in[-\beta,\beta]}\Vert\Pi_u\Vert_{\mathcal L(\widetilde\cB_1,\widetilde\cB_1)}\, du+O(\alpha^n)\\
&\le &O(n^{-1})\, ,
\end{eqnarray*}
using again the change of variable $v=u\sqrt{n}$.
\end{proof}
Due to Theorem \ref{prop:perturb}, Theorem \ref{theo:LLT} is a direct consequence of the following result.
\begin{cor}\label{TLL00}
Assume Assumption \ref{hypo:perturb}.
Let $f,g:\bar M\rightarrow \mathbb R$ such that $H_g(\cdot):=\mathbb E_{\bar\mu}[g\cdot]\in\widetilde\cB'_2$ and such that $f\in\widetilde\cB_1$. Then
\begin{equation}
\mathbb E_{\bar\mu}\left[f.\mathbf 1_{\{{S}_n=\ell\}}.g\circ \bar T^n\right]=\frac {\exp\left(-\frac{\Sigma^{-2}\ell\cdot \ell}{2n}\right)}{2\pi n\sqrt{\det \Sigma^2}}\mathbb E_{\bar\mu}[f]\mathbb E_{\bar\mu}[g]+ O\left(n^{-\frac 32}\Vert f\Vert_{\widetilde\cB_1}\, \Vert H_g\Vert_{\widetilde\cB'_2}\right)\, .
\end{equation}
\end{cor}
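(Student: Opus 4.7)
The proof will be an essentially immediate consequence of Theorem \ref{LLT0}, once the expectation on the left-hand side is rewritten in terms of the operator $\mathcal H_{\ell,n}$ and the linear functional $H_g$. The plan is to interchange the composition by $\bar T^n$ for an action of the transfer operator $P^n$ (duality), identify the resulting object as $\mathcal H_{\ell,n} f$, and then plug in the asymptotic expansion from Theorem \ref{LLT0}.

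First, I would observe that by the defining property of the transfer operator $P$ (dual to composition with $\bar T$ with respect to $\bar \mu$), for any integer $n\ge 1$ one has
\[
\mathbb E_{\bar\mu}\left[f\cdot\mathbf 1_{\{S_n=\ell\}}\cdot g\circ \bar T^n\right]
= \mathbb E_{\bar\mu}\left[P^n\!\bigl(f\cdot\mathbf 1_{\{S_n=\ell\}}\bigr)\cdot g\right]
= H_g\bigl(\mathcal H_{\ell,n}f\bigr),
\]
using \eqref{EQOP1} and the definition $H_g(h)=\mathbb E_{\bar\mu}[g\cdot h]$. This is the key algebraic step, and is what makes the problem a direct application of the spectral analysis of $P_u$.

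Next, I would invoke Theorem \ref{LLT0} to decompose
\[
\mathcal H_{\ell,n} \;=\; \frac{\exp\bigl(-\tfrac{1}{2n}\Sigma^{-2}\ell\cdot\ell\bigr)}{2\pi n\sqrt{\det\Sigma^2}}\,\Pi_0 \;+\; R_{\ell,n},
\qquad \|R_{\ell,n}\|_{L(\widetilde\cB_1,\widetilde\cB_2)}=O(n^{-3/2}),
\]
uniformly in $\ell\in\mathbb Z^2$. Applying $H_g$ to $\mathcal H_{\ell,n}f$ and using (A3), namely $\Pi_0 f=\mathbb E_{\bar\mu}[f]\,\mathbf 1_{\bar M}$, together with $H_g(\mathbf 1_{\bar M})=\mathbb E_{\bar\mu}[g]$, produces the main term of the asserted expansion. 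For the remainder, the bound
\[
\bigl|H_g(R_{\ell,n}f)\bigr|
\le \|H_g\|_{\widetilde\cB_2'}\,\|R_{\ell,n}f\|_{\widetilde\cB_2}
\le \|H_g\|_{\widetilde\cB_2'}\,\|R_{\ell,n}\|_{L(\widetilde\cB_1,\widetilde\cB_2)}\,\|f\|_{\widetilde\cB_1}
= O\!\left(n^{-3/2}\|f\|_{\widetilde\cB_1}\|H_g\|_{\widetilde\cB_2'}\right)
\]
gives exactly the error term claimed. Adding the two contributions yields the statement of Corollary \ref{TLL00}.

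There is no genuine obstacle here; the work has all been done in Theorem \ref{LLT0}. The only point worth emphasising is that the remainder $R_{\ell,n}$ must be controlled as an operator into $\widetilde\cB_2$ (not $\widetilde\cB_1$), because the differentiability of $u\mapsto\Pi_u$ in Assumption (A3) only holds in $L(\widetilde\cB_1,\widetilde\cB_2)$; this is precisely why the hypothesis on $g$ is formulated as $H_g\in\widetilde\cB_2'$ rather than $H_g\in\widetilde\cB_1'$, and why the continuous inclusion $\widetilde\cB_1\hookrightarrow\widetilde\cB_2$ in Assumption \ref{hypo:perturb} is essential for $H_g(\mathcal H_{\ell,n}f)$ to be well-defined. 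Theorem \ref{theo:LLT} then follows by specialising to $\widetilde\cB_1=\widetilde\cB_2=\widetilde\cB$, noting that the spectral data supplied by Theorem \ref{prop:perturb} verify Assumption \ref{hypo:perturb}.
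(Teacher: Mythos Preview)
Your proof is correct and follows exactly the same approach as the paper: rewrite the expectation via duality as $H_g(\mathcal H_{\ell,n}f)$ and then apply Theorem~\ref{LLT0}. The paper's own proof is a one-line version of what you wrote, and your additional remarks on why the remainder must be controlled in $L(\widetilde\cB_1,\widetilde\cB_2)$ are accurate and helpful.
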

\begin{proof}
Observe that we have
\[
\mathbb E_{\bar\mu}\left[f.\mathbf 1_{\{{S}_n=\ell\}}.g\circ \bar T^n\right]=\mathbb E_{\bar\mu}\left[P^n(f.\mathbf 1_{\{{S}_n=\ell\}}).g\right]=H_g\left(P^n(\mathbf 1_{\{{S}_n=\ell\}}f)\right)
\, ,
\]
recalling \eqref{eq:H_g}. We conclude due to Theorem \ref{LLT0}.
\end{proof}

\subsection{Return time to the original obstacle:  Proof of Theorem \ref{theo:range}}
\label{proofreturn}
Recall that $\mathcal I_k( x)$ corresponds to the index of the
atom $\bar O_{\mathcal I_k(x)}$ containing $\bar T^k x$ and that $S_n(x)$ corresponds to the label of the copy of $\bar M$ in $M$ containing $T^k(x,\mathbf{0})$. 
We also define $\mathcal I_k$ on $\bar M$
by canonical projection.
We consider the set $B_n$
of $x\in\bar M$ such that the orbit $(T^n(x,\mathbf{0}))_{n\ge 0}$
won't return to the initial atom $\bar O_{\mathcal I_0(x)}\times \{\mathbf{0}\}$
until time $n$:
$$
B_n:=\{\forall k=1,...,n\, :\, 
       ({\mathcal I}_k,S_k)\ne (\mathcal I_0,(0,0))\}  \subset\bar M  \, . 
$$
Analogously we define $B'_n$ the set of points
$x\in \bar M$
for which
the atom visited at time $n$ has not been visited before:
\begin{equation}
\label{eq:Bn'}
B'_n:=\{\forall k=0,...,n-1\, :\, ({\mathcal I}_k,S_k)\ne (\mathcal I_{n},S_n)\} \subset\bar M \, .
\end{equation}
We set $B_n(a):=\bar O_a\cap B_n$ and $B'_n(a):=\bar T^{-n}(\bar O_a)\cap B_n$. We prove the following result on the probability of these sets.

\begin{prop}\label{prop:muB}
Assume Assumption \ref{hypo:perturb}.\\

If $\mathbf 1_{\bar O_a}\in \widetilde{\cB}_1$ and if $f\mapsto\mathbb E_{\bar\mu}[f\mathbf 1_{B_k(a)}]$ are uniformly bounded (uniformly in $k$) in $\widetilde{\cB}'_2$, then
\begin{equation}\label{eq:muB}
\bar\mu(B_n(a))= \frac{2\pi\sqrt{\det\Sigma^2}}{\log n}+O\left((\log n)^{-\frac 43}\right)\, .
\end{equation}

If $f\mapsto\mathbb E_{\bar\mu}[f\mathbf 1_{\bar O_a}]$ is in $\widetilde{\cB}_2'$ and if $P^{k}\mathbf 1_{B'_k(a)}$ are uniformly bounded (uniformly in $k$) in $\widetilde{\cB}_1$, then
\begin{equation}\label{eq:muB'}
\bar\mu( B'_n(a))= \frac{2\pi\sqrt{\det\Sigma^2}}{\log n}+O\left((\log n)^{-\frac 43}\right)\, .
\end{equation}
\end{prop}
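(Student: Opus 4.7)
The plan is to derive \eqref{eq:muB} via the classical renewal scheme used for barely-recurrent $\mathbb{Z}^2$-random walks, combining the sharp return asymptotic (from Corollary~\ref{TLL00}) with a spectral-gap-based approximate Markov property for the first-return kernel. The dual estimate \eqref{eq:muB'} follows from the same argument applied in reverse time, which is why the assumptions appear in dual form.

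First, I would apply Corollary~\ref{TLL00} with $f = g = \mathbf 1_{\bar O_a}$ and $\ell = \mathbf 0$. The hypothesis $\mathbf 1_{\bar O_a}\in \widetilde{\cB}_1$ is given, while $f \mapsto \mathbb E_{\bar\mu}[f\,\mathbf 1_{\bar O_a}] \in \widetilde{\cB}_2'$ is the $k=0$ instance of the uniform-boundedness hypothesis, since $B_0(a) = \bar O_a$. This produces
\[
R_n(a) := \mathbb E_{\bar\mu}\bigl[\mathbf 1_{\bar O_a}\cdot\mathbf 1_{\bar O_a}\circ\bar T^n \cdot \mathbf 1_{\{S_n = \mathbf 0\}}\bigr] = \frac{\bar\mu(\bar O_a)^2}{2\pi n \sqrt{\det\Sigma^2}} + O(n^{-3/2}) \, .
\]
Writing $E_{a,k} := B_{k-1}(a) \setminus B_k(a)$ (with $B_0(a) := \bar O_a$) for the first-return set at time $k$ and $f_k(a) := \bar\mu(E_{a,k})$, the partition $\bar O_a = B_n(a) \sqcup \bigsqcup_{k=1}^n E_{a,k}$ yields $\bar\mu(B_n(a)) = \bar\mu(\bar O_a) - \sum_{k=1}^n f_k(a)$.

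The crucial step is an approximate renewal identity
\[
R_n(a) = \sum_{k=1}^n f_k(a)\,\frac{R_{n-k}(a)}{\bar\mu(\bar O_a)} + r_n
\]
with a quantitatively small error $r_n$. Splitting $R_n(a)$ by the first-return time and using $S_n = S_{n-k}\circ\bar T^k$ on $E_{a,k}$, one obtains, by duality of the transfer operator,
\[
\mathbb E\bigl[\mathbf 1_{E_{a,k}}\cdot\mathbf 1_{\bar O_a}\circ\bar T^n\cdot\mathbf 1_{\{S_n = \mathbf 0\}}\bigr] = \mathbb E\bigl[P^k\mathbf 1_{E_{a,k}}\cdot\mathbf 1_{\bar O_a}\circ\bar T^{n-k}\cdot\mathbf 1_{\{S_{n-k} = \mathbf 0\}}\bigr] \, ,
\]
and wants to replace $P^k\mathbf 1_{E_{a,k}}$---a function supported on $\bar O_a$ of total mass $f_k(a)$---by its equilibrated surrogate $f_k(a)\mathbf 1_{\bar O_a}/\bar\mu(\bar O_a)$. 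Exponential mixing of $P$ on $\widetilde{\cB}_1$ (Proposition~\ref{Quasicompact}) drives this equilibration, while the uniform-in-$k$ bound on $f \mapsto \mathbb E_{\bar\mu}[f\,\mathbf 1_{B_k(a)}]$ in $\widetilde{\cB}_2'$---transferring to $\mathbb E_{\bar\mu}[f\,\mathbf 1_{E_{a,k}}]$ through $\mathbf 1_{E_{a,k}} = \mathbf 1_{B_{k-1}(a)} - \mathbf 1_{B_k(a)}$---furnishes the dual control needed to bound $r_n$ summably when pushed into the generating function below.

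Finally, setting $U(z) := \sum_n R_n(a)\,z^n$ and $F(z) := \sum_n f_n(a)\,z^n$, the approximate renewal rearranges to $\bar\mu(\bar O_a) - F(z) = \bar\mu(\bar O_a)^2/U(z) + O(\cdots)$; the LLT asymptotic gives $U(z) \sim \frac{\bar\mu(\bar O_a)^2}{2\pi\sqrt{\det\Sigma^2}}\log\frac{1}{1-z}$ as $z\to 1^-$, whence $\bar\mu(\bar O_a) - F(z) \sim 2\pi\sqrt{\det\Sigma^2}/\log(1/(1-z))$. Since $\sum_n \bar\mu(B_n(a))\,z^n = (\bar\mu(\bar O_a) - F(z))/(1-z)$, a quantitative Karamata Tauberian theorem applied to the monotonically decreasing sequence $\bar\mu(B_n(a))$ delivers the leading-order asymptotic, with the $4/3$ exponent in the error arising from optimizing the truncation level in the Tauberian step against the $n^{-3/2}$ LLT error and the renewal-approximation error. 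The main obstacle is precisely this approximate renewal step: the absence of a genuine Markov property forces one to quantify, via the spectral gap on $\widetilde{\cB}_1$ combined with the dual norm hypothesis, the defect between $P^k\mathbf 1_{E_{a,k}}$ and its equilibrated surrogate; this is also where, in the proof of \eqref{eq:muB'}, the uniform $\widetilde{\cB}_1$-bound on $P^k\mathbf 1_{B_k'(a)}$ plays the symmetric role.
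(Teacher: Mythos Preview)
Your renewal/Tauberian scheme is a different route from the paper's, and under the stated hypotheses it has a genuine gap.

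The paper does not use a first-return decomposition at all. It uses the Dvoretzky--Erd\H{o}s \emph{last-visit} decomposition
\[
\bar\mu(\bar O_a)=\sum_{k=0}^{n}\bar\mu\!\left(\bar O_a\cap\{S_k=\mathbf 0\}\cap \bar T^{-k}B_{n-k}(a)\right),
\]
and then applies Corollary~\ref{TLL00} \emph{directly} to each summand with $f=\mathbf 1_{\bar O_a}\in\widetilde\cB_1$ and $g=\mathbf 1_{B_{n-k}(a)}$, so that the hypothesis $H_g=\mathbb E_{\bar\mu}[\,\cdot\,\mathbf 1_{B_{n-k}(a)}]\in\widetilde\cB_2'$ is exactly what is needed. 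This replaces each term by $\tfrac{\bar\mu(\bar O_a)\bar\mu(B_{n-k}(a))}{2\pi k\sqrt{\det\Sigma^2}}+O(k^{-3/2})$; then monotonicity of $k\mapsto\bar\mu(B_k(a))$ and two elementary truncations (at $m_n=\lceil(\log n)^2\rceil$ for the upper bound, at $m_n'=\lfloor(\log n)^{2/3}\rfloor$ applied with $n$ replaced by $n\log n$ for the lower bound) give the $(\log n)^{-4/3}$ remainder. No generating functions and no Tauberian step are used.

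Your approach puts $\mathbf 1_{E_{a,k}}$ at time $0$ and the ``future'' observable $\mathbf 1_{\bar O_a}\circ\bar T^{n-k}\mathbf 1_{\{S_{n-k}=\mathbf 0\}}$ at time $k$. To bound the defect $r_n$ you invoke Proposition~\ref{Quasicompact}, but that estimate reads $\|P^k h-\mathbb E_{\bar\mu}[h]\mathbf 1\|_{\widetilde\cB_1}\le C\alpha^k\|h\|_{\widetilde\cB_1}$ and therefore requires a uniform bound on $\|\mathbf 1_{E_{a,k}}\|_{\widetilde\cB_1}$. The hypothesis for \eqref{eq:muB} gives only the \emph{dual} control $\sup_k\|\mathbb E_{\bar\mu}[\,\cdot\,\mathbf 1_{B_k(a)}]\|_{\widetilde\cB_2'}<\infty$, which lets you integrate $\mathbf 1_{E_{a,k}}$ against elements of $\widetilde\cB_2$ but does not place $\mathbf 1_{E_{a,k}}$ (or $P^k\mathbf 1_{E_{a,k}}$) in $\widetilde\cB_1$. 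Conversely, the ``future'' piece here is not an element of $\widetilde\cB_2$ to which the dual bound could be applied, since $\mathbf 1_{\{S_{n-k}=\mathbf 0\}}$ and composition with $\bar T^{n-k}$ are not operations preserving $\widetilde\cB_2$. In short, the hypotheses are tailored to the last-visit decomposition (indicator of $\bar O_a$ on the $\widetilde\cB_1$ side, indicator of $B_k(a)$ on the dual side), and your first-return scheme reverses these roles. Note incidentally that the equilibrated surrogate delivered by Proposition~\ref{Quasicompact} is the constant $f_k(a)\mathbf 1_{\bar M}$, not $f_k(a)\mathbf 1_{\bar O_a}/\bar\mu(\bar O_a)$, so even the target of your approximation is off. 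A secondary concern is that extracting the sharp $(\log n)^{-4/3}$ remainder from a Karamata-type Tauberian argument is far from automatic; the paper obtains it by an explicit balancing of truncation levels in the Dvoretzky--Erd\H{o}s inequalities.
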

\begin{proof}
As in \cite{soazRange}, we follow the idea of the proof of Dvoretzy and Erd\"os and adapt it to our context.
Considering the last visit time
to $\bar O_a\times\{\mathbf{0}\}$ of $(T^k(x,\mathbf{0}))$ until time $n$, we write
\begin{equation}\label{EQUA01}
\bar\mu(\bar O_a)=\sum_{k=0}^{n}\bar\mu\left(\bar O_a\cap\{S_k=\mathbf{0}\}\cap \bar T^{-k}(B_{n-k}(a))\right)\, 
\end{equation}
and, analogously,
\begin{equation}\label{EQUA02} 
\bar\mu(\bar O_a)=\bar\mu\left(\bar T^{-n}\bar O_a\right)=\sum_{k=0}^{n}\bar\mu\left((\bar T^{-n}\bar O_a)\cap\{S_n-S_{n-k}=\mathbf{0}\}\cap B'_{n-k}(a)\right)
\end{equation}
considering the first visit time to $\bar O_a\times\{S_n\}$ before time $n$.
Moreover, due to Corollary \ref{TLL00} and to our assumptions on
$\bar O_a$ and on $B_n(a)$, there exists $C">0$ such that
\begin{equation}\label{EQUA1}
\forall k\in\mathbb N^*,\quad\left\vert\bar\mu\left(\bar O_a\cap\{S_k=\mathbf{0}\}\cap \bar T^{-k}(B_{n-k}(a))\right)-\frac{\bar\mu(\bar O_a)\bar\mu(B_{n-k}(a))}{2k\pi\sqrt{\det \Sigma^2}}\right\vert\le \frac {C"}{k^{\frac 32}}\, ,
\end{equation}
and, since $\bar\mu\left((\bar T^{-n}\bar O_a)\cap\{S_n-S_{n-k}=\mathbf{0}\}\cap B'_{n-k}(a)\right)=\mathbb E_{\bar\mu}\left[\mathbf 1_{\bar O_a} P^k\left(\mathbf 1_{\{S_k=\mathbf{0}\}}P^{n-k}\mathbf 1_{B'_{n-k(a)}}\right)    \right]$.
Due to Theorem \ref{LLT0}, we also have
\begin{equation}\label{EQUA2}
\forall k\in\mathbb N^*,\quad\left\vert\bar\mu\left((\bar T^{-n}\bar O_a)\cap\{S_n-S_{n-k}=\mathbf{0}\}\cap B'_{n-k}(a)\right)-\frac{\bar\mu(\bar O_a)\bar\mu(B'_{n-k}(a))}{2k\pi\sqrt{\det \Sigma^2}}\right\vert\le \frac {C"}{k^{\frac 32}}\, .
\end{equation}
We will prove \eqref{eq:muB} using \eqref{EQUA01} and \eqref{EQUA1}.
The proof of \eqref{eq:muB'} using \eqref{EQUA02} and \eqref{EQUA2} follows the same scheme, 
and we omit it.
\begin{eqnarray*}
\bar\mu(\bar O_a)&\ge&\sum_{k=\lceil m_n\rceil}^{n-1} \frac{\bar\mu(\bar O_a)\bar\mu(B_{n-1-k}(a))}{2k\pi\sqrt{\det \Sigma^2}}+\sum_{k=m_n}^n\frac {C"}{k^{\frac 32}}\nonumber\\
&\ge&\bar\mu(B_{n}(a)) \left(\log (n)-\log(m_n)\right)
    \frac{\bar\mu(\bar O_a)}{2\pi\sqrt{\det \Sigma^2}}+\sum_{k=m_n}^n\frac {C"}{k^{\frac 32}}\\
&\ge& \log (n) \, \bar\mu(B_{n}(a))\left(1-\frac{\log(m_n)}{\log n}\right)
    \frac{\bar\mu(\bar O_a)}{2\pi\sqrt{\det \Sigma^2}}+O(m_n^{-\frac 12})\, ,
\end{eqnarray*}
with $m_n=\lceil(\log n)^2\rceil$, which leads to
\begin{equation}
\log (n) \, \bar\mu(B_n(a)) \le 2\pi\sqrt{\det\Sigma^2}+O\left(\frac{\log\log n}{\log n}\right).\label{BBB1}
\end{equation}

Moreover\\
\begin{eqnarray*}
\bar\mu(\bar O_a)&\le&\sum_{k=0}^{m'_n-1}\bar\mu(B_{\lceil n\log n\rceil-k}(a))+\sum_{k=m'_n}^{\lceil n\log n\rceil-n} \frac{\bar\mu(\bar O_a)\bar\mu(B_{\lceil n\log n\rceil-k}(a))}{2k\pi\sqrt{\det \Sigma^2}}\nonumber\\
&\ &\ \ \ \ \ \ \ \ +\sum_{k=\lceil n\log n\rceil-n+1}^{\lceil n\log n\rceil}\frac{\bar\mu(\bar O_a) \bar \mu(B_{\lceil n\log n\rceil}(a))}{2k\pi\sqrt{\det \Sigma^2} }+\sum_{k=m'_n}^{\lceil n\log n\rceil}\frac {C"}{k^{\frac 32}}\nonumber\\
&\le&  \frac{m'_n}{\log n}+ \bar\mu(B_{n}(a))\left(\left(\log ( n\log n-n+1)-\log(m'_n-1)\right)
    \frac{\bar\mu(\bar O_a)}{2\pi\sqrt{\det \Sigma^2}}\right)\nonumber\\
&\ &+\frac {\log( n\log n)-\log( n\log n-n)}{2\pi\sqrt{\det \Sigma^2} }+C" (m'_n)^{-\frac 12}\nonumber\, ,\\
\end{eqnarray*}
where we used the facts that $\bar \mu(B_{\lceil n\log n\rceil-k}(a))\le\bar\mu(B_n(a))=O((\log n)^{-1})$ 
for every $k\le\lceil n\log n\rceil-n$ and that
$\bar\mu(B_m(a))\le 1$ for $k>\lceil n\log n\rceil-n$. This leads us to
\begin{eqnarray*}
\bar\mu(\bar O_a)
&\le&\log n\frac{\bar\mu(\bar O_a)}{2\pi\sqrt{\det \Sigma^2}}\bar\mu(B_{n}(a))\left(1+O\left(\frac{\log\log n+\log m'_n}{\log n}\right)
    \right)
+O\left(\frac {m'_n}{\log n}+(m'_n)^{-\frac 12}\right)\nonumber\, ,\\
&\le&\log n\frac{\bar\mu(\bar O_a)}{2\pi\sqrt{\det \Sigma^2}}\bar\mu(B_{n}(a))\left(1+O\left(\frac{\log\log n}{\log n}\right)
    \right)
+O\left((\log n)^{-\frac 13}\right)\nonumber\, ,\\
\end{eqnarray*}
by taking $m'_n:=\lfloor (\log n)^{\frac 23}\rfloor$
and so
\begin{equation}
\log (n) \, \bar\mu(B_n(a)) \; \ge \; 2\pi\sqrt{\det\Sigma^2}+O\left((\log n)^{-\frac 13}\right).\label{BBB2}
\end{equation}
 
The proposition follows from \eqref{BBB1} and \eqref{BBB2}.
\end{proof}

\begin{lem}
\label{lem:bounded}
There exists $K_1>0$ such that, for every positive integer $\ell$, for every $(\omega_1,...,\omega_\ell) \in E^\ell$, 
for every uniformly bounded function $g:\bar M_0 \rightarrow \mathbb R$ which is uniformly $p$-H\"older continuous on connected components of
$\bar M_0\setminus\left(\cup_{k=1}^\ell \bar T_{\omega_1}^{-1} \circ \cdots \circ \bar T_{\omega_k}^{-1} (\cS_{0,H})\right)$, and
for all $f \in \cB_w$, 
\begin{equation}
\label{eq}
| \mathbb{E}_{\bar\mu_0} [ f \, g ] | \le K_1 |f|_w \left(| g|_\infty+  \sup_{C\in\mathcal C_{\omega_1,...,\omega_\ell}}C_{g_{|C}}^{(p)}\right)\ .
\end{equation}
Moreover, for every $f\in\mathcal B$,
\begin{equation}
\label{eq2}
\Vert    \mathcal L_{u,\omega_\ell}...\mathcal L_{u,\omega_1}( gf)\Vert_{\mathcal B} \le K_1 \Vert f\Vert_{\mathcal B}\left(| g |_\infty+  \sup_{C\in\mathcal C_{\omega_1,...,\omega_\ell}}C_{g_{|C}}^{(p)}\right)\ ,
\end{equation}
where $ \mathcal C_{\omega_1,...,\omega_\ell}$ is the set of connected components of $\bar M_0\setminus\left(\cup_{k=1}^\ell \bar T_{\omega_1}^{-1} \circ \cdots \circ \bar T_{\omega_k}^{-1} (\cS_{0,H})\right)$ and where $C_{g_{|C}}^{(p)}$ is the H\"older constant of $g$ restricted to $C$.
\end{lem}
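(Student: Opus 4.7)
The plan is to adapt the Lasota--Yorke-type calculation of Lemma~\ref{lem:uniform ly} by absorbing the factor $g$ into the test functions on stable curves. The decisive observation is that the subdivision $\cG_\ell(W)$ of $\bar T_{\underline\omega}^{-\ell}W$ appearing in the proof of Lemma~\ref{lem:uniform ly} is finer than the partition $\mathcal C_{\omega_1,\ldots,\omega_\ell}$: each $W_i\in\cG_\ell(W)$ is a homogeneous stable curve whose iterates $\bar T_{\omega_k}\circ\cdots\circ\bar T_{\omega_1}(W_i)$ are homogeneous for every $1\le k\le\ell$, so $W_i$ cannot cross any of the curves $\bar T_{\omega_1}^{-1}\circ\cdots\circ\bar T_{\omega_k}^{-1}\cS_{0,H}$. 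Consequently $W_i$ is contained in a single component $C\in\mathcal C_{\omega_1,\ldots,\omega_\ell}$, and one has the uniform bound $|g|_{\cC^p(W_i)}\le|g|_\infty+\sup_{C}C^{(p)}_{g_{|C}}$.

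I would begin with \eqref{eq2}. For $f\in\cC^1$, $W\in\cW^s$, and an appropriate test function $\psi$, I would expand
\[
\int_W \mathcal L_{u,\omega_\ell}\cdots\mathcal L_{u,\omega_1}(gf)\,\psi\,dm_W
= \sum_{W_i\in\cG_\ell(W)}\int_{W_i} f\cdot\bigl(g\cdot e^{iu\cdot S_\ell\Phi_{\underline\omega}}\cdot\psi\circ\bar T_{\underline\omega}^\ell\bigr)\cdot J_{W_i}\bar T_{\underline\omega}^\ell\,dm_{W_i}.
\]
Exactly as in Lemma~\ref{lem:uniform ly}, the phase $e^{iu\cdot S_\ell\Phi_{\underline\omega}}$ is constant on every $W_i$; by submultiplicativity of $|\cdot|_{\cC^p}$ the composite test function on $W_i$ thus has $\cC^p$-norm bounded by $(|g|_\infty+\sup_C C^{(p)}_{g_{|C}})\,|\psi\circ\bar T_{\underline\omega}^\ell|_{\cC^p(W_i)}$. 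Substituting this into the stable- and unstable-norm bookkeeping of Lemma~\ref{lem:uniform ly} yields \eqref{eq2} with a constant independent of $u$, $\ell$, and $\underline\omega$. The same computation carried out in the weak norm gives
\[
|\mathcal L_{\omega_\ell}\cdots\mathcal L_{\omega_1}(gf)|_w \le K_1'\,|f|_w\Bigl(|g|_\infty+\sup_{C\in\mathcal C_{\omega_1,\ldots,\omega_\ell}}C^{(p)}_{g_{|C}}\Bigr),
\]
first for $f\in\cC^1$ and then for all $f\in\cB_w$ by density. For \eqref{eq}, the $\bar\mu_0$-invariance from assumption \textbf{(H5)} gives $\mathbb E_{\bar\mu_0}[fg]=\mathbb E_{\bar\mu_0}[\mathcal L_{\omega_\ell}\cdots\mathcal L_{\omega_1}(fg)]$, and the fact (Remark~\ref{Rke0}) that $\mathbb E_{\bar\mu_0}[\cdot]$ is a continuous linear form on $\cB_w$ combines with the weak-norm bound above to produce \eqref{eq}.

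I expect the main technical obstacle to be the unstable-norm part of \eqref{eq2}, where one must compare the integrals along two nearby stable curves $W^1,W^2\in\cW^s$. Following the matched/unmatched decomposition of \cite{MarkHongKun2013}, matched pieces of $\bar T_{\underline\omega}^{-\ell}W^1$ and $\bar T_{\underline\omega}^{-\ell}W^2$ lie in the same connected component of $\bar M_0\setminus\cS_\ell^{\bar T_{\underline\omega}}$, and hence in a common element of $\mathcal C_{\omega_1,\ldots,\omega_\ell}$, so the contributions of $g$ on matched pairs can be compared using its uniform Hölder modulus on that element together with the transverse unstable holonomy; on unmatched pieces the estimate reduces to that of \cite{MarkHongKun2013} with the same extra uniform multiplicative factor from $g$. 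Density of $\cC^1$ in $\cB$ then extends \eqref{eq2} from $f\in\cC^1$ to all $f\in\cB$.
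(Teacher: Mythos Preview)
Your argument for \eqref{eq2} is essentially the paper's: absorb $g$ into the test function and rerun the Lasota--Yorke bookkeeping from Lemma~\ref{lem:uniform ly}, using that each $W_i\in\cG_\ell(W)$ lies in a single element of $\mathcal C_{\omega_1,\ldots,\omega_\ell}$.

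For \eqref{eq} you take a somewhat different route. The paper writes $\mathbb E_{\bar\mu_0}[fg]=\int\cL_{\underline\omega_\ell}^\ell f\cdot g\circ(\bar T_{\underline\omega_\ell}^\ell)^{-1}\,d\bar\mu_0$ and then explicitly disintegrates $\bar\mu_0$ along a fixed smooth foliation of $\bar M_0$ by stable curves (finitely many boxes $B_j$ together with the homogeneity strips $\mathbb H_t$, exactly as in the proof of \cite[Lemma~3.4]{MarkHongKun2013}), reducing to integrals along leaves $W_\xi$ where the change of variables gives a sum over $\cG_\ell(W_\xi)$ and the test function on each $W_{\xi,i}$ picks up the factor $g$. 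Your route instead bounds $|\cL_{\underline\omega}^\ell(gf)|_w$ by the weak-norm Lasota--Yorke computation and then invokes Remark~\ref{Rke0} as a black box. This is more modular and equally valid; the paper's explicit disintegration is just the proof of Remark~\ref{Rke0} rerun with the extra factor $g$. The one point to watch in your version is that Remark~\ref{Rke0} is stated for elements of $\cB_w$, and it is not \emph{a priori} clear that $\cL_{\underline\omega}^\ell(gf)\in\cB_w$ (the discontinuity set of $gf$ lies in $\cS_\ell^{\bar T_{\underline\omega}}$, which is \emph{not} transverse to the stable cone, so $gf$ itself need not have finite weak norm). You sidestep this correctly by establishing the bound $|\mathbb E_{\bar\mu_0}[fg]|\le K_1'|f|_w(\ldots)$ first for $f\in\cC^1$ and then extending by density, since this already defines the required continuous linear functional on $\cB_w$.
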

The proof of Lemma \ref{lem:bounded} can be found in Appendix A.

\begin{rem}
The purpose of Lemma~\ref{lem:bounded} is to show that $K_1$ can be chosen independently of $\ell$.  
If one wishes similar bounds on piecewise H\"older continuous functions on $M_0$ with respect to a fixed partition, then Remark~\ref{Rke0} and
Lemma~\ref{lem:piecewise} provide such estimates under general conditions on the boundaries of partition elements.

Indeed, we will apply the lemma to the function $g = \mathbf{1}_{B_n(a)}$, where $B_n(a)$ is defined in Section~\ref{self} (see also Section~\ref{proofreturn}).
\end{rem}

Next we are  ready to prove the main Theorem \ref{theo:range}.
\begin{proof}[Proof of Theorem \ref{theo:range}]
Assumption \ref{hypo:perturb} follows from Theorem \ref{prop:perturb}.
The other assumptions of Proposition \ref{prop:muB} follow from Lemma \ref{lem:bounded}
since $\mathbf 1_{B_n(a)}$ satisfies the assumptions on $g$ in that lemma (uniformly
in $n$).
\end{proof}

\subsection{Number of self-intersections: Proof of Theorem \ref{theo:selfintersection}}\label{proofselfinter}
We consider the number of self-intersections $\mathcal V_n$ of the process $(\mathcal I_k,S_k)_k$ defined by
\begin{equation}
\label{eq:Vn}
\mathcal V_n:=\sum_{k,\ell=1}^{n}\mathbf 1_{\{S_\ell=S_k,\ \mathcal I_\ell=\mathcal I_k\}}.
\end{equation}
\begin{theo}\label{theo:SelfInter}
Assume Assumptions \ref{hypo:perturb} with $\widetilde\cB_2=\widetilde \cB_1$. Assume moreover:
\begin{itemize}
\item[(A5)] the operator $f\mapsto f\mathbf 1_{\bar O_a}$ is a linear operator on 
$\widetilde\cB_1$ for every $a \in \{ 1, \ldots, I \}$.
\end{itemize}
Then $(\mathcal V_n/(n\log n))_n$ converges $\bar\mu$-almost surely
to $\frac 1{\pi\sqrt{\det \Sigma^2}}\sum_{a=1}^I  
\bar\mu(\mathcal I_0=a)^2$.
\end{theo}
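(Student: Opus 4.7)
The natural strategy is to prove convergence in $L^1$ (or $L^2$) to the claimed limit, then upgrade to almost sure convergence via a variance estimate and a subsequence/monotonicity argument. Since $\mathcal V_n$ is manifestly non-decreasing in $n$, once we control $\mathcal V_{n_j}$ along a sufficiently thin subsequence $n_j$, the interpolation to arbitrary $n$ is automatic up to a factor $n_{j+1}\log n_{j+1}/(n_j\log n_j)$, which we can make as close to $1$ as we wish.

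\smallskip
\textbf{Step 1 (Expectation).} Write $A_{k,\ell}^{(a)}:=\{S_k=S_\ell,\ \mathcal I_k=\mathcal I_\ell=a\}$ and decompose
\[
\mathbb E_{\bar\mu}[\mathcal V_n]=n+2\sum_{a=1}^I\sum_{1\le k<\ell\le n}\bar\mu(A_{k,\ell}^{(a)}).
\]
Using the $\bar T$-invariance of $\bar\mu$, $\bar\mu(A_{k,\ell}^{(a)})=\bar\mu(\mathcal I_0=a,\,\mathcal I_{\ell-k}=a,\,S_{\ell-k}=\mathbf 0)$. Taking $f=\mathbf 1_{\bar O_a}\in\widetilde\cB_1$ (by (A5) applied to $\mathbf 1_{\bar M}\in\widetilde\cB_1$) and $g=\mathbf 1_{\bar O_a}$ (noting that $H_g\in\widetilde\cB_2'$ is just the dual action of (A5) combined with continuity of $\mathbb E_{\bar\mu}[\cdot]$), the local limit theorem (Corollary \ref{TLL00}) gives
\[
\bar\mu(A_{k,\ell}^{(a)})=\frac{\bar\mu(\mathcal I_0=a)^2}{2\pi(\ell-k)\sqrt{\det\Sigma^2}}+O\bigl((\ell-k)^{-3/2}\bigr),
\]
with a constant uniform in $a,k,\ell$. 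Summing in $\ell-k=m\in\{1,\dots,n-1\}$ with weight $(n-m)$ yields $\mathbb E_{\bar\mu}[\mathcal V_n]=C_\infty\,n\log n+O(n)$, where $C_\infty=\frac{1}{\pi\sqrt{\det\Sigma^2}}\sum_{a=1}^I\bar\mu(\mathcal I_0=a)^2$. This already gives $L^1$-convergence of $\mathcal V_n/(n\log n)$ to $C_\infty$.

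\smallskip
\textbf{Step 2 (Variance estimate).} Writing $\mathcal V_n-\mathbb E_{\bar\mu}[\mathcal V_n]$ as a four-index sum over tuples $(k_1,\ell_1,k_2,\ell_2)$, the key is to bound
\[
\text{Cov}\bigl(\mathbf 1_{A_{k_1,\ell_1}^{(a)}},\mathbf 1_{A_{k_2,\ell_2}^{(b)}}\bigr)
\]
according to the ordering of the four times. After restricting to $k_i<\ell_i$ and grouping by which of the six possible orderings of $\{k_1,\ell_1,k_2,\ell_2\}$ occurs, each covariance is expressed as a weighted sum of quantities of the form $\mathbb E_{\bar\mu}[\mathbf 1_{\bar O}\cdot\mathbf 1_{\{S_{m_1}=\mathbf 0\}}\cdot\mathbf 1_{\bar O}\circ\bar T^{m_1}\cdot\mathbf 1_{\{S_{m_1+m_2+m_3}-S_{m_1+m_2}=\mathbf 0\}}\cdot\mathbf 1_{\bar O}\circ\bar T^{m_1+m_2+m_3}]$ minus the corresponding product. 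Each such correlation is estimated by inserting Fourier inversion as in \eqref{EQOP2}–\eqref{EQOP3}, thereby reducing the problem to iterated applications of $P_u^m$. The spectral decomposition $P_u^m=\lambda_u^m\Pi_u+N_u^m$ from Theorem \ref{prop:perturb}(b), combined with the uniform bound \eqref{intPun} of Theorem \ref{LLT0} and the continuity of $f\mapsto\mathbf 1_{\bar O_a}f$ on $\widetilde\cB_1$ guaranteed by (A5), then yields the desired cancellation and an overall bound of the form $\text{Var}(\mathcal V_n)=O(n^2)$. This is the Bolthausen-type estimate carried out in the Banach-space setting without assuming $\|\Phi\|_\infty<\infty$ or $\widetilde\cB_1\hookrightarrow L^p$, in the spirit of \cite{soazSelfInter} but with the simplification afforded by (A5).

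\smallskip
\textbf{Step 3 (Almost sure convergence).} With $\text{Var}(\mathcal V_n)=O(n^2)$, Chebyshev gives $\bar\mu\bigl(|\mathcal V_n-\mathbb E[\mathcal V_n]|>\varepsilon\, n\log n\bigr)=O\bigl((\log n)^{-2}\bigr)$. This is not summable in $n$, but along $n_j=\lfloor e^{j^{1/2}}\rfloor$ (or indeed any subsequence with $\sum_j(\log n_j)^{-2}<\infty$) the Borel--Cantelli lemma gives $\mathcal V_{n_j}/(n_j\log n_j)\to C_\infty$ a.s. Using the monotonicity $\mathcal V_{n_j}\le\mathcal V_n\le\mathcal V_{n_{j+1}}$ for $n_j\le n\le n_{j+1}$ and the fact that $n_{j+1}\log n_{j+1}/(n_j\log n_j)\to 1$, we conclude $\mathcal V_n/(n\log n)\to C_\infty$ a.s.

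\smallskip
\textbf{Main obstacle.} The only nontrivial step is the variance bound in Step 2. The issue is that the off-diagonal covariances decay only like $1/m$ individually, so naively the four-index sum produces $O(n^2(\log n)^2)$, which is too large. Extracting the extra factor of $(\log n)^2$ requires exploiting the cancellation between the actual joint probability of two self-intersections and the product of their expectations, which is precisely the telescoping one obtains by writing both terms via the Fourier formula \eqref{EQOP3} and using that $\Pi_u-\Pi_0=O(u)$ in $L(\widetilde\cB_1,\widetilde\cB_2)$ together with the spectral gap on $N_u$. Carrying this out cleanly in the present framework---where $\Phi$ is unbounded and $\widetilde\cB_1$ is a space of distributions rather than functions---is exactly the technical improvement over \cite{soazSelfInter} announced by the authors, and is where assumption (A5) (bounded multiplication by indicators) plays its essential role.
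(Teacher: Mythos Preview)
Your overall strategy matches the paper's exactly: establish the expectation via the local limit theorem (the paper's Lemma~\ref{expVn}), prove a variance bound $\mathrm{Var}_{\bar\mu}(\mathcal V_n)=O(n^2)$ (the paper's Lemma~\ref{lem:varVn}, which actually proves the sharper $\mathrm{Var}_{\bar\mu}(\mathcal V_n)\sim c\,n^2$), and then apply Chebyshev along a subsequence plus monotonicity to get almost sure convergence.

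However, Step~3 contains a concrete error: your choice $n_j=\lfloor e^{j^{1/2}}\rfloor$ gives $\log n_j\sim j^{1/2}$ and hence $(\log n_j)^{-2}\sim j^{-1}$, which is \emph{not} summable, so Borel--Cantelli does not apply. You need the subsequence to satisfy simultaneously $\sum_j(\log n_j)^{-2}<\infty$ \emph{and} $n_{j+1}\log n_{j+1}/(n_j\log n_j)\to 1$, and the borderline growth $\log n_j\asymp\sqrt{j}$ fails the first condition. The paper takes $n_k=\exp(\sqrt{k}\,\log k)$, so that $(\log n_k)^{-2}=\bigl(k(\log k)^2\bigr)^{-1}$ is summable while $\log n_{k+1}-\log n_k\sim\tfrac{\log k}{2\sqrt{k}}\to 0$ ensures the ratio condition. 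Any $n_j=\exp(j^\alpha)$ with $\alpha\in(1/2,1)$ would also work.

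One minor remark on Step~1: computing $\mathbb E_{\bar\mu}[\mathcal V_n]\sim C_\infty\, n\log n$ yields convergence of the \emph{mean}, not $L^1$-convergence of $\mathcal V_n/(n\log n)$; the latter already requires the variance bound from Step~2.
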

The proof of Theorem \ref{theo:selfintersection} 
will follow from the following lemmas.
Recalling \eqref{eq:Vn}, let us write $E_{k,\ell}:=\{S_k=S_\ell,\mathcal I_k=\mathcal I_\ell\}$
and $E_\ell:=E_{0,\ell}$.
\begin{lem}\label{expVn}
For $\ell > k$, we have
\[
\bar\mu\left(E_{k,\ell}\cap\bar T^{-k}\bar O_a\right)=\frac{\left(\bar \mu(\bar O_a)\right)^2}{2\pi\sqrt{\det \Sigma^2}(\ell-k)}+O((\ell-k)^{-\frac 32})\, ,
\]
and so
\[
\bar\mu(E_{k,\ell})=\frac{c_1}{\ell-k}+O((\ell-k)^{-\frac 32})\quad\mbox{and}\quad
\mathbb E_{\bar\mu}[\mathcal V_n]=2c_1 n\log n + O(n)\, ,
\]
with $c_1:=\frac 1{2\pi\sqrt{\det \Sigma^2}}\sum_{a=1}^I  
\bar\mu(\mathcal I_0=a)^2$.
\end{lem}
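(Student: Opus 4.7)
The plan is to reduce everything to a single application of the mixing local limit theorem (Corollary \ref{TLL00}) via $\bar T$-invariance, then sum.

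First, I would use the cocycle identity $S_\ell - S_k = S_{\ell-k} \circ \bar T^k$ together with $\mathcal I_j = \mathcal I_0 \circ \bar T^j$ and the fact that $\bar T^{-k}\bar O_a = \{\mathcal I_k = a\}$ to rewrite
\[
E_{k,\ell}\cap \bar T^{-k}\bar O_a = \bar T^{-k}\bigl(\{S_{\ell-k}=\mathbf{0},\ \mathcal I_0 = \mathcal I_{\ell-k} = a\}\bigr).
\]
Setting $m:=\ell-k$ and invoking $\bar T$-invariance of $\bar\mu$ gives
\[
\bar\mu\bigl(E_{k,\ell}\cap \bar T^{-k}\bar O_a\bigr) = \mathbb E_{\bar\mu}\bigl[\mathbf 1_{\bar O_a}\cdot \mathbf 1_{\{S_m=\mathbf{0}\}}\cdot \mathbf 1_{\bar O_a}\circ \bar T^m\bigr].
\]

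Next, I would apply Corollary \ref{TLL00} with $f=g=\mathbf 1_{\bar O_a}$ and $\ell=\mathbf{0}$. To see this is legitimate, note that $\mathbf 1_{\bar M}\in \widetilde\cB_1$ by hypothesis, so by (A5) the indicator $\mathbf 1_{\bar O_a} = \mathbf 1_{\bar O_a}\cdot \mathbf 1_{\bar M}\in\widetilde\cB_1$; and since $f\mapsto f\mathbf 1_{\bar O_a}$ is bounded on $\widetilde\cB_1=\widetilde\cB_2$ and $\mathbb E_{\bar\mu}[\cdot]$ is continuous on $\widetilde\cB_2$, the functional $H_{\mathbf 1_{\bar O_a}}:f\mapsto \mathbb E_{\bar\mu}[\mathbf 1_{\bar O_a}f]$ lies in $\widetilde\cB_2'$. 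Corollary \ref{TLL00} then yields
\[
\bar\mu\bigl(E_{k,\ell}\cap \bar T^{-k}\bar O_a\bigr) = \frac{\bigl(\bar\mu(\bar O_a)\bigr)^2}{2\pi(\ell-k)\sqrt{\det\Sigma^2}} + O\bigl((\ell-k)^{-3/2}\bigr),
\]
uniformly in $a$. Summing over $a\in\{1,\dots,I\}$ (and recalling $\bar O_a=\{\mathcal I_0=a\}$) produces the middle formula with the constant $c_1$.

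Finally, for the expectation of $\mathcal V_n$, I would separate the diagonal $k=\ell$ (contributing $n$) and use symmetry to write
\[
\mathbb E_{\bar\mu}[\mathcal V_n]=n+2\sum_{1\le k<\ell\le n}\bar\mu(E_{k,\ell})=n+2c_1\sum_{m=1}^{n-1}\frac{n-m}{m} + O\!\left(\sum_{m=1}^{n-1}(n-m)m^{-3/2}\right).
\]
The first sum equals $n H_{n-1}-(n-1)=n\log n+O(n)$, while the error sum is bounded by $n\sum_{m\ge 1}m^{-3/2}=O(n)$. This gives $\mathbb E_{\bar\mu}[\mathcal V_n]=2c_1 n\log n+O(n)$.

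There is no serious obstacle: the proof is essentially bookkeeping around the LLT. The only subtle point is verifying the abstract spectral hypotheses used by Corollary \ref{TLL00} for the specific test functions $\mathbf 1_{\bar O_a}$, which is exactly what assumption (A5) is introduced for; in the concrete billiard setting this will in turn be guaranteed by Lemma \ref{lem:piecewise} applied to the partition into obstacles.
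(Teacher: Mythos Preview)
Your proof is correct and essentially identical to the paper's: both reduce via $\bar T$-invariance to $\mathbb E_{\bar\mu}[\mathbf 1_{\bar O_a}\mathbf 1_{\{S_m=\mathbf 0\}}\mathbf 1_{\bar O_a}\circ\bar T^m]$, apply the local limit theorem (the paper cites Theorem~\ref{LLT0} directly, you use its immediate consequence Corollary~\ref{TLL00}), sum over $a$, and then handle $\mathbb E_{\bar\mu}[\mathcal V_n]$ by the same diagonal-plus-off-diagonal count. Your justification that $\mathbf 1_{\bar O_a}\in\widetilde\cB_1$ and $H_{\mathbf 1_{\bar O_a}}\in\widetilde\cB_2'$ via (A5) is exactly what the paper has in mind.
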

\begin{proof}
Since $\bar\mu$ is $\bar T$-invariant,
for $k< \ell$, recalling \eqref{EQOP1} we have
\begin{eqnarray*}
\bar\mu\left(E_{k,\ell}\cap\bar T^{-k}\bar O_a\right)&=&\bar\mu\left(E_{\ell-k}\cap\bar O_a\right)
= \bar\mu
(\mathcal I_0=a,\ S_{\ell-k}=0,\mathcal I_{\ell-k}=a)\\
&=&\mathbb E_{\bar\mu}
   \left[\mathbf 1_{\bar O_a}\mathcal H_{0,\ell-k} (\mathbf 1_{\bar O_a})\right]\\
&=&\frac{\bar\mu(\bar O_a)^2}{2\pi\sqrt{\det \Sigma^2}(\ell-k)}+
      O((\ell-k)^{-\frac 32})\, ,
\end{eqnarray*}
due to Theorem~\ref{LLT0} since $\mathbf 1_{\bar O_a}\in
\widetilde{\mathcal B}_1$ and since $\mathbb E_{\bar\mu}[\mathbf 1_{\bar O_a}\cdot]\in\widetilde{\mathcal B}'_1$.
Hence
$$\bar\mu(E_{k,\ell})=\sum_{a=1}^I\bar\mu\left(E_{k,\ell}\cap\bar T^{-k}\bar O_a\right)=\frac{\sum_{a=1}^I\left(\bar \mu(\bar O_a)\right)^2}{2\pi\sqrt{\det \Sigma^2}(\ell-k)}+O((\ell-k)^{-\frac 32})\, ,
$$
and
\begin{eqnarray*}
\mathbb E_{\bar\mu}[\mathcal V_n]&=&\sum_{k,\ell=1}^n\bar\mu(E_{k,\ell})=n+2\sum_{1\le k<\ell\le n}\bar\mu(E_{\ell-k})\\
&=&n+2\sum_{m=1}^{n-1}
    (n-m)\bar\mu(E_m)=O(n)+2c_1  n\log n\, .
\end{eqnarray*}
\end{proof}
\begin{lem}\label{PRO1}
There exists $C_1>0$ such that for all non-negative
integers $n,m,k$, for all $i,j,i',j'\in\{1,...,I\}$, and for all $N_1,N_2\in\mathbb Z^2$,
we have
$$|\cov_{\bar\mu}(\mathbf 1_{\{\mathcal I_0=i,S_n=N_1,\mathcal I_n=i'\}},
\mathbf 1_{\{\mathcal I_{n+m}=j,S_{n+m+k}-S_{n+m}=N_2,\mathcal I_{n+m+k}=i'\}})|\le \frac{C_1\alpha^m}{(n+1)(k+1)}.$$
In particular
$$|\cov_{\bar\mu}(\mathbf 1_{E_{0,n}},
\mathbf 1_{E_{n+m,n+m+k}})|\le \frac{I^2\, C_1\alpha^m}{(n+1)(k+1)}.$$

\end{lem}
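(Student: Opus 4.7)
The plan is to rewrite $\mathbb E_{\bar\mu}[\mathbf 1_A \mathbf 1_B]$, with $A = \{\mathcal I_0 = i,\ S_n = N_1,\ \mathcal I_n = i'\}$ and $B = \{\mathcal I_{n+m} = j,\ S_{n+m+k} - S_{n+m} = N_2,\ \mathcal I_{n+m+k} = i'\}$, in a form that isolates $\mathbb E_{\bar\mu}[\mathbf 1_A]\mathbb E_{\bar\mu}[\mathbf 1_B]$ up to a remainder factoring as a product of three independently controllable quantities. First, using the cocycle identities $S_{n+m+k} - S_{n+m} = S_k\circ \bar T^{n+m}$ and $\mathcal I_\cdot = \mathcal I_0\circ \bar T^\cdot$, I would write $\mathbf 1_B = \mathbf 1_{B'}\circ \bar T^{n+m}$ with $B' := \{\mathcal I_0 = j,\ S_k = N_2,\ \mathcal I_k = i'\}$, and further split $\mathbf 1_{B'} = \mathbf 1_{\bar O_j}\cdot \mathbf 1_{\{S_k = N_2\}}\cdot(\mathbf 1_{\bar O_{i'}}\circ \bar T^k)$. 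Applying the duality $\mathbb E_{\bar\mu}[f\cdot g\circ \bar T^s] = \mathbb E_{\bar\mu}[P^s f\cdot g]$ once with $s = n+m$ and once with $s = k$, together with the definition \eqref{EQOP1} of $\mathcal H_{\ell,s}$, gives
\begin{equation*}
\mathbb E_{\bar\mu}[\mathbf 1_A \mathbf 1_B] = \mathbb E_{\bar\mu}\bigl[\mathcal H_{N_2,k}\bigl(P^{n+m}(\mathbf 1_A)\cdot \mathbf 1_{\bar O_j}\bigr)\cdot \mathbf 1_{\bar O_{i'}}\bigr].
\end{equation*}

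Next, I would analyze $P^{n+m}(\mathbf 1_A)$ in two stages. Writing $\mathbf 1_A = \mathbf 1_{\bar O_i}\cdot \mathbf 1_{\{S_n = N_1\}}\cdot (\mathbf 1_{\bar O_{i'}}\circ \bar T^n)$ and using the identity $P^n(h\cdot g\circ \bar T^n) = g\cdot P^nh$, I obtain $P^n(\mathbf 1_A) = \mathbf 1_{\bar O_{i'}}\cdot \mathcal H_{N_1,n}(\mathbf 1_{\bar O_i})$. The second inequality of Theorem \ref{LLT0} provides $\|\mathcal H_{N_1,n}\|_{L(\widetilde\cB_1,\widetilde\cB_1)} \le K_0/(n+1)$; combined with assumption (A5), so that multiplication by $\mathbf 1_{\bar O_{i'}}$ is bounded on $\widetilde\cB_1$, this produces $\|P^n(\mathbf 1_A)\|_{\widetilde\cB_1} \le C/(n+1)$. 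Applying $P^m$ next and using the spectral decomposition from Assumption \ref{hypo:perturb}(A1) at $u=0$, together with $\mathbb E_{\bar\mu}[P^n\mathbf 1_A] = \bar\mu(A)$, I arrive at
\begin{equation*}
P^{n+m}(\mathbf 1_A) = \bar\mu(A)\,\mathbf 1_{\bar M} + R_{m,n}, \qquad \|R_{m,n}\|_{\widetilde\cB_1}\le \frac{C'\alpha^m}{n+1}.
\end{equation*}

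Substituting back and using $\bar T$-invariance to identify $\mathbb E_{\bar\mu}[\mathbf 1_B] = \mathbb E_{\bar\mu}[\mathbf 1_{B'}] = \mathbb E_{\bar\mu}[\mathcal H_{N_2,k}(\mathbf 1_{\bar O_j})\cdot \mathbf 1_{\bar O_{i'}}]$, the rank-one term contributes exactly $\mathbb E_{\bar\mu}[\mathbf 1_A]\mathbb E_{\bar\mu}[\mathbf 1_B]$, so that
\begin{equation*}
\cov_{\bar\mu}(\mathbf 1_A,\mathbf 1_B) = \mathbb E_{\bar\mu}\bigl[\mathcal H_{N_2,k}(R_{m,n}\cdot \mathbf 1_{\bar O_j})\cdot \mathbf 1_{\bar O_{i'}}\bigr].
\end{equation*}
Bounding this by $\|\mathbb E_{\bar\mu}[\mathbf 1_{\bar O_{i'}}\cdot]\|_{\widetilde\cB_1'}\cdot\|\mathcal H_{N_2,k}\|_{L(\widetilde\cB_1,\widetilde\cB_1)}\cdot \|R_{m,n}\cdot \mathbf 1_{\bar O_j}\|_{\widetilde\cB_1}$ and invoking (A5) once more yields the claimed estimate $C_1\alpha^m/[(n+1)(k+1)]$. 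The bound on $\cov_{\bar\mu}(\mathbf 1_{E_{0,n}},\mathbf 1_{E_{n+m,n+m+k}})$ then follows by decomposing each indicator as a sum of $I$ terms indexed by the common value of $(\mathcal I_0,\mathcal I_n)$, respectively $(\mathcal I_{n+m},\mathcal I_{n+m+k})$, and summing at most $I^2$ covariance estimates with $N_1 = N_2 = 0$.

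The real technical point is keeping the three small factors separate: the $1/(n+1)$ comes from the LLT bound on $\mathcal H_{N_1,n}$, the $\alpha^m$ from the spectral gap of $P^m$ applied to $P^n(\mathbf 1_A)$, and critically the $1/(k+1)$ appears only by postponing all rough estimates and pairing the exponentially small remainder $R_{m,n}$ with the final averaged operator $\mathcal H_{N_2,k}$ via Theorem~\ref{LLT0}. Assumption (A5) is essential throughout to keep the $\mathbf 1_{\bar O_a}$-multipliers tame on $\widetilde\cB_1$, so that no sub-optimal $L^\infty$-type bound on these indicators has to be used.
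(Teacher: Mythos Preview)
Your proof is correct and follows essentially the same approach as the paper's own argument: both rewrite the covariance as $\mathbb E_{\bar\mu}\bigl[\mathbf 1_{\bar O_{j'}}\,\mathcal H_{N_2,k}\bigl(\mathbf 1_{\bar O_j}(P^m-\Pi_0)\bigl(\mathbf 1_{\bar O_{i'}}\mathcal H_{N_1,n}(\mathbf 1_{\bar O_i})\bigr)\bigr)\bigr]$ and then bound the three factors separately via \eqref{intPun} and (A1), (A5). The only cosmetic difference is that the paper subtracts the mean at the outset and carries $(P^m-\mathbb E_{\bar\mu})$ through, whereas you first compute the full expectation and then split off the rank-one piece of $P^m$; since $P^m-\Pi_0=N_0^m$, these are identical.
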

\begin{proof}
The covariance we are interested in can be rewritten
$$\cov_{\bar\mu}\left(\mathbf 1_{\bar O_i}\mathbf 1_{\{S_n=N_1\}}\mathbf 1_{\bar O_{i'}}\circ\bar T^n, (\mathbf 1_{\bar O_j}\mathbf 1_{\{S_{k}=N_2\}}\mathbf 1_{\bar O_{j'}}\circ\bar T^k)\circ \bar T^{n+m}\right)\, $$
\begin{eqnarray*}
&=&\mathbb E_{\bar\mu}\left[P^{n+m+k}\left(\left(\mathbf 1_{\bar O_i}\mathbf 1_{\{S_n=N_1\}}\mathbf 1_{\bar O_{i'}}\circ\bar T^n-\mathbb E_{\bar\mu}[\mathbf 1_{\bar O_i}\mathbf 1_{\{S_n=N_1\}]}\mathbf 1_{\bar O_{i'}}\circ\bar T^n]\right) (\mathbf 1_{\bar O_j}\mathbf 1_{\{S_{k}=N_2\}}\mathbf 1_{\bar O_{j'}}\circ\bar T^k)\circ \bar T^{n+m}\right)\right]\, .
\end{eqnarray*}
Moreover, using several times $ P^m(f\, g\circ\bar T^m)=g\,  P^m(f)$
and the definition  of $\mathcal H_{\ell,n}$, we obtain that this quantity is equal to

\[
\mathbb E_{\bar\mu}\left[\mathbf 1_{\bar O_{j'}}\,  \mathcal H_{N_2,k}\Big(\mathbf 1_{\bar O_j} (P^{m}-\mathbb E_{\bar\mu})\left(\mathbf 1_{\bar O_{i'}}\mathcal H_{N_1,n}  \left(\mathbf 1_{\bar O_i}\right) \right)\Big) \right]\, 
\]
and so is bounded by
\[
\begin{split}
&a_{j'} \cdot \Vert \mathcal H_{N_2,k}\Vert_{L(\widetilde{\mathcal B}_1,\widetilde{\mathcal B}_1)} 
   \cdot  a_j\cdot
       \Vert P^{m}-\mathbb E_{\bar\mu}\Vert_{L(\widetilde{\mathcal B}_1,\widetilde{\mathcal B}_1)}\cdot
    a_{i'}\cdot
        \Vert \mathcal H_{N_1,n}\Vert_{L(\widetilde{\mathcal B}_1,\widetilde{\mathcal B}_1)}
               \Vert\mathbf 1_{\bar O_i}\Vert_{\widetilde{\mathcal B}_1}\\
& \le \frac{K_0^2}{(n+1)(k+1)}C\alpha^m
 a_{j'}a_{j}a_i               \Vert\mathbf 1_{\bar O_i}\Vert_{\widetilde{\mathcal B}_1}\, ,
\end{split}
\]
due to \eqref{intPun} and assumption (A5) of Theorem \ref{theo:SelfInter}, together with (A1) of Assumptions \ref{hypo:perturb} applied to $u=0$. Here $a_i:= \Vert \mathbf 1_{\bar O_{i}}\times\cdot \Vert_{L(\widetilde{\mathcal B}_1,\widetilde{\mathcal B}_1)} $.

This gives the first estimate of the lemma. To get the second one from the first one, we just observe that
$$\mathbf 1_{E_{k,\ell}}:=\sum_{i=1}^I\mathbf 1_{\bar O_i\cap\{S_{\ell-k}=0\}\cap\bar T^{-(\ell-k)}\bar O_i}\circ \bar T^k\, .$$
\end{proof}

We will use the notation $A_n \sim B_n$ for two positive quantities whenever
$\lim_{n \to \infty} \frac{A_n}{B_n} = 1$.

\begin{lem}
\label{lem:varVn}
We have $Var_{\bar\mu}(\mathcal V_n)\sim cn^2\, ,$
with 
$$c:=\frac{\left(\sum_{a=1}^I(\bar\mu(\bar O_a))^2\right)^{2}}{\det\Sigma^2}\left(\frac{1+2J}{\pi^2}-\frac 16\right),$$ $$J:=\int_{y_1,y_2,y_3>0:y_1+y_2+y_3<1} \frac {1-y_1-y_2-y_3}{y_1y_2+
y_2y_3+y_1y_3} \, dy_1dy_2dy_3.$$
\end{lem}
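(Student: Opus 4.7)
The plan is to expand $\var_{\bar\mu}(\mathcal V_n)$ as a sum of covariances, partition it by the relative order of the four time indices involved, and evaluate the joint probabilities by iterating the local limit theorem of Theorem \ref{LLT0}.

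First, I would use the symmetry $E_{k,\ell}=E_{\ell,k}$ and the identity $E_{k,k}=\bar M$ to rewrite $\mathcal V_n = n + 2\sum_{1\le k<\ell\le n}\mathbf 1_{E_{k,\ell}}$, so that
$$\var_{\bar\mu}(\mathcal V_n) = 4\sum_{\substack{1\le k_1<\ell_1\le n\\ 1\le k_2<\ell_2\le n}}\cov_{\bar\mu}(\mathbf 1_{E_{k_1,\ell_1}},\mathbf 1_{E_{k_2,\ell_2}}).$$
I would then partition the inner sum according to the relative order of $(k_1,\ell_1,k_2,\ell_2)$, the three essential patterns being (up to swapping the two pairs): \emph{disjoint} $k_1<\ell_1<k_2<\ell_2$, \emph{interleaved} $k_1<k_2<\ell_1<\ell_2$, and \emph{nested} $k_1<k_2<\ell_2<\ell_1$; the remaining configurations (where some indices coincide) turn out to be of lower order.

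Next, in each pattern I would insert obstacle indicators $\mathbf 1_{\bar O_{a}}\circ\bar T^{k_i},\mathbf 1_{\bar O_{a}}\circ\bar T^{\ell_i}$ and iterate the spectral identity $P_u^n=\lambda_u^n\Pi_u+N_u^n$ from Theorem \ref{prop:perturb} across the intervals cut out by the four indices, then Fourier invert in the two cell-change variables involved, in the spirit of the proof of Theorem \ref{LLT0}. For the interleaved pattern, with inner gaps $a=k_2-k_1$, $b=\ell_1-k_2$, $c=\ell_2-\ell_1$, this yields
$$\bar\mu(E_{k_1,\ell_1}\cap E_{k_2,\ell_2}) \sim \frac{c_2}{ab+bc+ca},\qquad c_2:=\frac{\bigl(\sum_{a'=1}^I\bar\mu(\bar O_{a'})^2\bigr)^2}{4\pi^2\det\Sigma^2},$$
with a symmetric expression in the nested case involving the outer gaps; for the disjoint pattern, Lemma \ref{PRO1} bounds the covariance directly by $C\alpha^m/((\ell_1-k_1)(\ell_2-k_2))$ where $m=k_2-\ell_1$. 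I would then sum these estimates: the disjoint contribution is $O(n\log^2 n)=o(n^2)$ by the exponential factor from Lemma \ref{PRO1}, while for the interleaved and nested patterns, rescaling the three inner gaps by $n$ turns the sum into a Riemann approximation of an integral on the simplex $\{y_1,y_2,y_3>0,\ y_1+y_2+y_3<1\}$, with the free placement of the leftmost index providing an extra factor $n-n(y_1+y_2+y_3)$ and hence the prefactor $n^2$. The interleaved pattern (counted with its two orientations) produces the term $2Jc_2\cdot n^2$, while the nested pattern combined with the subtraction of $\mathbb E[\mathcal V_n]^2$ computed via Lemma \ref{expVn} accounts for the remaining $c_2/\pi^2$ and $-c_2/6$ contributions, giving after summation the claimed constant $c$.

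The main obstacle will be obtaining the joint LLT of the previous step in the interleaved and nested patterns with remainders that are uniform in the four time indices. Unlike the disjoint pattern where Lemma \ref{PRO1} applies directly, here the two events share dynamical structure, and a careful two-parameter Fourier inversion based on $P_u^n=\lambda_u^n\Pi_u+N_u^n$ together with $\Vert N_u^n\Vert=O(\alpha^n)$ is required, with sharp control of the remainders when some of the inner intervals are short. Equally delicate is isolating the finite $n^2$ coefficient through the exact cancellation of the $n^2\log^2 n$ divergences between $\mathbb E[\mathcal V_n^2]$ and $\mathbb E[\mathcal V_n]^2$; this cancellation, which following the strategy of \cite{DU} replaces the approach of \cite{soazSelfInter}, is where the improved variance estimate for $\mathcal V_n$ comes from.
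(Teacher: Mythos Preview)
Your overall strategy coincides with the paper's: expand the variance into covariances $D_{k_1,\ell_1,k_2,\ell_2}$, split by the order type of the four indices, kill the disjoint configuration via Lemma~\ref{PRO1}, and compute the interleaved one by a two-parameter Fourier inversion. Your asymptotic $\bar\mu(E_{k_1,\ell_1}\cap E_{k_2,\ell_2})\sim c_2/(ab+bc+ca)$ in the interleaved case is exactly what the paper obtains (there the denominator appears as the square root of the determinant of an explicit $4\times4$ matrix $A_Q^2$), and the Riemann-sum passage to the integral $J$ is the same.

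Where your sketch is too loose is the nested case $k_1<k_2<\ell_2<\ell_1$, which is precisely where the paper does the most work. Two points need correction. First, the joint probability in the nested case is \emph{not} a ``symmetric expression in the outer gaps'': with $a=k_2-k_1$, $b=\ell_2-k_2$, $c=\ell_1-\ell_2$, the two constraints decouple and the leading term is $c_2/((a+c)b)$, since the inner event involves only $b$ and the outer one only $a+c$. Second, the relevant divergence here is $n^2\log n$, not $n^2\log^2 n$: both $\sum_{\text{nested}}\bar\mu(E_{k_1,\ell_1}\cap E_{k_2,\ell_2})$ and $\sum_{\text{nested}}\bar\mu(E_{k_1,\ell_1})\bar\mu(E_{k_2,\ell_2})$ are individually $\asymp n^2\log n$ (the $n^2\log^2 n$ terms sit in the disjoint range and you have already removed them via Lemma~\ref{PRO1}, so invoking a global cancellation with $\mathbb E[\mathcal V_n]^2$ is inconsistent with your own bookkeeping). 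The key identity is
\[
\frac{1}{(a+c)b}-\frac{1}{(a+b+c)b}=\frac{1}{(a+c)(a+b+c)},
\]
whose sum over the simplex is $O(n^2)$. The paper makes this cancellation visible by \emph{not} using two independent Fourier variables in the nested case: it keeps a single variable $u$ for the outer constraint and inserts the operator $\mathcal H_{0,\ell_2-k_2}$ for the inner one, so that the leading term emerges with the exact factor $\bar\mu(E_{k_2,\ell_2}(b))$ matching the product side, and the remainder is $O\bigl((\ell_2-k_2)^{-1}(a+c)^{-3/2}\bigr)$, which is summably $o(n^2)$. Without this device, controlling the error between the two-variable Gaussian approximation and the true joint law uniformly enough to extract the $n^2$ coefficient is the genuine difficulty you have not addressed.
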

The proof of Lemma \ref{lem:varVn} is rather technical and involved, so we move it to the appendix B.
 
\begin{proof}[Proof of Theorem \ref{theo:SelfInter}]
Set $n_k:=\exp(\sqrt{k}\log k)$. For every $\varepsilon>0$,
due to the Bienaym\'e-Chebychev inequality and using Lemmas~\ref{expVn} and \ref{lem:varVn},
\[
\begin{split}
\sum_{k\ge 1}\bar\mu\left(|\cV_{n_k}-\mathbb E_{\bar\mu}[\cV_{n_k}]|>\varepsilon \mathbb E_{\bar\mu}[\cV_{n_k}]\right)
& \le \sum_{k\ge 1}\frac {Var_{\bar\mu}(\cV_{n_k})}
 {\varepsilon^2(\mathbb E_{\bar\mu}[\cV_{n_k}])^2} \\
 & = \sum_{k\ge 1} O((\log n_k)^{-2})=
 \sum_{k \ge 1} O(k^{-1}(\log k)^{-2}) \, < \infty \, .
\end{split} 
\]
Hence $(\cV_{n_k}/\mathbb E_{\bar\mu}[\cV_{n_k}])_k$
converges $\bar\mu$-almost surely to 1. Due to
Lemma \ref{expVn}, 
$(\cV_{n_k}/(n_k\log n_k))_k$ converges almost surely to $2c_1$.
Since $n_k\log n_k\sim n_{k+1}\log n_{k+1}$ and since $(\mathcal V_n)_n$ is increasing,  if $n\in\{n_k,...,n_{k+1}\}$, then 
$$\cV_{n_k}/(n_{k+1}\log n_{k+1})\le \mathcal \cV_n/(n\log n)\le \cV_{n_{k+1}}/(n_{k}\log n_{k}),$$ and so $(\mathcal \cV_n/(n\log n))_n$
converges $\bar\mu$-almost surely to $2c_1$.
\end{proof}

\begin{proof}[Proof of Theorem \ref{theo:selfintersection}]
Due to Remark~\ref{Rke1}, Theorem \ref{prop:perturb} and to Lemma \ref{lem:bounded},
the assumptions of Theorem \ref{theo:SelfInter} are satisfied.
Therefore $(\mathcal V_n/(n\log n))_n$ converges $\bar\mu$-almost surely
to 
\begin{equation}
\label{eq:square sum}
\frac 1{\pi\sqrt{\det \Sigma^2}}\sum_{a=1}^I  
\bar\mu(\mathcal I_0=a)^2=\frac 1{\pi\sqrt{\det \Sigma^2}}\sum_{a=1}^I  \left(\frac{2|\partial O_a|}{2\sum_{b=1}^I|\partial O_b|}\right)^2=
\frac 1{\pi\sqrt{\det \Sigma^2}} \frac{\sum_{a=1}^I |\partial O_a|^2}{\left(\sum_{b=1}^I|\partial O_b|\right)^2}\, .
\end{equation}
\end{proof}
\subsection{Random scenery: Proof of Theorem \ref{theo:randomscenery}}\label{proofRS}
Assume that to each atom $\bar O_i\times\{\ell\}$ is associated a random variable $\xi_{i,\ell}$,
independent and identically distributed across $i \in [1, \ldots I]$ and
$\ell \in \mathbb{Z}^2$, centered with variance $\sigma^2_\xi$ and
defined on a common probability space $(\Omega, \mathcal F, \mathbb{P})$.
We define the random variable (defined on $\bar M\times\Omega$):
$$\mathcal Z_n:=\sum_{k=0}^{n-1}\xi_{\mathcal I_k,S_k}\, .$$
We also define a linearly interpolated version of $\mathcal Z_n$ by,
\[
\widetilde{\mathcal Z}_n(t):=\mathcal Z_{\lfloor nt\rfloor}+(nt-\lfloor nt\rfloor)\xi_{(\mathcal I_{\lfloor nt\rfloor+1},S_{\lfloor nt\rfloor+1})}.
\]
\begin{theo}[Annealed and $\xi$-quenched CLT for $\mathcal Z$]\label{theo:pinball}
Assume Assumption \ref{hypo:perturb} and that, 
\begin{itemize}
  \item[i)] for every $a\in \{1,...,I\}$, $f\mapsto\mathbf 1_{\bar O_a}f$ is a continuous linear operator on $\widetilde{\mathcal B}_1$; 
  \item[ii)] and $\sup_{k\in\mathbb N}\Vert P^k\mathbf 1_{B'_k(a)}\Vert_{\widetilde {\mathcal B}_1}<\infty$
  (recalling \eqref{eq:Bn'}).
\end{itemize}
Then, $(\widetilde {\mathcal Z}_n:=(\widetilde{\mathcal Z}_{n}(t)/\sqrt{n\log n})_{t>0})_n$
converges in distribution, with respect to $\bar\mu\otimes\mathbb P$
(and to the uniform norm on $[0,T]$ for every $T>0$), to a Brownian motion 
$B = (B_t)_{t \ge 0}$ such that 
$\mathbb E[B_1^2]=\frac {\sigma^2_\xi}{\pi\sqrt{\det \Sigma^2}}\sum_{a=1}^I  \bar\mu(\mathcal I_0=a)^2$.

If, moreover, there exists $\chi>0$ such that $\mathbb E[|\xi_{(1,0)}|^2(\log^+|\xi_{(1,0)}|)^\chi|)]<\infty$, then, for $\mathbb P$-a.e.
realization of $(\xi_{i,\ell})_{i,\ell}$, 
$(\widetilde {\mathcal Z}_n)_n$ converges also in distribution, with respect to $\bar\mu$, to the same Brownian motion $B$.
\end{theo}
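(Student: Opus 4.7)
The plan is to represent the scenery sum as a local-time weighted combination of the i.i.d.\ scenery and then condition on the billiard trajectory. For $(i,\ell)\in\{1,\dots,I\}\times\mathbb Z^2$, set $N_n(i,\ell):=\#\{0\le k<n:(\mathcal I_k,S_k)=(i,\ell)\}$, so that
\[
\mathcal Z_n=\sum_{(i,\ell)}N_n(i,\ell)\,\xi_{i,\ell},\qquad \sum_{(i,\ell)}N_n(i,\ell)^2=\mathcal V_n.
\]
Conditionally on the billiard, $\mathcal Z_n$ is a finite linear combination of independent centered $L^2$ variables with conditional variance $\sigma_\xi^2\mathcal V_n$. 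By Theorem~\ref{theo:SelfInter} (whose hypotheses are exactly (i)--(ii)) we have $\mathcal V_n/(n\log n)\to 2c_1$ almost surely with $2c_1\sigma_\xi^2=\mathbb E[B_1^2]$, so the conditional variance already has the desired deterministic limit.

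Step 1 (finite-dimensional convergence, annealed). Fix $0\le s<t$ and $u\in\mathbb R$, and write $N_{[a,b]}(i,\ell)$ for visits during $[a,b]$. The conditional characteristic function satisfies
\[
\log\mathbb E\!\left[e^{iu(\mathcal Z_{\lfloor nt\rfloor}-\mathcal Z_{\lfloor ns\rfloor})/\sqrt{n\log n}}\,\big|\,(\mathcal I_k,S_k)_k\right]=\sum_{(i,\ell)}\log\phi_\xi\!\left(\tfrac{u\,N_{[ns,nt]}(i,\ell)}{\sqrt{n\log n}}\right),
\]
and the Taylor expansion $\log\phi_\xi(s)=-\sigma_\xi^2 s^2/2+o(s^2)$ yields the leading term $-u^2\sigma_\xi^2\mathcal V_{[ns,nt]}/(2n\log n)\to-(t-s)u^2c_1\sigma_\xi^2$. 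The remainder is controlled via $\max_{(i,\ell)}N_n(i,\ell)=o(\sqrt{n\log n})$, which follows from the LLT bound $\mathbb E[N_n(i,\ell)]=O(\log n)$ given by Theorem~\ref{LLT0} combined with a Borel--Cantelli argument over the $O(n)$ sites visited before time $n$. Joint convergence for disjoint increments uses that the cross local time $\sum_{(i,\ell)}N_{[0,ns]}(i,\ell)N_{[ns,nt]}(i,\ell)$ has expectation $O(n)$ by an LLT estimate after splitting, hence is $o(n\log n)$ almost surely, so the conditional characteristic function factorizes in the limit.

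Step 2 (tightness). The sharp self-intersection bounds of Lemma~\ref{lem:varVn} (and its restriction to sub-intervals) give a uniform fourth-moment estimate
\[
\mathbb E\!\left[(\mathcal Z_{\lfloor nt\rfloor}-\mathcal Z_{\lfloor ns\rfloor})^4\right]\lesssim \mathbb E[\xi^4]\,\mathbb E[\mathcal V_{\lceil n(t-s)\rceil}]+\sigma_\xi^4\,\mathbb E[\mathcal V_{\lceil n(t-s)\rceil}^2]=O\!\left((n(t-s)\log n)^2\right),
\]
so the Chentsov/Billingsley criterion applies to the rescaled continuous process $\widetilde{\mathcal Z}_n/\sqrt{n\log n}$. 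This is precisely the Bolthausen-style argument that the paper announces to simplify, since the sharp variance of $\mathcal V_n$ removes the need for an $L^p$-injectivity of $\widetilde{\mathcal B}$.

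Step 3 ($\xi$-quenched version). Once the annealed convergence is proved, for a \emph{fixed} realization of $\xi$ we consider the (random) characteristic functional $\omega\mapsto\mathbb E_{\bar\mu}[\exp(iu\widetilde{\mathcal Z}_n(t)/\sqrt{n\log n})]$ and estimate its variance with respect to $\mathbb P$. Using independence of $\xi$ from the billiard and the moment hypothesis $\mathbb E[|\xi|^2(\log^+|\xi|)^\chi]<\infty$, together with the concentration of $\mathcal V_n$ provided by Lemma~\ref{lem:varVn}, this variance decays fast enough that along $n_k=\lfloor\theta^k\rfloor$ a Borel--Cantelli argument yields almost-sure convergence of the characteristic functional to its deterministic Gaussian limit; continuity in $u$ and an interpolation between consecutive $n_k$'s (using the fourth-moment estimate of Step 2) then give weak convergence in the Skorokhod space. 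This is the framework of \cite{NJR}, and our estimates provide exactly the inputs required there.

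The main obstacle is Step 3: while Steps 1 and 2 are essentially an organized application of Theorems~\ref{theo:selfintersection}, \ref{LLT0}, and Lemma~\ref{lem:varVn}, the quenched upgrade requires a careful decomposition of $\mathrm{Var}_{\mathbb P}(\mathbb E_{\bar\mu}[e^{iu\widetilde{\mathcal Z}_n/\sqrt{n\log n}}])$ into a deterministic part, a scenery-fluctuation part (controlled by the logarithmic moment), and an ergodic-fluctuation part (controlled by Lemma~\ref{lem:varVn}); producing the summable bound needed for Borel--Cantelli is the delicate point and is where the extra moment assumption is essential.
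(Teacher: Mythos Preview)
Your Step~2 contains a genuine gap: the fourth-moment bound you write uses $\mathbb E[\xi^4]$, but the theorem only assumes $\xi$ is centered and square integrable. With only second moments, the conditional second moment of an increment gives $\sigma_\xi^2\,\mathbb E[\mathcal V_{\lceil n(t-s)\rceil}]\sim c\,n(t-s)\log n$, which after normalization is of order $|t-s|$---not $|t-s|^{1+\beta}$---so the Chentsov criterion does not apply. The paper avoids any moment beyond the second by using Bolthausen's observation that, conditionally on the trajectory, the increments $(\mathcal Z_k-\mathcal Z_{k-1})_k$ are positively associated; this yields the maximal inequality
\[
(\bar\mu\otimes\mathbb P)\Big(\max_{j\le n}|\mathcal Z_j|\ge\lambda\sigma_\xi\sqrt{\mathcal V_n}\Big)\le 2\,(\bar\mu\otimes\mathbb P)\Big(|\mathcal Z_n|>(\lambda-\sqrt2)\sigma_\xi\sqrt{\mathcal V_n}\Big),
\]
and then the already-established convergence $\mathcal Z_n/\sqrt{\mathcal V_n}\Rightarrow\mathcal N(0,\sigma_\xi^2)$ gives the Gaussian tail needed for tightness (Lemma~\ref{lem:tight}). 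This is precisely the simplification the paper announces.

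Your Step~3 is too vague and does not match what \cite{NJR} actually requires. The paper does not estimate the $\mathbb P$-variance of a characteristic functional; instead it checks the concrete inputs of \cite{NJR}: (a) the annealed functional CLT; (b) $\sup_y\mathbb E_{\bar\mu}[\widetilde N_n(y)]=O(\log n)$; (c) $\sum_y(\mathbb E_{\bar\mu}[\widetilde N_n(y)])^2=O(n)$; and (d) $\bar\mu(\widetilde S_n\notin\{\widetilde S_0,\dots,\widetilde S_{n-1}\})=O((\log n)^{-1})$. Conditions (b) and (c) follow from Theorem~\ref{LLT0}; condition (d) is where hypothesis~(ii) enters, via Proposition~\ref{prop:muB} applied to $B'_n(a)$. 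In particular, (ii) is \emph{not} a hypothesis of Theorem~\ref{theo:SelfInter} (that theorem needs only (i)), and it is not used in the annealed part at all.

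Finally, in Step~1 your justification of $\max_{(i,\ell)}N_n(i,\ell)=o(\sqrt{n\log n})$ from the first-moment bound plus Borel--Cantelli over $O(n)$ sites is insufficient: Markov with a first moment gives a tail of order $1$ per site, not summable over $n$ sites. The paper (Lemma~\ref{Nnas}) obtains the much stronger $o(n^\vartheta)$ by computing the $N$-th moment of $N_n(i,\ell)$ via iterated applications of $\mathcal H_{\cdot,\cdot}$, then taking a union bound over $|\ell|\le n^{1+\vartheta}$ and choosing $N$ large.
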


\begin{proof}[Proof of Theorem~\ref{theo:randomscenery}]
Using Theorem~\ref{theo:pinball}, we prove Theorem~\ref{theo:randomscenery}.  Assumptions 5.1
hold in the setting of Theorem~\ref{theo:randomscenery} due to Theorem~\ref{prop:perturb}.
Moreover, assumption (i) of Theorem~\ref{theo:pinball} follows from Remark~\ref{Rke1}, while
assumption (ii) follows from Lemma~\ref{lem:bounded}.  With the hypotheses of Theorem~\ref{theo:pinball} verified, Theorem~\ref{theo:randomscenery} follows using the same
calculation as in \eqref{eq:square sum}.
\end{proof}
We proceed to prove Theorem~\ref{theo:pinball}.

For the annealed central limit theorem, we mostly follow the proof by Bolthausen for random walks
in random scenery in dimension 2 \cite{Bolthausen}.
In comparison with \cite{soazPAPA}, the fact that the almost sure convergence of $\mathcal V_n$ has been proved greatly 
simplifies the proof.
\begin{lem}\label{Nnas}
Fix $\vartheta > 0$.
For $\bar\mu$-almost every $x\in\bar M$,
$\sup_{\ell}\sum_{k=1}^{n} \mathbf 1_{\{S_k=\ell\}}
=o(n^\vartheta)$.
\end{lem}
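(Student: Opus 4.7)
My plan is to reduce the lemma to a moment bound on the local time $N_n(\ell) := \sum_{k=1}^n \mathbf{1}_{\{S_k = \ell\}}$, then conclude by Markov's inequality, a union bound over the sites visited by $S_k$, and the Borel--Cantelli lemma. Because the horizon is finite, the cell-change function $\Phi$ is bounded, so $|S_n| \le \|\Phi\|_\infty n$ and the supremum in the statement ranges effectively over $O(n^2)$ sites.

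The heart of the argument is the uniform moment bound $\mathbb{E}_{\bar\mu}[N_n(\ell)^p] \le C_p (\log n)^p$ for every integer $p \ge 1$. Expanding the $p$-th power and restricting to strictly ordered tuples $1 \le k_1 < \cdots < k_p \le n$ (which dominate the sum), I set $\tau_1 = k_1$ and $\tau_i = k_i - k_{i-1}$ for $i \ge 2$. Using $S_{k_i} - S_{k_{i-1}} = S_{\tau_i} \circ \bar T^{k_{i-1}}$ and iterating the duality $\int f \cdot g \circ \bar T^k \, d\bar\mu = \int P^k f \cdot g \, d\bar\mu$ together with the identity $\mathcal{H}_{\ell,k} f = P^k(\mathbf{1}_{\{S_k = \ell\}} f)$, one obtains the telescoping formula
\[
\bar\mu \Big( \bigcap_{i=1}^p \{S_{k_i} = \ell\} \Big) = \mathbb{E}_{\bar\mu} \big[ \mathcal{H}_{0, \tau_p} \mathcal{H}_{0, \tau_{p-1}} \cdots \mathcal{H}_{0, \tau_2} \mathcal{H}_{\ell, \tau_1} \mathbf{1}_{\bar M} \big] \, .
\]
Combining $\mathbf{1}_{\bar M} \in \widetilde{\cB}_1$ and $\mathbb{E}_{\bar\mu}[\cdot] \in \widetilde{\cB}_1'$ (Theorem~\ref{prop:perturb}, together with Assumption~\ref{hypo:perturb}) with the uniform operator bound $\|\mathcal{H}_{\ell,k}\|_{L(\widetilde{\cB}_1)} \le K_0/(k+1)$ from Theorem~\ref{LLT0} yields
\[
\bar\mu \Big( \bigcap_{i=1}^p \{S_{k_i} = \ell\} \Big) \le C K_0^p \prod_{i=1}^p (\tau_i + 1)^{-1} \, ,
\]
and summation over ordered tuples produces $(\log n + 1)^p$; the tuples with coincident coordinates contribute only lower-order terms. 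This proves the desired moment bound uniformly in $\ell$.

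For the fixed $\vartheta > 0$, I then pick an integer $p$ with $p\vartheta / 2 > 3$. Markov's inequality combined with the union bound over the $O(n^2)$ sites $\{\ell : |\ell| \le \|\Phi\|_\infty n\}$ gives
\[
\bar\mu \Big( \sup_\ell N_n(\ell) > n^{\vartheta/2} \Big) = O \big( n^{2 - p\vartheta/2} (\log n)^p \big) \, ,
\]
which is summable in $n$, so Borel--Cantelli yields $\sup_\ell N_n(\ell) \le n^{\vartheta/2}$ for all sufficiently large $n$, $\bar\mu$-almost surely, and hence $\sup_\ell N_n(\ell) = o(n^\vartheta)$. The only delicate step is setting up the telescoping identity, which requires a careful iterative use of the $\bar T$-invariance of $\bar\mu$ and correct matching of the indicator functions $\mathbf{1}_{\{S_{\tau_i} = 0\}}$; once it is in hand, the remaining steps are routine.
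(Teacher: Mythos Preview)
Your proof is correct in the billiard setting: the moment bound on the local time via the telescoping identity $\mathbb E_{\bar\mu}[\mathcal H_{0,\tau_p}\cdots\mathcal H_{\ell,\tau_1}\mathbf 1]$ and the operator estimate $\Vert\mathcal H_{\ell,k}\Vert\le K_0/(k+1)$ are exactly what the paper uses, and your Markov--union--Borel--Cantelli conclusion is sound.

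The one genuine difference is how you control the range of sites over which the supremum is taken. You invoke the finite horizon to get the deterministic bound $|S_k|\le \Vert\Phi\Vert_\infty\, n$, hence $O(n^2)$ sites. The paper instead works under the general spectral hypotheses (Assumption~\ref{hypo:perturb}), where $\Phi$ is \emph{not} assumed bounded; it therefore uses the CLT-level bound $\mathbb E_{\bar\mu}[|S_n|^2]\sim c'n$ together with a Billingsley--Serfling maximal inequality to obtain $\bar\mu(\max_{k\le n}|S_k|>n^{1+\vartheta})=O(n^{-1-\vartheta})$, and only then performs the union bound over the $O(n^{2+2\vartheta})$ sites in the ball of radius $n^{1+\vartheta}$. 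Your route is shorter and perfectly adequate for the random Lorentz gas, but the paper's argument is deliberately set up to avoid the boundedness of $\Phi$ (as the authors emphasize when comparing with \cite{soazSelfInter}), so it applies to any system satisfying Assumption~\ref{hypo:perturb}.
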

\begin{proof}
For every $\ell\in\mathbb Z^2$ and every $N\in\mathbb N^*$,
\begin{multline}\label{momentRange}
\mathbb E_{\bar\mu}\left[\left(\sum_{k=1}^{ n} \mathbf 1_{\{S_k=\ell\}}\right)^N \right]
\le N! \sum_{1 \le k_1\le \cdots\le k_N \le n} {\bar\mu}\left(S_{k_1}=S_{k_2}=\cdots=S_{k_N}=\ell\right)\\
=N! \sum_{1 \le  k_1\le \cdots\le k_N \le n} \mathbb E_{\bar\mu}\left[\mathcal H_{0,k_M-k_{N-1}}\cdots \mathcal H_{0,k_2-k_{1}}
\mathcal H_{\ell,k_{1}}(\mathbf 1)\right]\\
\le N! \sum_{1 \le  k_1\le \cdots\le k_N \le n}\frac{(K_0)^N}{(k_1+1)(k_2-k_1+1)\cdots(k_N-k_{N-1}+1)}=O(K_0^N N! \, (\log n)^N)\, ,
\end{multline}
due to Lemma \ref{LLT0}.
Moreover, due to Theorem~\ref{CLT}, there exists $c'>0$ such that $\mathbb E_{\bar\mu}[|S_n|^2]\sim c'n$ and so due to a result by Billingsley (see \cite{BillConvProbab} and \cite{Serfling70})
\[
\mathbb E_{\bar\mu}\left[\max_{k=1,...,n}|S_k|^2\right]=O(n(\log n)^2)\, 
\]
and so due to the Markov inequality, for every $s>0$,
$\bar\mu\left(\max_{k=1,...,n}|S_k|>n^{1+s}\right)\le\frac{\mathbb E_{\bar\mu}\left[\max_{k=1,...,n}|S_k|^2\right]}{n^{2+2s}}
=O(n^{-1-s}) \, .$
Now fix $\vartheta >0$. Then,
\begin{align*}
\bar\mu\left(\sup_\ell\sum_{k=1}^{ n}
 \mathbf 1_{\{S_k=\ell\}}>
n^{\vartheta}\right)&\le\bar\mu\left(\max_{k=1,...,n}|S_k|>n^{1+\vartheta}\right)+\bar\mu\left(\sup_{|\ell|\le n^{1+\vartheta}}\sum_{k=1}^{ n}
 \mathbf 1_{\{S_k=\ell\}}>
n^{\vartheta}\right)\\
&\le O(n^{-1-\vartheta})+ (2n^{1+\vartheta}+1)^2\sup_{|\ell| \le n^{1+\vartheta}}
\bar\mu\left( \sum_{k=1}^{ n}
 \mathbf 1_{\{S_k=\ell\}}>
n^{\vartheta}\right)\\
&\le O(n^{-1-\vartheta}+(\log n)^N n^{2+2\vartheta-\vartheta N})\, ,
\end{align*}
where we used the inequality $\mathbb{E}[X > n^\vartheta] \le \mathbb{E}[X^N] n^{-\vartheta N}$ for any $N \in \mathbb{N}^*$
combined with \eqref{momentRange}.  Now choosing $N > (3+3\vartheta)/\vartheta$,
we conclude the proof of the lemma by the Borel-Cantelli lemma.
\end{proof}
Recall that, for $x\in\bar M$, the random variable
$\mathcal Z_n(x)$ can be rewritten:
$
\mathcal Z_n( x)=\sum_{k=1}^n\xi_{\mathcal I_k,S_k}
 =\sum_{i=1}^I\sum_{\ell\in \mathbb Z^2}
   \xi_{i,\ell}\mathcal N_n(i,\ell)( x),
$
where $\mathcal N_n(i,\ell)( x):=
\sum_{k=1}^{n}\mathbf 1_{\{S_k=\ell,\mathcal I_k=i\}}(x)$
is the number of visits to the obstacle of index $(i,\ell)$
up to time $n$ and where $(\xi_{i, \ell})_{i, \ell}$ is a sequence of i.i.d.
centered square integrable random variables defined on some probability space 
$(\Omega,\mathcal F,\mathbb{P})$.

Note that the variance of $\mathcal Z_n(x)$
(with respect to $\mathbb{P}$) is $\sigma^2_\xi\mathcal V_n(x)$,
where $\sigma^2_\xi:=\mathbb E[\xi_{(1,\mathbf 0)}^2]$.

\begin{lem}[Convergence of finite-dimensional distributions]
\label{lem:fdd}
For every $m\ge 1$, every $0<t_1<t_2<...<t_m$,
$\bar\mu$-almost surely, 
$\left(\sum_{j=1}^ma_j\left(\mathcal Z_{\lfloor nt_j\rfloor}-\mathcal Z_{\lfloor nt_{j-1}\rfloor}\right)(x)/\sqrt{n\log n}\right)_n$ converges
in distribution (with respect to $\bar\mu\otimes \mathbb{P}$) to a centered Gaussian random variable
with variance $2c_1\sigma^2_\xi\sum_{j=1}^ma_j^2(t_j-t_{j-1})$.
\end{lem}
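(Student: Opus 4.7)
The plan is to condition on $x\in\bar M$ and apply the Lindeberg--Feller central limit theorem to the resulting sum of independent centered random variables in the $\xi_{i,\ell}$. Writing $n_j:=\lfloor nt_j\rfloor$ and grouping visits by obstacle,
\[
\sum_{j=1}^m a_j\bigl(\mathcal Z_{n_j}-\mathcal Z_{n_{j-1}}\bigr)(x)=\sum_{(i,\ell)}\xi_{i,\ell}\,c_{i,\ell,n}(x),\quad c_{i,\ell,n}(x):=\sum_{j=1}^m a_j\bigl(\mathcal N_{n_j}-\mathcal N_{n_{j-1}}\bigr)(i,\ell)(x).
\]
For $x$ fixed the summands are independent with mean $0$ and conditional variance $\sigma_\xi^2\,c_{i,\ell,n}(x)^2$, which is the natural setting for Lindeberg--Feller.

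The first step is to compute $\sigma_\xi^2\sum_{(i,\ell)}c_{i,\ell,n}(x)^2/(n\log n)$ and identify its $\bar\mu$-a.s.\ limit as $2c_1\sigma_\xi^2\sum_j a_j^2(t_j-t_{j-1})$. Expanding the square yields the diagonal contribution $\sum_j a_j^2\,\mathcal V^{(j)}_n(x)$, where $\mathcal V^{(j)}_n$ denotes the number of self-intersections of the process $(\mathcal I_k,S_k)$ confined to the time window $(n_{j-1},n_j]$, plus off-diagonal terms $C^{(j,k)}_n$ counting matched pairs of visits located in two distinct windows. By Theorem~\ref{theo:SelfInter} applied to the full sequence and, via a shift argument exploiting $\bar T$-invariance of $\bar\mu$, to each window, one gets $\mathcal V^{(j)}_n/(n\log n)\to 2c_1(t_j-t_{j-1})$ $\bar\mu$-a.s. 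Since the partition identity $\mathcal V_{n_m}=\sum_j\mathcal V^{(j)}_n+2\sum_{j<k}C^{(j,k)}_n$ is an equality of non-negative quantities, and both $\mathcal V_{n_m}/(n\log n)$ and $\sum_j\mathcal V^{(j)}_n/(n\log n)$ converge a.s.\ to $2c_1 t_m$, each $C^{(j,k)}_n/(n\log n)$ must vanish a.s., so the diagonal part captures the full limit.

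The second step is the Lindeberg condition, which reduces to $\max_{(i,\ell)}|c_{i,\ell,n}(x)|/\sqrt{n\log n}\to 0$ $\bar\mu$-a.s.\ (uniform integrability of the $\xi_{i,\ell}^2$ being automatic from finite variance). This follows from $|c_{i,\ell,n}(x)|\le(\max_j|a_j|)\sup_\ell\sum_{k=1}^{n_m}\mathbf 1_{\{S_k=\ell\}}(x)$ together with Lemma~\ref{Nnas}, which gives the uniform bound $o(n^\vartheta)$ for any $\vartheta\in(0,1/2)$. Applying Lindeberg--Feller conditionally on $x$ yields, for $\bar\mu$-a.e.\ $x$, convergence in $\mathbb P$-distribution of the normalized sum to the centered Gaussian with variance $2c_1\sigma_\xi^2\sum_j a_j^2(t_j-t_{j-1})$; a dominated convergence argument on conditional characteristic functions then transfers the convergence to the joint law $\bar\mu\otimes\mathbb P$. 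The main obstacle is the a.s.\ convergence of the per-window self-intersection counts $\mathcal V^{(j)}_n$, which I handle by the non-negativity decomposition above combined with applying Theorem~\ref{theo:SelfInter} at the intermediate scales $n_j$; once this is in place, the vanishing of the cross terms comes for free.
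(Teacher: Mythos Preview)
Your approach is essentially the same as the paper's: fix $x$, apply Lindeberg--Feller to the independent sum over $(i,\ell)$, identify the conditional variance through the almost-sure asymptotics of self-intersection counts over the time windows, and verify the Lindeberg condition via Lemma~\ref{Nnas}. Your treatment of the cross terms $C_n^{(j,k)}$ through the non-negativity identity $\mathcal V_{n_m}=\sum_j\mathcal V_n^{(j)}+2\sum_{j<k}C_n^{(j,k)}$ is a mild streamlining compared to the paper, which instead writes the cross contributions as telescoping differences of shifted $\mathcal V$'s and checks that their limits cancel pairwise.

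One point deserves more care (and the paper is also terse here): the ``shift argument exploiting $\bar T$-invariance'' does \emph{not} by itself give almost-sure convergence of $\mathcal V_n^{(j)}=\mathcal V_{n_j-n_{j-1}}\circ\bar T^{n_{j-1}}$, since equality of marginal laws does not transfer almost-sure limits, and the shift $n_{j-1}$ varies with $n$ so monotonicity is lost. What is actually needed is to rerun the Borel--Cantelli-plus-sandwiching proof of Theorem~\ref{theo:SelfInter} for these shifted sequences: the Chebyshev bound $\bar\mu(|\mathcal V_n^{(j)}-\mathbb E[\mathcal V_n^{(j)}]|>\varepsilon\,\mathbb E[\mathcal V_n^{(j)}])=O((\log n)^{-2})$ uses only marginals and thus holds verbatim, giving a.s.\ convergence along the same subsequence $n_k=\lfloor\exp(\sqrt{k}\log k)\rfloor$; filling in between requires a short extra sandwiching argument because the window both shifts and grows. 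The paper signals this by invoking ``the \emph{proof} of Theorem~\ref{theo:selfintersection}'' rather than its statement; your phrase ``shift argument'' is a touch too brief on this point, and the final sentence of your sketch reads as slightly circular (you cannot use the non-negativity decomposition to \emph{establish} the convergence of $\mathcal V_n^{(j)}$, only to deduce the vanishing of the cross terms \emph{from} it).
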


\begin{proof}
We fix $x\in\bar M$.
The variance of $\sum_{j=1}^ma_j\left(\mathcal Z_{\lfloor nt_j\rfloor}-\mathcal Z_{\lfloor nt_{j-1}\rfloor}\right)(x)$ 
(with respect to $\mathbb{P}$) is equal to,
recalling \eqref{eq:Vn},
\begin{equation}
\begin{split}
\label{equivvarcond}
\sigma^2_\xi   \sum_{i=1}^I\sum_{\ell\in\mathbb Z^2}
& \left(
   \sum_{j=1}^m a_j \left( \mathcal N_{\lfloor nt_j\rfloor}(i,\ell)(x)-
     \mathcal N_{\lfloor nt_{j-1}\rfloor}(i,\ell)( x)\right)\right)^2 \\
& =\sigma^2_\xi
   \sum_{i=1}^I\sum_{\ell\in\mathbb Z^2}
   \sum_{j,j'=1}^ma_ja_{j'}
\sum_{k=\lfloor nt_{j-1}\rfloor+1}^{\lfloor nt_{j}\rfloor}
\sum_{k'=\lfloor nt_{j'-1}\rfloor+1}^{\lfloor nt_{j'}\rfloor}
\mathbf 1_{\{S_k=\ell,\mathcal I_k=i,S_{k'}=\ell,\mathcal I_{k'}=i\}}( x)\\
& =\sigma^2_\xi   \sum_{j,j'=1}^ma_ja_{j'}
\sum_{k=\lfloor nt_{j-1}\rfloor+1}^{\lfloor nt_{j}\rfloor}
\sum_{k'=\lfloor nt_{j'-1}\rfloor+1}^{\lfloor nt_{j'}\rfloor}
\mathbf 1_{\{S_k=S_{k'},\mathcal I_k=\mathcal I_{k'}\}}( x)\\
& =\sigma^2_\xi
\left(\sum_{j=1}^m a_j^2 \cV_{\lfloor nt_{j}\rfloor-\lfloor nt_{j-1}\rfloor}\circ   
\bar T^{\lfloor nt_{j-1}\rfloor}\right.\\
& \hspace{-.75 in} \left.   +   \sum_{1\le j<j'\le m}   a_ja_{j'}
\left(\left(\cV_{\lfloor nt_{j'}\rfloor-\lfloor nt_{j-1}\rfloor}-
\cV_{\lfloor nt_{j'-1}\rfloor-\lfloor nt_{j-1}\rfloor}\right)
       \circ   \bar T^{\lfloor nt_{j-1}\rfloor}+\left(
\cV_{\lfloor nt_{j'-1}\rfloor-\lfloor nt_{j}\rfloor}-
    \cV_{\lfloor nt_{j'}\rfloor-\lfloor nt_{j}\rfloor}\right)
\circ   \bar T^{\lfloor nt_{j}\rfloor}
\right)\right)\\
& \sim 2c_1 \sigma^2_\xi\sum_{j=1}^ma_j^2
(t_j-t_{j-1})n\log n\, ,
\end{split}
\end{equation}
for $\bar\mu$-a.e. $x\in\bar M$, 
due to the proof of Theorem \ref{theo:selfintersection}
(since $(\mathcal V_n/(n\log n))_n$ $\bar\mu$-converges almost surely to
$2c_1$, as well as any sequence of random variables
with the same marginal distributions).

Note that, with respect to $\mathbb{P}$, $\sum_{j=1}^ma_j\left(\mathcal Z_{\lfloor nt_j\rfloor}-\mathcal Z_{\lfloor nt_{j-1}\rfloor}\right)(x)$ is a sum of independent 
centered random variables with variances
$$\left(\sigma^2_{n,i,\ell}(x):=\sigma^2_\xi
\left(\sum_{j=1}^ma_j(\mathcal N_{\lfloor nt_j\rfloor}(i,\ell)(x)-
     \mathcal N_{\lfloor nt_{j-1}\rfloor}(i,\ell)( x))\right)^2\right)_{i,\ell}\, . $$
Hence, due to Lemma \ref{Nnas} and to the Lindeberg
Theorem, for $\bar\mu$-almost every
$x\in\bar M$, the sequence of random variables
\[
\left(\frac{\sum_{j=1}^ma_j\left(\mathcal Z_{\lfloor nt_j\rfloor}-\mathcal Z_{\lfloor nt_{j-1}\rfloor}\right)(x)}
  {Var(\sum_{j=1}^ma_j(\mathcal Z_{\lfloor nt_j\rfloor}-\mathcal Z_{\lfloor nt_{j-1}\rfloor})(x))}
\right)_n
\]
converges in distribution (with respect to $\mathbb{P}$) to a standard Gaussian random variable.
The conclusion then follows from \eqref{equivvarcond}.
\end{proof}
\begin{lem}\label{lem:tight}
The sequence of random variables $\left(\widetilde{\mathcal Z}_n(t)
/\sqrt{n\log n}\right)_n$ is tight (with respect to $\bar\mu\otimes \mathbb{P}$) in $C([0,T])$ for every $T>0$.
\end{lem}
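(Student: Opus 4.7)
The plan is to verify a Kolmogorov-Chentsov type tightness criterion in $C([0,T])$. Since $\widetilde{\mathcal Z}_n(0) = 0$ deterministically, it suffices to bound the modulus of continuity of $\widetilde{\mathcal Z}_n/\sqrt{n\log n}$ uniformly in $n$. Because $\xi$ is only assumed to lie in $L^2(\mathbb P)$, I would first split $\xi_{i,\ell} = \xi^{(a)}_{i,\ell} + \xi^{(>a)}_{i,\ell}$, where $\xi^{(a)}_{i,\ell}$ is the centered truncation at level $a$, inducing a decomposition $\widetilde{\mathcal Z}_n = \widetilde{\mathcal Z}_n^{(a)} + \widetilde{\mathcal Z}_n^{(>a)}$. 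The tail is controlled by a maximal inequality applied conditionally on the walk (exploiting the orthogonality of the $\xi^{(>a)}_{i,\ell}$), combined with $\mathbb E_{\bar\mu}[\mathcal V_{nT}] = O(nT \log(nT))$ from Lemma \ref{expVn}, to yield a bound of the form $C(T)\sigma^2_{\xi^{(>a)}}/\eta^2$ on the probability that $\sup_{t\in[0,T]}|\widetilde{\mathcal Z}^{(>a)}_n(t)|/\sqrt{n\log n}$ exceeds $\eta$; this bound tends to $0$ as $a \to \infty$ uniformly in $n$ since $\sigma^2_{\xi^{(>a)}} = \mathbb E[(\xi^{(>a)})^2] \to 0$.

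For the truncated part, I would estimate the fourth moment of increments. Conditional on the walk $x$, the increment $\widetilde{\mathcal Z}^{(a)}_n(t) - \widetilde{\mathcal Z}^{(a)}_n(s)$ equals, up to a bounded correction from the linear interpolation, a sum $\sum_{(i,\ell)} \xi^{(a)}_{i,\ell}\, \mathcal N^{s,t}_n(i,\ell)$ of independent centered random variables, where $\mathcal N^{s,t}_n(i,\ell)$ counts visits to $(i,\ell)$ between $\lfloor ns\rfloor$ and $\lfloor nt\rfloor$. Expanding the fourth power and using $\sum_{(i,\ell)} \mathcal N^{s,t}_n(i,\ell)^4 \leq (N^*_n)^2 V$ gives
\[
\mathbb E\bigl[(\widetilde{\mathcal Z}^{(a)}_n(t) - \widetilde{\mathcal Z}^{(a)}_n(s))^4 \mid x\bigr] \leq 3\,\sigma^4_{\xi^{(a)}}\, V^2 + \mathbb E[(\xi^{(a)})^4]\, (N^*_n(x))^2\, V,
\]
where $V := \mathcal V_{\lfloor n(t-s)\rfloor}\circ \bar T^{\lfloor ns\rfloor}(x)$ and $N^*_n(x)$ denotes the maximum number of visits to any single obstacle before time $n$.

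The key simplification over Bolthausen's argument in \cite{Bolthausen} is to invoke directly the \emph{almost sure} convergence $\mathcal V_n(x)/(n\log n) \to 2c_1$ from Theorem \ref{theo:SelfInter}, combined with the bound $N^*_n(x) = o(n^\vartheta)$ from Lemma \ref{Nnas}, valid for any $\vartheta > 0$. Fixing $\vartheta < 1/2$, these yield, for $\bar\mu$-a.e.\ $x$ and all $n$ large enough (depending on $x$),
\[
\mathbb E\bigl[(\widetilde{\mathcal Z}^{(a)}_n(t) - \widetilde{\mathcal Z}^{(a)}_n(s))^4/(n\log n)^2 \mid x\bigr] \leq K(x,a)(t-s)^2,
\]
uniformly in $s,t\in[0,T]$. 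A standard chaining argument based on this fourth-moment bound, applied conditionally on $x$, produces conditional tightness of $\widetilde{\mathcal Z}^{(a)}_n/\sqrt{n\log n}$; restricting to a set of $\bar\mu$-measure arbitrarily close to $1$ on which $K(x,a)$ is uniformly bounded and integrating over $x$ upgrades this to annealed tightness. Combined with the tail estimate, this concludes the proof. The main technical point to verify is the fourth-moment bound uniformly in $(s,t)$; particular care is required for very small $(t-s)$, where the asymptotic $V \approx 2c_1\, n(t-s)\log(n(t-s))$ degenerates (since $\log(n(t-s))$ can be much smaller than $\log n$) and the interpolation correction must be estimated separately---but precisely in that regime the crude bound $V \leq n(t-s)$ already closes the estimate.
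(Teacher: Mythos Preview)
Your Kolmogorov--Chentsov strategy is a legitimate alternative route, but two steps need repair.

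The most concrete issue is the ``crude bound $V\le n(t-s)$'': since $\mathcal V_m\ge m$ always (the diagonal terms alone contribute $m$), this inequality goes the wrong way. More seriously, you write $V=\mathcal V_{\lfloor n(t-s)\rfloor}\circ\bar T^{\lfloor ns\rfloor}(x)$ and then invoke the almost-sure convergence $\mathcal V_m(x)/(m\log m)\to 2c_1$; but that convergence is for the \emph{fixed} starting point $x$, not uniformly over the shifted points $\bar T^{\lfloor ns\rfloor}x$ as $s$ ranges over $[0,T]$. If you try to recover uniformity via $V\le\mathcal V_{\lfloor nt\rfloor}(x)-\mathcal V_{\lfloor ns\rfloor}(x)$ and the asymptotic of $\mathcal V_n(x)$, the $o(n\log n)$ error term is not $o(m\log n)$, so you pick up an additive error of order $\varepsilon\cdot n\log n$ that does not vanish with $t-s$. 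A clean fix is to take the annealed fourth moment instead: by Lemmas~\ref{expVn} and \ref{lem:varVn}, $\mathbb E_{\bar\mu}[V^2]=\mathbb E_{\bar\mu}[\mathcal V_m^2]\le C\,m^2(\log m\vee 1)^2$, which gives $\mathbb E_{\bar\mu\otimes\mathbb P}\bigl[|\widetilde{\mathcal Z}_n^{(a)}(t)-\widetilde{\mathcal Z}_n^{(a)}(s)|^4\bigr]/(n\log n)^2\le K_a(t-s)^2$ directly. Secondly, the ``maximal inequality applied conditionally on the walk'' for the tail $\widetilde{\mathcal Z}_n^{(>a)}$ is not standard: given $x$, the process $k\mapsto\mathcal Z_k^{(>a)}(x)$ is neither a martingale nor a sum of independent increments, so orthogonality of the $\xi_{i,\ell}$ alone does not yield a maximal bound. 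What works is the L\'evy--Ottaviani type inequality based on $\operatorname{Var}(\mathcal Z_n-\mathcal Z_k\mid x)\le\operatorname{Var}(\mathcal Z_n\mid x)$ --- precisely Bolthausen's positively-associated-increments argument.

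The paper's proof is much shorter and avoids both truncation and fourth moments. It replaces $\sqrt{n\log n}$ by $\sigma_\xi\sqrt{\mathcal V_n}$ (legitimate by Theorem~\ref{theo:SelfInter}), applies Bolthausen's maximal inequality once to reduce tightness to a single tail bound for $|\mathcal Z_n|/(\sigma_\xi\sqrt{\mathcal V_n})$, and then observes that this ratio converges in distribution to a Gaussian (a consequence of Lemma~\ref{lem:fdd}), so its tails are controlled. Your approach, once repaired via the annealed moment bound, would also work and has the merit of not relying on the finite-dimensional convergence as input; but for the $L^2$ tail it still needs the same Bolthausen maximal inequality that the paper exploits directly for the whole argument.
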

\begin{proof}
Due to Theorem \ref{theo:selfintersection}, 
it is enough to prove the tightness of 
$\left(\widetilde{\mathcal Z}_n(t)/\sqrt{\sigma^2_\xi \mathcal V_n}\right)_n$.
Due to \cite[Lemma p. 88]{BillConvProbab}, 
it is enough to prove that
\begin{equation}\label{tight}
\lim_{\lambda\rightarrow +\infty}\limsup_{n\rightarrow +\infty}
   \lambda^2(\bar\mu\otimes \mathbb{P})
       \left(\max_{k=1,...,n}|\mathcal Z_k|\ge \lambda \sigma_\xi\sqrt{ \mathcal V_n}\right)=0.
\end{equation}
We modify the proof of tightness of Bolthausen in \cite{Bolthausen}. 
For completeness, we explain the adaptations to make.
Following \cite{Bolthausen} (see also \cite[bottom of page 824]{soazPAPA},
using the fact that $(\mathcal Z_n)_n$ has positively associated increments knowing $(S_n)_n$, 
we obtain that, for any $\lambda>\sqrt{2}$, 
$$(\bar\mu\otimes \mathbb{P})\left(\max_{j\le n}|\mathcal Z_j|\ge \lambda\sigma_\xi \sqrt{\mathcal V_n}\right) \le 
2(\bar\mu\otimes \mathbb{P})\left(|\mathcal Z_n|>(\lambda-\sqrt{2})\sigma_\xi\sqrt{\mathcal V_n}\right)\, .$$
Now we simplify the conclusion of \cite{Bolthausen}. Since we know that $(\mathcal Z_n/\sqrt{\mathcal V_n})_n$ converges in distribution to a Gaussian random variable $Y$, so 
\begin{eqnarray*}
\limsup_{n\rightarrow +\infty}(\bar\mu\otimes \mathbb{P})\left(\max_{j\le n}|\mathcal Z_j|\ge \lambda \sigma_\xi\sqrt{\mathcal V_n}\right)
      \le  2\mathbb P\left(|Y|>(\lambda-\sqrt{2})\sigma_\xi\right).
\end{eqnarray*}
and $\mathbb P\left(|Y|>x\right)=O(e^{-c_Yx^2})$ for some $c_Y>0$,
which proves \eqref{tight} and so the tightness.
\end{proof}

\begin{proof}[Proof of Theorem~\ref{theo:pinball}]
The first result of Theorem \ref{theo:pinball} is a direct consequence of
Lemmas \ref{lem:fdd} and \ref{lem:tight}.

Now let us prove the last point. For this, we use the general argument
developed by Guillotin-Plantard, Dos Santos and Poisat in \cite{NJR}.
Indeed the proof of \cite{NJR} only uses the following assumptions:
\begin{itemize}
\item $\Gamma$ is a denumerable set,
\item $\widetilde S:=(\widetilde S_n)_{n\ge 0}$ is a sequence of $\Gamma$-valued random variables,
\item $\xi:=(\xi)_{y\in \Gamma}$ is a sequence of independent identically distributed real valued
random variables, which are centered and such that $\mathbb E[|\xi_{y}|^2(\log^+|\xi_{y}|)^\chi|)]<\infty$
for some $\chi >0 $,
\item the sequences of random variables $\xi$ and $\widetilde S$ are independent,
\item $\left(\frac 1{\sqrt{n\log n}}(\sum_{k=0}^{\lfloor nt\rfloor-1}\xi_{\widetilde S_k}+(nt-\lfloor nt\rfloor)\xi_{\widetilde S_{\lfloor nt\rfloor}})\right)_{t\in[0,1]}$ converges in distribution
in $C(0,T)$ to the Brownian motion $B$,
\item $\sup_{y\in \Gamma}\mathbb E[\widetilde N_n(y)]=O(\log n)$ with
$\widetilde N_n(y):=\#\{k=0,...,n-1\, :\, \widetilde S_k=y\}=\sum_{k=0}^{n-1}\mathbf 1_{\{\widetilde S_k=y\}}$ being the local time of $\widetilde S$.
\item $\sum_{y\in \Gamma}(\mathbb E[(\widetilde N_n(y))])^2=O(n)$, with the same notation.
\item $\mathbb P(\widetilde S_n\not\in\{\widetilde S_0,...,\widetilde S_{n-1}\})=O((\log n)^{-1})$.
\end{itemize}
We apply this to $\Gamma=\{1,...,I\}\times\mathbb Z^2$ and $\widetilde S_n=(\mathcal I_n,S_n)$.
For the antepenultimate condition, observe that, due to Corollary \ref{TLL00},
$$ \mathbb E[\widetilde N_n(a,\ell)]=\sum_{k=0}^{n-1}\mathbb E_{\bar\mu}\left[\mathbf 1_{\{S_k=\ell\}}.\mathbf 1_{\bar O_a}\circ\bar T^{n}\right]=\sum_{k=0}^{n-1}\mathbb E_{\bar\mu}\left[\mathbf 1_{\bar O_a}\mathcal H_{\ell,k}(\mathbf 1)\right]=O(\log n)\, .$$
For the penultimate condition, 
$$\sum_{y\in \Gamma}(\mathbb E[(\widetilde N_n(y))])^2=\sum_{y\in \Gamma}\sum_{k,j=0}^{n-1}
\mathbb E_{\bar\mu\times\bar\mu}[\mathbf 1_{\widetilde S_k=\widetilde S'_j=y}]=\sum_{i,j=0}^{n-1}\mathbb E_{\bar\mu\times\bar\mu}[\mathbf 1_{\widetilde S_k=\widetilde S'_j}],$$
considering an independent copy $\widetilde S'=(\widetilde S'_n=(\mathcal I'_n,S'_n))_n$ of $\widetilde S$. 
Now, using again \eqref{EQOP2} combined with Assumption \ref{hypo:perturb} with $\beta$ and $a>0$ as in the proof of Theorem \ref{LLT0},
we obtain
\begin{eqnarray*}
\mathbb E_{\bar\mu\times\bar\mu}\left[\mathbf 1_{\widetilde S_k=\widetilde S'_j}\right]
&\le&\mathbb E_{\bar\mu\times\bar\mu}\left[\mathbf 1_{ S_k=S'_j}\right]\\
&=&\int_{[-\pi,\pi]^2}\mathbb E_{\bar\mu\times\bar\mu}\left[e^{i u\cdot  S_k}e^{-iu\cdot  S'_j}\right]\, du\\
&=&\int_{[-\pi,\pi]^2}\mathbb E_{\bar\mu}\left[e^{i u\cdot  S_i}\right]\mathbb E_{\bar\mu}\left[e^{-iu\cdot  S'_j}\right]\, du\\
&=&\int_{[-\pi,\pi]^2}\mathbb E_{\bar\mu}\left[P_u^k\mathbf 1\right]\mathbb E_{\bar\mu}\left[P_{-u}^j\mathbf 1\right]\, du\\
&\le&\int_{[-\beta,\beta]^2}e^{-ak|u|^2}\left|\mathbb E_{\bar\mu}\left[\Pi_u\mathbf 1\right]\right|\, e^{-aj|u|^2}\left|\mathbb E_{\bar\mu}\left[\Pi_u\mathbf 1\right]\right|\, du+O(\alpha^{k+j})\\
&\le&\int_{\mathbb R^2}e^{-ak|u|^2}\left|\mathbb E_{\bar\mu}\left[\Pi_u\mathbf 1\right]\right|\, e^{-aj|u|^2}\left|\mathbb E_{\bar\mu}\left[\Pi_u\mathbf 1\right]\right|\, du+O(\alpha^{k+j})\\
&=&O(|1+k+j|^{-1})\, .
\end{eqnarray*}
Therefore
$$\sum_{y\in \Gamma}(\mathbb E[(\widetilde N_n(y))])^2=O\left(\sum_{0\le j,k\le n-1}\frac 1{1+k+j}\right)\, =O(n)\, .$$

The last condition comes from the second part of Proposition~\ref{prop:muB}.  
Note that in order to invoke Proposition~\ref{prop:muB}, we need that
the operator $f \mapsto \mathbb{E}_{\bar \mu}[f \, \mathbf{1}_{\bar O_a}]$ is continuous
on $\widetilde B_2'$.  This follows from the fact that we have assumed (i) in the statement
of the theorem, that
$f \mapsto f \mathbf{1}_{\bar O_a}$ is a continuous operator on $\widetilde \cB_1$, and
that by Assumption~\ref{hypo:perturb}, $\mathbb{E}_{\bar \mu}[ \cdot ]$ acts continuously on 
$\widetilde \cB_2$.  The second condition needed to conclude \eqref{eq:muB'} from 
Proposition~\ref{prop:muB} is precisely assumption (ii) in the statement of the theorem.
\end{proof}
\begin{appendix}

\section{Proof of Lemma \ref{lem:bounded}}
Here we prove the Lemma \ref{lem:bounded}, which was used in Subsection \ref{proofreturn}, especially used in the proof of Theorem \ref{theo:range}].

Let us prove that \eqref{eq} holds true. 
By density, it suffices to perform the estimate for $f \in \cC^1(\bar M_0)$.
In the proof below, we use the fact that the invariant measure $\bar\mu_0$ is absolutely continuous with respect to the Lebesgure measure.

Choose $\ell \ge 1$ and fix $\underline\omega_\ell := (\omega_1, \ldots, \omega_\ell)$.  Let $g$ be as in the statement of the lemma.
For brevity, denote by
$\bar T_{\underline{\omega}_\ell}^\ell = \bar T_{\omega_\ell} \circ \cdots \circ \bar T_{\omega_1}$ the composition of
random maps and  by $\cL_{\underline{\omega}_\ell}^\ell$ its associated transfer operator.
Also, set $H_\ell^p(g) = |g|_\infty + \sup_{C\in\mathcal C_{\omega_1,...,\omega_\ell}}C_{g_{|C}}^{(p)}$.
We must estimate
$$
\mathbb{E}_{\bar\mu_0} [f \, g ] = \int_{\bar M_0} f \, g \, d\bar\mu_0 = 
\int_{\bar M_0} \cL_{\underline\omega_\ell}^\ell f \cdot g \circ (\bar T_{\underline\omega_\ell}^{\ell})^{-1} \, d\bar\mu_0 .
$$
To do this, we decompose $\bar M_0$ into a countable collection of local rectangles, each
foliated by a smooth collection of stable curves on which we may apply our norms.  This
technique follows closely the decomposition used in \cite[Lemma~3.4]{MarkHongKun2013}.

We partition each connected component of $\bar M_0 \setminus (\cup_{|k| \ge k_0} \mathbb{H}_k)$, 
into finitely many boxes $B_j$ whose boundary curves are elements of $\cW^s$ and 
$\cW^u$, as well as the horizontal boundaries of $\mathbb{H}_{\pm k_0}$.  We construct the
boxes $B_j$ so that each has diameter in $(\delta/2, \delta)$, for some $\delta>0$,
and is foliated by a smooth foliation of
stable curves
$\{ W_\xi \}_{\xi \in \Xi_j}$, such that each curve $W_{\xi}$ is stretched completely between the two unstable boundaries of $B_j$. 
Indeed, due to the continuity of the cones $C^s(x)$ from {\bf (H1)}, we can choose $\delta$ sufficiently
small that the family $\{ W_\xi \}_{\xi \in \Xi_j}$ is a family of parallel line segments.

We disintegrate the  measure $\bar\mu_0$  on $B_j$ into
a family of  conditional probability measures $d\mu_{\xi} = c_\xi \cos \vf \, dm_{W_\xi}$, $\xi \in \Xi_j$, 
where $c_\xi$ is a normalizing constant, and a factor measure 
$ \lambda_j(\xi)$ on the index set $\Xi_j$.  Since $\bar\mu_0$ is absolutely continuous with respect to Lebesgue measure
on $\bar M_0$, 
we have $ \lambda_j(\Xi_j) = \bar \mu_0(B_j) = \mathcal{O}(\delta^2)$.  

Similarly, on each homogeneity strip $\mathbb{H}_t$, $t \ge k_0$, we choose a smooth foliation of parallel line segments 
$\{ W_\xi \}_{\xi \in \Xi_t} \subset \mathbb{H}_t$ which completely cross $\mathbb{H}_t$.  Due to the
uniform transversality of the stable cone with $\partial \mathbb{H}_t$, we may choose a single index set $\Xi_t$ for each homogeneity strip.  We again disintegrate $\bar\mu_0$
into a family of 
conditional probability measures $d\mu_\xi = c_\xi \cos \vf \, dm_{W_\xi}$, $\xi \in \Xi_t$, and a transverse
measure $\lambda_t(\xi)$ on the index set $\Xi_t$.  
This implies that $\lambda_t(\Xi_t) = \bar\mu_0(\mathbb{H}_t) = \mathcal{O}(|t|^{-5})$ for each $|t| \ge k_0$.

Notice that on each homogeneity strip $\mathbb{H}_k$, the function $\cos \vf$ satisfies,
\begin{equation}
\label{eq:jac dist}
|\log \cos \vf(x)- \log \cos \vf(y)| \leq C d(x,y)^{1/3}
\end{equation}
for some uniform constant $C>0$ (uniform in $k$).

We are ready to estimate the required integral.  Let
$\cG_\ell(W_\xi)$ denote the components of $(\bar T_{\underline{\omega}_\ell}^{\ell})^{-1} W_{\xi}$ , with long
pieces subdivided
to have length between $\delta_0/2$ and $\delta_0$, as in the proof of Lemma~\ref{lem:uniform ly}.
\[
\begin{split}
\int & \cL_{ \underline{\omega}_\ell}^\ell f \cdot g \circ (\bar T_{\underline{\omega}_\ell}^{\ell})^{-1} \, d\bar\mu_0
 =\sum_j \int_{B_j} \cL_{\underline{\omega}_\ell}^\ell f \cdot g \circ (\bar T_{\underline{\omega}_\ell}^{\ell})^{-1} d\bar\mu_0  
 +  \sum_{|t| \ge k_0} \int_{\mathbb{H}_t} \cL_{\underline{\omega}_\ell}^\ell f \cdot g \circ (\bar T_{\underline{\omega}_\ell}^{\ell})^{-1} d\bar\mu_0\\
 &=\sum_j \!\! \int_{\Xi_j} \! \int_{W_\xi} \! \! \cL_{\underline{\omega}_\ell}^\ell f \cdot g \circ (\bar T_{\underline{\omega}_\ell}^{\ell})^{-1} \,
d\mu_\xi d\lambda_j(\xi)  
 +  \sum_{|t| \ge k_0} \! \! \int_{\Xi_t} \! \int_{W_\xi} \! \! \cL_{\underline{\omega}_\ell}^\ell f \cdot g \circ (\bar T_{\underline{\omega}_\ell}^{\ell})^{-1} \, d\mu_\xi d\lambda_t(\xi) \\ 
 & =\sum_j \int_{\Xi_j}  \sum_{W_{\xi,i} \in \cG_\ell(W_\xi)} \int_{W_{\xi,i}}  f \, g \,  c_\xi \cos \vf \circ  \bar T_{\underline\omega_\ell}^\ell  \, 
 J_{W_{\xi,i}}\bar T_{\underline\omega_\ell}^\ell \, dm_{W_{\xi,i}} d\lambda_j(\xi) \\
 & \qquad +
 \sum_{|t| \ge k_0} \int_{\Xi_t}  \sum_{W_{\xi,i} \in \cG_\ell(W_\xi)} \int_{W_{\xi,i}}  f \, g \, c_\xi \cos \vf \circ  \bar T_{\underline\omega_\ell}^\ell  \, J_{W_{\xi,i}}\bar T_{\underline\omega_\ell}^\ell \, dm_{W_{\xi,i}} d\lambda_t(\xi) \, .
\end{split}
\]

Next we use the assumption that $g$ is H\"older continuous on connected componts of
$\bar M_0 \setminus (\cup_{k=1}^\ell \bar T_{\omega_1}^{-1} \circ \cdots \circ \bar T_{\omega_k}^{-1} (\cS_{0,H}))$.  Since
elements of $\cG_\ell(W_\xi)$ are also subdivided according to these singularity sets, we have
that $g$ is H\"older continuous on each $W_{\xi, i} \in \cG_\ell(W_\xi)$.
Thus,
\[
\begin{split}
   \int_{W_{\xi, i}}  f  \, g \, c_\xi \cos \vf \circ \bar T_{\underline\omega_\ell}^\ell  \, J_{W_{\xi,i}}\bar T_{\underline\omega_\ell}^\ell \, dm_{W_{\xi,i}}
 &\le |f|_w  |g|_{\cC^p(W_{\xi,i})} c_\xi |\cos \vf \circ \bar T_{\underline\omega_\ell}^\ell | _{\cC^p(W_{\xi,i})}   |J_{W_{\xi,i}}\bar T_{\underline\omega_\ell}^\ell |_{\cC^p(W_{\xi,i})} \\
 & \le  |f|_w H_\ell^p(g) |J_{W_{\xi,i}}\bar T_{\underline\omega_\ell}^\ell |_{\cC^0(W_{\xi,i})}  \frac{C}{|W_{\xi}|},
 \end{split}
\]
where we used (\ref{eq:jac dist}) in the last estimate, as well as the fact that the normalizing constant $c_\xi$ is
proportional to $|W_\xi|^{-1}$.
This implies that
\begin{align*}
\mathbb{E}_{\bar\mu_0} [ f \, g ] & 
\le C |f|_w H_\ell^p(g) \Big( \sum_j \int_{\Xi_j}  \sum_{W_{\xi,i} \in \cG_\ell(W_\xi)}   |J_{W_{\xi,i}}\bar T_{\underline\omega_\ell}^\ell |_{\cC^0(W_{\xi,i})}   |W_\xi|^{-1} \, d\lambda_j(\xi) \\
 & \qquad +
 \sum_{|t| \ge k_0} \int_{\Xi_t}  \sum_{W_{\xi,i} \in \cG_\ell(W_\xi)}  |J_{W_{\xi,i}}\bar T_{\underline\omega_\ell}^\ell |_{\cC^0(W_{\xi,i})}   |W_\xi|^{-1} \, d\lambda_t(\xi) \Big)
\end{align*}
Now $\sum_{W_{\xi,i} \in \cG_\ell(W_\xi)}  |J_{W_{\xi,i}}\bar T_{\underline\omega_\ell}^\ell |_{\cC^0(W_{\xi,i})}$ is bounded by a uniform
constant independent of $\xi$ and $\underline\omega_\ell$ by \cite[Lemma~5.5(b)]{MarkHongKun2013}.
Moreover, $\int_{\Xi_j} |W_{\xi}|^{-1} d \lambda_j(\xi) \leq C\delta_0$ for some constant $C>0$ since we chose our foliation to be comprised of long cone-stable curves.
We conclude that the first term to the right hand side of  the last inequality  is uniformly bounded by $C_1|f|_w H_\ell^p(g)$ since
the sum over $j$ is finite.

For the second term on the right hand side of the last equality, we again use \cite[Lemma~5.5(b)]{MarkHongKun2013}
as well as the fact that $|W_{\xi}|^{-1} = \mathcal{O}(t^3)$ for
$\xi \in \Xi_t$, while $\lambda_t(\Xi_t) = \mathcal{O}(t^{-5})$. Thus
\[ 
\sum_{|t| \ge k_0} \int_{\Xi_t}|W_{\xi}|^{-1}d \lambda_t(\xi)\le 
\sum_{|t| \ge k_0} C t^{-2} \leq C k_0^{-1} .
\]
We conclude that
\[
\left| \mathbb{E}_{\bar \mu_0} [f \, g] \right| \le K_1 |f|_w H^p_\ell(g) ,
\] 
for some uniform constant $K_1$ depending on $\bar {\mathcal{F}}_{\vartheta_0}$, but not on 
$f$, $\ell$ or $\underline{\omega}_\ell$. This completes the proof of \eqref{eq}.

To prove \eqref{eq2}, we follow the proof of 
Lemma~\ref{lem:uniform ly}.  Note that for 
$f \in \cC^1(\bar M_0)$, $W \in \cW^s$, and a test function
$\psi$, we have
\[
\int_W \cL_{u, \omega_\ell} \ldots \cL_{u, \omega_1}(fg) \, \psi \, dm_W
= \sum_{W_i} \int_{W_i} fg\, e^{iu \cdot S_\ell} \, \psi \circ \bar T^\ell_{\underline \omega_\ell} \,
J_{W_i}\bar T^\ell_{\underline \omega_\ell} \, dm_{W_i} \, ,
\] 
where the sum is taken over $W_i \in \cG_\ell(W)$, the components of $(\bar T^\ell_{\underline \omega_\ell})^{-1}W$, subdivided as before.  This is the same type of expression as in 
\cite[eq. (5.24)]{MarkHongKun2013} or \cite[eq. (4.4)]{MarkHongKun2013}, but now the
test function is
\[
g\, e^{iu \cdot S_\ell} \, \psi \circ \bar T^\ell_{\underline \omega_\ell} \,
J_{W_i}\bar T^\ell_{\underline \omega_\ell}
\]
rather than simply 
$\psi \circ \bar T^\ell_{\underline \omega_\ell} \, J_{W_i}\bar T^\ell_{\underline \omega_\ell}$.
Since $S_\ell$ is constant on each $W_i \in \cG_\ell(W)$, and we have assumed that 
$g$ is (uniformly in $\ell$) H\"older continuous on each $W_i \in \cG_\ell(W)$,
the proof of the Lasota-Yorke inequalities follows as in the proof of 
\cite[Proposition~5.6]{MarkHongKun2013}.  The bound \eqref{eq2} then follows as in
the proof of Lemma~\ref{lem:uniform ly}.

\begin{rem}
\label{rem:dual}
As a consequence of this lemma, if 
$g:\bar M \rightarrow\mathbb R$ is a bounded measurable function such that, for every $\uomega=(\omega_k)_{k\ge 0}\in E^{\mathbb N}$, there exists positive integer 
$\ell_{\uomega}$ such that 
$g(\cdot,\uomega)$ is $p$-H\"older on every connected component (uniformly on $\uomega$) of
$\bar M_0\setminus\left(\cup_{k=0}^{\ell_{\uomega}-1} \bar T_{\omega_0}^{-1} \circ \cdots \circ \bar T_{\omega_{\ell(\uomega)-1}}^{-1} (\cS_{0,H})\right)$. Then, for every 
$f\in\widetilde{\mathcal B}_w$, we have
\begin{eqnarray*}
\left|\mathbb E_{\bar\mu}[gf]\right|&=&
   \left|\int_{E}\mathbb E_{\bar\mu_0}[g(\cdot,\uomega) f(x,\uomega)]\, d\eta(\uomega)\right|\\
&=&K_1\Vert f\Vert_{\widetilde\cB_w} \left(\Vert g\Vert_\infty+\sup_{\uomega\in E^{\mathbb N}}\sup_{C\in\mathcal C_{\omega_1,...,\omega_\ell(\omega)}}C_{(g(\cdot,\uomega))_{|C}}^{(p)}\right)\, ,
\end{eqnarray*}
with the same notations as in the previous lemma.
Therefore, $\mathbb{E}_{\bar\mu} [ g \cdot] $ is in $\widetilde{\mathcal \cB}_w'$.
\end{rem}

\section{Proof of Lemma \ref{lem:varVn}.}

Note that $\mathcal V_n=n+2\sum_{1\le k<\ell\le n}\mathbf 1_{\{S_\ell=S_k,\mathcal I_\ell=\mathcal I_k\}}$. Hence
\[
Var_{\bar\mu}(\mathcal V_n)=4\sum_{1\le k_1<\ell_1\le n}\sum_{1\le k_2<\ell_2\le n}
    D_{k_1,\ell_1,k_2,\ell_2},
\]
with $D_{k_1,\ell_1,k_2,\ell_2}:=\bar\mu(E_{k_1,\ell_1}\cap
    E_{k_2,\ell_2})-\bar\mu(E_{k_1,\ell_1})\bar\mu(E_{k_2,\ell_2})$.
It follows that 
\begin{equation}
\label{eq:Var}
\left|Var_{\bar\mu}(\mathcal V_n)-8(A_2+A_3)\right|\le 8(A_1+A_4),
\end{equation}
with
\[
A_1:=\sum_{1\le k_1<\ell_1\le k_2<\ell_2\le n}
    \left|D_{k_1,\ell_1,k_2,\ell_2}\right|,\quad
A_2:=\sum_{1\le k_1< k_2< \ell_1<\ell_2\le n}
    D_{k_1,\ell_1,k_2,\ell_2}\, ,
\]
\[
A_3:=\sum_{1< k_1< k_2<\ell_2<\ell_1\le n}
    D_{k_1,\ell_1,k_2,\ell_2},\quad
A_4:=\sum_{(k_1,k_2,\ell_1,\ell_2)\in E_n\cup F_n}
    \left| D_{k_1,\ell_1,k_2,\ell_2}\right|\, ,
\]
with $$E_n:=\{(k_1,k_2,\ell_1,\ell_2)\in\{1,...,n\}\ :\
    k_1=k_2<\min(\ell_1,\ell_2)\},$$$$
 F_n:=\{(k_1,k_2,\ell_1,\ell_2)\in\{1,...,n\}\ :\
    \max(k_1, k_2)<\ell_1=\ell_2\}.$$\
We will start with the two easiest estimates: the estimates of the error terms $A_1$ and $A_4$.
The method we will use to estimate the main terms $A_2$ and $A_3$ differs from \cite{soazSelfInter}.

 Due to Lemma \ref{PRO1},
\[
A_1\le\, I^2\sum_{1\le k_1<\ell_1\le k_2<\ell_2\le n}\frac{C_1\alpha^{k_2-\ell_1}}{(\ell_1-k_1)(\ell_2-k_2)}
=O(n(\log n)^2)=o(n^2).
\]
 Let us now prove that $A_4=o(n^2)$ by writing
\begin{eqnarray*}
\sum_{(k_1,k_2,\ell_1,\ell_2)\in E_n}
 \left|D_{k_1,\ell_1,k_2,\ell_2}\right|
&\le&2\sum_{1\le k<\ell_1\le\ell_2\le n}
\left(\bar\mu(E_{k,\ell_1}\cap
    E_{k,\ell_2})+\bar\mu(E_{k,\ell_1})\bar\mu(E_{k,\ell_2})\right)\\
&\le&2\sum_{1\le k<\ell_1\le\ell_2\le n}
\left(\bar\mu(S_{\ell_1}=S_{\ell_2}=S_k)
     +\bar\mu(S_{\ell_1}=S_k)\bar\mu(S_{\ell_2}=S_k)\right)\\
&\le&2\sum_{1\le k<\ell_1\le\ell_2\le n}
\left(\mathbb E_{\bar\mu}\left[\mathcal H_{0,\ell_2-\ell_1}\mathcal H_{0, \ell_1-k}(\mathbf 1)\right]+ \mathbb{E}_{\bar\mu} \left[ \mathcal H_{0, \ell_1-k}(\mathbf 1) \right]
\mathbb{E}_{\bar\mu} \left[ \mathcal H_{0, \ell_2-k}(\mathbf 1) \right]
\right)\\
&\le&K'_0\sum_{1\le k<\ell_1\le\ell_2\le n}
\left(\frac{1}{(\ell_1-k)(\ell_2-\ell_1+1)}
+\frac{1}{(\ell_1-k)(\ell_2-k)}\right)
\end{eqnarray*}
for some $K'_0>0$ due to Theorem \ref{LLT0}, since 
$\mathbb E_{\bar\mu}[\cdot]$ is a continuous linear operator on $\widetilde \cB_1$
and since $\mathbf 1\in \widetilde \cB_1$.
This leads to
$\sum_{(k_1,k_2,\ell_1,\ell_2)\in E_n}
 \left|D_{k_1,\ell_1,k_2,\ell_2}\right|
=O(n(\log n)^2)$.
Analogously, we obtain
$\sum_{(k_1,k_2,\ell_1,\ell_2)\in F_n}
 \left|D_{k_1,\ell_1,k_2,\ell_2}\right|=O(n(\log n)^2)$.
Hence $A_4=o(n^2)$.

 For $A_2$, we study separately the terms 
$\bar\mu(E_{k_1,\ell_1}\cap
    E_{k_2,\ell_2})$ and the terms $\bar\mu(E_{k_1,\ell_1})\bar\mu(E_{k_2,\ell_2})$.
First by Lemma~\ref{expVn},
\begin{multline}
\sum_{1\le k_1< k_2< \ell_1<\ell_2\le n}
\bar\mu(E_{k_1,\ell_1})\bar\mu( E_{k_2,\ell_2})\\
=c_1^2\sum_{1\le k_1< k_2< \ell_1<\ell_2\le n}\left((\ell_1-k_1)^{-1}+O((\ell_1-k_1)^{-3/2})\right)\left((\ell_2-k_2)^{-1}+O((\ell_2-k_2)^{-3/2})\right)\\
=o(n^2)+c_1^2\sum_{1\le k_1< k_2< \ell_1<\ell_2\le n}\frac 1{(\ell_1-k_1)(\ell_2-k_2)}\, ,
\end{multline}
where we used the fact that
\begin{eqnarray*}
\sum_{1\le k_1<k_2<\ell_1<\ell_2\le n}\frac 1{\ell_1-k_1}
\frac 1{(\ell_2-k_2)^{\frac 32}}&\le&\sum_{m_1,m_2,m_3,m_4=1}^{n}\frac 1{m_2+m_3}\frac 1{(m_3+m_4)^{\frac 32}}\\
&\le&n\sum_{m_3=1}^n\sum_{m_2=1}^n\frac 1{m_2+m_3}\sum_{m_4=1}^{n}\frac 1{(m_3+m_4)^{\frac 32}}\\
&=&O\left( n\sum_{m_3=1}^n\log n \, m_3^{-\frac 12}\right)\\
&=& O(n^{\frac 32}\log n)=o(n^2)\, .
\end{eqnarray*}
Therefore, due to the Lebesgue dominated convergence theorem, we obtain
\begin{eqnarray}
\sum_{1\le k_1< k_2< \ell_1<\ell_2\le n}
\bar\mu(E_{k_1,\ell_1})\bar\mu( E_{k_2,\ell_2})
&=&o(n^2)+c_1^2 n^2\int_{\frac 1n\le\frac{\lfloor nx\rfloor}n<\frac{\lfloor ny\rfloor}n<\frac{\lceil nz\rceil}n<\frac{\lceil nt\rceil}n\le 1}\frac{dxdydzdt}{\left(\frac{\lceil nz\rceil}n-\frac{\lfloor nx\rfloor}n\right)
\left(\frac{\lceil nt\rceil}n-\frac{\lfloor ny\rfloor}n\right)} \nonumber \\
&\sim& c_1^2 n^2\int_{0<x<y<z<t<1}\frac{dxdydzdt}{(z-x)(t-y)} \nonumber \\
&=&  c_1^2\frac{\pi^2}{12}n^2
= \frac {n^2}{48\det \Sigma^2}\left(\sum_{a=1}^I  
\bar\mu(\mathcal I_0=a)^2\right)^2.   \label{eq:first A2}
\end{eqnarray}
The rest of the estimate of $A_2$ is new (it is different from \cite{soazSelfInter}). 
Fix for the moment $1\le k_1< k_2< \ell_1<\ell_2\le n$. Note that
\begin{multline*}
\bar\mu(E_{k_1,\ell_1}\cap E_{k_2,\ell_2})\\
=\sum_{a,b=1}^I\bar\mu\left(\bar T^{-k_1}\bar O_a\cap\bar T^{-k_2}\bar O_b\cap\bar T^{-\ell_1}(\bar O_a)\cap\bar T^{-\ell_2}\bar O_b\cap\{S_{k_2}-S_{k_1}=-(S_{\ell_1}-S_{k_2})=S_{\ell_2}-S_{\ell_1}\}\right)\, .
\end{multline*}
Using now \eqref{EQOP2} as for \eqref{EQOP3}, we observe that
$\mathbf 1_{\{S_{k_2}-S_{k_1}=-(S_{\ell_1}-S_{k_2})=S_{\ell_2}-S_{\ell_1}\}}$ is equal to the following quantity
\[
\frac 1{(2\pi)^4}\int_{([-\pi,\pi]^2)^2}e^{i u\cdot((S_{k_2}-S_{k_1})+(S_{\ell_1}-S_{k_2}))} 
      e^{i v\cdot((S_{\ell_2}-S_{\ell_1})+(S_{\ell_1}-S_{k_2}))}   \, du\, dv \,  , 
\]
which is also equal to
\[
\begin{split}
& \frac 1{(2\pi)^4}\int_{([-\pi,\pi]^2)^2}e^{i u\cdot(S_{k_2}-S_{k_1})}e^{i(u+v)\cdot(S_{\ell_1}-S_{k_2})} 
      e^{i v\cdot(S_{\ell_2}-S_{\ell_1})}   \, du\, dv \\
&  = \frac 1{(2\pi)^4}\int_{([-\pi,\pi]^2)^2} e^{i u\cdot S_{k_2 - k_1} \circ \bar T^{k_1}}
e^{i(u+v)\cdot S_{\ell_1 - k_2} \circ \bar T^{k_2}} 
      e^{i v\cdot S_{\ell_2 - \ell_1} \circ \bar T^{\ell_1}}   \, du\, dv \, . 
\end{split}
\]
Now using the $P$-invariance and $\bar T$-invariance of $\bar\mu$ and several times the formula
$P^m(f.g\circ\bar T^m)=gP^m(f)$, we obtain
$$
\bar\mu(E_{k_1,\ell_1}\cap E_{k_2,\ell_2})=
\sum_{a,b=1}^I\frac 1{(2\pi)^4}\int_{([-\pi,\pi]^2)^2}\mathbb E_{\bar\mu}
\left[ \mathbf 1_{\bar O_b} P_v^{\ell_2-\ell_1}\left(\mathbf 1_{\bar O_a}   P_{u+v}^{\ell_1-k_2}\left(\mathbf 1_{\bar O_b}P_u^{k_2-k_1}(\mathbf 1_{\bar O_a})\right)\right) \right]\, du\, dv\, .
$$
Due to our spectral assumptions, we observe that
$$P_u^n= \lambda_u^n\Pi_u + O(\alpha^n)\, ,$$
up to defining $\lambda_u=e^{-\frac 12 \Sigma^2 u\cdot u}$ for $u$ outside $[-\beta,\beta]$ and so, proceding as in the proof of Theorem \ref{LLT0}, we obtain that, for every $n\ge 2$ and every $u,v\in[-\pi,\pi]^2$,
\begin{eqnarray*}
P_u^n&=& e^{-\frac n2 \Sigma^2u\cdot u}\mathbb E_{\bar\mu}[\cdot]\mathbf 1 + O(\alpha^n)+O(e^{-2na|u|^2}(|u|+n|u|^3))\\
&=& e^{-\frac n2 \Sigma^2u\cdot u}\mathbb E_{\bar\mu}[\cdot]\mathbf 1 + O(e^{-n a|u|^2}|u|)\, ,
\end{eqnarray*}
and $|\lambda_u^n|\le e^{- 2a|u|^2}$
for some $a>0$ (such that $e^{-2a|\pi|^2}>\alpha^n$, $\max(\lambda_u^{n-1},e^{-\frac{n-1}2\Sigma^2u\cdot u})\le e^{- 2an|u|^2}$) since $n|u|^2e^{-2n a|u|^2}= O(e^{-n  a|u|^2})$.
Therefore, we obtain
\begin{multline}\label{DDDDD1}
\mathbb E_{\bar\mu}
\left[ \mathbf 1_{\bar O_b} P_v^{\ell_2-\ell_1}\left(\mathbf 1_{\bar O_a}   P_{u+v}^{\ell_1-k_2}\left(\mathbf 1_{\bar O_b}P_u^{k_2-k_1}(\mathbf 1_{\bar O_a})\right)\right) \right]\\
=(\bar\mu(\bar O_a)\bar\mu(\bar O_b))^2 e^{-\frac 12Q(\Sigma u,\Sigma v)}
+O\left((|u|+|v|)e^{-naQ(u,v)}\right)\, ,
\end{multline}
where we have set 
\begin{eqnarray*}
Q(u,v)&:=&(\ell_2-\ell_1)|v|^2+(\ell_1-k_2)|u+v|^2+(k_2-k_1) |u|^2\\
&=&(\ell_2-k_2)|v|^2+2(\ell_1-k_2)u\cdot v+(\ell_1-k_1) |u|^2\\
&=& (A_Q (u,v))\cdot (A_Q(u,v))=|A_Q(u,v)|^2\, ,
\end{eqnarray*}
with $A^2_Q:=\left(\begin{array}{cccc}
        \ell_1-k_1&0&\ell_1-k_2&0\\
         0&\ell_1-k_1&0&\ell_1-k_2\\
         \ell_1-k_2&0&\ell_2-k_2&0\\
          0&\ell_1-k_2&0&\ell_2-k_2\end{array}\right)$
which is symmetric with determinant
\begin{equation}
\label{eq:AQ det}
\begin{split}
\det A^2_Q &= (\ell_1-k_1)^2(\ell_2-k_2)^2+(\ell_1-k_2)^4-2(\ell_1-k_2)^2(\ell_1-k_1)(\ell_2-k_2)\\
&= ((k_2-k_1)(\ell_1-k_2)+(k_2-k_1)(\ell_2-\ell_1)+(\ell_1-k_2)(\ell_2-\ell_1))^2\, .
\end{split}
\end{equation}
Due to the form of $A^2_Q$, we observe that $A^2_Q$ has eigenvectors of the forms $(*,0,*,0)$ and $(0,*,0,*)$, that it has two double eigenvalues of sum (without multiplicity) $\ell_1-k_1+\ell_2-k_2$
and of product (without multiplicity) $\sqrt{\det A_Q^2}$.
Therefore its dominating eigenvalue is smaller than the sum and so is less than $4\max(k_2-k_1,\ell_1-k_2,\ell_2-\ell_1)$ and so (using the fact that the product of the two eigenvalues is larger than the maximum times the median of these three values) the smallest eigenvalue of $A^2_Q$ cannot be smaller than a quarter of the median of 
$k_2-k_1,\ell_1-k_2,\ell_2-\ell_1$, that we denote by $med (k_2-k_1,\ell_1-k_2,\ell_2-\ell_1)$.
So 
\begin{multline*}
\int_{([-\pi,\pi]^2)^2} e^{-nQ(\Sigma u,\Sigma v)}\, dudv\, 
=(\det \Sigma)^{-2}\int_{(\Sigma [-\pi,\pi]^2)^2}  e^{-nQ( u,v)}\, dudv\\
=(\det A_Q)^{-1}(\det \Sigma)^{-2}\int_{A_Q(\Sigma ([-\pi,\pi]^2)^2)} e^{-|(x,y)|^2}\, dxdy\\
=(\det A_Q)^{-1}(\det \Sigma)^{-2}\left(\int_{(\mathbb R^2)^2} e^{-|(x,y)|^2}\, dxdy 
        + O(e^{-a_1 {med (k_2-k_1,\ell_1-k_2,\ell_2-\ell_1)^2}})\right)\\
=(2\pi)^2(\det A_Q)^{-1}(\det \Sigma)^{-2}\left(1
        + O(e^{-a_1 {med (k_2-k_1,\ell_1-k_2,\ell_2-\ell_1)^2}})\right)\, ,
\end{multline*}
for some $a_1>0$. 
Moreover
\begin{multline*}
\int_{(\mathbb R^2)^2} |(u,v)|e^{-naQ(u,v)}\, dudv
=(\det A_Q)^{-1}\int_{(\mathbb R^2)^2} |A_Q^{-1}(u,v)| e^{-a|(x,y)|^2}\, dxdy\\
=O\left((\det A_Q)^{-1}\, med(k_2-k_1,\ell_1-k_2,\ell_2-\ell_1)^{-\frac 12} \right)\, .
\end{multline*}
Therefore
\begin{equation}\label{EEEEE1}
\bar\mu(E_{k_1,\ell_1}\cap E_{k_2,\ell_2})
=\frac{\left(\sum_{a=1}^I\bar\mu(\bar O_a)^2\right)^2}{(2\pi)^{2}\, \det A_Q\, \det \Sigma^{2}}\left(1+
O\left( med(k_2-k_1,\ell_1-k_2,\ell_2-\ell_1))^{-\frac 12} \right)\right)\, .
\end{equation}
But using \eqref{eq:AQ det},
\begin{multline*}
\sum_{1\le k_1< k_2< \ell_1<\ell_2\le n}
(\det A_Q)^{-1}=\sum_{1\le k_1< k_2< \ell_1<\ell_2\le n}\frac 1
{(k_2-k_1)(\ell_1-k_2)+(k_2-k_1)(\ell_2-\ell_1)+(\ell_1-k_2)(\ell_2-\ell_1)}\\
=\sum_{m_1,m_2,m_3,m_4\ge 1\ :\ m_1+m_2+m_3+m_4\le  n}\frac {1}
{m_2m_3+m_2m_4+m_3m_4}\\
=n^2\int_{(0,+\infty)^4}
      \frac {\mathbf 1_{\left\{\frac{\lceil ny_1\rceil}n+\frac{\lceil ny_2\rceil}n+\frac{\lceil ny_3\rceil}n+\frac{\lceil ny_4\rceil}n\le 1\right\}}}{\frac{\lceil ny_2\rceil}n \frac{\lceil ny_3\rceil}n +\frac{\lceil ny_2\rceil}n \frac{\lceil ny_4\rceil}n+\frac{\lceil ny_3\rceil}n\frac{\lceil ny_4\rceil}n}\, dy_1\, dy_2\, dy_3\, dy_4\\
\sim n^2\int_{(0,+\infty)^4}
      \frac {\mathbf 1_{\left\{y_1+y_2+y_3+y_4\le 1\right\}}}{y_2y_3 +y_2y_4+y_3y_4}\, dy_1\, dy_2\, dy_3\, dy_4\, ,
\end{multline*}
due to the dominated convergence theorem.
Therefore
\begin{equation}\label{EEEEE2}
\sum_{1\le k_1< k_2< \ell_1<\ell_2\le n}
(\det A_Q)^{-1}\sim n^2\int_{(0,+\infty)^3}
      \frac {(1-y_2-y_3-y_4)\mathbf 1_{\left\{y_2+y_3+y_4\le 1\right\}}}{y_2y_3 +y_2y_4+y_3y_4}\, dy_2\, dy_3\, dy_4=n^2J\, .
\end{equation}
Analogously
\begin{equation}
\begin{split}
\label{EEEEE3}
\sum_{1\le k_1< k_2< \ell_1<\ell_2\le n} &
(\det A_Q)^{-1}\, (med(k_2-k_1,\ell_1-k_2,\ell_2-\ell_1))^{-\frac 12} \\
& =\sum_{m_1,m_2,m_3,m_4\ge 1\ :\ m_1+m_2+m_3+m_4\le  n}\frac {1}
{(m_2m_3+m_2m_4+m_3m_4)\, med(m_2,m_3,m_4)^{\frac 12}}\\
& \le n\sum_{1\le m_2\le m_3\le m_4\le  n}\frac {1}
{(m_2m_3+m_2m_4+m_3m_4)\, m_3^{\frac 12}}\\
& \le  n\sum_{1\le m_2\le m_3\le m_4\le  n}\frac {1}
{ m_3^{\frac 32}m_4}
\; \le \; n \log n \sum_{m_2=1}^n\sum_{m_3=m_2}^n m_3^{-\frac 32}\\
& \le n \log n \sum_{m_2=1}^n O( m_3^{-\frac 12})=O(n^{\frac 32}\log n)=o(n^2)\, .
\end{split}
\end{equation}
Equations \eqref{EEEEE1}, \eqref{EEEEE2} and \eqref{EEEEE3} lead to
$$
\sum_{1\le k_1< k_2< \ell_1<\ell_2\le n}\bar\mu(E_{k_1,\ell_1}\cap E_{k_2,\ell_2})=\frac{\left(\sum_{a=1}^I\bar\mu(\bar O_a)^2\right)^2}{((2\pi)^2\det\Sigma^2)}J+o(n^2)\, .
$$
Combining this with \eqref{eq:first A2}, we conclude that
\begin{equation}
\label{eq:A2}
A_2\sim \frac{n^2}{\det\Sigma^2}\left(\sum_{a=1}^I\bar\mu(\mathcal I_0=a)^2\right)^2\left(\frac{-1}{48}+\frac J{4\pi^2}
\right)\, .
\end{equation}

 The study of 
$A_3$ is the most delicate. We can observe that both
sums
$\sum_{1\le k_1< k_2<\ell_2<\ell_1\le n}
    \bar\mu(E_{k_1,\ell_1}\cap
    E_{k_2,\ell_2})$ and $\sum_{1\le k_1< k_2<\ell_2<\ell_1\le n}\bar\mu(E_{k_1,\ell_1})\bar\mu(E_{k_2,\ell_2})$
are in $O(n^2\log n)$. However, we will see that their difference
is in $n^2$. Once again our proof differs from the one in \cite{soazSelfInter} 
and is based on the same idea as the one used to prove $A_2$.
We set $E_{k,\ell}(b):=E_{k,\ell}\cap\{\mathcal I_k=b\}$. 
Due to the first part of Lemma \ref{expVn},
\begin{align}
A_3&=\sum_{1\le k_1< k_2< \ell_2 < \ell_1\le n}
\bar\mu(E_{k_1,\ell_1}\cap E_{k_2,\ell_2})
-\bar\mu(E_{k_1,\ell_1})\bar\mu(E_{k_2,\ell_2})\nonumber\\
&=o(n^{2})+\sum_{1\le k_1< k_2< \ell_2 <    \ell_1\le n}
\sum_{a,b=1}^I\left(-I_{k_1,k_1,l_1,l_2}+
\bar\mu(O_{k_1,k_2,l_1,l_2}\cap S_{k_1,k_2,l_1,l_2})\right)\nonumber\\
&=o(n^{2})+\sum_{1\le k_1< k_2< \ell_2 <  \ell_1\le n}
\sum_{a,b=1}^I\left(-I_{k_1,k_1,l_1,l_2}\right)\\
&+\sum_{1\le k_1< k_2< \ell_2 <  \ell_1\le n}
\sum_{a,b=1}^I\left(\frac 1{(2\pi)^2}\int_{[-\pi,\pi]^2}\mathbb E_{\bar\mu}\left[
 \mathbf 1_{\bar O_a}P_u^{\ell_1-\ell_2}\left(\mathbf 1_{\bar O_b}\mathcal H_{0,\ell_2-k_2}\left(\mathbf 1_{\bar O_b}P_u^{k_2-k_1}\left(\mathbf 1_{\bar O_a}\right)\right)\right)\right]\, du\right)\, ,\label{AAAAA1}
\end{align}
where $$I_1(k_1,k_1,l_1,l_2)=\frac{(\bar\mu(\bar O_a))^2\bar\mu(E_{k_2,\ell_2}(b))} {2\pi\sqrt{\det \Sigma^2}(\ell_1-k_1)},$$
$$O_{k_1,k_2,l_1,l_2}=\bar O_a\cap \bar T^{-(k_2-k_1)}\bar O_b\cap\bar T^{-(\ell_2-k_1)}\bar O_b \cap\bar T^{-(\ell_1-k_1)}\bar O_a,$$$$S_{k_1,k_2,l_1,l_2}= \{S_{\ell_2-k_2}\circ\bar T^{k_2-k_1}=0\}
\cap\{S_{\ell_1-\ell_2}\circ \bar T^{\ell_2-k_1}=-S_{k_2-k_1}\}$$
Now, as we did for \eqref{DDDDD1} (and using Theorem \ref{LLT0}), we get that
\begin{multline*}
\mathbb E_{\bar\mu}\left[
 \mathbf 1_{\bar O_a}P_u^{\ell_1-\ell_2}\left(\mathbf 1_{\bar O_b}\mathcal H_{0,\ell_2-k_2}\left(\mathbf 1_{\bar O_b}P_u^{k_2-k_1}\left(\mathbf 1_{\bar O_a}\right)\right)\right)\right]\\
=(\bar\mu(\bar O_a))^2 e^{-\frac {(\ell_1-\ell_2)+(k_2-k_1)}2|\Sigma u|^2}\mathbb E_{\bar\mu}\left[\mathbf 1_{\bar O_b}\mathcal H_{0,\ell_2-k_2}\mathbf 1_{\bar O_b}\right]
+O\left(\frac{|u|}{\ell_2-k_2}e^{-na|u|^2}\right)\, .
\end{multline*}
Therefore
\begin{multline}\label{AAAAA2}
\frac 1{(2\pi)^2}\int_{(-\pi,\pi)^2}\mathbb E_{\bar\mu}\left[
 \mathbf 1_{\bar O_a}P_u^{\ell_1-\ell_2}\left(\mathbf 1_{\bar O_b}\mathcal H_{0,\ell_2-k_2}\left(\mathbf 1_{\bar O_b}P_u^{k_2-k_1}\left(\mathbf 1_{\bar O_a}\right)\right)\right)\right]\, du\\
=  \frac{(\bar\mu(\bar O_a))^2{\bar\mu}\left(
E_{k_2,\ell_2}(b)\right)}
      {2\pi(\ell_1-\ell_2+k_2-k_1)\sqrt{\det \Sigma ^2}}
+O\left(\frac 1{(\ell_2-k_2)(\ell_1-\ell_2+k_2-k_1)^{\frac 32}}\right)\, .
\end{multline}
We will now prove that the term in $O$ in this last formula is negligable. Indeed its sum over $\{1\le k_1\le k_2\le \ell_2\le\ell_1\le n\}$ is in $O$ of the following quantity:
\begin{eqnarray*}
\sum_{m_1+m_2+m_3+m_4\le n}\left(\frac 1{m_3(m_4+m_2)^{\frac 32}}\right)
&\le&n\log n\sum_{m_2=1}^n\sum_{m_4=1}^n(m_4+m_2)^{-\frac 32}\\
&\le&O\left(n\log n\sum_{m_2=1}^nm_4^{-\frac 12}\right)=O(n^{\frac 32}\log n)=o(n^2)\, .
\end{eqnarray*}
This combined with \eqref{AAAAA1} and \eqref{AAAAA2} leads to
$$ A_3=o(n^{2})+\sum_{1\le k_1< k_2< \ell_2\le\ell_1\le n}
\sum_{a,b=1}^I\frac{(\bar\mu(\bar O_a))^2\bar\mu\left(
E_{k_2,\ell_2}(b)\right)}
      {2\pi\sqrt{\det \Sigma ^2}}\left(\frac 1{\ell_1-\ell_2+k_2-k_1}-\frac{1}
  {\ell_1-k_1}\right)\, ,$$
i.e.
\begin{multline*}
A_3
=o(n^{2})+\frac{\sum_{a}^I(\bar\mu(\mathcal I_0=a))^2}{2\pi\sqrt{\det \Sigma^2}}\sum_{m_1+m_2+m_3+m_4\le n}\left(\frac{c_1}{m_3}+O(m_3^{-\frac 32})\right)\frac{m_3} {(m_2+m_4)(m_2+m_3+m_4)}\\
=o(n^{2})+c_1^2\sum_{m_1+m_2+m_3+m_4\le n}\frac{1} {(m_2+m_4)(m_2+m_3+m_4)}\, ,
\end{multline*}
since 
$$ \sum_{m_1+m_2+m_3+m_4\le n}\frac{1} {m_3^{\frac 12}(m_2+m_4)(m_2+m_3+m_4)}=O\left(n\sum_{m_2,m_3,m_4=1}^nm_3^{-\frac 12}(m_2m_4)^{-1}\right)=o(n^2)\, .$$
Therefore, due to the Lebesgue dominated convergence theorem,
\begin{multline*}
A_3
\sim n^{2}c_1^2\int_{y_1,y_2,y_3,y_4>0:y_1+y_2+y_3+y_4<1}
\frac{1} {(y_2+y_4)(y_2+y_3+y_4)}\, dy_1dy_2dy_3dy_4
\sim \frac {c_1^2}2 n^{2} .\\
\end{multline*}

To conclude the proof of the lemma, we use the estimate for 
$A_3$
together with
\eqref{eq:Var} and \eqref{eq:A2} to obtain,
\begin{align*}
8A_2 + 8A_3 &= 4c_1^2 n^2 + \frac{8n^2}{\det \Sigma^2} \left( \sum_{a=1}^I \bar \mu(\bar O_a)^2 \right)^2
\left( \frac{-1}{48} + \frac{J}{4\pi^2} \right) \\
&= \frac{n^2}{\det \Sigma^2} \left( \sum_{a=1}^I \bar \mu(\bar O_a)^2 \right)^2
\left[ \frac{2J+1}{\pi^2} - \frac 16 \right].
\end{align*}
This finished the proof.

\section{Spectrum of $\cP_u$}\label{eigenvalue}
In this appendix, we are interested in the spectrum of the family of
operators $\cP_u$.
We start by stating a result for the unperturbed operators $\mathcal L_{u,0}$.
\begin{lem}\label{lem:nonarithm}
Let $u\in \mathbb R^2$, $h\in \cB$ and $\lambda\in \mathbb C$ 
be such that 
$\mathcal L_{u,0}h=\lambda h$ in ${\mathcal B}$ and $|\lambda| \ge 1$.
Then either $h\equiv 0$ or $u\in 2\pi\mathbb Z^2$, $\lambda =1$ and $h$ is $\bar \mu_0$-almost surely constant.
\end{lem}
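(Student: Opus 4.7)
The plan is to mimic, in the Banach space $\cB$, the classical aperiodicity argument of Sz\'asz--Varj\'u, adapting it to the distributional setting. First, iterating the identity $\mathcal L_{u,0}^n f=\mathcal L_0^n(e^{iu\cdot S_n\Phi_0}f)$, the eigenvalue equation becomes $\mathcal L_0^n(e^{iu\cdot S_n\Phi_0}h)=\lambda^n h$ in $\cB$ for every $n\ge 1$. Since $\mathcal L_0$ is Markov (it preserves the probability measure $\bar\mu_0$), pairing the identity $\mathcal L_0(e^{iu\cdot \Phi_0}h)=\lambda h$ with the constant test function $\mathbf 1$ and using $|\langle \mathcal L_0(e^{iu\cdot \Phi_0}h),\psi\rangle|\le \langle \mathcal L_0|h|,|\psi|\rangle$ for test functions $\psi\in\cC^p$, one gets $|\lambda|\,|\langle h,\psi\rangle|\le \langle\mathcal L_0|h|,|\psi|\rangle$ in the sense of the pairing with the dual, hence $|\lambda|\,\|h\|_{L^1(\bar\mu_0)}\le \|h\|_{L^1(\bar\mu_0)}$. (That $h$ defines an absolutely continuous signed measure follows from the fact that $|\lambda|\ge 1$ combined with the Lasota--Yorke type estimate \eqref{eq:hyp} and Proposition~\ref{DFLY} applied with $E=\{0\}$, which put $h$ into the $L^1$ part of the quasicompact decomposition.) Thus either $h=0$ or $|\lambda|=1$ and $|h|=\mathcal L_0|h|$ $\bar\mu_0$-a.e.

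Assuming $h\not\equiv 0$ and $|\lambda|=1$, the next step is to extract a cohomological equation. Write $h=|h|\,e^{i\vartheta}$ on $\{|h|>0\}$. The equality case in the inequality $|\mathcal L_0(e^{iu\cdot \Phi_0}h)|\le \mathcal L_0|h|$ forces, for $\bar\mu_0$-almost every $x$, all preimages $y\in\bar T_0^{-1}\{x\}$ to share a common phase, which unfolds to the functional identity
\[
e^{i\vartheta\circ \bar T_0(y)}\,\lambda=e^{iu\cdot\Phi_0(y)}\,e^{i\vartheta(y)}\quad\text{for $\bar\mu_0$-a.e. $y\in\{|h|>0\}$.}
\]
Equivalently, $u\cdot\Phi_0$ is cohomologous modulo $2\pi\mathbb Z$ to the constant $\arg\lambda$, with coboundary $\vartheta$. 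Summing this identity along orbits, for every periodic point $x$ of $\bar T_0$ lying in $\{|h|>0\}$, with period $n$ and cell-change vector $N:=S_n\Phi_0(x)\in\mathbb Z^2$, one obtains
\[
\lambda^n=e^{iu\cdot N}.
\]

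The heart of the argument is now to exploit this relation using the hyperbolicity and finite horizon of the Sinai billiard $\bar T_0$. Since $(\bar M_0,\bar\mu_0,\bar T_0)$ is mixing and $\{|h|>0\}$ is $\bar T_0$-invariant of positive measure, it is $\bar\mu_0$-full. The finite horizon assumption, combined with the shadowing/specification-type properties of the dispersing billiard, guarantees the existence of periodic orbits realizing a family of displacement vectors $(n_k,N_k)$ such that the vectors $\{N_k-\tfrac{n_k}{n_0}N_0\}_k$ span a full-rank sublattice of $\mathbb Z^2$. Imposing $e^{iu\cdot N_k}=\lambda^{n_k}$ simultaneously along such a family forces $u\in 2\pi\mathbb Z^2$ and then $\lambda=1$. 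The main obstacle of the proof lies precisely here: producing the required family of periodic orbits with prescribed displacement vectors, which rests on results specific to planar periodic Lorentz gases with finite horizon (cf.\ \cite{SV}, and the use of closed orbits with displacements spanning $\mathbb Z^2$).

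Finally, once $u\in 2\pi\mathbb Z^2$, one has $e^{iu\cdot\Phi_0}\equiv 1$ (since $\Phi_0$ is $\mathbb Z^2$-valued), so $\mathcal L_{u,0}=\mathcal L_0$ and the eigenvalue equation reduces to $\mathcal L_0 h=\lambda h$ with $|\lambda|=1$. By the spectral properties of $\mathcal L_0$ on $\cB$ recalled in Lemma~\ref{quasicompact1} (from \cite[Theorem~2.2 and Corollary~2.4]{MarkHongKun2013}), $1$ is the unique peripheral eigenvalue of $\mathcal L_0$, simple, with eigenspace $\mathbb C\cdot\mathbf 1_{\bar M_0}$. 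Hence $\lambda=1$ and $h$ is $\bar\mu_0$-almost surely constant, completing the proof.
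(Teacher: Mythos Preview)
Your overall strategy---derive the cohomological relation $\lambda\, e^{i\vartheta\circ\bar T_0}=e^{iu\cdot\Phi_0}e^{i\vartheta}$ and then test it on periodic orbits---is the classical Sz\'asz--Varj\'u approach, and it is genuinely different from what the paper does. However, as written there are two real gaps.

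\medskip
\textbf{First gap: passing from $h\in\cB$ to a density.} Elements of $\cB$ are distributions, and the inequality $|\langle \cL_0(e^{iu\cdot\Phi_0}h),\psi\rangle|\le \langle \cL_0|h|,|\psi|\rangle$ that you invoke presupposes that $|h|$ makes sense, i.e.\ that $h$ is already (at least) a complex measure. Your justification---``the Lasota--Yorke estimate and Proposition~\ref{DFLY} put $h$ into the $L^1$ part of the quasicompact decomposition''---is not a valid argument: quasicompactness of $\cL_{u,0}$ on $\cB$ does not by itself identify peripheral eigenvectors with densities. The paper handles this point carefully: it first uses the weak-norm estimate $|h(\psi)|\le C|\lambda|^{-n}|h|_w(|\psi|_\infty+\Lambda^{-pn}C^{(p)}_{\cW^s}(\psi))$ to deduce that $h$ is a (signed) measure when $|\lambda|=1$, and then uses the Ces\`aro representation of the eigenprojection to show $|h(\psi)|\le |f|_\infty\bar\mu_0(|\psi|)$, hence $h\ll\bar\mu_0$ with $L^\infty$ density. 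Only after this is the pointwise equation $\lambda\, h\circ\bar T_0=e^{iu\cdot\Phi_0}h$ obtained.

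\medskip
\textbf{Second gap: evaluating an a.e.\ identity on periodic orbits.} Even granting the density step, the relation $\lambda\, e^{i\vartheta\circ\bar T_0}=e^{iu\cdot\Phi_0}e^{i\vartheta}$ holds only $\bar\mu_0$-almost everywhere, with $\vartheta$ merely the argument of an $L^\infty$ function. Periodic orbits form a $\bar\mu_0$-null set, so you cannot sum the identity along them without first establishing some regularity of $\vartheta$ (a Liv\v{s}ic-type step). In Sz\'asz--Varj\'u this is available because they work on a Young tower with H\"older observables; here you have not established any regularity for $\vartheta$, and in the distributional framework of $\cB$ there is no obvious mechanism to recover it. The sentence ``The main obstacle\ldots lies in producing the required family of periodic orbits'' understates the difficulty: producing the orbits is not the problem---evaluating the a.e.\ equation on them is.

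\medskip
\textbf{How the paper proceeds instead.} The paper sidesteps both the regularity issue and the periodic-orbit algebra entirely. After obtaining $\lambda\, h\circ\bar T_0=e^{iu\cdot\Phi_0}h$ a.e., it passes to the infinite-measure $\mathbb Z^2$-extension $M_0$ and further crosses with the rotation $T_\lambda$ on the closed group $G_\lambda$ generated by $\lambda$. The function $H(\bar x+\ell,y):=y\,h(\bar x)\,e^{-iu\cdot\ell}$ is then shown to be $(T_0\times T_\lambda)$-invariant, and conservativity/ergodicity of this extended system (from \cite{FPCRAS}) forces $H$ to be constant, which immediately yields $u\in 2\pi\mathbb Z^2$, $\lambda=1$, and $h$ constant. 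This argument uses only the a.e.\ identity and requires no pointwise evaluation on null sets.
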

\begin{proof}
Recall that for $\psi \in \cC^p(\bar M_0)$, we have $\psi \circ \bar T_0^n \in \cC^p(\bar T^{-n}\cW^s)$.   Note that
$$\mathcal L_{u,0} h(\psi)=h(e^{iu \cdot \Phi_0}\psi\circ \bar T_0).$$
 Thus for $n\geq 1$,
 $$\mathcal L_{u,0}^n h(\psi)= h(e^{iu \cdot S_n \Phi_0 }\psi\circ \bar T_0^n),$$
 where $S_n \Phi_0 =\Phi_0+\Phi_0\circ \bar T_0+\cdots+\Phi_0\circ \bar T_0^{n-1}$ denotes the partial sum.
By \cite[Lemma~3.4]{MarkHongKun2013}, using the invariance of $h$,
\begin{equation}
\label{eq:measure}
|h(\psi)| = |\lambda|^{-n} |h(e^{iu \cdot S_n\Phi_0}\psi\circ \bar T_0^n) |
\leq C |\lambda|^{-n} | h |_w \big(|e^{i u \cdot S_n\Phi_0} \psi \circ \bar T_0^n|_\infty + C^{(p)}_{\bar T_0^{-n} \cW^s}(e^{iu\cdot S_n\Phi_0}\cdot\psi \circ \bar T_0^n) \big) ,
\end{equation}
where $C^{(p)}_{\bar T_0^{-n}\cW^s}(\cdot)$ denotes the H\"older constant of exponent 
$p$ 
measured along elements of $\bar T_0^{-n} \cW^s$.  Since $|e^{i u \cdot S_n  \Phi_0}| = 1$
and $S_n \Phi_0$ is constant on each element of $\bar T_0^{-n} \cW^s$, we have
\[
\begin{split}
C^{(p)}_{\bar T_0^{-n} \cW^s}(e^{iu\cdot S_n\Phi_0}\cdot\psi \circ 
\bar T_0^n) 
& \le |e^{i u \cdot S_n \Phi_0}|_\infty C^{(p)}_{\bar T_0^{-n} \cW^s}(\psi \circ \bar T_0^n) 
+ |\psi \circ \bar T_0^n|_\infty C^{(p)}_{\bar T_0^{-n}\cW^s}(e^{i u \cdot S_n \Phi_0}) \\
& \le C \Lambda^{-pn} C^{(p)}_{\cW^s}(\psi) .
\end{split}
\]
Using this estimate in \eqref{eq:measure} and taking the limit as $n \to \infty$ yields 
$|h(\psi)| = 0$ if $|\lambda|>1$ and $|h(\psi)| \leq C| h |_w |\psi|_\infty$ for all $\psi \in \cC^p(\cW^s)$
if $|\lambda|=1$.  From this we conclude that the spectrum of $\cL_{u,0}$ is always
contained in the unit disk.  Furthermore, when $|\lambda|=1$, then
$h$ is a signed measure.  For the remainder of the proof, we assume $|\lambda|=1$.

 Let $\mathbb{V}_{u,0}$ be the eigenspace of $\mathcal L_{u,0}$ corresponding to eigenvalue $\lambda_{u,0}$, and $\Pi_{u,0}$ the eigenprojection operator. 
Since we are assuming $\mathbb{V}_{u,0}$ is non-empty, Lemma~\ref{lem:uniform ly} implies
that $\cL_{u,0}$ is quasi-compact with essential spectral radius bounded by 
$\tau < 1$.
Moreover, Lemma~\ref{lem:uniform ly} implies that $\| \cL_{u,0}^n \|_{L(\cB, \cB)}$ remains bounded
for all $n \ge 0$, so using \cite[Lemma~5.1]{MarkHongKun2011}, we conclude that
$\cL_{u,0}$ has no Jordan blocks corresponding to its peripheral spectrum. 
 
Using these facts,  $\Pi_{u,0}$ has the representation
$$\lim_{n\to\infty}\frac{1}{n} \sum_{j=1}^n \lambda^{-j} \mathcal L_{u,0}^j =\Pi_{u,0} . $$
In addition, for $f\in \cC^1(\bar M_0)$, $\psi\in \cC^p(\cW^s)$,
$$
\left| \Pi_{u,0} f(\psi) \right| =\left| \lim_{n\to\infty}\frac{1}{n} \sum_{j=1}^n \lambda^{-j} 
f((e^{iu \cdot S_j \Phi_0}\psi\circ \bar T_0^j) \right| \leq |f|_{\infty}|\psi|_{\infty} .
$$

Since $\Pi_{u,0} \cC^1(\bar M_0)$ is dense in the finite dimensional space $\Pi_{u,0}\cB$, therefore $\Pi_{u,0} \cC^1(\bar M_0)=\Pi_{u,0}\cB=\mathbb{V}_{u,0}$.  So for  $h \in \mathbb{V}_{u,0}$, there
exists $f \in \cC^1(\bar M_0)$ such that $\Pi_{u,0} f = h$.  Now for each $\psi \in \cC^p(\bar M_0)$,
\[
|h(\psi)| = |\Pi_{u,0} f(\psi)| \leq |f|_\infty \Pi_0 1(|\psi|) = |f|_\infty \bar \mu_0(|\psi|) .
\]
Thus $h$ is absolutely continuous with respect to $\bar \mu_0$. For simplicity, we identify $h$ and its density with respect to $\bar\mu_0$; then $h \in L^\infty(\bar M_0, \bar\mu_0)$.
Now for any $\psi\in \cC^p(\cW^s)$, we have
\begin{align*}
\lambda\int_{\bar M_0} h \psi\, d\mu_0&=\int_{\bar M_0} \cL_0(e^{iu \cdot \Phi_0} h)\cdot \psi\, d\bar\mu_0\\
&=\int_{\bar M_0} (e^{iu\cdot \Phi_0} h)\circ \bar T_0^{-1}\cdot \psi\,d\bar \mu_0
\end{align*}
Accordingly, $\lambda\,  h=(e^{iu \cdot \Phi_0} h)\circ \bar T_0^{-1}$, $\bar\mu_0$-a.e. Or equivalently, we have $\lambda\, h\circ \bar T_0=e^{iu \cdot \Phi_0}h$.
Hence $\lambda^n\, h\circ \bar T_0^n=e^{iu\cdot S_n \Phi_0}h$.

Let $G_\lambda$ be the closed multiplicative group generated by $\lambda$ and let $m_{\lambda}$ be the normalized Haar measure on $G_\lambda$.
($G_\lambda$ is finite if $\lambda$ is a root of unity; 
it is $\{z\in\mathbb C\, :\, |z|=1\}$ otherwise.) 
The dynamical system $(G_\lambda,m_{\lambda},T_\lambda)$
is ergodic, where $T_\lambda$ denotes multiplication by $\lambda$ in $G_\lambda$.
Due to \cite{FPCRAS},
the dynamical system $(M_0 \times G_\lambda,\mu_0\otimes m_{\lambda},
T_0 \times T_\lambda)$
in infinite measure is conservative and ergodic. But
the function $H: M_0 \times G_\lambda \rightarrow\mathbb C$ defined as follows is
$(T_0 \times T_\lambda)$-invariant:
\[
\forall (\bar x,\ell,y)\in\bar M_0\times\mathbb Z^2\times G_\lambda ,\quad 
H(\bar x+\ell,y):=y h(\bar x)e^{-i u\cdot \ell}.
\]
Indeed, for $\mu_0\otimes m_{\lambda}$-a.e. $(\bar x+\ell,y)\in M_0 \times G_\lambda$,
\begin{eqnarray*}
H((T_0 \times T_\lambda)(\bar x+\ell,y))
&=&H(\bar T_0(\bar x)+\ell+\Phi_0(\bar x),\lambda y)
=\lambda y h(\bar T_0(\bar x))e^{-i u\cdot (\ell+\Phi_0(\bar x))}\\
&=& ye^{-i u\cdot \ell}(\lambda h(\bar T_0(\bar x))
   e^{-i u\cdot \Phi_0(\bar x)})\\
&=& ye^{-i u\cdot \ell} h(\bar x)\, ,
\end{eqnarray*}
due to our assumption on $h$.
We conclude that $H$ is a.e. equal to a constant, which implies that
$u\in 2\pi\mathbb Z^2$, $\lambda=1$, and $h$ is $\bar\mu_0$-a.s. constant.
\end{proof}
\begin{prop}\label{lem:nonarithm2}
Given $\beta > 0$, there exists $C>1$ and $\alpha\in(0,1)$ such that 
$$\forall n\in\mathbb N^*,\quad \sup_{\beta \le |u|\le\pi }\Vert \mathcal P_u^n\Vert_{L(\widetilde\cB,\widetilde\cB)}\le C\alpha^n\, .$$
\end{prop}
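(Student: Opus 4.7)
The plan is to reduce the claim about $\cP_u$ on $\widetilde\cB$ to a uniform spectral radius bound for the unperturbed twisted operator $\cL_{u,0}$ on $\cB$, and then combine Lemma~\ref{lem:nonarithm} with a compactness argument over the annulus $K := \{u \in \mathbb{R}^2 : \beta \le |u| \le \pi\}$, which is disjoint from $2\pi\mathbb{Z}^2$.

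\textbf{Step 1 (Reduction to $\cL_{u,0}$).} Iterating the definition of $\cP_u$ one obtains
\[
\cP_u^n f(y,\uomega) = \int_{E^n} \cL_{u,0}^n f(\cdot,(\tilde\omega_n,\uomega))(y)\, d\eta^{\otimes n}(\tilde\omega_n).
\]
Applying the $n$-fold analogue of Lemma~\ref{tildeBnorm}(b) with the constant family $H_{\tilde\omega_n} \equiv \cL_{u,0}^n$, exactly as in the proof of Proposition~\ref{DFLY}, yields
\[
\|\cP_u^n\|_{L(\widetilde\cB,\widetilde\cB)} \le (1+\varkappa^{-n})\,\|\cL_{u,0}^n\|_{L(\cB,\cB)}\,.
\]
It therefore suffices to prove $\sup_{u\in K}\|\cL_{u,0}^n\|_{L(\cB,\cB)}\le C\alpha^n$ for some $C>0$, $\alpha\in(0,1)$ and every $n\ge 1$.

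\textbf{Step 2 (Pointwise spectral gap).} The Lasota--Yorke inequality of Lemma~\ref{lem:uniform ly} (applied with the constant length-one sequence $\omega_1=0$) shows that $\cL_{u,0}$ is quasicompact on $\cB$ with essential spectral radius at most $\tau<1$, uniformly in $u$. Since $K\cap 2\pi\mathbb{Z}^2=\emptyset$, Lemma~\ref{lem:nonarithm} rules out any eigenvalue of $\cL_{u,0}$ of modulus $\ge 1$, and therefore $\rho(\cL_{u,0})<1$ for every $u\in K$.

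\textbf{Step 3 (Uniformity).} Because the horizon is finite, $\Phi_0$ is bounded and constant on each of the finitely many connected components of $\bar M_0\setminus\cS_1^{\bar T_0}$; hence Lemma~\ref{lem:piecewise}(b) gives
\[
\|\cL_{u,0}-\cL_{u',0}\|_{L(\cB,\cB)} \le C_0\,\|\Phi_0\|_\infty\,|u-u'|\,.
\]
Combining this Lipschitz continuity in operator norm with the uniform quasicompactness, the peripheral spectrum of $\cL_{u,0}$ varies continuously in $u$ by the Keller--Liverani perturbation theorem \cite{kellerliverani99}. I would then argue by contradiction as in the proof of Theorem~\ref{prop:perturb}(c): if no uniform $\alpha$ existed, there would be $u_k\in K$ with $\rho(\cL_{u_k,0})\to 1$; extracting $u_k\to u_\infty\in K$ by compactness would produce an eigenvalue of $\cL_{u_\infty,0}$ of modulus $1$, contradicting Step~2. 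This gives $\sup_{u\in K}\rho(\cL_{u,0})<1$, from which the required uniform bound $\|\cL_{u,0}^n\|_{L(\cB,\cB)}\le C\alpha^n$ follows by Gelfand's formula and a finite covering of $K$ by neighbourhoods on which a common iterate $\cL_{u,0}^N$ has operator norm strictly less than $1$.

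The main obstacle is precisely this uniformity step: Lemma~\ref{lem:nonarithm} yields the exclusion of modulus-one eigenvalues at each fixed $u$ almost for free, but promoting the pointwise bound $\rho(\cL_{u,0})<1$ to a \emph{uniform} exponential decay rate on the compact annulus $K$ requires that the constants in the Keller--Liverani-type perturbation estimates be locally uniform in $u$, which in turn rests on the uniform Lasota--Yorke inequality for $\cL_{u,0}$ together with the operator-norm continuity of $u\mapsto\cL_{u,0}$ established above.
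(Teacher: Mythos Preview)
Your approach is essentially the same as the paper's: reduce $\cP_u^n$ to $\cL_{u,0}^n$ via the integral formula and Lemma~\ref{tildeBnorm}, then establish a uniform bound $\sup_{u\in K}\|\cL_{u,0}^n\|_{L(\cB,\cB)}\le C\alpha^n$ from Lemma~\ref{lem:nonarithm} plus continuity and compactness. The only structural difference is that the paper outsources your entire Step~3 to \cite[Lemma~4.3]{AaronsonDenker}, which packages the passage from ``no peripheral eigenvalue at each $u$'' plus uniform Lasota--Yorke plus continuity of $u\mapsto\cL_{u,0}$ into the uniform exponential bound directly; you are effectively re-proving that lemma via Gelfand and a finite cover, which is fine.

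Two small technical points. First, your Lipschitz estimate in Step~3 cannot invoke Lemma~\ref{lem:piecewise}(b) directly on $e^{iu\cdot\Phi_0}$: the discontinuity set of $\Phi_0$ is $\cS_1^{\bar T_0}$, whose curves lie in the stable cone and therefore violate the transversality hypothesis $m_W(N_\ve(\partial Z)\cap W)\le C_0\ve$ for $W\in\cW^s$. The fix is exactly the trick used in the proof of Lemma~\ref{lem:PuOp}: write $\cL_{u,0}f=(e^{iu\cdot\Phi_0}\circ\bar T_0^{-1})\,\cL_0 f$, so the multiplier is now piecewise constant on $\bar M_0\setminus\cS_{-1}^{\bar T_0}$, whose curves are unstable and do satisfy the hypothesis of Lemma~\ref{lem:piecewise}(b). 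Second, since you obtain continuity of $u\mapsto\cL_{u,0}$ in the \emph{strong} operator norm $L(\cB,\cB)$, you do not need Keller--Liverani at all; ordinary Kato perturbation theory (or just upper semicontinuity of isolated spectrum under norm perturbations) suffices for the contradiction argument.
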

\begin{proof}
Fix $\beta > 0$.
Due to  \cite[Lemma 4.3]{AaronsonDenker},  Lemma~\ref{lem:nonarithm}, 
and the
continuity in $u$ provided by \cite[Lemma~5.4]{MarkHongKun2014} (see also Lemma~\ref{analytic}
applied to $\cL_{u,0}$ rather than $P_u$), we know that there exists $C>1$ and $\alpha\in(0,1)$ such that 
$$\forall n\in\mathbb N^*,\quad \sup_{\beta \le |u|\le \pi}\Vert \mathcal L_{u,0}^n\Vert_{L(\cB,\cB)}\le C\alpha^n\, .$$
Therefore, for every $f\in\widetilde\cB$, we have
\begin{eqnarray*}
\sup_{\uomega\in E^{\mathbb N}}\left\Vert   \mathcal P_u^nf(x,\uomega)\right\Vert_{\cB}&=&\sup_{\uomega\in E^{\mathbb N}}\left\Vert  \int_{E^n}\mathcal L_{u,0}^nf(\cdot,(\tilde\omega,\uomega))\,
    d\eta^{\otimes n}(\tilde\omega)   \right\Vert_{\cB}\\
&\le&\sup_{\uomega\in E^{\mathbb N}} \int_{E^n}\left\Vert \mathcal L_{u,0}^nf(\cdot,(\tilde\omega,\uomega))\right\Vert_{\cB}\,
    d\eta^{\otimes n}(\tilde\omega)\\
&\le&  \sup_{\uomega\in E^{\mathbb N}} C\alpha^n\sup_{\underline{\omega'}}\left\Vert f(\cdot,\underline{\omega'})\right\Vert_{\cB}\, .
\end{eqnarray*}
where we used Lemma \ref{tildeBnorm} to obtain the second line.
Analogously,
\begin{eqnarray*}
\sup_{\uomega\ne \underline{\omega'}}\frac{\left\Vert   \mathcal P_u^nf(x,\uomega)-\mathcal P_u^nf(x,\underline{\omega'})\right\Vert_{\cB}}{d(\uomega,\underline{\omega'})}&=&\sup_{\uomega\ne \underline{\omega'}}\frac{\left\Vert  \int_{E^n}\mathcal L_{u,0}^n\left(f(\cdot,(\tilde\omega,\uomega))-f(\cdot,(\tilde\omega,\underline{\omega'}))\right)\,    d\eta^{\otimes n}(\tilde\omega)   \right\Vert_{\cB}}{d(\uomega,\underline{\omega'})}\\
&\le&\sup_{\uomega\ne \underline{\omega'}} \int_{E^n}\frac{\left\Vert \mathcal L_{u,0}^n\left(f(\cdot,(\tilde\omega,\uomega))-f(\cdot,(\tilde\omega,\underline{\omega'}))\right)\right\Vert_{\cB}}{d(\uomega,\underline{\omega'})}\,
    d\eta^{\otimes n}(\tilde\omega)\\
&\le&  C\alpha^n\varkappa^n\sup_{\uomega\ne \underline{\omega'}} \frac{\left\Vert f(\cdot,\underline{\omega'})-f(\cdot,\underline{\omega'})\right\Vert_{\cB}}{d(\uomega,\underline{\omega'})}\, .
\end{eqnarray*}
We conclude by putting these two estimates together.
\end{proof}
\end{appendix}
\paragraph{\bf Acknowledgements.}
This work was begun at the AIM workshop {\em Stochstic Methods for Non-Equilibrium Dynamical Systems}, in June 2015.  Part of this work was carried out during visits by the authors 
to ESI, Vienna in 2016, to CIRM, Luminy in 2017 and 2018, and to BIRS, Canada in 2018, 
and by a visit of FP to the University of Massachusetts at Amherst in 2018.
MD was supported in part by NSF Grant DMS 1800321.

\end{document}